\def\arXiv#1{arXiv:\href{http://arXiv.org/abs/#1}{#1}}
\newcommand{\caseup}[1]{\textup{#1}}
\newcommand{\sC}{\mathcal{C}}
\newcommand{\C}{\mathbb{C}}
\newcommand{\R}{\mathbb{R}}
\newcommand{\N}{\mathbb{N}}
\newcommand{\Z}{\mathbb{Z}}
\newcommand{\F}{{\mathbb F}}
\newcommand{\X}{{\mathfrak X}}
\newcommand{\OO}{{\mathbb O}}
\newcommand{\HH}{{\mathbb H}}
\newcommand{\Proj}{{\mathbb P}}
\newcommand{\id}{\mathrm{id}}
\newcommand{\Span}{\operatorname{span}}
\newcommand{\rank}{\operatorname{rank}}
\def\mathcenterto#1#2{\mathclap{\phantom{#1}\mathclap{#2}}\phantom{#1}}
\let\old@widetilde\widetilde
\def\widetildeto#1#2{\mathcenterto{#2}{\old@widetilde{\mathcenterto{#1}{#2\,}}}}
\newcommand{\Mtilde}{\widetildeto{X}{M}}
\newcommand\Cplusplus{\textsc{C}\nolinebreak[4]\raisebox{.4ex}{\tiny\textbf{++}}}
\newcommand{\sdim}[2]{r(#1,#2)}
\DeclareMathOperator{\Tr}{Tr}
\DeclareMathOperator{\PSL}{PSL}
\DeclareMathOperator{\RealPart}{Re}
\DeclareMathOperator{\SO}{SO}
\DeclareMathOperator*{\hilbertsum}{\widehat{\bigoplus}}
\theoremstyle{plain}
\newtheorem{theorem}{Theorem}
\newtheorem{corollary}[theorem]{Corollary}
\newtheorem{lemma}[theorem]{Lemma}
\newtheorem{proposition}[theorem]{Proposition}
\newtheorem{conjecture}[theorem]{Conjecture}
\numberwithin{theorem}{section}
\theoremstyle{definition}
\newtheorem{definition}[theorem]{Definition}
\newtheorem{remark}[theorem]{Remark}
\numberwithin{equation}{section}
\numberwithin{table}{section}
\numberwithin{figure}{section}
\begin{document}

\title{Optimal simplices and codes in projective spaces}

\author[Cohn]{Henry Cohn}
\address{Microsoft Research\\
One Memorial Drive\\
Cambridge, MA 02142} \email{cohn@microsoft.com}

\author[Kumar]{Abhinav Kumar}
\address{Department of Mathematics\\
Massachusetts Institute of Technology\\
Cambridge, MA 02139}
\curraddr{Department of Mathematics\\
Stony Brook University\\
Stony Brook, NY 11794}
\email{abhinavk@alum.mit.edu}

\author[Minton]{Gregory Minton}
\address{Microsoft Research\\
One Memorial Drive\\
Cambridge, MA 02142} \email{gminton@alum.mit.edu}

\thanks{AK was supported in part by National Science Foundation grants
  DMS-0757765 and DMS-0952486 and by a grant from the Solomon
  Buchsbaum Research Fund.  GM was supported by a Fannie and John
  Hertz Foundation Graduate Fellowship, a National Science Foundation
  Graduate Research Fellowship, and internships at Microsoft Research.
  An earlier version of this paper appears in Chapter~II of GM's
  doctoral dissertation.}

\date{September 20, 2015}

\begin{abstract}
We find many \emph{tight} codes in compact spaces, i.e.,
optimal codes whose optimality follows from linear programming
bounds. In particular, we show the existence (and abundance) of
several hitherto unknown families of simplices in quaternionic
projective spaces and the octonionic projective plane.  The
most noteworthy cases are $15$-point simplices in $\HH \Proj^2$
and $27$-point simplices in $\OO \Proj^2$, both of which are
the largest simplices and the smallest $2$-designs possible in
their respective spaces. These codes are all universally
optimal, by a theorem of Cohn and Kumar.  We also show the
existence of several positive-dimensional families of simplices
in the Grassmannians of subspaces of $\R^n$ with $n \leq 8$;
close numerical approximations to these families had been found
by Conway, Hardin, and Sloane, but no proof of existence was
known. Our existence proofs are computer-assisted, and the main
tool is a variant of the Newton-Kantorovich theorem.  This
effective implicit function theorem shows, in favorable
conditions, that every approximate solution to a set of
polynomial equations has a nearby exact solution. Finally, we
also exhibit a few explicit codes, including a configuration of
$39$ points in $\OO \Proj^2$ that form a maximal system of mutually
unbiased bases.  This is the last tight code in $\OO\Proj^2$
whose existence had been previously conjectured but not
resolved.
\end{abstract}

\maketitle

\tableofcontents

\section{Introduction}

The study of codes in spaces such as spheres, projective spaces, and
Grassmannians has been the focus of much interest recently, involving an
interplay of methods from many aspects of mathematics, physics, and computer
science \cite{BV, BNOV09, BRV13, BG09, R05, MQKF13, FKM13}. Given a compact
metric space $X$, the basic question is how to arrange $N$ points in $X$ so
as to maximize the minimal distance between them. A point configuration is
called a \emph{code}, and an \emph{optimal code} $\sC$ maximizes the minimal
distance between its points given its size $|\sC|$.  Finding optimal codes is
a central problem in coding theory.  Even when $X$ is finite (for example,
the cube $\{0,1\}^n$ under Hamming distance), this optimization problem is
generally intractable, and it is even more difficult when $X$ is infinite.

Most of the known optimality theorems have been proved using linear
programming bounds, and we are especially interested in codes for which these
bounds are sharp. We call them \emph{tight} codes.\footnote{The word
``tight'' is used for a related but more restrictive concept in the theory of
designs.  We use the same word here for lack of a good substitute. This makes
``tight'' a noncompositional adjective, much like ``optimal'': codes and
designs are both just sets of points, so every code is a design and vice
versa, but a tight code is not necessarily a tight design. (However, one can
show that every tight design is a tight code.)} The class of tight codes
includes many of the most remarkable codes known, such as the icosahedron and
the $E_8$ root system.

In this paper, we explore the landscape of tight codes in projective spaces.
We devote most of our attention to regular simplices (i.e., collections of
equidistant points). Tight simplices correspond to tight equiangular frames
\cite{STDH}, which have applications in signal processing and sparse
approximation, and they also capture interesting invariants of their ambient
spaces.  Furthermore, they seem to be by far the most widespread sort of
tight codes.

In real and complex projective spaces, tight simplices occur only
sporadically.  All known constructions are based on geometric,
group-theoretic, or combinatorial properties that depend delicately on the
number of points and the dimension of the projective space. By contrast, we
find a surprising new phenomenon in quaternionic and octonionic spaces: in
each dimension, there are substantial intervals of sizes for which tight
simplices always seem to exist. For instance, in $\HH \Proj^2$ we show that
tight simplices exist for $N$ points with $1 \leq N \leq 13$ or $N = 15$,
while in $\HH \Proj^3$ we show existence for $1 \leq N \leq 21$.

This behavior cannot plausibly be explained using the sorts of constructions
that work in real and complex spaces.  In fact, the new tight simplices
exhibit little structure and seem to exist not for any special reason, but
rather because of parameter counting: they can be characterized by systems of
equations with more variables than constraints.  Making this heuristic
precise, and indeed extracting any proof from this approach, requires a
delicate choice of constraints. Much of our paper is devoted to identifying
and analyzing such a choice.  We do not know how to prove that these new
simplices exist in all dimensions, but we prove existence in many hitherto
unknown cases. We also extend our methods to handle some exceptional cases
that are particularly subtle.

Our results settle several open problems dating back to the early 1980s,
while raising new questions.
\begin{enumerate}
\item We show the existence of a $15$-point simplex in $\HH \Proj^2$
    (Theorem~\ref{thm:15}) and a $27$-point simplex in $\OO \Proj^2$
    (Theorem~\ref{thm:27}).  These simplices are not only optimal codes,
    but also the largest possible simplices in their ambient spaces. For
    comparison, the six diagonals of an icosahedron form the largest
    simplex in $\R\Proj^2$, and the largest simplex in $\C\Proj^2$ has size
    nine. The real and complex simplices have long been known, but the
    quaternionic and octonionic simplices had been conjectured not to exist
    \cite[p.~251]{Ho1}.
\item These two codes are also tight $2$-designs, which makes them
    analogues of ``symmetric, informationally complete, positive
    operator-valued measures'' (SIC-POVMs).  SIC-POVMs are $d^2$-point
    simplices in $\C \Proj^{d-1}$, which play an important role in quantum
    information theory \cite{SICPOVM}.  Zauner has conjectured that
    SIC-POVMs exists for each $d$ (a problem analogous in some ways to the
    existence of Hadamard matrices), but his conjecture remains
    tantalizingly out of reach \cite{AFZ}; examples of SIC-POVMs have been
    algebraically constructed in low dimensions (up to $d=16$ and a few
    larger cases) and numerically approximated for $d \le 67$, but no
    infinite families are known \cite{SG,ABBEGL}. Our results do not apply
    directly to Zauner's conjecture, but rather they suggest that the
    analogous question in quaternionic projective spaces has an entirely
    different character: tight $2$-designs in $\HH \Proj^{d-1}$ do not seem
    to exist for $d > 3$.
\item An intriguing phenomenon we have observed is the apparent
    nonexistence of $14$-point tight simplices in $\HH \Proj^2$
    (Conjecture~\ref{conj:no14}) and $26$-point tight simplices in $\OO
    \Proj^2$ (Conjecture~\ref{conj:no26}), when all other sizes up to the
    maximum (15 or 27, respectively) occur.  We have no explanation for why
    the second largest possible size should not occur.  We observe the same
    phenomenon for the Grassmannian $G(2,4)$, which bolsters our confidence
    that it is a genuine pattern.
\item More generally, we prove the existence of many tight simplices in
    real Grassmannians (Theorems~\ref{thm:grass}
    and~\ref{thm:11inG25}--\ref{thm:7and8}). Such simplices were
    conjectured to exist in \cite{CHS} based on numerical evidence, and we
    show how parameter counting explains this phenomenon. As in projective
    spaces, the difficulty lies in finding the right constraints, so that
    the problem becomes amenable to rigorous proof.
\item Finally, we give a few explicit constructions of other codes,
    including a construction for a set of $13$ mutually unbiased bases in
    $\OO \Proj^2$ (Theorem~\ref{thm:39}). They had been conjectured to
    exist \cite[p.~35]{Ho2}, but no construction was previously known.
\end{enumerate}

In contrast to the usual algebraic methods for constructing tight codes, we
take a rather different approach to show the existence of families of
simplices. We use a general \emph{effective} implicit function theorem (i.e.,
one with explicit bounds), which allows us to show the existence of a real
solution to a system of polynomial equations near an approximate solution.
Furthermore, it proves that the space of solutions is a smooth manifold near
the approximate solution and tells us its dimension.  Using this approach, we
prove the existence of tight simplices by computing numerical approximations
and then applying the existence theorem.\footnote{In real projective spaces,
the problem is much easier: one can easily convert an approximation to an
exact construction by rounding the Gram matrix.  However, that fails in other
projective spaces and Grassmannians.  See the discussion before
Proposition~\ref{prp:tightoptimal}.}

The idea of making the implicit function theorem effective goes back to the
Newton-Kantorovich theorem \cite{Kantorovich}, but applying it in this
geometric setting allows us to establish many new results, for which
algebraic constructions seem out of reach. The closest predecessor to our
applications that we are aware of is a sequence of papers
\cite{ChenWomersley, Chen, AnChenSloanWomersley, ChenFrommerLang} on the
existence of spherical $t$-designs on $S^2$ with at least $(t+1)^2$ points.
These papers also use a Newton-Kantorovich variant, applied in a case in
which there are approximately twice as many variables as constraints: the space of $N$-point
configurations on $S^2$ has dimension $2N-3$ for $N \ge 3$, and the $t$-design
condition imposes $(t+1)^2-1$ constraints (since that is the
dimension of the space spanned by the spherical harmonics of degree $1$
through $t$).

In \textsection\ref{known} we describe linear programming bounds and recall
what is known about tight codes in projective spaces over $\R$, $\C$, $\HH$,
and $\OO$.  An effective existence theorem, our main tool in this paper, is
the subject of \textsection\ref{effective}. Our results concerning existence
of new families of projective simplices, proved using the existence theorem,
are described in \textsection\ref{quat} and \textsection\ref{oct}. In
\textsection\ref{grass} we use our methods to produce positive-dimensional
families of simplices in real Grassmannians.  We then give a discussion of
the algorithms and computer programs used for these computer-assisted proofs
in \textsection\ref{algs}. Finally, we conclude in \textsection\ref{mub} with
three explicit constructions of universally optimal codes, the most notable
of which is a maximal system of mutually unbiased bases in $\OO \Proj^2$.

We thank Noam Elkies for many helpful conversations.  We are especially
grateful to Mahdad Khatirinejad for his involvement in the early stages of
this work.  In particular, he collaborated with us on the numerical
investigations that initially suggested the widespread existence of tight
quaternionic simplices.  We are also grateful to the anonymous referees for
their suggestions.

\section{Codes in projective spaces and linear programming bounds} \label{known}

\subsection{Projective spaces over $\R$, $\C$, $\HH$, and $\OO$}

If $K = \R$, $\C$, or $\HH$, we denote by $K\Proj^{d-1} := (K^d \setminus
\{0\})/K^\times$ the set of lines in $K^d$. That is, we identify $x$ and
$x \alpha$ for $x \in K^d\setminus \{0\}$ and $\alpha \in K^\times$.  Note
the convention that $K^\times$ acts on the right; this is important for
the noncommutative algebra $\HH$.

We equip $K^d$ with the Hermitian inner product $\langle x_1, x_2 \rangle =
x_1^\dagger x_2$, where $\dagger$ denotes the conjugate transpose. We may
represent an element of the projective space $K\Proj^{d-1}$ by a unit-length vector $x \in
K^d$, and we often abuse notation by treating the element itself as such a
vector. Under this identification, the \emph{chordal distance} between two
points of $K\Proj^{d-1}$ is
\[
\rho(x_1,x_2) = \sqrt{1 - |\langle x_1,x_2 \rangle|^2}.
\]
It is not difficult to check that this formula defines a metric
equivalent to the Fubini-Study metric. Specifically, if $\vartheta(x_1,x_2)$
is the geodesic distance on $K\Proj^{d-1}$ under the Fubini-Study metric,
normalized so that the greatest distance between two points is $\pi$, then
\[
\cos \vartheta(x_1,x_2) = 2 |\langle x_1,x_2 \rangle |^2 - 1
\]
and
\[
\rho(x_1,x_2) = \sin \left( \frac{\vartheta(x_1,x_2)}{2} \right).
\]

Alternatively, elements $x \in K\Proj^{d-1}$ correspond to projection
matrices $\Pi = x x^\dagger$, which are Hermitian matrices with $\Pi^2 = \Pi$
and $\Tr \Pi = 1$.  The space $\mathcal{H}(K^d)$ of Hermitian matrices is a
real vector space endowed with a positive-definite inner product
\[
\langle A , B \rangle = \Tr \frac{1}{2}(AB+BA) = \RealPart \Tr AB.
\]
Because $\RealPart ab = \RealPart ba$ for $a,b \in K$, it follows that
$\RealPart \Tr (ABC) = \RealPart \Tr(CAB)$ for $A,B,C \in K^{d \times
  d}$; in other words, the functional $\RealPart \Tr$ is \emph{cyclic
  invariant}. Hence, for any $x_1,x_2 \in K\Proj^{d-1}$ with
associated projection matrices $\Pi_1,\Pi_2 \in \mathcal{H}(K^d)$, we
have
\begin{equation}
\label{eq:piproduct}
\begin{split}
\langle \Pi_1 , \Pi_2 \rangle &= \RealPart \Tr x_1 x_1^\dagger x_2 x_2^\dagger\\
&= \RealPart \Tr x_2^\dagger x_1 x_1^\dagger x_2 \\
&= \RealPart{} \langle x_2 , x_1 \rangle \langle x_1 , x_2 \rangle\\ 
&= |\langle x_1,x_2 \rangle|^2.
\end{split}
\end{equation}
Thus the metric on $K\Proj^{d-1}$ can also be defined by $\rho(x_1,x_2) =
\sqrt{1 - \langle \Pi_1,\Pi_2 \rangle}$.  Equivalently, it equals
$||\Pi_1-\Pi_2||_F/\sqrt{2}$, where $||\cdot||_F$ denotes the Frobenius norm:
$||A||_F = \big(\sum_{i,j} |A_{ij}|^2 \big)^{1/2}$ for a matrix whose $i,j$
entry is $A_{ij}$.

Modulo isometries, distance is the only invariant of a pair of
points, but triples have another invariant, known as the
\emph{Bargmann invariant} \cite{B64} or \emph{shape invariant}
\cite{B90}. In terms of projection matrices, it equals
$\RealPart \Tr {}\big(\Pi_1 \Pi_2 \Pi_3\big)$, and the
information it conveys is essentially the symplectic area of
the corresponding geodesic triangle \cite{MS93,HM94}. One can
define similar invariants for more than three points, but they
can be computed in terms of three-point invariants as long as
no two points are orthogonal.  When no two points are
orthogonal, the two- and three-point invariants characterize
the entire configuration \cite{BE98,BE01}.

The one remaining projective space we have not yet constructed
is the octonionic projective plane $\OO\Proj^2$. (See \cite{Ba}
for an account of why $\OO\Proj^d$ cannot exist for $d > 2$;
one can construct $\OO\Proj^1$, but we will ignore it as it is
simply $S^8$.) Due to the failure of associativity, the
construction of $\OO\Proj^2$ is more complicated than that of
the other projective spaces; in particular, we cannot simply view it as the
space of lines in $\OO^3$. However, there is a construction
analogous to the one using Hermitian matrices above. The result
is an exceptionally beautiful space that has been called the
panda of geometry \cite[p.~155]{Be}. The points of $\OO\Proj^2$
are $3 \times 3$ projection matrices over $\OO$, i.e., $3
\times 3$ Hermitian matrices $\Pi$ satisfying $\Pi^2 = \Pi$ and
$\Tr \Pi = 1$.  The (chordal) metric in $\OO\Proj^2$ is given
by
\[
\rho(\Pi_1,\Pi_2) = \frac{1}{\sqrt{2}}||\Pi_1 - \Pi_2||_F = \sqrt{1 -  \langle \Pi_1,\Pi_2 \rangle}.
\]
Each projection matrix $\Pi$ is of the form
\[
\Pi = \begin{pmatrix} a \\ b \\ c \end{pmatrix} \begin{pmatrix} \bar a & \bar b & \bar c \end{pmatrix},
\]
where $a,b,c \in \OO$ satisfy $|a|^2+|b|^2+|c|^2 = 1$ and $(ab)c = a(bc)$. We
can cover $\OO\Proj^2$ by three affine charts as follows. Any point may be
represented by a triple $(a,b,c) \in \OO^3$ with $|a|^2+|b|^2+|c|^2=1$, and
for the three charts we assume $a$, $b$, or $c$ are in $\R_+$, respectively.
In practice, for computations with generic configurations we can simply work
in the first chart and refer to a projection matrix by its associated point
$(a,b,c) \in \R_{+} \times \OO^2$.

\subsection{Tight simplices} \label{subsec:tight}

Projective spaces can be embedded into Euclidean space by mapping each point
to the corresponding projection matrix; using this embedding, the standard
bounds on the size and distance of regular Euclidean simplices imply
bounds on projective simplices. The resulting bounds, which we review in this subsection,
were first proven by Lemmens and Seidel
\cite{LeS}. They are also known in information theory as Welch bounds
\cite{W}.

As above, let $K$ be $\R$, $\C$, $\HH$, or $\OO$. We consider regular
simplices in $K\Proj^{d-1}$, with the understanding that $d = 3$ when $K =
\OO$.

\begin{definition}
A \emph{regular simplex} in a metric space $(X,\rho)$ is a collection of
distinct points $x_1, \dots, x_N$ of $X$ with the distances $\rho(x_i, x_j)$
all equal for $i \neq j$.
\end{definition}

We often drop the adjective ``regular'' and refer to a regular simplex as a simplex.

\begin{proposition} \label{prp:bounds} Consider a regular simplex in $K \Proj^{d-1}$
  consisting of $N>1$ points $x_1,\dots,x_N$
  with associated projection matrices $\Pi_1,\dots,\Pi_N$, and
  let $\alpha = \langle \Pi_i,\Pi_j \rangle$ be the common inner
  product for $i \ne j$.  Then
\[
N \le d + \frac{(d^2 - d)\dim_\R K}{2}
\]
and, for any such value of $N$,
\[
\alpha \ge \frac{N-d}{d(N-1)}.
\]
\end{proposition}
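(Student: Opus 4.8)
The plan is to embed the simplex into the Euclidean space $\mathcal{H}(K^d)$ via the projection matrices and then apply the elementary fact that a regular simplex of $N$ points in a Euclidean space of dimension $m$ satisfies $N \le m+1$, together with the sharp lower bound on the pairwise inner product. First I would record that each $\Pi_i$ lies in the affine hyperplane $\{A \in \mathcal{H}(K^d) : \Tr A = 1\}$, since $\Tr \Pi_i = 1$; moreover each $\Pi_i$ has the same norm, because $\langle \Pi_i,\Pi_i\rangle = \Tr \Pi_i^2 = \Tr \Pi_i = 1$. Thus the points $\Pi_1,\dots,\Pi_N$ lie on a sphere inside this hyperplane, and by~\eqref{eq:piproduct} (extended to $\OO$ by the same cyclic-invariance computation, which holds since $\RealPart\Tr$ is cyclic invariant) the pairwise inner products $\langle \Pi_i,\Pi_j\rangle = |\langle x_i,x_j\rangle|^2 = \alpha$ are all equal. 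So we have a regular Euclidean simplex, and the content of the proposition is just the two standard constraints such a simplex must satisfy.

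For the size bound, I would argue that the vectors $\Pi_i - \Pi_N$ for $i = 1,\dots,N-1$ are linearly independent: their Gram matrix has diagonal entries $2(1-\alpha)$ and off-diagonal entries $1 - 2\alpha + \alpha = 1-\alpha$ (using $\langle \Pi_i,\Pi_j\rangle = \alpha$ for all the relevant pairs), i.e.\ it equals $(1-\alpha)(I + J)$ where $J$ is all-ones; since $\alpha < 1$ (the points are distinct) this matrix is positive definite, so $N-1 \le \dim\{A : \Tr A = 0\} = \dim_\R \mathcal{H}(K^d) - 1$. Then I would compute $\dim_\R \mathcal{H}(K^d)$: a Hermitian $d\times d$ matrix over $K$ has $d$ real diagonal entries and $\binom{d}{2}$ off-diagonal entries each lying in $K$, giving $d + \binom{d}{2}\dim_\R K = d + \tfrac{(d^2-d)\dim_\R K}{2}$. (For $K = \OO$ with $d = 3$ this gives the exceptional Jordan algebra of dimension $27$, which one checks directly.) Combining, $N - 1 \le d + \tfrac{(d^2-d)\dim_\R K}{2} - 1$, which is the stated bound.

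For the inner-product bound, I would use positive semidefiniteness of the Gram matrix of the $\Pi_i$ themselves rather than of the differences. The $N\times N$ Gram matrix $G$ with $G_{ii} = 1$ and $G_{ij} = \alpha$ equals $(1-\alpha)I + \alpha J$, with eigenvalues $1-\alpha$ (multiplicity $N-1$) and $1 + (N-1)\alpha$ (multiplicity $1$). But these $\Pi_i$ span at most a $d'$-dimensional space, where $d' = \dim_\R\mathcal{H}(K^d)$ is not the binding constraint here; rather, the relevant point is that the $\Pi_i$ sum to something of bounded norm. Concretely, consider $S = \sum_i \Pi_i$: then $\langle S,S\rangle = \sum_{i,j}\langle\Pi_i,\Pi_j\rangle = N + N(N-1)\alpha \ge 0$ trivially, so instead I would use $\Tr S = N$ and the Cauchy--Schwarz-type inequality $\langle S,S\rangle \ge (\Tr S)^2/\Tr I = N^2/d$, valid because $S$ is a positive semidefinite Hermitian operator on $K^d$ (a sum of projections) and for such an operator $\|S\|_F^2 \ge (\Tr S)^2/d$ by Cauchy--Schwarz applied to the eigenvalues. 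This yields $N + N(N-1)\alpha \ge N^2/d$, hence $\alpha \ge \tfrac{N/d - 1}{N-1} = \tfrac{N-d}{d(N-1)}$, as claimed.

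The main obstacle I anticipate is the octonionic case: the identity~\eqref{eq:piproduct}, the eigenvalue/trace inequality $\|S\|_F^2 \ge (\Tr S)^2/d$, and the dimension count $\dim_\R\mathcal{H}(\OO^3) = 27$ all need to be justified without the crutch of an associative matrix algebra. For the first, one re-runs the cyclic-invariance argument, which only uses $\RealPart ab = \RealPart ba$ and not associativity. For the trace inequality, one uses that a Hermitian element of the exceptional Jordan algebra has a well-defined real spectrum and that $\Tr$ and the inner product are the first and second power sums of the eigenvalues, so ordinary Cauchy--Schwarz on three real numbers suffices; positive semidefiniteness of a sum of rank-one projections is likewise a statement about eigenvalues. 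I would cite the standard structure theory of the Albert algebra (e.g.\ via the references already given for the construction of $\OO\Proj^2$) rather than reprove it. Everything else is routine linear algebra.
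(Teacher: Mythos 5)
Your proof is correct and takes essentially the same route as the paper: a Gram-matrix linear-independence argument for the dimension bound (the paper shows that $\Pi_1,\dots,\Pi_N$ themselves are linearly independent rather than passing to the differences $\Pi_i - \Pi_N$, but this is equivalent since the $\Pi_i$ lie in the affine trace-one hyperplane), and the inequality $\langle S,S\rangle \ge N^2/d$ with $S = \sum_i \Pi_i$ for the inner-product bound. One simplification worth noting: the bound $\langle S,S\rangle \ge (\Tr S)^2/d$, which you justify via Cauchy--Schwarz on eigenvalues, is just the expansion of $\left\langle S - \tfrac{N}{d} I_d,\ S - \tfrac{N}{d} I_d\right\rangle \ge 0$ in the inner product space $\mathcal{H}(K^d)$ (equivalently, Cauchy--Schwarz applied to $S$ and $I_d$), which is how the paper phrases it; this formulation needs neither positive semidefiniteness of $S$ nor a real spectrum, and it entirely dispenses with the Albert-algebra spectral theory you flag as the main obstacle in the octonionic case.
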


\begin{proof}
  The Gram matrix $G$ associated to
  $\Pi_1,\dots,\Pi_N$ has unit diagonal and $\alpha$ in each
  off-diagonal entry.  Since $G$ is
  nonsingular,\footnote{Specifically, $G = (1-\alpha)I_N + \alpha vv^t$, where $v$
  is the all-ones vector, and therefore the eigenvalues of $G$ are $1-\alpha$ (with multiplicity $N-1$)
  and $1+(N-1)\alpha$.
  These are all nonzero because $\alpha \in [0,1)$.} the elements
    $\Pi_1,\dots,\Pi_N \in \mathcal{H}(K^d)$ are linearly independent,
    implying $N \le \dim_{\R} \mathcal{H}(K^d) = d + (d^2 -
    d)(\dim_\R K)/2$.  Now note that $\langle \Pi_i , I_d \rangle = |x_i|^2 = 1$
    for each $i=1,\dots,N$.  Using this we compute
\[
\left \langle \left ( \sum_{i=1}^N \Pi_i \right ) - \frac{N}{d} I_d ,
  \left ( \sum_{i=1}^N \Pi_i \right ) - \frac{N}{d} I_d \right \rangle =
N - \frac{N^2}{d} + N(N-1) \alpha.
\]
Nonnegativity of this expression gives the desired bound on $\alpha$.
\end{proof}

\begin{definition} \label{def:tightsimplex}
We refer to a regular simplex with
\[
\alpha = \frac{N-d}{d(N-1)}
\]
as a \emph{tight simplex}. That is, it is a simplex with the maximum possible
distance allowed by Proposition~\ref{prp:bounds}.
\end{definition}

We noted above the difference between tight codes and tight designs,
and on the surface Definition~\ref{def:tightsimplex} seems to introduce
a third notion of tightness.  However, we will see that a tight simplex is
a tight code (Lemma~\ref{lma:tightmeanstight}), so this new definition is
really just a specialization.

Note that Definition~\ref{def:tightsimplex} is independent of the coordinate algebra $K$.  In
other words, the canonical embeddings $\R\Proj^{d-1} \hookrightarrow \C\Proj^{d-1} \hookrightarrow
\HH\Proj^{d-1}$ and $\HH\Proj^2 \hookrightarrow \OO\Proj^2$ preserve tight simplices.

It is not known for which $N$, $d$, and $K$ a tight simplex exists (later in
this section we will survey the known examples).  When $K = \R$, this problem
is fundamentally combinatorial.  Specifically, consider the Gram matrix of
some corresponding unit vectors in $\R^d$.  All the off-diagonal entries must
be $\pm \sqrt{\tfrac{N-d}{d(N-1)}}$, and the simplex is determined by the
sign pattern.  Thus, up to isometry, there can be only finitely many tight simplices of a
given size in $\R\Proj^{d-1}$.  Furthermore, any sufficiently close numerical
approximation will determine the signs and let one reconstruct the exact
simplex.

By contrast, tight simplices are much more subtle when $K \ne
\R$. The Gram matrix entries have phases, not just signs, and
tight simplices can even occur in positive-dimensional
families. In terms of the Bargmann invariants, the three-point
invariants are not determined by the pairwise distances. No
simple way to reconstruct an exact simplex from an
approximation is known, and we see no reason to believe one
exists.

\begin{proposition} \label{prp:tightoptimal}
Every tight simplex is an optimal code.
\end{proposition}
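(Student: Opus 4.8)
The plan is to re-run the averaging argument from the proof of Proposition~\ref{prp:bounds}, but now for an \emph{arbitrary} $N$-point configuration instead of a simplex. This produces a universal lower bound on the largest pairwise inner product of $N$ points of $K\Proj^{d-1}$ — equivalently, a universal upper bound on their minimal distance — and a tight simplex attains it with equality, so it is optimal.

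Concretely, I would fix any $N$ points $x_1, \dots, x_N$ of $K\Proj^{d-1}$ (with $d = 3$ when $K = \OO$) and pass to their projection matrices $\Pi_1, \dots, \Pi_N \in \mathcal{H}(K^d)$. Since $\langle \cdot, \cdot \rangle$ is positive-definite on $\mathcal{H}(K^d)$ for each of the four choices of $K$ (including the exceptional Jordan algebra $\mathcal{H}(\OO^3)$), and since $\langle \Pi_i, \Pi_i \rangle = \langle \Pi_i, I_d \rangle = \Tr \Pi_i = 1$ (using $\Pi_i^2 = \Pi_i$) and $\langle I_d, I_d \rangle = d$, the same expansion as in the proof of Proposition~\ref{prp:bounds} gives
\[
0 \le \left\langle \sum_{i=1}^N \Pi_i - \frac{N}{d} I_d, \; \sum_{i=1}^N \Pi_i - \frac{N}{d} I_d \right\rangle = N - \frac{N^2}{d} + \sum_{i \ne j} \langle \Pi_i, \Pi_j \rangle,
\]
hence $\sum_{i \ne j} \langle \Pi_i, \Pi_j \rangle \ge N(N-d)/d$. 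Averaging over the $N(N-1)$ ordered pairs with $i \ne j$ shows that $\max_{i \ne j} \langle \Pi_i, \Pi_j \rangle \ge \frac{N-d}{d(N-1)}$, and since $\rho(x_i, x_j) = \sqrt{1 - \langle \Pi_i, \Pi_j \rangle}$ this says the minimal distance of the configuration is at most $\sqrt{1 - \frac{N-d}{d(N-1)}}$. A tight simplex of size $N$ has every pairwise distance equal to exactly this value by Definition~\ref{def:tightsimplex}, so it meets the bound, and therefore no $N$-point code has strictly larger minimal distance; that is, a tight simplex is an optimal code. (The equality case of the averaging step shows, conversely, that the optimal codes of size $N$ that happen to be simplices are precisely the tight simplices, but that refinement is not needed here.)

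I do not anticipate a genuine obstacle: the computation is essentially that of Proposition~\ref{prp:bounds} applied to a general configuration rather than a simplex. The one point to watch is that the inner-product identities above and the positive-definiteness of $\langle \cdot, \cdot \rangle$ continue to hold over $\OO$, where projection matrices are defined intrinsically rather than as $x x^\dagger$ and the manipulations with $\Tr$ must be justified through the cyclic invariance of $\RealPart \Tr$ rather than through associativity.
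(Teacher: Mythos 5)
Your proof is correct and takes essentially the same approach as the paper: both apply the nonnegativity of $\langle \sum_i \Pi_i - \tfrac{N}{d}I_d, \sum_i \Pi_i - \tfrac{N}{d}I_d\rangle$ to an arbitrary $N$-point configuration and average to bound the maximal inner product from below by $(N-d)/(d(N-1))$. The paper's version is a touch more terse (it does not spell out the octonionic caveat in the proof itself, having addressed it earlier), but the argument is identical.
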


More generally, the bound on $\alpha$ in Proposition~\ref{prp:bounds} applies
to the minimal distance of any code, not just a simplex.

\begin{proof}
Let $\Pi_1,\dots,\Pi_N$ be the projection matrices corresponding to any
$N$-point code in $K\Proj^{d-1}$. As in the proof of
Proposition~\ref{prp:bounds},
\[
N - \frac{N^2}{d} + \sum_{\substack{i, j =1\\i \ne j}}^N \langle \Pi_i, \Pi_j\rangle
=
\left \langle \left ( \sum_{i=1}^N \Pi_i \right ) - \frac{N}{d} I_d ,
  \left ( \sum_{i=1}^N \Pi_i \right ) - \frac{N}{d} I_d \right \rangle
  \ge 0.
\]
Thus, the average of $\langle \Pi_i, \Pi_j \rangle$ over all $i \ne j$
satisfies
\[
\frac{1}{N(N-1)} \sum_{\substack{i, j =1\\i \ne j}}^N \langle \Pi_i, \Pi_j\rangle
\ge \frac{N^2/d-N}{N(N-1)} = \frac{N-d}{d(N-1)}.
\]
In particular, the greatest value of $\langle \Pi_i,\Pi_j \rangle$ for $i \ne
j$ must be at least this large.
\end{proof}

A regular simplex of $N \le d$ points in $K\Proj^{d-1}$ is
optimal if and only if the points are orthogonal (i.e.,
$\alpha=0$). Such simplices always exist. We only consider them
to be tight when $N=d$, as the $N < d$ cases do not satisfy
Definition~\ref{def:tightsimplex}; these degenerate cases are
tight simplices in a lower-dimensional projective space. There
also always exists a tight simplex with $N=d+1$ points,
obtained by projecting the regular simplex on the sphere
$S^{d-1}$ into $\R\Proj^{d-1}$. Therefore in what follows we
will generally assume $N \ge d+2$.

It follows immediately from the proof of Proposition~\ref{prp:bounds} that a
regular simplex $\{x_1,\dots,x_N\}$ is tight if and only if
\[
\sum_{i=1}^N x_i x_i^\dagger = \frac{N}{d} I_d.
\]
This condition can be reformulated in the language of projective designs
\cite{DGS,N81} (see also \cite{Ho1} for a detailed account of the relevant
computations in projective space).  Specifically, it says that the
configuration is a $1$-design.  We will make no serious use of the theory of
designs in this paper, and for our purposes we could simply regard
$\sum_{i=1}^N x_i x_i^\dagger = (N/d) I_d$ as the definition of a $1$-design.
However, to put our discussion in context, we will briefly recall the general concept
of designs in the next subsection.

\subsection{Linear programming bounds}

Linear programming bounds \cite{KL,DGS} use harmonic analysis on a space $X$
to prove bounds on codes in $X$.  These bounds and their extensions \cite{BV}
are among the only known ways to prove systematic bounds on codes, and they
are sharp in a number of important cases.  Later in this section we
summarize the sharp cases that are known in projective spaces (see also
Table~1 in \cite{CK} for a corresponding list for spheres), but first we
give a brief review of how linear programming bounds work.

The simplest setting for linear programming bounds is a compact two-point
homogeneous space.  We will focus on the connected examples, namely spheres
and projective spaces, but discrete two-point homogeneous spaces such as the
Hamming cube are also important in coding theory.

Let $X$ be a sphere or projective space, and let $G$ be its
isometry group under the geodesic metric $\vartheta$
(normalized so that the greatest distance is $\pi$).  Then
$L^2(X)$ is a unitary representation of $G$, and we can
decompose it as a completed direct sum
\[
L^2(X) = \hilbertsum\limits_{k \ge 0} V_k
\]
of irreducible representations $V_k$.  There is a corresponding sequence of
\emph{zonal spherical functions} $C_0, C_1, \dots$, one attached to each
representation $V_k$.  The zonal spherical functions are most easily obtained
as reproducing kernels; for a brief review of the theory, see Sections~2.2
and~8 of \cite{CK}.  We can represent them as orthogonal polynomials with
respect to a measure on $[-1,1]$, which depends on the space $X$, and we
index the polynomials so that $C_k$ has degree $k$.

For our purposes, the most important property of zonal spherical functions is
that they are \emph{positive-definite kernels}: for all $N \in \N$ and
$x_1,\dots,x_N \in X$, the $N \times N$ matrix $\big(C_i(\cos
\vartheta(x_i,x_j))\big)_{1 \le i,j \le N}$ is positive semidefinite. In
fact, the zonal spherical functions span the cone of all such functions.

For projective spaces $K \Proj^{d-1}$, the polynomials $C_k$
may be taken to be the Jacobi polynomials
$P_k^{(\alpha,\beta)}$, where $\alpha = (d-1)(\dim_\R K)/2-1$
(i.e, $\alpha = (\dim_\R K\Proj^{d-1})/2-1$) and $\beta =
(\dim_\R K)/2 - 1$. We will normalize $C_0$ to be $1$.

Linear programming bounds for codes amount to the following proposition:

\begin{proposition} \label{prop:LP}
Let $\theta \in [0,\pi]$, and suppose the polynomial
\[
f(z) = \sum_{k=0}^n f_k C_k(z)
\]
satisfies $f_0>0$, $f_k \ge 0$ for $1 \le k \le n$, and $f(z) \le 0$ for $-1
\le z \le \cos \theta$.  Then every code in $X$ with minimal geodesic
distance at least $\theta$ has size at most $f(1)/f_0$.
\end{proposition}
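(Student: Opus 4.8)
The plan is to run the standard linear programming argument on the positive-semidefinite structure of the zonal spherical functions. Let $\sC = \{x_1,\dots,x_N\}$ be a code in $X$ with all pairwise geodesic distances at least $\theta$, and consider the double sum $\sum_{i,j=1}^N f(\cos\vartheta(x_i,x_j))$. I would evaluate this quantity in two different ways and compare the results. On the one hand, since $f = \sum_{k=0}^n f_k C_k$ and each matrix $\big(C_k(\cos\vartheta(x_i,x_j))\big)_{i,j}$ is positive semidefinite (this is the key property of zonal spherical functions recalled just before the proposition), the double sum $\sum_{i,j} C_k(\cos\vartheta(x_i,x_j))$ is the sum of all entries of a PSD matrix, hence $\ge 0$. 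Because $f_k \ge 0$ for $1 \le k \le n$ and $C_0 \equiv 1$ (by the stated normalization), we get
\[
\sum_{i,j=1}^N f(\cos\vartheta(x_i,x_j)) \ge f_0 \sum_{i,j=1}^N C_0(\cos\vartheta(x_i,x_j)) = f_0 N^2.
\]

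On the other hand, I split the double sum into diagonal and off-diagonal terms. The diagonal contributes $\sum_{i=1}^N f(\cos\vartheta(x_i,x_i)) = \sum_{i=1}^N f(1) = N f(1)$, since $\vartheta(x,x)=0$ and $\cos 0 = 1$. For $i \ne j$ the points are distinct with $\vartheta(x_i,x_j) \ge \theta$, so $\cos\vartheta(x_i,x_j) \in [-1,\cos\theta]$, and the hypothesis $f(z) \le 0$ on that interval gives $f(\cos\vartheta(x_i,x_j)) \le 0$. Hence
\[
\sum_{i,j=1}^N f(\cos\vartheta(x_i,x_j)) \le N f(1).
\]
Combining the two bounds yields $f_0 N^2 \le N f(1)$, and dividing by $f_0 N > 0$ gives $N \le f(1)/f_0$, as claimed.

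There is no real obstacle here; the only thing to be slightly careful about is the normalization bookkeeping — making sure $C_0$ is the constant $1$ (stated in the excerpt), so that the degree-$0$ term of the double sum is exactly $f_0 N^2$ rather than some multiple of it. One should also note that $f(1)/f_0 > 0$ automatically, so the bound is meaningful; and that the argument uses only $\cos\theta$ as the cutoff, matching the way the $C_k$ are indexed as polynomials in $z = \cos\vartheta$. If desired, one can remark that equality forces $\sum_i C_k(\cos\vartheta(x_i,x_j)) $-type conditions (a quadrature/design interpretation), but that is not needed for the statement itself.
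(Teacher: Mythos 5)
Your proof is correct and follows exactly the paper's argument: bound the double sum $\sum_{i,j} f(\cos\vartheta(x_i,x_j))$ from below by $f_0 N^2$ via positive definiteness of the $C_k$, from above by $N f(1)$ via the sign condition on $f$, and compare.
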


\begin{proof}
Let $\sC$ be such a code.  Then
\[
\sum_{x,y \in \sC} f(\cos \vartheta(x,y)) \ge f_0 |\sC|^2,
\]
because each zonal spherical function $C_k$ is positive definite and hence
satisfies
\[
\sum_{x,y \in \sC} C_k(\cos \vartheta(x,y)) \ge 0.
\]
On the other hand, $f(\cos \vartheta(x,y)) \le 0$ whenever $\vartheta(x,y)
\ge \theta$, and hence
\[
\sum_{x,y \in \sC} f(\cos \vartheta(x,y)) \le |\sC| f(1)
\]
because only the diagonal terms contribute positively. It follows that $f_0
|\sC|^2 \le f(1) |\sC|$, as desired.
\end{proof}

We say this bound is \emph{sharp} if there is a code $\sC$ with minimal
distance at least $\theta$ and $|\sC| = f(1)/f_0$.  Note that we require
exact equality, rather than just $|\sC| = \lfloor f(1)/f_0 \rfloor$.

\begin{definition}
A \emph{tight code} is one for which linear programming bounds are sharp.
\end{definition}

Examining the proof of Proposition~\ref{prop:LP} yields the following
characterization of tight codes:

\begin{lemma} \label{lemma:tightchar}
A code $\sC$ with minimal geodesic distance $\theta$ is tight iff there is a
polynomial $f(z) = \sum_{k=0}^n f_k C_k(z)$ satisfying $f_0>0$, $f_k \ge 0$
for $1 \le k \le n$, $f(z) \le 0$ for $-1 \le z \le \cos \theta$,
\[
\sum_{x,y \in \sC} C_k(\cos \vartheta(x,y)) = 0
\]
whenever $f_k>0$ and $k \ne 0$, and $f(\cos \vartheta(x,y))=0$
for $x,y \in \sC$ with $x \ne y$.  In fact, these conditions
must hold for every polynomial $f$ satisfying both
$f(1)/f_0=|\sC|$ and the hypotheses of
Proposition~\ref{prop:LP}.
\end{lemma}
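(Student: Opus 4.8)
The plan is to reread the proof of Proposition~\ref{prop:LP} and track exactly when each inequality used there can be an equality. There are two inequalities in that proof: the lower bound $\sum_{x,y\in\sC} f(\cos\vartheta(x,y)) \ge f_0|\sC|^2$, which came from summing the contributions $f_k \sum_{x,y} C_k(\cos\vartheta(x,y)) \ge 0$ over $k$; and the upper bound $\sum_{x,y\in\sC} f(\cos\vartheta(x,y)) \le |\sC| f(1)$, which came from discarding the off-diagonal terms $f(\cos\vartheta(x,y)) \le 0$. For $\sC$ to be tight, i.e.\ $|\sC| = f(1)/f_0$, these two bounds must coincide, so every inequality that was used must in fact be an equality.

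First I would establish the forward direction under the standing assumption that $f(1)/f_0 = |\sC|$. Chasing equality in the upper bound forces $f(\cos\vartheta(x,y)) = 0$ for all $x \ne y$ in $\sC$, since each such term is $\le 0$ and their sum must vanish. Chasing equality in the lower bound forces $f_k \sum_{x,y\in\sC} C_k(\cos\vartheta(x,y)) = 0$ for every $k$ with $1 \le k \le n$ (using $f_k \ge 0$ and positive-definiteness, each summand in $\sum_k$ is nonnegative, so they all vanish); hence $\sum_{x,y\in\sC} C_k(\cos\vartheta(x,y)) = 0$ whenever $f_k > 0$ and $k \ne 0$. This is precisely the asserted list of conditions, and it shows the ``only if'' direction for \emph{every} $f$ satisfying both $f(1)/f_0 = |\sC|$ and the hypotheses of Proposition~\ref{prop:LP}, which is the stronger ``in fact'' clause.

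For the converse, suppose some polynomial $f = \sum_{k=0}^n f_k C_k$ satisfies $f_0 > 0$, $f_k \ge 0$, $f \le 0$ on $[-1,\cos\theta]$, together with the two extra vanishing conditions. Then I would just run the proof of Proposition~\ref{prop:LP} forward and observe that both inequalities become equalities: $\sum_{x,y} f(\cos\vartheta(x,y)) = f_0 |\sC| + \sum_{k\ge 1} f_k \sum_{x,y} C_k(\cos\vartheta(x,y)) + f_0\big(|\sC|^2 - |\sC|\big)$; the middle sum vanishes by the first extra condition (the $C_0$ term contributes $f_0|\sC|^2$ since $C_0 = 1$), and separately the off-diagonal part of $\sum_{x,y} f(\cos\vartheta(x,y))$ vanishes by the second, leaving $|\sC| f(1)$. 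Comparing gives $f_0|\sC|^2 = |\sC| f(1)$, i.e.\ $|\sC| = f(1)/f_0$, so linear programming bounds are sharp and $\sC$ is tight.

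I expect the only real subtlety to be bookkeeping rather than a genuine obstacle: one must be careful that $C_0 = 1$ (stated in the excerpt) so that the $k=0$ term is handled separately from the vanishing conditions, and that the contribution of the diagonal terms $\sum_{x} f(\cos\vartheta(x,x)) = |\sC| f(1)$ is exactly what appears on the right-hand side. There is no hard analytic or algebraic step here; the lemma is essentially a tautological unwinding of when the two inequalities in the linear programming argument are tight, and the proof should be short.
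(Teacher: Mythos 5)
Your proof is correct and takes the same approach the paper intends; the paper gives no explicit proof beyond the one-line remark that examining the proof of Proposition~\ref{prop:LP} yields the characterization, and your argument is precisely the unwinding of that proof, tracing equality through the chain $f_0|\sC|^2 \le \sum_{x,y} f(\cos\vartheta(x,y)) \le |\sC|f(1)$ to extract the two vanishing conditions in the forward direction and checking them directly in the converse.
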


By Proposition~\ref{prop:LP}, every tight code is as large as possible given
its minimal distance, but it is less obvious that such a code maximizes
minimal distance given its size.

\begin{proposition}
Every tight code is optimal.
\end{proposition}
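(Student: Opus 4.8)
The plan is to argue by contradiction, using a one-point perturbation together with Lemma~\ref{lemma:tightchar}. Fix a tight code $\sC$ with minimal geodesic distance $\theta$, and let $f(z) = \sum_{k=0}^n f_k C_k(z)$ be a polynomial witnessing the sharpness of the bound in Proposition~\ref{prop:LP}, so that $N := |\sC| = f(1)/f_0$. To prove optimality I must show that no $N$-point code in $X$ has minimal distance strictly greater than $\theta$.

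The first step is a general observation: if $\sC''$ is \emph{any} $N$-point code in $X$ with minimal distance at least $\theta$, then $f(\cos\vartheta(x,y)) = 0$ for every pair of distinct points $x, y \in \sC''$. To see this, let $\theta'' \ge \theta$ be the minimal distance of $\sC''$; since $f \le 0$ on $[-1,\cos\theta]$ and $\cos\theta'' \le \cos\theta$, the polynomial $f$ satisfies the hypotheses of Proposition~\ref{prop:LP} for the threshold $\theta''$, and it also satisfies $f(1)/f_0 = N = |\sC''|$. The last sentence of Lemma~\ref{lemma:tightchar}, applied to $\sC''$, then yields the claim. (Equivalently, one can rerun the proof of Proposition~\ref{prop:LP} for $\sC''$: equality $|\sC''| = f(1)/f_0$ forces equality in $\sum_{x,y \in \sC''} f(\cos\vartheta(x,y)) \le |\sC''|f(1)$, and since every off-diagonal term there is $\le 0$, each of them must vanish.)

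Now suppose, for contradiction, that some $N$-point code $\sC'$ has minimal distance $\theta' > \theta$. Fix two distinct points $x_0, y_0 \in \sC'$ (possible as $N \ge 2$). For $x$ in a small connected neighborhood $U$ of $x_0$ in $X$, form the configuration $\sC'_x := (\sC' \setminus \{x_0\}) \cup \{x\}$. If $U$ is small enough, $\sC'_x$ is again an $N$-point code, and its minimal distance still exceeds $\theta$, because the minimal distance of a configuration is a continuous function of its points and takes the value $\theta' > \theta$ at $x = x_0$. Applying the first step to each $\sC'_x$ gives $f(\cos\vartheta(x,y_0)) = 0$ for every $x \in U$. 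Thus the continuous map $x \mapsto \cos\vartheta(x,y_0)$ carries the connected set $U$ into the finite set of roots of $f$, hence is constant on $U$. But this is absurd: since $x_0 \ne y_0$, the function $x \mapsto \cos\vartheta(x,y_0) = 1 - 2\rho(x,y_0)^2$ is not locally constant at $x_0$, because moving $x_0$ slightly toward $y_0$ along a minimizing geodesic strictly decreases $\rho(\cdot,y_0)$. This contradiction proves that no such $\sC'$ exists, so $\sC$ is optimal.

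The step that needs the most care is the uniformity in the perturbation: one must check that $\sC'_x$ remains an admissible competitor — an $N$-point code of minimal distance at least $\theta$ — for \emph{every} $x$ in a full neighborhood of $x_0$, so that the single polynomial $f$ applies to all the perturbed codes simultaneously and Lemma~\ref{lemma:tightchar} can be invoked uniformly. This is precisely where the strict inequality $\theta' > \theta$ enters; if one only knew $\theta' \ge \theta$, the perturbed codes could fall below the threshold and the argument would break down.
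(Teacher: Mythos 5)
Your proof is correct and follows essentially the same strategy as the paper's: use sharpness of the linear programming bound (via the last sentence of Lemma~\ref{lemma:tightchar}, or by rerunning the proof of Proposition~\ref{prop:LP}) to force $f$ to vanish at every pairwise distance in any competitor code of the same size with minimal distance at least $\theta$, then perturb and derive a contradiction. Where the paper asserts informally that ``this fails for some perturbations, for example if we move two points slightly closer together,'' you make the step precise by perturbing a single point and using connectedness of the neighborhood together with the finiteness of the root set of $f$ to show $x \mapsto \cos\vartheta(x,y_0)$ would have to be locally constant, which it is not.
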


\begin{proof}
Suppose $f$ satisfies the hypotheses of Proposition~\ref{prop:LP}, and $\sC$
is a code of size $f(1)/f_0$ with minimal geodesic distance at least
$\theta$.  We wish to show that its minimal distance is exactly $\theta$.

By Lemma~\ref{lemma:tightchar},
\[
\sum_{x,y \in \sC} \big(f(\cos \vartheta(x,y)) - f_0\big) = 0
\]
and $f(\cos \vartheta(x,y))=0$ for $x,y \in \sC$ with $x \ne y$

Now suppose $\sC$ had minimal geodesic distance strictly greater than
$\theta$, and consider a small perturbation $\sC'$ of $\sC$.  It must satisfy
\[
\sum_{x,y \in \sC'} \big(f(\cos \vartheta(x,y)) - f_0\big) \ge 0,
\]
by positive definiteness.  On the other hand,
\[
\sum_{x,y \in \sC'} \big(f(\cos \vartheta(x,y)) - f_0\big) =
|\sC'|f(1) - |\sC'|^2 f_0 + \sum_{\substack{x,y \in \sC'\\x \ne y}} f(\cos \vartheta(x,y)).
\]
We have $|\sC'|f(1) - |\sC'|^2 f_0 = 0$ since $|\sC'|=|\sC|=f(1)/f_0$.
Thus,
\[
\sum_{\substack{x,y \in \sC'\\x \ne y}} f(\cos \vartheta(x,y)) \ge 0.
\]
If the perturbation is small enough, then the minimal distance of $\sC'$
remains greater than $\theta$ and hence $f(\cos \vartheta(x,y)) \le 0$ for
distinct $x,y \in \sC'$. In that case, we must have $f(\cos \vartheta(x,y)) =
0$ for distinct $x,y \in \sC'$. However, this fails for some perturbations,
for example if we move two points slightly closer together. It follows that
every code of size $f(1)/f_0$ and minimal geodesic distance at least $\theta$
has minimal distance exactly $\theta$, so these codes are all optimal.
\end{proof}

\begin{lemma} \label{lma:tightmeanstight}
Tight simplices in projective space are tight codes.
\end{lemma}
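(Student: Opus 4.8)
The plan is to exhibit an explicit polynomial $f$ satisfying the hypotheses of Proposition~\ref{prop:LP} for which the linear programming bound is attained by the tight simplex, so that the simplex qualifies as a tight code by definition. Since a tight simplex is a $1$-design (as noted after Definition~\ref{def:tightsimplex}), the natural candidate is a degree-one polynomial built from $C_0$ and $C_1$. First I would recall that for $K\Proj^{d-1}$ the polynomials $C_k$ are the Jacobi polynomials $P_k^{(\alpha,\beta)}$ with the stated parameters, normalized so that $C_0 = 1$; I would write $C_1(z) = az + b$ with $a > 0$ and record the relation between $z$ and the chordal inner product, namely $\langle \Pi_i,\Pi_j\rangle = |\langle x_i, x_j\rangle|^2 = (1+z)/2$ where $z = \cos\vartheta(x_i,x_j)$. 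For a tight simplex the common value of $\langle \Pi_i,\Pi_j\rangle$ is $\alpha_0 := \frac{N-d}{d(N-1)}$, so all off-diagonal inner products correspond to the single value $z_0 = 2\alpha_0 - 1$, and $\theta$ is the geodesic distance with $\cos\theta = z_0$.

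Next I would take $f(z) = f_0 C_0(z) + f_1 C_1(z)$ and choose $f_0, f_1 > 0$ so that $f(z_0) = 0$; this forces $f$ to be the (up to positive scaling) unique affine function vanishing at $z_0$ with positive leading coefficient, hence $f(z) \le 0$ precisely on $[-1, z_0]$, which is exactly the condition $f(z) \le 0$ for $-1 \le z \le \cos\theta$. The coefficient conditions $f_0 > 0$ and $f_1 \ge 0$ then hold by construction. It remains to check that $f(1)/f_0 = N$. Here I would use the $1$-design property: writing $g = \sum_{i=1}^N \Pi_i - (N/d)I_d$, the proof of Proposition~\ref{prp:bounds} shows $\langle g,g\rangle = N - N^2/d + N(N-1)\alpha_0 = 0$ for a tight simplex, which is equivalent to $\sum_{i\ne j} C_1(z_0) = -N C_1(1)$, i.e. $N(N-1)C_1(z_0) + N C_1(1) = 0$. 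Combining this with $f(z_0) = f_0 + f_1 C_1(z_0) = 0$ gives $f_1 C_1(1) = f_0(N-1)$, hence $f(1) = f_0 + f_1 C_1(1) = f_0 N$, as required.

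Finally I would package this: the polynomial $f$ satisfies all hypotheses of Proposition~\ref{prop:LP}, the bound it yields is $f(1)/f_0 = N$, and the tight simplex achieves it (it has $N$ points and minimal geodesic distance exactly $\theta$, since all pairwise distances equal $\theta$). Therefore linear programming bounds are sharp for this code, so it is a tight code in the sense of the definition preceding Lemma~\ref{lemma:tightchar}. The only mildly delicate point — and the step I expect to require the most care — is verifying the arithmetic that $f(1)/f_0 = N$, which hinges on translating the $1$-design identity into the statement $N(N-1)C_1(z_0) = -N C_1(1)$; this is really just the observation that the $N\times N$ matrix $(C_1(z_{ij}))$ annihilates the all-ones vector because $\sum_i \Pi_i$ is a scalar matrix, but one must check the normalization of $C_1$ is consistent with the inner product computation in \eqref{eq:piproduct}. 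Everything else is formal once the right $f$ is written down.
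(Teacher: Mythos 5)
Your proof is correct and follows essentially the same route as the paper: both use the linear polynomial $f = f_0 C_0 + f_1 C_1$ vanishing at $z_0 = 2\alpha_0 - 1$ and verify $f(1)/f_0 = N$. The only difference is cosmetic: the paper writes down the explicit formula for $f$ using $C_1(z) = z + (d-2)/d$ and checks the arithmetic directly, whereas you derive $f(1)/f_0 = N$ abstractly from the $1$-design identity $N C_1(1) + N(N-1) C_1(z_0) = 0$, which has the small advantage of not depending on the normalization of $C_1$.
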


\begin{proof}
Up to scaling, the first-degree zonal spherical function $C_1$ on $K
\Proj^{d-1}$ is $z+(d-2)/d$.  Now let
\[
f(z) = 1 + \frac{(N-1)d}{2(d-1)}\left(z + \frac{d-2}{d}\right).
\]
It satisfies $f(z) \le 0$ for $z \in [-1,2\alpha-1]$, where
\[
\alpha = \frac{N-d}{d(N-1)},
\]
and $f(1)/f_0 = N$, as desired.
\end{proof}

Note that in this proof $C_1$ depends only on $d$, and not on
$K$. By contrast, higher-degree zonal spherical functions for
$K\Proj^{d-1}$ depend on both $d$ and $K$.

We do not know whether every tight $N$-point code in $K\Proj^{d-1}$
with
\[
d \le N \le d + (d^2-d)(\dim_\R K)/2
\]
is a tight simplex, although we know of no counterexample.  This
assertion would follow if the linear function $f(z)$ from the proof of
Lemma~\ref{lma:tightmeanstight} always gave the optimal
bound for this range of $N$, but it does not.  For example,
consider $5$-point codes in $\R\Proj^2$, i.e., $d=3$, $K=\R$, $N=5$.
If there were a tight simplex with these parameters, then it would have
common squared inner product $\alpha = 1/6$, and positive definiteness
of $C_k$ would require that $C_k(1) + 4C_k(2\cdot 1/6-1) \ge 0$ for all $k$.
However, $C_4(1) + 4C_4(-2/3)<0$.  This means that no tight simplex
exists, and, in terms of linear programming bounds,%
\footnote{In fact, linear programming bounds prove a bound of $0.16866\dots$
(a cubic irrational) for the maximal squared inner product
of any $5$-point code in $\R\Proj^2$.  This bound is not achieved
by any real code, so in particular there is no tight $5$-point code, simplex or otherwise.}
it means that
we can improve on the linear function
$f(z)$ by replacing it with $f(z) + \varepsilon \big(C_4(z) - C_4(-2/3)\big)$,
for small positive $\varepsilon$.

A \emph{$t$-design} in $X$ is a code $\sC \subset X$ such that for every $f
\in V_k$ with $0 < k \le t$,
\[
\sum_{x \in \sC} f(x) = 0.
\]
In other words, every element of $V_0 \oplus \dots \oplus V_t$ has the same
average over $\sC$ as over the entire space $X$.  (Note that all functions in
$V_k$ for $k>0$ have average zero, since they are orthogonal to the constant
functions in $V_0$.)  Using the reproducing kernel property, this can be
shown to be equivalent to
\[
\sum_{x,y \in \sC} C_k(\cos \vartheta(x,y)) = 0
\]
for $0 < k \le t$.

In $K\Proj^{d-1}$, one can check that
\[
\sum_{i=1}^N x_i x_i^\dagger = \frac{N}{d} I_d
\]
holds if and only if $\{x_1,\dots,x_N\}$ is a $1$-design.

A code is \emph{diametrical} in $X$ if it contains two points at maximal
distance in $X$, and it is an \emph{$m$-distance set} if exactly $m$
distances occur between distinct points.

\begin{definition} A \emph{tight design} is an $m$-distance set that is a
$(2m-\varepsilon)$-design, where $\varepsilon$ is $1$ if the set is
diametrical and $0$ otherwise.
\end{definition}

For example, an $N$-point tight simplex in $K\Proj^{d-1}$ with $N = d + (d^2
- d)(\dim_\R K)/2$ (the largest possible value of $N$) is a tight $2$-design.
See \cite{BH} for further examples.

Every tight $t$-design is the smallest possible $t$-design in its ambient
space.  This was first proved for spheres in \cite{DGS}; see Propositions~1.1
and~1.2 in \cite{BH85} for the general case.  The converse is false: the
smallest $t$-design is generally not tight.

A theorem of Levenshtein \cite{Le1} says that every
$m$-distance set that is a $(2m-1-\varepsilon)$-design is a
tight code, where as above $\varepsilon$ is $1$ if the set is
diametrical and $0$ otherwise.  For example, all tight designs
are tight codes.  In \cite{CK}, it was also shown that under
these conditions, $\sC$ is \emph{universally optimal} for
potential energy: it minimizes energy for every completely
monotonic function of squared chordal distance.  (See also
\cite{Cohn} for context.) This applies in particular to
simplices, so all tight simplices are universally optimal.

In fact every known tight code is universally optimal.
Moreover, except for the regular $600$-cell in $S^3$ and its
image in $\R\Proj^3$, they all satisfy the design condition
just mentioned.  For lack of a counterexample, we conjecture
that tight codes are always universally optimal.  (But see
\cite{CZ} for perspective on why the simplest reason why this
might hold fails.)

\subsection{Tight codes in $\R\Proj^{d-1}$}

We now describe what is known about tight codes in real projective spaces.
Table~\ref{tab:runiv} provides a summary of the current state of knowledge.
Note that in several lines in the table, existence of a code is conditional
on existence of a combinatorial object such as a conference matrix; we
provide further details in the text below. See also Table~1 in \cite{W09}, which
provides a list of all known tight simplices in $\R\Proj^{d-1}$ with $d \le
50$ and all possible cases that have not been resolved.

\begin{table}
\caption{Known universal optima of $N$ points in real
  projective spaces $\R\Proj^{d-1}$.
  The tight simplices are indicated by an asterisk in the third column and have
  maximal squared inner product $(N-d)/(d(N-1))$; for brevity
  we omit the Gale duals of the tight simplices. A star in the last column
  means the code may exist only for certain parameter settings.} \label{tab:runiv}
\begin{center}
\begin{tabular}{cccc}
\toprule
$d$ & $N$ & $\max |\langle x,y \rangle|^2$ &  Name/origin \\
\midrule
$d$ & $N \le d$ & $0$ & orthogonal points (tight when $N=d$)\\
$d$ & $d+1$ & $*$ & Euclidean simplex \\
$d$ & $2d$ & $*$ & symm.\ conf.\ matrix of order $2d$\ \ ($\star$)\\
$d$ & $d(d+2)/2$ & $1/d$ & $d/2+1$ mutually unbiased bases\ \ ($\star$)\\
$2$ & $N$ & $\cos^2(\pi/N)$ & regular polygon \\
$4$ & $60$ & $(\sqrt{5} - 1)/4$ & regular $600$-cell\\
$6$ & $16$ & $*$ & Clebsch \\
$6$ & $36$ & $1/4$ & $E_6$ root system \\
$7$ & $28$ & $*$ & equiangular lines \\
$7$ & $63$ & $1/4$ & $E_7$ root system \\
$8$ & $120$ & $1/4$ & $E_8$ root system \\
$23$ & $276$ & $*$ & equiangular lines \\
$23$ & $2300$ & $1/9$ & kissing configuration of next line\\
$24$ & $98280$ & $1/4$ & Leech lattice minimal vectors\\
$\tfrac{v(v-1)}{k(k-1)}$ & $v\big(1+\tfrac{v-1}{k-1}\big)$ & $*$ & Steiner construction \ \ ($\star$)\\
& & & strongly regular graph with parameters \\
$d$ & $N$ & $*$ &  $(N-1,k,(3k-N)/2,k/2)$, where\\
& & & $k = \tfrac{N}{2} - 1 + \big(1-\tfrac{N}{2d}\big)\sqrt{\tfrac{d(N-1)}{N-d}}$\ \ ($\star$)\\
\bottomrule
\end{tabular}
\end{center}
\end{table}

Euclidean simplices and orthogonal points give the simplest
infinite families of tight codes.

Another infinite family of tight simplices comes from
conference matrices \cite{vLS} (see \cite[p.~156]{CHS}): if a
symmetric conference matrix of order $2d$ exists, then there is
a tight simplex of size $2d$ in $\R^d$. In particular, we get a
tight simplex in $\R^d$ whenever $2d-1$ is a prime power
congruent to $1$ modulo $4$. One can also construct such codes
through the Weil representation of the group $G =
\PSL_2(\F_q)$. Note that the icosahedron arises as the special
case $q=5$, which is why it is not listed separately in
Table~\ref{tab:runiv}.

Levenshtein \cite{Le2} described a family of tight codes in
$\R\Proj^{d-1}$ for $d$ a power of $4$, based on a construction
using Kerdock codes; the regular $24$-cell is the special case
with $d=4$. These codes meet the orthoplex bound (Corollary~5.3
in \cite{CHS}) and give rise to $d/2+1$ mutually unbiased bases
in their dimensions.  Recall that two orthonormal bases
$v_1,\dots,v_d$ and $w_1,\dots,w_d$ are \emph{mutually
unbiased} if $|\langle v_i, w_j \rangle|^2 = 1/d$ for all $i$
and $j$.

A trivial systematic family of tight codes is formed by the diameters of the
regular polygons in the plane.  The next nine lines in Table~\ref{tab:runiv}
correspond to exceptional geometric structures.

The Steiner construction from \cite{FMT12} builds a tight simplex from a
$(2,k,v)$ Steiner system and a Hadamard matrix of order $1+(v-1)/(k-1)$. See
\cite{FMT12} for a discussion of the parameters that can be achieved using
different sorts of Steiner systems. (Note that Bondarenko's tight simplex
\cite{Bo} is a Steiner simplex with $(k,v)=(3,15)$.)  Steiner simplices can
be constructed as follows. Recall that a $(2,k,v)$ Steiner system is a set of
$v$ points with a collection of subsets of size $k$ called blocks, such that
every two distinct points belonging to a unique block. Then there must be $d$
blocks, and every point is in $r$ blocks, where
\[
d = \frac{v(v-1)}{k(k-1)} \qquad \textup{and} \qquad r = \frac{v-1}{k-1}.
\]
Consider the $d \times v$ incidence matrix $A$ for blocks and points, with
entries $0$ and $1$, and let $H$ be a Hadamard matrix of order $r+1$. For
each $j$ from $1$ to $v$, consider the $j$th column of $A$, and form a $d
\times (r+1)$ matrix $M_j$ whose $i$th row is a different row of $H$ for each
$i$ satisfying $A_{i,j} \ne 0$ and vanishes otherwise. Then it is not
difficult to check that the $v(1+r)$ columns of all these matrices $M_j$ form
a tight simplex in $\R\Proj^{d-1}$.

The last entry in the table is a reformulation of tight simplices in
$\R\Proj^{d-1}$ in terms of strongly regular graphs (see Theorem~5.2 in
\cite{W09}).  This sort of combinatorial description works only over the real
numbers.  When $d \le 50$, only three cases are known that are not
encompassed by other lines in the table: $(d,N) = (22, 176)$, $(36,64)$, and
$(43,344)$. See Table~1 in \cite{W09} for more information.

We also observe the phenomenon of Gale duality: tight simplices
of size $N$ in $K\Proj^{d-1}$ correspond to tight simplices of
size $N$ in $K\Proj^{N-d-1}$. For instance, the Gale dual of
the Clebsch configuration gives a tight simplex of $16$ points
in $\R\Proj^9$. See \textsection\ref{subsec:gale} for more
details.

\subsection{Tight codes in $\C\Proj^{d-1}$}

Table~\ref{tab:cuniv} lists the tight codes we are aware of in
complex projective spaces. For a detailed survey of tight
simplices, we refer the reader to Chapter~4 of \cite{Kh}.

\begin{table}
\caption{Known universal optima of $N$ points in complex
  projective spaces $\C\Proj^{d-1}$.
  The tight simplices are indicated by an asterisk in the third column and have
  maximal squared inner product $(N-d)/(d(N-1))$; for brevity
  we omit the Gale duals of the tight simplices as well as the tight simplices from $\R\Proj^{d-1}$.
  A star in the last column
  means the code may exist only for certain parameter settings.}
\label{tab:cuniv}
\begin{center}
\begin{tabular}{cccc}
\toprule
$d$ & $N$ & $\max |\langle x,y \rangle|^2$ & Name/origin \\
\midrule
$d$ & $2d$ & $*$ & skew-symm.\ conf.\ matrix of order $2d$ \ \ ($\star$) \\
$d$ & $d^2$ & $*$ & SIC-POVMs \ \ ($\star$)\\
$d$ & $d(d+1)$ & $1/d$ &  $d+1$ mutually unbiased bases \ \ ($\star$)\\
$2k-1$ & $4k-1$ & $*$ & skew-Hadamard matrix of order $4k$\ \ ($\star$)\\
$2k$ & $4k-1$ & $*$ & skew-Hadamard matrix of order $4k$\ \ ($\star$)\\
$4$ & $40$  & $1/3$ & Eisenstein structure on $E_8$  \\
$5$ & $45$  & $1/4$ & kissing configuration of next line  \\
$6$ & $126$ & $1/4$ & Eisenstein structure on $K_{12}$  \\
$28$ & $4060$ & $1/16$ & Rudvalis group  \\
$\tfrac{v(v-1)}{k(k-1)}$ & $v\big(1+\tfrac{v-1}{k-1}\big)$ & $*$ & Steiner construction\ \ ($\star$)\\
$|S|$ & $|G|$ & $*$ & difference set $S$ in abelian group $G$\ \ ($\star$) \\
\bottomrule
\end{tabular}
\end{center}
\end{table}

Here, we observe a few more infinite families. In particular, if a conference
matrix of order $2d$ exists, then there is a tight code of $2d$ lines in
$\C\Proj^{d-1}$ \cite[p.~66]{Za}. For prime powers $q \equiv 3 \pmod 4$, this
gives a construction of a tight $(q+1)$-point code in $\C\Proj^{(q-1)/2}$. As
mentioned before, such codes may also be constructed using the Weil
representation of $\PSL_2(\F_q)$. Another family of codes of $d(d+1)$ points
in $\C \Proj^{d-1}$, for $d$ an odd prime power, was constructed by
Levenshtein \cite{Le2} using dual BCH codes. These codes meet the orthoplex
bound and give rise to $d+1$ mutually unbiased bases in their dimensions.
They were rediscovered by Wootters and Fields \cite{WF89}, with an extension
to characteristic~$2$ and applications to physics. A third infinite family is
obtained from skew-Hadamard matrices (see \cite{Re} for a construction using
explicit families of skew-Hadamard matrices and Theorem~4.14 in \cite{Kh} for
the general case).

The most mysterious tight simplices are the awkwardly named
SIC-POVMs (symmetric, informationally complete, positive
operator-valued measures).  SIC-POVMs are simplices of size
$d^2$ in $\C\Proj^{d-1}$, i.e., simplices of the greatest size
allowed by Proposition~\ref{prp:bounds}.  These configurations
play an important role in quantum information theory, which
leads to their name.  Numerical experiments suggest they exist
in all dimensions, and that they can even be taken to be orbits
of the Weyl-Heisenberg group \cite{Za,SICPOVM}.  Exact
SIC-POVMs are known for $d \le 16$, as well as $d = 19$, $24$,
$28$, $35$, and $48$, while numerical approximations are known for all
$d \le 67$ (see \cite{SG} and \cite{ABBEGL}).

The Steiner construction can be carried out in $\C\Proj^{d-1}$ using a
complex Hadamard matrix instead of a real Hadamard matrix (see \cite{FMT12}).
Complex Hadamard matrices of every order exist, so the construction
applies whenever there is a $(2,k,v)$ Steiner system.

The last line of the table refers to a construction based on
difference sets \cite{XZG} (see also \cite{Koenig}). Let $G$ be
an abelian group of order $N$, $S$ a subset of $G$ of order
$d$, and $\lambda$ a natural number such that every nonzero
element of $G$ is a difference of exactly $\lambda$ pairs of
elements of $S$. It follows that $d(d-1) = \lambda (N-1)$, and
that the vectors
\[
v_\chi = \left( \chi(s) \right)_{s \in S}
\]
give rise to a tight simplex of $N$ points in $\Proj^{d-1}$ as $\chi$ ranges
over all characters of $G$. As particular cases of this construction, one can
obtain a tight simplex of $n^2 + n + 1$ points in $\C\Proj^n$, when there is
a projective plane of order $n$. A generalization of this example was given
in \cite{XZG}, using Singer difference sets, to produce $(q^{d+1}-1)/(q-1)$
points in $\C\Proj^{d-1}$, with $d = (q^d -1)/(q-1)$. Similarly, if $q$ is a
prime power congruent to $3$ modulo $4$, then the quadratic residues give a
difference set, yielding a tight simplex of $q$ points in
$\C\Proj^{(q-3)/2}$. As another example, there is a difference set of $6$
points in $\Z/31\Z$ (namely, $\{0,1,4,6,13,21\}$), which gives rise to a
tight simplex of $31$ points in $\C\Proj^5$.

\subsection{Tight codes in $\HH\Proj^{d-1}$ and $\OO \Proj^2$}

\begin{table}
\caption{Previously known universal optima of $N$ points in
quaternionic and octonionic projective spaces.  For brevity
we omit the tight simplices from $\R\Proj^{d-1}$ and
$\C\Proj^{d-1}$.    A star in the last column
  means the code may exist only for certain parameter settings.} \label{tab:qouniv}
\begin{center}
\begin{tabular}{cccc}
\toprule
Space & $N$ & $\max |\langle x,y \rangle|^2$ & Name/origin  \\
\midrule
$\HH \Proj^{d-1}$ & $d(2d+1)$ & $1/d$ & $2d+1$ mutually unbiased bases\ \ ($\star$)\\
$\HH \Proj^4$ & $165$ & $1/4$ & quaternionic reflection group \\
$\OO \Proj^2$ & $819$ & $1/2$ & generalized hexagon of order $(2,8)$ \\
\bottomrule
\end{tabular}
\end{center}
\end{table}

Relatively little is known about tight codes in quaternionic or
octonionic projective spaces, aside from the real and complex
tight simplices they automatically contain.  When $d$ is a
power of $4$, there is a construction of $2d+1$ mutually
unbiased bases in $\HH\Proj^{d-1}$ due to Kantor \cite{Ka}, and
two exceptional codes are known.

The $165$ points in $\HH\Proj^4$ from Table~\ref{tab:qouniv} are constructed
using a quaternionic reflection group (Example~9 in \cite{Ho1}). The
$819$-point universal optimum is a remarkable code in the octonionic
projective plane \cite{Coh}; see also \cite{EG96} for another construction.
It can be thought of informally as the $196560$ Leech lattice minimal vectors modulo the
action of the $240$ roots of $E_8$ (viewed as units in the integral
octonions), although this does not yield an actual construction: there is no
such action because the multiplication is not associative.

\subsection{Gale duality} \label{subsec:gale}

Gale duality is a fundamental symmetry of tight simplices. It goes by several
names in the literature, such as coherent duality, Naimark complements, and
the theory of eutactic stars.  We call it Gale duality because it is a metric
version of Gale duality from the theory of polytopes (see Chapter~5 of
\cite{T06} for the non-metric Gale transform).

Let $K$ be $\R$, $\C$, or $\HH$.  (Gale duality does
not apply to $\OO\Proj^2$.)

\begin{proposition}[Hadwiger \cite{Had}] \label{galeprop}
Let $v_1, \dots, v_N$ span a $d$-dimensional vector space $V$
over $K$, and suppose they have the same norm $|v_i|^2 = d/N$.
Then their images in $K\Proj^{d-1}$ form a $1$-design if and
only if there is an $N$-dimensional vector space $U$ containing
$V$ and an orthonormal basis $u_1, \dots, u_N$ of $U$ such that
$v_i$ is the orthogonal projection of $u_i$ to $V$.
\end{proposition}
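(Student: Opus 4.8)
The plan is to prove Proposition~\ref{galeprop} by working with the $d \times N$ matrix whose columns are the $v_i$, expressed in an orthonormal basis of $V$. Call this matrix $B$, so that the Gram matrix of the $v_i$ is $B^\dagger B$, an $N \times N$ Hermitian matrix with $(i,i)$ entry $|v_i|^2 = d/N$. The key observation is that the $1$-design condition $\sum_i v_i v_i^\dagger = (N/d) I_d$ (using the normalization $|v_i|^2 = d/N$, so that after rescaling to unit vectors $x_i = v_i\sqrt{N/d}$ we recover $\sum_i x_i x_i^\dagger = \sum_i (N/d) v_i v_i^\dagger / |v_i|^2 \cdot |v_i|^2 \cdots$ — more directly, $\sum_i v_i v_i^\dagger = B B^\dagger$) is equivalent to $B B^\dagger = I_d$ after suitable normalization. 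I should first get the constants right: with $|v_i|^2 = d/N$, the images form a $1$-design in $K\Proj^{d-1}$ iff $\sum_{i=1}^N v_i v_i^\dagger = I_d$, i.e.\ iff $B B^\dagger = I_d$, i.e.\ iff the rows of $B$ are orthonormal. So the proposition amounts to: the rows of the $d \times N$ matrix $B$ (with prescribed column norms $\sqrt{d/N}$) are orthonormal if and only if $B$ can be completed to an $N \times N$ matrix whose rows \emph{and} columns are all orthonormal of the right length, i.e.\ to (a scalar multiple of) a unitary matrix, in such a way that the columns of $B$ are the projections of the columns of that unitary matrix onto the span of its first $d$ rows.

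First I would prove the easy direction. Suppose $U \supseteq V$ with orthonormal basis $u_1, \dots, u_N$, and $v_i = P_V u_i$ where $P_V$ is orthogonal projection onto $V$. Pick an orthonormal basis of $U$ extending one of $V$; then the matrix $W$ whose columns are the $u_i$ expressed in this basis is unitary (its columns are orthonormal), and $B$ is exactly the top $d$ rows of $W$. Since $W W^\dagger = I_N$, reading off the top-left $d \times d$ block gives $B B^\dagger = I_d$, which is the $1$-design condition; and the column norms of $B$ are $|P_V u_i|^2 \le |u_i|^2 = 1$ — but here I need them to equal $d/N$, which is an additional hypothesis of the proposition on the $v_i$, so this direction really only needs the projection structure to yield the $1$-design property. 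The column-norm condition is assumed on the $v_i$ side throughout, so this is fine.

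For the converse, assume the $v_i$ form a $1$-design, so $B B^\dagger = I_d$, meaning the $d$ rows of $B$ form an orthonormal set of vectors in $K^N$ (with the standard Hermitian inner product). I would then simply extend this orthonormal set to an orthonormal basis of $K^N$ — this is Gram--Schmidt over a division ring, which works fine for $\R$, $\C$, and $\HH$ provided one is careful about the side on which scalars act (the paper has already flagged that $K^\times$ acts on the right). Stacking these $N$ rows gives an $N \times N$ matrix $W$ with $W W^\dagger = I_N$, hence also $W^\dagger W = I_N$, so the columns $u_1, \dots, u_N$ of $W$ are orthonormal; let $U = K^N$. The span of the first $d$ coordinate vectors is a copy of $V$ (identified via $B$'s row space), and projecting $u_i$ onto it discards the last $N-d$ coordinates, leaving exactly the $i$th column of $B$, which is $v_i$. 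That completes the argument. I would also remark that $B B^\dagger = I_d$ automatically forces $\sum_i |v_i|^2 = \Tr B^\dagger B = \Tr B B^\dagger = d$, consistent with the $N$ column norms each being $d/N$.

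The main obstacle is bookkeeping rather than conceptual: making sure every normalization constant is consistent (the $d/N$ versus $N/d$ factors, and the precise statement of the $1$-design condition for vectors of norm $\sqrt{d/N}$ rather than unit vectors), and making sure the Gram--Schmidt / basis-extension step is stated carefully over the noncommutative ring $\HH$, where one must consistently treat $K^N$ as a right $K$-module and verify that $W W^\dagger = I_N$ implies $W^\dagger W = I_N$ (true because a one-sided inverse of a square matrix over a division ring is two-sided). None of this is deep, but it is the part where an error would most easily creep in, so I would present it explicitly rather than waving it through.
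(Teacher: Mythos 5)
Your proposal is correct and is essentially the same as the paper's proof: both encode the $v_i$ as columns of a $d\times N$ matrix $M$, observe that $M M^\dagger = \sum_i v_i v_i^\dagger$ so the $1$-design condition is exactly orthonormality of the rows of $M$, and note that this is precisely the condition for $M$ to extend to a unitary $N\times N$ matrix whose columns give the $u_i$. You spell out the row-extension step and the quaternionic subtlety (one-sided inverses of square matrices over a division ring are two-sided) in more detail, but the paper takes these as standard.
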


\begin{proof}
Let $M$ be the $d \times N$ matrix whose $i$th column is $v_i$. The existence
of $U$ and $u_1,\dots,u_N$ is equivalent to that of an extension of $M$ to a
unitary matrix by adding $N-d$ rows, in which case $u_1,\dots,u_N$ are the
columns of the extended matrix. This extension is possible if and only if the
rows of $M$ are orthonormal vectors; in other words, it is equivalent to $M
M^\dagger = I_d$.

To analyze $M$, we can write it as $M = \sum_{i=1}^N v_i
e_i^\dagger$, where $e_1,\dots,e_N$ is the standard orthonormal
basis of $K^N$.  Then
\[
M M^\dagger = \sum_{i,j=1}^N v_i e_i^\dagger e_j v_j^\dagger = \sum_{i=1}^N v_i v_i^\dagger.
\]
Thus, the extension is possible if and only if
\[
\sum_{i=1}^N v_i v_i^\dagger = I_d.
\]
This equation is the condition for a projective $1$-design once we
rescale to account for the normalization $|v_i|^2 = d/N$.
\end{proof}

Under the $1$-design condition from Proposition~\ref{galeprop}, consider the
projections $w_i$ of the vectors $u_i$ to the orthogonal complement $V^\perp$
of $V$ in $U$. This code $\{w_1,\dots,w_N\}$ in $K\Proj^{N-d-1}$ is called
the \emph{Gale dual} of the code $\{v_1, \dots, v_N\}$ in $K\Proj^{d-1}$. The
construction from the proof shows that the Gale dual is well defined up to
unitary transformations of $V^\perp$. However, there is one technicality: the
$N$ points in $K\Proj^{N-d-1}$ need not be distinct in general, so the Gale
dual must be considered a multiset of points.  Aside from the need to allow
multisets, Gale duality is an involution on projective $1$-designs, defined
up to isometry.

Gale duality preserves tight simplices when $N>d+1$, and the
multiplicity issue does not arise:

\begin{corollary} \label{cor:galedual}
Let $K$ be $\R$, $\C$, or $\HH$.  For $N > d+1$, the Gale dual
of an $N$-point tight simplex in $K \Proj^{d-1}$ is an
$N$-point tight simplex in $K \Proj^{N-d-1}$.
\end{corollary}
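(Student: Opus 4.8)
The plan is to feed the tight simplex into Hadwiger's proposition (Proposition~\ref{galeprop}), read off the Gale dual explicitly, and then verify directly that it satisfies the defining conditions of a tight simplex. The only real work is bookkeeping with three short inner-product identities, plus one inequality that isolates exactly where the hypothesis $N>d+1$ is needed.

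First I would normalize. Let $x_1,\dots,x_N$ be the given tight simplex in $K\Proj^{d-1}$, with unit representatives and $|\langle x_i,x_j\rangle|^2=\alpha=(N-d)/(d(N-1))$ for $i\ne j$; tightness means $\sum_i x_ix_i^\dagger=(N/d)I_d$, so in particular the $x_i$ span $K^d$. Setting $v_i=\sqrt{d/N}\,x_i$ puts us in the hypotheses of Proposition~\ref{galeprop} with $V=K^d$, yielding an $N$-dimensional $U\supset V$ and an orthonormal basis $u_1,\dots,u_N$ of $U$ whose orthogonal projections to $V$ are the $v_i$. By definition the Gale dual consists of the projections $w_i$ of $u_i$ onto the $(N-d)$-dimensional space $W=V^\perp$, so that $u_i=v_i+w_i$ is an orthogonal decomposition.

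Next I would record the identities. From $|u_i|^2=1$ and $|v_i|^2=d/N$ we get $|w_i|^2=(N-d)/N$, a fixed nonzero value (as $N>d$); write $\hat w_i=w_i/|w_i|$. For $i\ne j$, expanding $0=\langle u_i,u_j\rangle$ in the decomposition and using $V\perp W$ gives $\langle w_i,w_j\rangle=-\langle v_i,v_j\rangle=-(d/N)\langle x_i,x_j\rangle$, hence $|\langle\hat w_i,\hat w_j\rangle|^2=(d/N)^2\alpha\big/\big((N-d)/N\big)^2=d/((N-d)(N-1))$, independently of the pair. Finally, if $P$ denotes orthogonal projection onto $W$, then $\sum_i w_iw_i^\dagger=P\big(\sum_i u_iu_i^\dagger\big)P=P\,I_U\,P=I_W$, so $\sum_i\hat w_i\hat w_i^\dagger=\tfrac{N}{N-d}I_{N-d}$; in particular the $\hat w_i$ span $W$ and form a $1$-design.

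It remains to see that the $N$ points $\hat w_i$ are distinct, and this is the step to highlight, since it is the sole reason for requiring $N>d+1$ rather than merely $N>d$. Two unit vectors define the same point of $K\Proj^{N-d-1}$ exactly when the absolute value of their inner product equals $1$, so by the computation above distinctness is equivalent to $d/((N-d)(N-1))<1$, i.e.\ $d<(N-d)(N-1)$; and for $N\ge d+2$ one has $(N-d)(N-1)\ge 2(N-1)\ge 2(d+1)>d$. Thus $\{\hat w_i\}$ is a set of $N$ distinct, equidistant points that is a $1$-design, hence a tight simplex in $K\Proj^{N-d-1}$ by the characterization of tight simplices recorded after Proposition~\ref{prp:tightoptimal} (its common squared inner product $d/((N-d)(N-1))$ is indeed the tight value $(N-(N-d))/((N-d)(N-1))$ for these parameters). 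Everything here is routine linear algebra; the one point of care is to carry the right $K^\times$-action and Hermitian adjoints through the $\HH$ case, but each identity used is adjoint- and cyclic-stable and survives verbatim.
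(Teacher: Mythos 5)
Your proposal is correct and follows essentially the same route as the paper: apply Hadwiger's Proposition~\ref{galeprop} to obtain the orthogonal decomposition $u_i = v_i + w_i$, then use $\langle u_i, u_j\rangle = 0$ to transfer equidistance to the $w_i$, with the $1$-design property coming along for free. You are somewhat more explicit than the paper in normalizing, computing the common squared inner product $d/((N-d)(N-1))$, and verifying distinctness via the inequality $d<(N-d)(N-1)$; the paper leaves these as routine, but the underlying argument is identical.
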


\begin{proof}
Because the $1$-design property is preserved, we need only
check that the Gale dual is a simplex.  In the notation used
above, for $i \neq j$ we have
\[
0 = \langle u_i , u_j \rangle = \langle v_i, v_j \rangle + \langle w_i, w_j \rangle.
\]
Thus, $\langle w_i, w_j \rangle$ is constant for $i \ne j$
because $\langle v_i, v_j \rangle$ is.  The inequality $N>d+1$
merely rules out the degenerate case $K\Proj^0$.
\end{proof}

The inequality
\[
N \le d + \frac{(d^2-d)\dim_\R K}{2}
\]
from Proposition~\ref{prp:bounds} shows that tight simplices
cannot be too large.  Combining Gale duality with the same
inequality shows that they cannot be too small either (see
Theorem~2.30 in \cite{Za} and Corollary~2.19 in \cite{Kh}):

\begin{corollary} \label{cor:galebound}
Let $K$ be $\R$, $\C$, or $\HH$.  If there exists an $N$-point
tight simplex in $K\Proj^{d-1}$ with $N>d+1$, then
\[
N \ge d + \frac{1+\sqrt{1+8d/(\dim_\R K)}}{2}.
\]
\end{corollary}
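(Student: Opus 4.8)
The plan is to derive this lower bound by playing the upper bound of Proposition~\ref{prp:bounds} against itself through Gale duality. The only real content is the size bound $N \le d + (d^2-d)(\dim_\R K)/2$ for tight simplices; since Gale duality sends an $N$-point tight simplex in $K\Proj^{d-1}$ to an $N$-point tight simplex in $K\Proj^{N-d-1}$, that same inequality must hold with $d$ replaced by the dual dimension $N-d$, and for $N$ comparatively small this becomes a constraint from below rather than above.

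Concretely, first I would invoke Corollary~\ref{cor:galedual}: because $N > d+1$, the Gale dual of the given tight simplex is an $N$-point tight simplex in $K\Proj^{N-d-1}$, and the hypothesis $N > d+1$ (equivalently $N-d \ge 2$) guarantees that this target space is nondegenerate, so Corollary~\ref{cor:galedual} genuinely applies. Next I would apply the size inequality of Proposition~\ref{prp:bounds} to this dual simplex, i.e., with $N-d$ in the role of the dimension parameter, obtaining
\[
N \le (N-d) + \frac{\big((N-d)^2-(N-d)\big)\dim_\R K}{2}.
\]
Cancelling $N-d$ from both sides and writing $m = \dim_\R K$ and $e = N-d$, this rearranges to $2d/m \le e^2 - e$, that is, $e^2 - e - 2d/m \ge 0$.

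Finally I would solve this quadratic inequality in $e$. The polynomial $e^2 - e - 2d/m$ has roots $\big(1 \pm \sqrt{1 + 8d/m}\big)/2$, of which the smaller is negative (as $\sqrt{1+8d/m} > 1$), so nonnegativity together with $e = N-d \ge 2 > 0$ forces $e \ge \big(1 + \sqrt{1 + 8d/m}\big)/2$. Substituting back $e = N-d$ gives $N \ge d + \big(1 + \sqrt{1 + 8d/(\dim_\R K)}\big)/2$, as claimed. There is no serious obstacle here; the one point requiring minor care is to note that the hypothesis $N > d+1$ does double duty, both licensing the appeal to Corollary~\ref{cor:galedual} and supplying the positivity $N-d > 0$ that selects the larger root of the quadratic.
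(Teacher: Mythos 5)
Your proof is correct and fills in exactly the argument the paper sketches: apply Corollary~\ref{cor:galedual} to pass to the $N$-point tight simplex in $K\Proj^{N-d-1}$, then apply the cardinality bound of Proposition~\ref{prp:bounds} in that space and solve the resulting quadratic in $N-d$. This is the same route the paper indicates (``Combining Gale duality with the same inequality'').
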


\section{Effective existence theorems} \label{effective}

Our main tool is an effective implicit function theorem, which gives
conditions under which an approximate solution to a system of equations
necessarily leads to a nearby exact solution. Theorems of this sort date back
to the Newton-Kantorovich theorem \cite{Kantorovich} on the convergence of
Newton's method (see also \cite{O} for a short proof).  Our formulation is
closer to Krawczyk's version of Newton-Kantorovich \cite{Kraw}, but it differs
in that we focus on existence of solutions rather than convergence of numerical
algorithms.

The following theorem is a variant of Theorem~2 in \cite{N07}, and we adapt the proof
given there.  In the statement, $||\cdot||$ denotes the operator
norm, $Df(x)$ is the Jacobian of $f$ at $x$,
$B(x_0,\varepsilon)$ is the open ball around $x_0$ with radius
$\varepsilon$, and $\id_W$ is the identity operator on $W$.

\begin{theorem} \label{thm:implicit}
Let $V$ and $W$ be finite-dimensional normed vector spaces over $\R$, and
suppose that $f \colon B(x_0,\varepsilon) \to W$ is a $C^1$ function, where
$x_0 \in V$ and $\varepsilon>0$. Suppose also that $T \colon W \to V$ is a
linear operator such that
\begin{equation} \label{eq:implicit1}
||Df(x) \circ T - \id_{W}|| < 1 - \frac{||T|| \cdot |f(x_0)|}{\varepsilon}
\end{equation}
for all $x \in B(x_0,\varepsilon)$. Then there exists $x_{*}
\in B(x_0, \varepsilon)$ such that $f(x_{*}) = 0$. Moreover, in $B(x_0,\varepsilon)$,
the zero locus $f^{-1}(0)$ is a $C^1$ submanifold
of dimension $\dim V - \dim W$.
\end{theorem}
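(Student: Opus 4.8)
The statement is an effective implicit function theorem, so the natural approach is a contraction-mapping argument built around the Newton-like operator $g(x) = x - T f(x)$, followed by a standard implicit-function-theorem argument to identify the zero locus as a submanifold. I will carry this out in three stages: first show $g$ maps $B(x_0,\varepsilon)$ into itself and is a contraction there, so it has a unique fixed point $x_*$, which is a zero of $f$ provided $T$ is injective; second, dispense with the injectivity hypothesis by a reduction (or by reinterpreting the fixed point directly); third, analyze $f^{-1}(0)$ near $x_*$.

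\textbf{Step 1: the contraction.} Write $g(x) = x - T f(x)$, so that $Dg(x) = \id_V - T \circ Df(x)$. Here I should be careful: the hypothesis \eqref{eq:implicit1} controls $\|Df(x)\circ T - \id_W\|$ on $W$, whereas $Dg$ acts on $V$. The cleanest route is to estimate $g$ directly. For $x, y \in B(x_0,\varepsilon)$, write
\[
g(x) - g(y) = (x-y) - T\bigl(f(x)-f(y)\bigr) = -T\int_0^1 \bigl(Df(y+t(x-y)) - T^{-1}\bigr)(x-y)\,dt,
\]
but this needs $T$ invertible, which is not assumed. Instead I will estimate $g(x)-g(x_0) = (x - x_0) - T(f(x) - f(x_0))$ and, more to the point, bound $\|f(g(x))\|$ type quantities — actually the sharpest approach, following \cite{N07}, is to show that $h \colon B(x_0,\varepsilon)\to W$ defined by $h = f \circ$ (something) contracts. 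Let me instead track $f(x_k)$ along the iteration $x_{k+1} = g(x_k)$: one has $f(x_{k+1}) = f(x_k) + \int_0^1 Df(x_k + t(x_{k+1}-x_k))(x_{k+1}-x_k)\,dt$ and $x_{k+1} - x_k = -Tf(x_k)$, so $f(x_{k+1}) = \bigl(\id_W - \int_0^1 Df(\cdots)\circ T\,dt\bigr) f(x_k)$, giving $|f(x_{k+1})| \le q\, |f(x_k)|$ where $q = \sup_{x} \|Df(x)\circ T - \id_W\| < 1$ by \eqref{eq:implicit1}. Hence $|f(x_k)| \le q^k |f(x_0)|$ and $\|x_{k+1}-x_k\| \le \|T\|\, q^k |f(x_0)|$, so $(x_k)$ is Cauchy with limit $x_*$ satisfying $\|x_* - x_0\| \le \|T\|\,|f(x_0)|/(1-q) < \varepsilon$ — the last strict inequality is exactly what \eqref{eq:implicit1} is arranged to give, since $q < 1 - \|T\||f(x_0)|/\varepsilon$ rearranges to $\|T\||f(x_0)|/(1-q) < \varepsilon$. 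Continuity of $f$ and $|f(x_k)| \to 0$ force $f(x_*) = 0$. The main obstacle is keeping the iterates inside the ball at every step, not just in the limit; this requires the same strict inequality applied with the partial sums, which works because $\sum_{j<k}\|T\| q^j |f(x_0)| < \|T\||f(x_0)|/(1-q) < \varepsilon$ uniformly in $k$.

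\textbf{Step 2: the manifold structure.} For the zero locus, I first observe that $q < 1$ forces $Df(x) \circ T$ to be invertible on $W$ for every $x \in B(x_0,\varepsilon)$ (it is within distance $q<1$ of $\id_W$ in operator norm), hence $Df(x) \colon V \to W$ is surjective throughout the ball; in particular $Df(x_*)$ is surjective, so $f$ is a submersion at $x_*$, and indeed at every point of $B(x_0,\varepsilon)$. By the classical submersion theorem (constant-rank/implicit function theorem applied on the open set $B(x_0,\varepsilon)$ where the rank is constantly $\dim W$), $f^{-1}(0) \cap B(x_0,\varepsilon)$ is a $C^1$ submanifold of dimension $\dim V - \dim W$. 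There is nothing delicate here beyond noting that surjectivity of $Df$ holds on the \emph{whole} ball, not just at $x_*$, which is what lets us conclude the submanifold statement on all of $B(x_0,\varepsilon)$ rather than merely in a possibly smaller neighborhood of $x_*$. I would remark that $Df(x)T$ invertible also shows $T$ itself is injective, so the apparent gap in Step 1 (needing $f(x_*)=0$ rather than merely $Tf(x_*)=0$) never arises: $g(x_*) = x_*$ gives $Tf(x_*)=0$, and injectivity of $T$ then gives $f(x_*)=0$ — though the iteration argument above already yields this directly and is cleaner.

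Overall, the heart of the proof is the one-line identity $f(x_{k+1}) = (\id_W - \int_0^1 Df\circ T)\,f(x_k)$ together with the bookkeeping that $\sum_k \|T\| q^k |f(x_0)| < \varepsilon$; everything else is soft. I expect the only genuinely fiddly point to be presenting the "iterates stay in the ball" induction cleanly, and I would handle it by proving the uniform bound $\|x_k - x_0\| \le \|T\||f(x_0)|(1-q^k)/(1-q) < \varepsilon$ by induction on $k$ before passing to the limit.
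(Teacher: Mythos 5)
Your approach is genuinely different from the paper's. The paper flows along the continuous Newton ODE $x'(t) = -T\bigl(Df(x(t))\circ T\bigr)^{-1}f(x_0)$, which forces $f(x(t)) = (1-t)f(x_0)$, and uses Peano existence together with the a priori bound $|x'(t)|<\varepsilon$ to continue the solution to $t=1$; you instead run the discrete iteration $x_{k+1}=x_k-Tf(x_k)$ with a \emph{fixed} $T$, contract the residuals via $f(x_{k+1}) = \bigl(\id_W - \int_0^1 Df\circ T\,dt\bigr)f(x_k)$, and pass to the limit. Your route is more elementary (no ODE existence theorem is needed), and your treatment of the manifold structure coincides with the paper's. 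However, there is a gap in Step~1. You set $q = \sup_{x\in B(x_0,\varepsilon)}||Df(x)\circ T - \id_W||$ and assert $q < c := 1 - ||T||\,|f(x_0)|/\varepsilon$, citing \eqref{eq:implicit1}. But \eqref{eq:implicit1} gives the strict bound $||Df(x)\circ T - \id_W|| < c$ only \emph{pointwise} on the open ball, and the supremum over an open set of a pointwise-strict bound need not itself be strict: $q=c$ can occur (for example $V=W=\R$, $f(x)=x-\tfrac12+\tfrac{x^2}{4}$, $x_0=0$, $\varepsilon=1$, $T=1$, where $||Df(x)T-\id||=|x|/2$ has supremum $c=\tfrac12$). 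When $q=c$, your estimate only yields $|x_*-x_0|\le ||T||\,|f(x_0)|/(1-q) = \varepsilon$, so you cannot conclude that $x_*$ lies in the open ball, where $f$ is defined.

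The repair parallels what the paper's asserted strict bound $|x(t)-x_0|<\varepsilon t$ implicitly relies on, namely compactness. The contraction factor at step $k$ is
\[
q_k := \Bigl|\Bigl|\int_0^1\bigl(Df(x_k + t(x_{k+1}-x_k))\circ T - \id_W\bigr)\,dt\Bigr|\Bigr| \le \max_{[x_k,x_{k+1}]}||Df(\cdot)\circ T - \id_W|| < c,
\]
strictly, because the segment $[x_k,x_{k+1}]$ is a compact subset of the open ball. In particular $q_0<c$, so $|f(x_k)|\le q_0 c^{k-1}|f(x_0)|$ for $k\ge 1$, and
\[
\sum_{k\ge0}|f(x_k)| \le |f(x_0)|\Bigl(1 + \frac{q_0}{1-c}\Bigr) < \frac{|f(x_0)|}{1-c},
\]
whence $|x_*-x_0|\le||T||\sum_k|f(x_k)| < ||T||\,|f(x_0)|/(1-c) = \varepsilon$, strictly. (Your induction $|x_k-x_0|\le||T||\,|f(x_0)|(1-c^k)/(1-c)<\varepsilon$ already keeps each finite iterate strictly inside the ball, so only the limiting step needs this correction.) With this change your argument is correct and gives a valid alternative proof.
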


Of course, the submanifold is smooth if $f$ is $C^\infty$.

\begin{proof}
Consider the initial value problem
\begin{equation} \label{eq:ivp}
x'(t) = -T \big(Df(x(t)) \circ T\big)^{-1} f(x_0), \qquad x(0) = x_0,
\end{equation}
which is a rescaling of the differential equation for the
continuous analogue of Newton's method (see Section~3 of
\cite{N07}).  The motivation is that
\begin{align*}
\frac{d}{dt} [ f(x(t)) ] &= Df(x(t))(x'(t)) \\
&= - \big(Df(x(t)) \circ T\big) \big(Df(x(t)) \circ T\big)^{-1} f(x_0) \\
&= -f(x_0),
\end{align*}
and so $f(x(t)) = (1-t)f(x_0)$.  Thus, $x(1)$ should be a root of $f$, but of
course we must verify that the initial value problem has a solution defined on $[0,1]$.

First note that the bound \eqref{eq:implicit1} implies that
$Df(x) \circ T$ is invertible for all $x \in B(x_0,\varepsilon)$.
Moreover, supposing for the moment that $f(x_0) \ne 0$, we have
\begin{equation} \label{eq:implicit-1}
||(Df(x) \circ T)^{-1}|| < \frac{\varepsilon}{||T|| \cdot |f(x_0)|}.
\end{equation}
These claims follow from the series expansion
\[
(Df(x) \circ T)^{-1} = \sum_{i=0}^\infty \big(\id_W-Df(x) \circ T \big)^i.
\]
Because $f$ is $C^1$, $(Df(x) \circ T)^{-1}$ is continuous.  Thus,
by the Peano existence theorem (see Chapter~1, Sections~1--5 in \cite{CL55}), the initial value problem
\eqref{eq:ivp} has a $C^1$ solution $x(t)$ defined on a
nontrivial interval starting at $0$.  The solution can be extended
as long as $x(t)$ does not approach the boundary
of $B(x_0,\varepsilon)$.
Using \eqref{eq:implicit-1}, we have
\[|x'(t)| \le ||T|| \cdot ||(Df(x(t)) \circ T)^{-1}|| \cdot |f(x_0)| < \varepsilon.\]
It follows that the solution
$x(t)$ can be continued to $t=1$ and satisfies $|x(t) - x_0| < \varepsilon t$;
setting $x_* = x(1)$ finishes the
first part of the theorem.

Of course, if $f(x_0) = 0$, then we can just take $x_* = x_0$.

It remains only to show that $f^{-1}(0)$ is a manifold of dimension
$\dim V-\dim W$.  We noted above that the operator $Df(x) \circ T$ is
invertible for all $x \in B(x_0,\varepsilon)$, so in particular this is true for
all $x \in f^{-1}(0)$.  But that implies that $Df(x)$ is surjective, so we are
done by an application of the standard
implicit function theorem (see Section~4.3 in \cite{KP13}).
\end{proof}

Given a function $f$ and an approximate root $x_0$, it is
straightforward to apply this theorem.  We must compute an
approximate right inverse $T$ of $Df(x_0)$ and bound $||Df(x)
\circ T - \id_W||$ for all $x \in B(x_0,\varepsilon)$.  The
simplest and most elegant way to do this is using interval
arithmetic (see \textsection\ref{algs} for details), but we can
also use Corollary~\ref{cor:polybounds} below when $f$ is a
polynomial.

In order for Theorem~\ref{thm:implicit} to prove the existence of a solution
of $f(x)=0$, $Df(x)$ must have a right inverse at that solution. (In
particular, we must have $\dim V \ge \dim W$.)  If we view $f$ as defining a
system of simultaneous equations, then choosing the right equations to use
can be tricky.  For example, some of the most straightforward systems
defining a tight simplex will not work to prove existence of such a simplex,
because $Df$ is singular at every solution.  Much of this paper is devoted to
formulating suitable systems defining different sorts of tight simplices. The
generic cases are reasonably straightforward, but even they must be handled
carefully, and a few extreme cases are particularly subtle
(Propositions~\ref{prp:15} and~\ref{prp:27}).

In our applications, $f$ will always be a polynomial map.
In this case, the following lemma can be useful in
conjunction with Theorem \ref{thm:implicit}.

\begin{definition}
For a polynomial $p \colon \R^m \to \R$ given by $p(x) = \sum_I
c_I x^I$, define $|p| = \sum_I |c_I|$.  Given a polynomial map
$p = (p_1,\dots,p_n) \colon \R^m \to \R^n$, define $|p| = \max
|p_i|$.
\end{definition}

\begin{lemma} \label{lma:polybounds}
Let $m \geq n$, $\varepsilon>0$, and $x_0 \in \R^m$.  Suppose $f \colon \R^m
\to \R^n$ is a polynomial function of total degree $d$, and let $\R^m$ and
$\R^n$ carry the $\ell_\infty$ norm. Set $\eta = \max(1, |x_0| + \varepsilon)$. Then
for all $x \in B(x_0, \varepsilon)$,
\[
||Df(x)  - Df(x_0)|| < |f| d(d-1) \varepsilon \eta^{d-2}.
\]
\end{lemma}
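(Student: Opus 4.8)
The plan is to reduce the operator-norm bound to a pointwise estimate on the second derivatives of $f$ and then bound those monomial by monomial. Since $\R^m$ and $\R^n$ carry the $\ell_\infty$ norm, the operator norm of the matrix $Df(x)-Df(x_0)$ is its largest absolute row sum, namely $\max_{1\le i\le n}\sum_{j=1}^m\bigl|\partial_j f_i(x)-\partial_j f_i(x_0)\bigr|$, so it suffices to bound this quantity for each coordinate polynomial $f_i$. Throughout I may assume $d\ge 2$ and $f\not\equiv 0$, since otherwise $Df$ is constant and the statement is of no substance. Two elementary observations will be used repeatedly: if $x\in B(x_0,\varepsilon)$ then each point $\xi_t:=x_0+t(x-x_0)$ with $t\in[0,1]$ also lies in $B(x_0,\varepsilon)$ and satisfies $|\xi_t|\le|x_0|+\varepsilon\le\eta$; and $\eta\ge 1$.

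First I would apply the fundamental theorem of calculus along the segment from $x_0$ to $x$: for each $i$ and $j$,
\[
\partial_j f_i(x)-\partial_j f_i(x_0)=\sum_{k=1}^m (x_k-x_{0,k})\int_0^1\partial_j\partial_k f_i(\xi_t)\,dt,
\]
so, using $|x_k-x_{0,k}|\le|x-x_0|<\varepsilon$,
\[
\sum_{j=1}^m\bigl|\partial_j f_i(x)-\partial_j f_i(x_0)\bigr|\;\le\;|x-x_0|\int_0^1\sum_{j,k}\bigl|\partial_j\partial_k f_i(\xi_t)\bigr|\,dt\;<\;\varepsilon\int_0^1\sum_{j,k}\bigl|\partial_j\partial_k f_i(\xi_t)\bigr|\,dt .
\]
Thus the lemma reduces to the pointwise claim that $\sum_{j,k}\bigl|\partial_j\partial_k f_i(y)\bigr|\le|f|\,d(d-1)\,\eta^{d-2}$ for every $y$ with $|y|\le\eta$.

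The heart of the matter is therefore a monomial estimate. Writing $f_i(y)=\sum_I c^{(i)}_I y^I$ with $r=|I|\le d$, I would note that $\partial_j\partial_k(y^I)$ is a single monomial of degree $r-2$ (and is $0$ when $r\le 1$), so $\bigl|\partial_j\partial_k(y^I)\bigr|\le\gamma^{(I)}_{jk}\,\eta^{r-2}\le\gamma^{(I)}_{jk}\,\eta^{d-2}$, where $\gamma^{(I)}_{jk}\ge 0$ is its (integer) coefficient and we used $\eta\ge 1$. The one computation to carry out is $\sum_{j,k}\gamma^{(I)}_{jk}=r(r-1)$: indeed $\gamma^{(I)}_{jk}=i_j i_k$ for $j\ne k$ and $i_j(i_j-1)$ for $j=k$, and these sum to $\bigl(\sum_j i_j\bigr)^2-\sum_j i_j=r^2-r$ (equivalently, $\sum_{j,k}\gamma^{(I)}_{jk}$ is $\bigl(\sum_j\partial_j\bigr)^2 y^I$ evaluated at the all-ones point). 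Since $s\mapsto s(s-1)$ is nondecreasing for $s\ge 1$ and vanishes at $s=0,1$, we get $r(r-1)\le d(d-1)$, hence $\sum_{j,k}\bigl|\partial_j\partial_k(y^I)\bigr|\le d(d-1)\eta^{d-2}$ for every $I$ with $|I|\le d$. Summing against $|c^{(i)}_I|$ gives $\sum_{j,k}\bigl|\partial_j\partial_k f_i(y)\bigr|\le d(d-1)\eta^{d-2}\sum_I|c^{(i)}_I|=|f_i|\,d(d-1)\eta^{d-2}\le|f|\,d(d-1)\eta^{d-2}$. Plugging this into the previous display and taking the maximum over $i$ yields $\|Df(x)-Df(x_0)\|<|f|\,d(d-1)\,\varepsilon\,\eta^{d-2}$; the strict inequality survives because $|f|\,d(d-1)\eta^{d-2}>0$ under the standing assumption $d\ge 2$, $f\not\equiv 0$.

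I do not expect a genuine obstacle here: the argument is routine. The only point needing a little care is the bookkeeping in the monomial estimate — ensuring the bound is uniform over all $I$ with $|I|\le d$, which is exactly where $\eta\ge 1$ (to pass from $\eta^{r-2}$ to $\eta^{d-2}$) and the monotonicity of $s\mapsto s(s-1)$ (to pass from $r(r-1)$ to $d(d-1)$) enter, and where the monomials of degree $\le 1$ must be disposed of separately. One could instead avoid calculus and expand $\partial_j f_i(x)-\partial_j f_i(x_0)$ directly in powers of $x-x_0$, but the route through the fundamental theorem of calculus along a segment keeps everything self-contained.
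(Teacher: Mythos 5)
Your proof is correct and follows essentially the same strategy as the paper's: both reduce the $\ell_\infty$ operator norm to the maximum row sum, both pass to the second derivatives along the segment from $x_0$ to $x$, and both carry out the identical monomial bookkeeping ($\sum_{j,k}\gamma^{(I)}_{jk}=r(r-1)$, bounding the residual monomial by $\eta^{r-2}$ and using $\eta\ge 1$ and monotonicity of $s\mapsto s(s-1)$ to pass to $d(d-1)\eta^{d-2}$). The only real difference is cosmetic: you use the fundamental theorem of calculus (an integral along the segment), whereas the paper applies the mean value theorem to $g(t)=\partial_i f(x_0+t(x-x_0))$, obtaining a possibly different intermediate point $v_i$ for each $i$; these are interchangeable here since you end up bounding the second-derivative sum uniformly over the segment anyway.

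One small note, which applies equally to the paper's own proof: for $d\le 1$ the asserted \emph{strict} inequality fails (both sides are zero), so the implicit standing hypothesis $d\ge 2$ you flag at the outset is indeed needed; you were right to call it out rather than gloss over it.
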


\begin{proof}
The $\ell_\infty \to \ell_\infty$ operator norm of a matrix is
the maximum of the $\ell_1$ norms of its rows, so we need to
bound the $\ell_1$ norm of each row of $Df(x) - Df(x_0)$.
Without loss of generality suppose $n=1$; in other words, work
with a fixed row of the matrix.  The quantity we want to bound
is
\[
A = \sum_{i=1}^m |\partial_i f(x) - \partial_i f(x_0)|,
\]
where $\partial_i f$ denotes the partial derivative of $f$ with respect to
the $i$th coordinate. Splitting this as a sum over the monomials of $f$, it
suffices, by the triangle inequality, to prove that $A < e(e-1) \varepsilon
\eta^{e-2}$ when $f$ is a (monic) monomial of total degree $e \le d$.  Using
the mean value theorem applied to the function $g(t) = \partial_i f (x_0 +
t(x - x_0))$, we have
\[
\partial_i f (x) - \partial_i f (x_0) = \sum_{j=1}^m \partial^2_{ij} f
(v_i) (x - x_0)_j
\]
for some $v_i$ on the line segment between $x_0$ and $x$ (where
$(x-x_0)_j$ denotes the $j$th coordinate of the vector
$x-x_0$). Therefore,
\[
A \le \sum_{i=1}^m \left | \sum_{j=1}^m \partial_{ij}^2 f(v_i)
  (x-x_0)_j \right | < \varepsilon \sum_{i,j=1}^m \left| \partial_{ij}^2 f (v_i)
\right|
\]
since the $\ell_\infty$ norm $|x - x_0|$ is bounded by $\varepsilon$. Write $f =
\prod_{k=1}^m x_k^{e_k}$.  Then $\partial_{ij}^2 f(v_i)$ equals a monomial of
degree $e-2$ times either $e_i e_j$ if $i \ne j$, or $e_i(e_i-1)$ if $i=j$.
Because $\eta \ge \max(|v_i|,1)$, the monomial is bounded by $\eta^{e-2}$.
Summing, we obtain
\[
A < \varepsilon \eta^{e-2} \left ( \sum_{i,j=1}^m e_i e_j - \sum_{i=1}^m e_i \right )
= \varepsilon \eta^{e-2} e(e-1),
\]
as desired.
\end{proof}

\begin{corollary} \label{cor:polybounds}
With notation as in Lemma~\ref{lma:polybounds}, if there exists a linear
operator $T \colon \R^n \to \R^m$ such that
\[
||Df(x_0) \circ T - \id_{\R^n} || + \varepsilon \, |f| d (d-1) \eta^{d-2}
||T|| < 1 - \frac{||T|| \cdot |f(x_0)|}{\varepsilon},
\]
then there exists $x_* \in B(x_0, \varepsilon)$ such that $f(x_*) = 0$, and
the zero locus $f^{-1}(0)$ is locally a manifold of dimension $m-n$.
\end{corollary}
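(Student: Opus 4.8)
The plan is to reduce Corollary~\ref{cor:polybounds} to Theorem~\ref{thm:implicit} by using Lemma~\ref{lma:polybounds} to control the only $x$-dependent quantity appearing in the hypothesis~\eqref{eq:implicit1}. To invoke Theorem~\ref{thm:implicit} with $V = \R^m$, $W = \R^n$, the map $f$, and the operator $T$ given here, I must check that $||Df(x)\circ T - \id_{\R^n}|| < 1 - ||T||\cdot|f(x_0)|/\varepsilon$ for \emph{every} $x \in B(x_0,\varepsilon)$, not just for $x = x_0$. The idea is to bound this operator norm by its value at $x_0$ plus a uniform error term coming from the variation of the Jacobian.

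Concretely, for $x \in B(x_0,\varepsilon)$ I would write
\[
Df(x)\circ T - \id_{\R^n} = \big(Df(x_0)\circ T - \id_{\R^n}\big) + \big(Df(x) - Df(x_0)\big)\circ T,
\]
and then apply the triangle inequality together with submultiplicativity of the operator norm to get
\[
||Df(x)\circ T - \id_{\R^n}|| \le ||Df(x_0)\circ T - \id_{\R^n}|| + ||Df(x) - Df(x_0)||\cdot ||T||.
\]
Lemma~\ref{lma:polybounds} bounds the middle factor: with $\eta = \max(1,|x_0|+\varepsilon)$ as there, it gives $||Df(x) - Df(x_0)|| < |f|\,d(d-1)\,\varepsilon\,\eta^{d-2}$ uniformly on $B(x_0,\varepsilon)$. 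Substituting, I obtain
\[
||Df(x)\circ T - \id_{\R^n}|| < ||Df(x_0)\circ T - \id_{\R^n}|| + \varepsilon\,|f|\,d(d-1)\,\eta^{d-2}\,||T||,
\]
and the hypothesis of the corollary says precisely that the right-hand side is strictly less than $1 - ||T||\cdot|f(x_0)|/\varepsilon$.

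This is exactly~\eqref{eq:implicit1}, so Theorem~\ref{thm:implicit} applies and yields the existence of $x_* \in B(x_0,\varepsilon)$ with $f(x_*) = 0$ together with the statement that $f^{-1}(0)$ is locally a $C^1$ submanifold of dimension $\dim V - \dim W = m - n$; since $f$ is a polynomial it is in fact $C^\infty$, so the submanifold is smooth. I do not anticipate a genuine obstacle here: the corollary is a mechanical combination of Lemma~\ref{lma:polybounds} and Theorem~\ref{thm:implicit}, with all the real work done in those two results. The only points requiring care are bookkeeping ones — keeping the factor $||T||$ attached to the right quantities, and making sure the operator norms are consistently the $\ell_\infty \to \ell_\infty$ norms on $\R^m$ and $\R^n$ that Lemma~\ref{lma:polybounds} uses — so that the chain of inequalities lines up exactly with the inequality assumed in the statement.
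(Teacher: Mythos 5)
Your proposal is correct and is essentially the paper's proof: the paper's one-line argument also applies the triangle inequality to $||Df(x)\circ T - \id_{\R^n}|| \le ||Df(x)-Df(x_0)||\cdot||T|| + ||Df(x_0)\circ T - \id_{\R^n}||$ and invokes Lemma~\ref{lma:polybounds} to verify~\eqref{eq:implicit1}. Your write-up simply spells out the same chain of inequalities in more detail.
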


\begin{proof}
Using $||Df(x) \circ T - \id_{\R^n} || \leq || Df(x) -
Df(x_0)|| \cdot || T || + ||Df(x_0) \circ T - \id_{\R^n}|| $,
we see that the hypotheses of Theorem \ref{thm:implicit} are
met.
\end{proof}

\section{Simplices in quaternionic projective spaces} \label{quat}

\subsection{Generic case}

The definition gives one characterization of tight $N$-point simplices; we
simply impose $|x_i|^2=1$ for each $i$ and $|\langle x_i,x_j \rangle|^2 =
(N-d)/(d(N-1))$ for $i < j$. In fact, tight simplices can be characterized
even more succinctly: it can be shown that $\sum_{i,j} |\langle x_i,x_j
\rangle|^2 \ge N^2/d$, with equality iff $\{x_1,\dots,x_N\}$ is a tight
simplex. Both of these descriptions, though, suffer from the problem that the
imposed conditions are \emph{singular}; loosely put, if a set of points
satisfies the conditions, then it does so ``just barely.'' For instance, if
we define $f \colon \HH^N \to \R^{N+1}$ by
\[
f( x_1, \dots, x_N) = \Big(|x_1|^2 - 1, \dots, |x_N|^2 - 1, \sum_{i,j}
|\langle x_i, x_j \rangle |^2 - N^2/d \Big),
\]
then the fact that the last coordinate is always nonnegative implies that the
last row of $Df$ is zero at a tight simplex. Therefore it is hopeless to try
to prove existence by applying Theorem~\ref{thm:implicit}. Setting all the
inner products equal to $(N-d)/(d(N-1))$ suffers from the same problem,
because
\[
\frac{1}{N(N-1)} \sum_{\substack{i, j =1\\i \ne j}}^N |\langle x_i,x_j \rangle|^2
\ge \frac{N-d}{d(N-1)}
\]
for all $x_1,\dots,x_N$ (see the proof of
Proposition~\ref{prp:tightoptimal}).

Fortunately, it is generally possible to recast the conditions
describing tight simplices so that the Jacobian of the associated
polynomial map becomes surjective.

\begin{proposition} \label{prp:general} Suppose $x_1,\dots,x_N \in
  \HH^{d}$ $(d > 1)$ and $w_1,\dots,w_N \in \R$ satisfy
  the following conditions:
  \begin{enumerate}
  \item $|x_i|^2 = 1$ for $i=1,\dots,N$,
  \item $|\langle x_i,x_j \rangle|^2 = |\langle
      x_{i'},x_{j'} \rangle|^2$ for $1 \le i < j \le N$ and
      $1 \le i' < j' \le N$, and
  \item $\sum_{i=1}^N w_i x_i x_i^\dagger = I_d$.
\end{enumerate}
Then $w_1 = \cdots = w_N = d/N$ and $\{x_1,\dots,x_N\}$ is a tight simplex in
$\HH\Proj^{d-1}$.
\end{proposition}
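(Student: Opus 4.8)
The plan is to show that conditions (1)--(3) force $w_1 = \cdots = w_N$ by pairing the single matrix constraint (3) against each individual vector $x_j$; once the weights are known to be equal, everything else is bookkeeping. Write $\alpha$ for the common value of $|\langle x_i,x_j\rangle|^2$ over all $i<j$ provided by (2).

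First I would record two linear relations on the weights. Taking the trace of (3) and using $\Tr(x_i x_i^\dagger) = |x_i|^2 = 1$ from (1) gives $\sum_{i=1}^N w_i = d$. Next, multiplying (3) on the left by $x_j^\dagger$ and on the right by $x_j$ gives $\sum_{i=1}^N w_i\,|\langle x_i,x_j\rangle|^2 = |x_j|^2 = 1$; here one uses that $w_i$ is real and that $x_j^\dagger x_i x_i^\dagger x_j = q\bar q = |q|^2$ with $q = x_j^\dagger x_i$, which is a nonnegative real even though $\HH$ is noncommutative, and that $|q| = |\langle x_i,x_j\rangle|$. Separating the $i=j$ term (for which $|\langle x_j,x_j\rangle|^2 = 1$) and invoking (2) for $i \ne j$ turns this into $w_j + \alpha\sum_{i\ne j}w_i = 1$, and substituting $\sum_{i\ne j}w_i = d - w_j$ gives $(1-\alpha)\,w_j = 1 - \alpha d$ for every $j$.

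Now if $\alpha = 1$ this relation reads $0 = 1 - d$, contradicting $d > 1$; hence $\alpha \ne 1$. Therefore $w_j = (1-\alpha d)/(1-\alpha)$ is the same for every $j$, and combining this with $\sum_i w_i = d$ forces $w_j = d/N$ for all $j$. Substituting $w_j = d/N$ back into $(1-\alpha)w_j = 1 - \alpha d$ and clearing denominators yields $N - d = \alpha\, d(N-1)$, i.e.\ $\alpha = (N-d)/(d(N-1))$, the value in Definition~\ref{def:tightsimplex}. In particular $\alpha < 1$, since $N - d < d(N-1)$ is equivalent to $1 < d$; hence no two of the $x_i$ are proportional, so they represent $N$ distinct points of $\HH\Proj^{d-1}$, and together with (2) these points form a regular simplex with the maximal allowed value of $\alpha$, i.e.\ a tight simplex. (Equivalently, once $w_j = d/N$ is known, (3) becomes $\sum_i x_i x_i^\dagger = (N/d) I_d$, which is the tightness criterion noted after Proposition~\ref{prp:tightoptimal}.)

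I do not expect a genuine obstacle: the argument is linear algebra once one has the idea of contracting (3) with $x_j$ on both sides. The only points needing mild care are the harmless observation that $\alpha \ne 1$ (which simultaneously rules out the collapsed configuration and supplies distinctness of the points) and the check that $x_j^\dagger x_i x_i^\dagger x_j$ is a true nonnegative real over $\HH$, so that the elementary manipulations with the real weights are valid. If forced to name the crux, it is recognizing that the single matrix equation (3), contracted against the $N$ rank-one projections $x_j x_j^\dagger$, already pins down all $N$ weights.
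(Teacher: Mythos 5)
Your argument is the same as the paper's, just phrased in terms of direct quaternionic contractions $x_j^\dagger(\cdot)x_j$ rather than the Hermitian inner product $\langle \Pi_j, \cdot\rangle$ of projection matrices; these are identical by equation~\eqref{eq:piproduct}. You do add two small checks that the paper's proof leaves implicit (namely that $\alpha\ne 1$, which justifies dividing by $1-\alpha$, and that $\alpha<1$, which ensures the $N$ points of $\HH\Proj^{d-1}$ are distinct), and these are correct and worth spelling out, but they do not amount to a different approach.
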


\begin{proof}
Define $\Pi_i = x_i x_i^\dagger$, and let $\alpha$ denote the
common inner product $|\langle x_i,x_j \rangle|^2$  for $i \ne
j$.  By the first condition we have $\langle \Pi_i , I_d
\rangle = 1$ for each $i$.  Thus
\[
d = \langle I_d,I_d \rangle = \sum_{i=1}^N w_i \langle \Pi_i ,
I_d \rangle = \sum_{i=1}^N w_i.
\]
Moreover, using equation \eqref{eq:piproduct} we have $\langle
\Pi_i , \Pi_i \rangle = 1$ and $\langle \Pi_i , \Pi_j \rangle =
\alpha$ for all $i \ne j$.  Thus, for any $j$,
\[1 = \langle \Pi_j , I_d \rangle = \sum_{i=1}^N w_i \langle \Pi_j,
\Pi_i \rangle = (1-\alpha) w_j + \alpha \cdot \sum_{i=1}^N w_i
= (1-\alpha) w_j + \alpha d.\] It follows that $w_j = (1 -
\alpha d)/(1-\alpha)$ for each $j$.  Substituting back into the
equation $\sum_{i=1}^N w_i = d$ yields $\alpha = (N-d) /
(d(N-1))$, from which the result follows.
\end{proof}

Using Proposition \ref{prp:general}, we can view tight simplices of $N$
points in $\HH\Proj^{d-1}$ as the solutions of a system of
\[
N + \left ( \frac{N(N-1)}{2} - 1 \right ) + (2d^2-d) \text{ real constraints}
\]
in
\[
N(4d+1)\text{ real variables}.
\]
In situations where Theorem~\ref{thm:implicit} applies to this system, we get
a solution space of dimension $(\text{number of variables}) - (\text{number
of constraints})$.  This separately counts each unit-norm lift of the $N$
elements of $\HH\Proj^{d-1}$, so the space of simplices has
codimension $3N$.  Moreover, the space of simplices is invariant under the
action of the symmetry group of $\HH\Proj^{d-1}$, and we are most interested in the
quotient, i.e., the moduli space of simplices.  This symmetry group, the
compact symplectic group $\operatorname{Sp}(d)$ (strictly speaking, modulo
its center $\{\pm1\}$), has real dimension $d(2d+1)$. Thus the actual
dimension of the moduli space of simplices, local to this particular solution,
is at least
\begin{equation}  \label{eq:generaldim}
\sdim{N}{\HH\Proj^{d-1}} := (4d-3)N - \frac{N(N-1)}{2} - 4d^2 + 1
\end{equation}
when Theorem~\ref{thm:implicit} and
Proposition~\ref{prp:general} apply. Equality holds if the simplices in this
neighborhood have finite stabilizers
(in which case the moduli space of simplices is locally an orbifold
of the desired dimension); in any case,
the moduli space always has dimension at least $\sdim{N}{\HH\Proj^{d-1}}$.

The discussion above is informal in the case of a positive-dimensional stabilizer,
but it is not difficult to make the lower
bound rigorous for topological dimension.
Specifically, the solution space $\X$ of the system of equations from
Proposition~\ref{prp:general}
is a compact metric space, and locally a manifold of dimension
$\sdim{N}{\HH\Proj^{d-1}} + 3N + \dim \operatorname{Sp}(d)$
near the solution we find.  Thus its topological dimension is at least that large.
The moduli space is $\X/G$, where $G =
\operatorname{Sp}(1)^N \times \operatorname{Sp}(d)$.  Because
$G$ is compact, the quotient map $\X \to \X/G$ is closed and $\X/G$ is
Hausdorff.  Thus, we can apply topological
dimension theory for separable metric spaces to conclude that
\[
\dim(\X/G) \ge \dim \X - \dim G \ge \sdim{N}{\HH\Proj^{d-1}},
\]
as desired (see Theorem~VI~7 from \cite[p.~91]{HW41}).

Note that Gale duality, which replaces $d$ with $N-d$, preserves
$\sdim{N}{\HH\Proj^{d-1}}$, as one would expect.  Furthermore,
because $\sdim{N}{\HH\Proj^{d-1}}$ is quadratic in $N$, it is
also symmetric about the midpoint of the range in which it is
positive.  Specifically, $\sdim{N}{\HH\Proj^{d-1}} =
\sdim{8d-5-N}{\HH\Proj^{d-1}}$.

While \emph{a priori} it is possible to have tight simplices of
up to $N=2d^2-d$ points, we only have $\sdim{N}{\HH\Proj^{d-1}}
\ge 0$ for $N$ between roughly $(4-2\sqrt{2})d$ and
$(4+2\sqrt{2})d$.  That does not rule out larger tight
simplices, but it does mean that this approach using
Proposition~\ref{prp:general} and Theorem~\ref{thm:implicit}
could not prove their existence.  We believe that outside of
this range, only sporadic examples will exist in general, but
we conjecture that tight simplices always exist within the
range where $\sdim{N}{\HH\Proj^{d-1}} \ge 0$, at least if one
stays away from the boundary:

\begin{conjecture}
As $d \to \infty$, there exist tight $N$-point simplices in
$\HH\Proj^{d-1}$ for all $N$ satisfying
\[
(4-2\sqrt{2}+o(1))d \le N \le (4+2\sqrt{2}-o(1))d.
\]
\end{conjecture}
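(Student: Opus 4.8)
The plan is to deduce the conjecture from Corollary~\ref{cor:polybounds} applied to the polynomial system of Proposition~\ref{prp:general}. For a fixed pair $(N,d)$ in the stated range the strategy is exactly the computer-assisted one used elsewhere in the paper: produce a near-solution $(x_0,w_0) \in \HH^{dN} \times \R^N$ of conditions (a)--(c), compute an approximate right inverse $T$ of the Jacobian $Df(x_0,w_0)$, and verify the inequality of Corollary~\ref{cor:polybounds} by interval arithmetic (as in \textsection\ref{algs}). Here $f$ has total degree $4$ (condition (a) is quadratic, (b) quartic, (c) cubic), and every coordinate of a near-solution is $O(1)$, since each coordinate of a unit vector has absolute value at most $1$ and $w_i \approx d/N \le 1/(4-2\sqrt{2}) < 1$; hence $\eta = O(1)$ and the polynomial error term $12\,\varepsilon\,|f|\,\eta^{2}\,\|T\|$ from Lemma~\ref{lma:polybounds} is harmless once $\varepsilon$ is small. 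The whole argument therefore reduces to exhibiting $(x_0,w_0)$ with small residual $|f(x_0,w_0)|$ together with a right inverse $T$ of $Df(x_0,w_0)$ whose operator norm is controlled.

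Next I would make the parameter count of \textsection\ref{quat} do real work. In the interior of the interval the system has $\sdim{N}{\HH\Proj^{d-1}} + 3N + \dim\operatorname{Sp}(d)$ more variables than equations, so $Df$ at a genuine tight simplex is surjective precisely when its kernel consists only of the tangent directions to the orbit of $\operatorname{Sp}(1)^N \times \operatorname{Sp}(d)$ and to the moduli space, with nothing extra. The crucial analytic input is a \emph{uniform} version of this: a constant $C$ independent of $d$, and a family of near-solutions $(x_0^{(d)},w_0^{(d)})$ with $|f(x_0^{(d)},w_0^{(d)})| \to 0$ admitting right inverses with $\|T^{(d)}\| \le C$. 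Given such a family, choosing $\varepsilon^{(d)} \asymp \|T^{(d)}\| \cdot |f(x_0^{(d)},w_0^{(d)})|$ makes the hypothesis of Corollary~\ref{cor:polybounds} hold for all large $d$, and its conclusion—existence of an exact tight simplex, with the moduli space locally of dimension $\sdim{N}{\HH\Proj^{d-1}}$—is exactly the assertion of the conjecture (the $o(1)$ slack keeping $N$ away from the boundary where the count collapses).

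Producing the family $(x_0^{(d)},w_0^{(d)})$ is the first substantial task. For a single $(N,d)$, a short projected gradient flow on $\sum_{i\ne j}|\langle x_i,x_j\rangle|^2$, keeping the $x_i$ unit vectors and periodically recentering so that $\sum_i x_i x_i^\dagger$ stays near $(N/d)I_d$, converges rapidly in practice. For the asymptotic statement one wants a semi-explicit family: natural candidates are small perturbations of unions of partial systems of mutually unbiased bases in $\HH^d$ (available when $d$ is a power of $4$ by Kantor's construction cited in Table~\ref{tab:qouniv}), images of near-regular spherical configurations, or random unit vectors rescaled so that $\sum_i x_i x_i^\dagger = (N/d)I_d$ exactly. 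The last option satisfies (a) and (c) on the nose and, by concentration of measure, has $\max_{i\ne j}\bigl||\langle x_i,x_j\rangle|^2 - (N-d)/(d(N-1))\bigr| = O(d^{-1/2+o(1)})$, so only (b) is violated, and only slightly; one must then check that this small violation is genuinely reflected in the residual $|f|$ rather than forcing $Df$ toward singularity.

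\textbf{The main obstacle} is the uniform conditioning bound $\|T^{(d)}\| \le C$. The Jacobian of $f$ is a large structured matrix whose rows split into the $N$ normalization equations, the $\binom{N}{2}-1$ equiangularity equations, and the $2d^2-d$ design equations, and whose action mixes all the $x_i$ and $w_i$; one would analyze it by decomposing $\mathcal{H}(\HH^d)$ and the tangent space into $\operatorname{Sp}(d)$-isotypic components and showing that, transverse to the tangent directions of the group orbit and the moduli space, the least singular value stays bounded below uniformly in $d$. Establishing this spectral gap—equivalently, ruling out a family of ever more nearly degenerate approximate tight simplices—is precisely what keeps the statement a conjecture rather than a theorem; it is plausible that the right near-solutions make $Df$ block-triangular up to small error, reducing the bound to the invertibility of a few explicit small blocks, but I do not see how to carry this out in general. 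I would not expect a deformation argument (starting from a known tight simplex, such as a conference-matrix simplex at $N=2d$, and moving $N$ by one) to help, since changing $N$ alters the ambient system discontinuously; the direct effective-implicit-function-theorem route above appears to be the only realistic plan.
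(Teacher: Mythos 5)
This statement is a conjecture in the paper, stated without proof; there is no paper argument to compare against, so the pertinent question is whether your proposal would actually settle it. You correctly say that it would not. Your strategy---apply Corollary~\ref{cor:polybounds} to the system of Proposition~\ref{prp:general} with a near-solution $(x_0,w_0)$ and an approximate right inverse $T$---is exactly the paper's machinery for each individual $(N,d)$, and you have identified the missing ingredient for the asymptotic statement: a uniform bound $||T^{(d)}|| \le C$ across the family, together with residuals $|f(x_0^{(d)},w_0^{(d)})|$ decaying quickly enough to beat the error terms in Lemma~\ref{lma:polybounds}. That diagnosis is exactly right; the authors leave the statement as a conjecture precisely because no such uniform conditioning estimate is available.

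Two points sharpen why the gap is real and not merely a matter of effort. First, the parameter count does not by itself imply a well-conditioned Jacobian: Table~\ref{tbl:general3} reports rank deficiency at $N=12$ and $N=13$ in $\HH\Proj^2$ even though $\sdim{12}{\HH\Proj^2}=7$ and $\sdim{13}{\HH\Proj^2}=4$ are strictly positive, so degeneracy of $Df$ can occur strictly in the interior of the ``expected'' range, and any purported family of near-solutions must be shown to avoid such strata. Second, the paper's own remedies for these degeneracies (\textsection\ref{subsec:1213}, \textsection\ref{subsec:15}) are bespoke---imposing cyclic symmetry or reparametrizing via a Gram determinant---and there is no indication that they assemble into a uniform scheme; the isotypic block decomposition of $Df$ you gesture at is the natural thing to try, but whether it yields uniformly invertible blocks is genuinely open. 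In short: correct approach, correct and concrete identification of the obstruction, and no overclaim---your proposal accurately reflects the status of the statement as a conjecture.
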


\begin{remark} \label{rmk:whenrapplies}
We emphasize that $\sdim{N}{\HH\Proj^{d-1}}$ is \emph{defined} by
\eqref{eq:generaldim}.  The assertion that the moduli space of simplices locally has
dimension $\sdim{N}{\HH\Proj^{d-1}}$ is justified only when (i) we find a
numerical solution of the conditions of Proposition \ref{prp:general} to
which Theorem~\ref{thm:implicit} applies, and (ii) the action of the symmetry
group on our simplex has finite (zero-dimensional) stabilizer.  Regarding
(ii), we have checked this rigorously in all the cases in
part (a) of Tables~\ref{tbl:general3}--\ref{tbl:general6} (see
\textsection\ref{subsec:stabilizers}).  In Table~\ref{tbl:generalop2}, which
deals with $\OO\Proj^2$, only $5$-point simplices fail to satisfy condition
(ii). In that case there is a $3$-dimensional stabilizer. We accounted for
this in Table~\ref{tbl:generalop2}.
\end{remark}

\begin{remark}
Similar calculations based on the real and complex analogues of
Proposition~\ref{prp:general} yield
\[
\sdim{N}{\R\Proj^{d-1}} = dN - \frac{N(N-1)}{2} - d^2 + 1
\]
and
\[
\sdim{N}{\C\Proj^{d-1}} = (2d-1)N - \frac{N(N-1)}{2} - 2d^2 + 2.
\]
Neither quantity is ever positive when $d>2$, which explains why our methods
do not apply to real and complex projective spaces: the system of equations
cannot be nonsingular for any tight simplex whose stabilizer is
zero-dimensional.
\end{remark}

When we attempt to apply Proposition \ref{prp:general}, there are
three possible outcomes:
\begin{enumerate}
\item[(a)] we find an approximate numerical solution with
    surjective Jacobian, in which case we can prove
    existence using Theorem \ref{thm:implicit},
\item[(b)] we find an approximate numerical solution, but
    the Jacobian at that point is not surjective, or
\item[(c)] we cannot even find an approximate numerical
    solution to the system, in which case we conjecture
    that there exists no tight simplex.
\end{enumerate}
In a few cases we encountered a fourth possibility:
\begin{enumerate}
\item[(d)] we find what appears to be an approximate solution but we are
unable to converge to greater precision.
\end{enumerate}
When this situation arose we tried both Newton's method and
gradient descent for energy minimization (see
\textsection\ref{subsec:finding}), but we were unable to
improve the error in the constraints beyond $10^{-5}$ (as
compared to a numerical error of about $10^{-15}$ for cases (a)
and (b)). In these cases we make no conjecture as to existence
or nonexistence of solutions.

Tables \ref{tbl:general3}, \ref{tbl:general4},
\ref{tbl:general5}, and \ref{tbl:general6} list our results for
$d=3$, $d=4$, $d=5$, and $d=6$, respectively.  Each table lists
all values of $N$ from $d+2$ to the upper bound $2d^2-d$ from
Proposition~\ref{prp:bounds}. There is no intrinsic problem
with extending to larger dimensions, although the calculations
become increasingly time-consuming.

\begin{theorem}
For the values of $(N,d)$ listed in part (a) of
Tables~\ref{tbl:general3} through \ref{tbl:general6}, there
exist tight $N$-point simplices in $\HH\Proj^{d-1}$.
\end{theorem}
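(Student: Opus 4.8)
The plan is to reduce the statement to a finite sequence of computer-assisted verifications, each of which applies Corollary~\ref{cor:polybounds} (and hence Theorem~\ref{thm:implicit}) to the polynomial system attached to a single pair $(N,d)$ via Proposition~\ref{prp:general}. For each such pair I would first set up the map $f\colon \R^{N(4d+1)}\to\R^m$, where $m = N + \bigl(\tfrac{N(N-1)}{2}-1\bigr) + (2d^2-d)$, whose three blocks of coordinates encode: (i) $|x_i|^2 - 1$ for $i=1,\dots,N$; (ii) the differences $|\langle x_i,x_j\rangle|^2 - |\langle x_{i'},x_{j'}\rangle|^2$ over a spanning set of $\tfrac{N(N-1)}{2}-1$ pairs of index-pairs (e.g.\ comparing each $(i,j)$ to a fixed reference pair); and (iii) the real and imaginary components of $\sum_{i=1}^N w_i\, x_i x_i^\dagger - I_d$ in $\mathcal{H}(\HH^d)$. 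Here the $w_i$ are adjoined as extra real unknowns, so the variables are the $4dN$ real coordinates of the $x_i$ together with the $N$ reals $w_i$; note $N(4d+1) = 4dN + N$ matches the count in the paper. By Proposition~\ref{prp:general}, any real zero of $f$ is a tight simplex (with $w_i = d/N$ automatically), so it suffices to produce one zero for each $(N,d)$ in part~(a) of the tables.

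Next, for each $(N,d)$ I would take the numerical approximate solution $x_0$ found by the methods of \textsection\ref{subsec:finding} (Newton's method or energy-minimizing gradient descent), together with a numerically computed approximate right inverse $T$ of $Df(x_0)$ — obtained, say, from a least-norm pseudoinverse. One then chooses a radius $\varepsilon>0$ and checks the single scalar inequality of Corollary~\ref{cor:polybounds},
\[
\|Df(x_0)\circ T - \id_{\R^m}\| \;+\; \varepsilon\,|f|\,d_\ast(d_\ast-1)\,\eta^{d_\ast-2}\,\|T\|
\;<\; 1 - \frac{\|T\|\cdot|f(x_0)|}{\varepsilon},
\]
where $d_\ast$ is the total degree of $f$ (which is $4$, since the entries are at most quartic in the real coordinates — quadratic forms $|x_i|^2$ and the products $x_i x_i^\dagger$ are degree $2$, and $|\langle x_i,x_j\rangle|^2$ is degree $4$), $\eta = \max(1,|x_0|_\infty+\varepsilon)$, and $|f|$ is the coefficient $\ell_1$-norm from the definition preceding Lemma~\ref{lma:polybounds}. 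All quantities here are computed rigorously with interval arithmetic, as described in \textsection\ref{algs}: $\|Df(x_0)\circ T-\id\|$ and $\|T\|$ are operator norms of explicit matrices with interval entries, $|f(x_0)|$ is bounded by evaluating the polynomial at the interval point $x_0$, and $|f|$ is read off from the symbolic form of $f$. If the inequality holds for some admissible $\varepsilon$, Corollary~\ref{cor:polybounds} yields an exact $x_\ast \in B(x_0,\varepsilon)$ with $f(x_\ast)=0$, i.e.\ a genuine tight $N$-point simplex in $\HH\Proj^{d-1}$. Repeating this for every entry in part~(a) of Tables~\ref{tbl:general3}--\ref{tbl:general6} proves the theorem.

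The main obstacle is the same one flagged in the discussion after Theorem~\ref{thm:implicit}: the inequality can only succeed if $Df(x_0)$ has a genuine right inverse, i.e.\ $Df$ is surjective at (a point near) the solution, and for this the \emph{choice} of the $\tfrac{N(N-1)}{2}-1$ comparison constraints in block~(ii), and the introduction of the weights $w_i$ as free variables in block~(iii), is essential — the naive systems (all inner products literally equal to $(N-d)/(d(N-1))$, or $\sum|\langle x_i,x_j\rangle|^2 = N^2/d$) have identically singular Jacobians at every tight simplex, as the paper explains. Granting Proposition~\ref{prp:general}, surjectivity of $Df(x_0)$ is not something one proves a priori; it is certified a posteriori precisely by the existence of a $T$ making $\|Df(x_0)\circ T - \id\|$ small, which is exactly what the numerical search must find. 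The remaining difficulties are computational rather than conceptual: obtaining $x_0$ to sufficiently high precision (the cases of type~(d) are exactly where this fails), controlling the interval-arithmetic error so that the strict inequality is verified, and — for the entries where the symmetry group has a positive-dimensional stabilizer (only the $5$-point case in $\OO\Proj^2$, per Remark~\ref{rmk:whenrapplies}, which does not arise in the quaternionic tables) — this does not affect the present theorem at all, since Theorem~\ref{thm:implicit} still produces an exact zero regardless of the stabilizer dimension. Thus the proof is a finite check, and its only genuinely delicate ingredient is the design of the constraint system, which has already been carried out in Proposition~\ref{prp:general}.
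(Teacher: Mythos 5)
Your proposal matches the paper's approach: set up the polynomial system from Proposition~\ref{prp:general} (with the $w_i$ adjoined as free variables), find a numerical approximate root $x_0$ and an approximate right inverse $T$ of $Df(x_0)$, and then certify existence via Corollary~\ref{cor:polybounds}. One small wrinkle worth noting: the paper's primary proof certificates (the \textsc{PARI/GP} files) do not use interval arithmetic at all — they choose $x_0$ and $T$ to be exact rational vectors/matrices with denominator $2^{50}$, then verify the Corollary~\ref{cor:polybounds} inequality with exact rational arithmetic (this is why Lemma~\ref{lma:polybounds} is needed, since one cannot simply evaluate $Df$ over the whole cube). Interval arithmetic appears in the paper only in the alternative \textsc{QNewton} certificates, which apply Theorem~\ref{thm:implicit} directly by bounding $\|Df(x)\circ T - \id\|$ over the cube via interval-valued entries, bypassing Lemma~\ref{lma:polybounds} entirely. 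Your hybrid — Corollary~\ref{cor:polybounds} with interval evaluation of $|f(x_0)|$, $\|T\|$, etc.\ — is a perfectly valid third variant, but it is slightly wasteful: if the data are exact rationals one gets exact values for free, and if one already has an interval Jacobian one does not need the polynomial degree bound. Everything else (the choice of constraints to obtain a surjective Jacobian, the degree-$4$ bound, the observation that stabilizer dimension is irrelevant to bare existence) is exactly right.
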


In fact, near the points found by our computer program and
exhibited in the auxiliary files, the moduli space of simplices
has dimension exactly $\sdim{N}{\HH\Proj^{d-1}}$.
In the case of a
singular Jacobian (part (b) of the tables) we report the
\emph{rank deficiency} (i.e., $\dim W - \mathop{\mathrm{rank}} Df(x_*)$ in the
terminology of Theorem~\ref{thm:implicit}).

\begin{table}
\caption{Cases in $\HH\Proj^2$: (a) proven existence of tight simplices;
(b) singular Jacobian; (c) conjectured nonexistence.}
\label{tbl:general3}

\centering
\begin{tabular}{>{\centering\arraybackslash\hspace{0pt}}p{0.3\textwidth}>{\centering\arraybackslash\hspace{0pt}}p{0.3\textwidth}>{\centering\arraybackslash\hspace{0pt}}p{0.3\textwidth}}
{\begin{tabular}[t]{cc}
\toprule
$N$ & {$\sdim{N}{\HH\Proj^2}$} \\
\midrule
5 & 0 \\
6 & 4 \\
7 & 7 \\
8 & 9 \\
9 & 10 \\
10 & 10 \\
11 & 9 \\
\bottomrule
\end{tabular}

\smallskip

(a)} &
{\begin{tabular}[t]{cc}
\toprule
$N$ & {rank deficiency} \\
\midrule
12 & 2 \\
13 & 2 \\
15 & 14 \\
\bottomrule
\end{tabular}

\smallskip

(b)} &
{\begin{tabular}[t]{c}
\toprule
$N$ \\
\midrule
14 \\
\bottomrule
\end{tabular}

\smallskip

(c)}
\end{tabular}
\end{table}

\begin{table}
\caption{Cases in $\HH\Proj^3$: (a) proven existence of tight simplices;
(c) conjectured nonexistence.}
\label{tbl:general4}

\centering
\begin{tabular}{>{\centering\arraybackslash\hspace{0pt}}p{0.5\textwidth}>{\centering\arraybackslash\hspace{0pt}}p{0.2\textwidth}}
{\begin{tabular}[t]{ccccc}
\toprule
$N$ & $\sdim{N}{\HH\Proj^3}$ & {\hspace{8pt}} & $N$ & $\sdim{N}{\HH\Proj^3}$ \\
\midrule
 6 & 0       & {}     &  14 & 28   \\
 7 & 7       & {}     &  15 & 27   \\
 8 & 13      & {}     &  16 & 25   \\
 9 & 18      & {}     &  17 & 22   \\
10 & 22      & {}     &  18 & 18   \\
11 & 25      & {}     &  19 & 13   \\
12 & 27      & {}     &  20 &  7   \\
13 & 28      & {}     &  21 &  0   \\
\bottomrule
\end{tabular}

\smallskip

(a)} &
{\begin{tabular}[t]{c}
\toprule
$N$ \\
\midrule
22--28 \\
\bottomrule
\end{tabular}

\smallskip

(c)}
\end{tabular}
\vskip 0.5cm 
\end{table}

\begin{table}
\caption{Cases in $\HH\Proj^4$: (a) proven existence of tight
simplices; (c) conjectured nonexistence (proven for $N=7$); (d)
ambiguous numerical results.} \label{tbl:general5}

\centering
\begin{tabular}{>{\centering\arraybackslash\hspace{0pt}}p{0.75\textwidth}>{\centering\arraybackslash\hspace{0pt}}p{0.2\textwidth}}
{\begin{tabular}[t]{cccccccc}
\toprule
$N$ & $\sdim{N}{\HH\Proj^4}$ & {\hspace{8pt}} & $N$ & $\sdim{N}{\HH\Proj^4}$ & {\hspace{8pt}} & $N$ & $\sdim{N}{\HH\Proj^4}$ \\
\midrule
 8 &  9      & {}     &  15 & 51      & {}     &  22 & 44   \\
 9 & 18      & {}     &  16 & 53      & {}     &  23 & 39   \\
10 & 26      & {}     &  17 & 54      & {}     &  24 & 33   \\
11 & 33      & {}     &  18 & 54      & {}     &  25 & 26   \\
12 & 39      & {}     &  19 & 53      & {}     &  26 & 18   \\
13 & 44      & {}     &  20 & 51      & {}     &  27 &  9   \\
14 & 48      & {}     &  21 & 48      \\
\bottomrule
\end{tabular}

\smallskip

(a)} &
{\begin{tabular}[t]{c}
\toprule
$N$ \\
\midrule
7 \\
29--45 \\
\bottomrule
\end{tabular}

\smallskip

(c)

\bigskip

\begin{tabular}[t]{c}
\toprule
$N$ \\
\midrule
28 \\
\bottomrule
\end{tabular}

\smallskip

(d)}
\end{tabular}
\end{table}

\begin{table}
\caption{Cases in $\HH\Proj^5$: (a) proven existence of tight
simplices; (c) conjectured nonexistence (proven for $N=8$); (d)
ambiguous numerical results.} \label{tbl:general6}

\centering
\begin{tabular}{>{\centering\arraybackslash\hspace{0pt}}p{0.75\textwidth}>{\centering\arraybackslash\hspace{0pt}}p{0.2\textwidth}}
{\begin{tabular}[t]{cccccccc}
\toprule
$N$ & $\sdim{N}{\HH\Proj^5}$ & {\hspace{8pt}} & $N$ & $\sdim{N}{\HH\Proj^5}$ & {\hspace{8pt}} & $N$ & $\sdim{N}{\HH\Proj^5}$ \\
\midrule
 9 & 10      & {}     &  18 & 82      & {}     &  27 & 73   \\
10 & 22      & {}     &  19 & 85      & {}     &  28 & 67   \\
11 & 33      & {}     &  20 & 87      & {}     &  29 & 60   \\
12 & 43      & {}     &  21 & 88      & {}     &  30 & 52   \\
13 & 52      & {}     &  22 & 88      & {}     &  31 & 43   \\
14 & 60      & {}     &  23 & 87      & {}     &  32 & 33   \\
15 & 67      & {}     &  24 & 85      & {}     &  33 & 22   \\
16 & 73      & {}     &  25 & 82      & {}     &  34 & 10   \\
17 & 78      & {}     &  26 & 78      \\
\bottomrule
\end{tabular}

\smallskip

(a)} &
{\begin{tabular}[t]{c}
\toprule
$N$ \\
\midrule
 8 \\
36--66 \\
\bottomrule
\end{tabular}

\smallskip

(c)

\bigskip

\begin{tabular}[t]{c}
\toprule
$N$ \\
\midrule
35 \\
\bottomrule
\end{tabular}

\smallskip

(d)}
\end{tabular}
\end{table}

In Table \ref{tbl:general5}, i.e., in $\HH\Proj^4$, we first
observe a gap between the tight simplices of sizes $d$ and
$d+1$ that always exist in $\HH\Proj^{d-1}$ and the range of
simplices for which our method proves existence. The gap is
real: there exists no $7$-point tight simplex in $\HH\Proj^4$,
because of Corollary~\ref{cor:galebound}. Similarly, there
exists no $8$-point tight simplex in $\HH\Proj^5$.

\subsection{$12$- and $13$-point simplices}
\label{subsec:1213}

The cases of $12$- and $13$-point simplices are somewhat special: the
system of constraints specified by Proposition \ref{prp:general} has a
rank deficiency.  To prove existence of solutions using Theorem
\ref{thm:implicit}, a different approach is needed.

We take as our starting point the following observation: not only do tight
$12$-point simplices exist (numerically), but actually $12$-point
cyclic-symmetric simplices exist (again, numerically).  By this we mean a
simplex such that, if $(x,y,z) \in \HH^3$ is a point in it, then so are
$(y,z,x)$ and $(z,x,y)$, and these are three distinct points in $\HH\Proj^2$.

We would like to adapt Proposition \ref{prp:general} to find
simplices with cyclic symmetry.  Imposing this symmetry reduces
the number of degrees of freedom we have, but it also reduces
the number of conditions we need to check. Fortunately, we end
up with a set of constraints that has a surjective Jacobian at
a tight simplex.

For convenience we will state the result only for $d=3$, but it
naturally generalizes to any dimension (along the lines of Proposition~\ref{prp:11inG25}).

\begin{proposition} \label{prp:12} Let $\sigma$ be the cyclic-shift
  automorphism $\sigma(a,b,c) = (b,c,a)$.  Suppose $x_1,\dots,x_{3m}
  \in \HH^3$ and $w_1,\dots,w_{3m} \in \R$ satisfy the
  following conditions:
\begin{enumerate}
\item $x_{m+i} = \sigma(x_i)$ for $i=1,\dots,2m$,
\item $w_{m+i} = w_i$ for $i=1,\dots,2m$,
\item $|x_i|^2 = 1$ for $i=1,\dots,m$,
\item the squared inner products $|\langle x_i , x_j \rangle |^2$ for
    $i=1,\dots,m$ and the following values of $j$ are all equal:
    \caseup{(i)} $j=i+m$, \caseup{(ii)} $i < j \le m$, \caseup{(iii)}
    $i+m < j \le 2m$, \caseup{(iv)} $i+2m < j \le 3m$, and
\item the matrix $\sum_{i=1}^{3m} w_i x_i x_i^\dagger$ has $1,1$ entry
    equal to $1$ and vanishing $1,2$ entry.
\end{enumerate}
Then $w_1 = \cdots = w_{3m} = 1/m$ and $\{x_1,\dots,x_{3m}\}$ is a tight
simplex in $\HH\Proj^2$.
\end{proposition}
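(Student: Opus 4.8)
The plan is to follow the proof of Proposition~\ref{prp:general}, using the cyclic symmetry to bootstrap the two scalar equations in condition~(5) up to the full $1$-design identity $\sum_{i=1}^{3m} w_i x_i x_i^\dagger = I_3$. Write $P$ for the $0/1$ permutation matrix with $\sigma(x) = Px$; then $P^3 = I_3$, and $P$ is real orthogonal, so $P^\dagger = P^{-1}$ and $\langle Px, Py\rangle = \langle x,y\rangle$. Iterating condition~(1) shows $\sigma(x_i) = x_{\tau(i)}$ for all $i$, where $\tau$ is the permutation of $\{1,\dots,3m\}$ with $\tau(i) \equiv i+m \pmod{3m}$, and condition~(2) likewise gives $w_{\tau(i)} = w_i$. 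In particular every $x_i$ is a $\sigma$-image of some $x_j$ with $j \le m$, so condition~(3) upgrades to $|x_i|^2 = 1$ for all $i = 1,\dots,3m$.

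First I would check that all off-diagonal squared inner products agree. Since $P$ is orthogonal, $|\langle x_i, x_j\rangle|^2$ depends only on the $\tau$-orbit of the unordered pair $\{i,j\}$. The permutation $\tau$ has order $3$ and acts freely on $\{1,\dots,3m\}$ (no index is fixed, since $m \not\equiv 0 \pmod{3m}$), hence acts freely on $2$-element subsets, so every orbit of pairs has size $3$ and there are $\tfrac13\binom{3m}{2} = m + 3\binom{m}{2}$ of them. A short case analysis shows that the pairs enumerated in parts (i)--(iv) of condition~(4), as $i$ ranges over $\{1,\dots,m\}$, meet each orbit exactly once: part~(i) contributes the $m$ pairs $\{i,i+m\}$ lying inside a single $\tau$-orbit of indices, while parts (ii), (iii), (iv) contribute, for each $i$, the $3(m-i)$ pairs joining index $i$ to an index in each of the other two $\tau$-orbits in turn, and the counts match since $\sum_{i=1}^m 3(m-i) = 3\binom{m}{2}$. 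Hence condition~(4) forces a common value $\alpha := |\langle x_i, x_j\rangle|^2$ for all $i \ne j$.

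Next I would constrain $S := \sum_{i=1}^{3m} w_i x_i x_i^\dagger$. Using $(P x_i)(P x_i)^\dagger = P x_i x_i^\dagger P^{-1}$, then $Px_i = x_{\tau(i)}$, then $w_i = w_{\tau(i)}$, and finally reindexing by the bijection $\tau$,
\[
P S P^{-1} = \sum_{i=1}^{3m} w_i (P x_i)(P x_i)^\dagger = \sum_{i=1}^{3m} w_i x_{\tau(i)} x_{\tau(i)}^\dagger = \sum_{i=1}^{3m} w_{\tau(i)} x_{\tau(i)} x_{\tau(i)}^\dagger = S,
\]
so the Hermitian matrix $S$ commutes with the cyclic permutation $P$. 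A $3\times3$ matrix commutes with $P$ if and only if it is a circulant, and a Hermitian circulant is determined by its $(1,1)$ entry (which must be real) together with its $(1,2)$ entry in $\HH$ (the $(1,3)$ entry being the conjugate of the latter, and the lower triangle filled in accordingly). Condition~(5) says precisely that these two entries equal $1$ and $0$, so $S = I_3$.

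Finally, conditions~(1) and~(2) of Proposition~\ref{prp:general} now hold (unit norms from the first paragraph, common inner product $\alpha$ from the second), and we have just established its condition~(3). Applying Proposition~\ref{prp:general} with $d=3$ and $N=3m$ yields $w_1 = \cdots = w_{3m} = 3/(3m) = 1/m$ and the conclusion that $\{x_1,\dots,x_{3m}\}$ is a tight simplex in $\HH\Proj^2$. I expect the only genuinely delicate point to be the orbit bookkeeping that shows condition~(4) supplies a full set of orbit representatives; the conceptual heart of the argument is the observation that a Hermitian circulant has just two free entries, which is exactly what lets the pair of equations in condition~(5) stand in for the nine real equations in $S = I_3$.
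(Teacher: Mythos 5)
Your proof is correct and follows essentially the same route as the paper's: use the cyclic symmetry to verify the simplex condition, show that the Hermitian matrix $\sum_i w_i x_i x_i^\dagger$ must be a circulant and hence is forced to equal $I_3$ by the two scalar equations in condition~(5), and then invoke Proposition~\ref{prp:general}. The only substantive difference is that you spell out the $\tau$-orbit bookkeeping showing that condition~(4) yields a full set of representatives (the paper compresses this into ``it easily follows''); your phrase ``an index in each of the other two $\tau$-orbits'' is a slip---for fixed $j>i$ the three indices $j,j+m,j+2m$ all lie in the \emph{same} $\tau$-orbit of indices, and the three resulting pairs $\{i,j\},\{i,j+m\},\{i,j+2m\}$ represent the three distinct $\tau$-orbits of cross-pairs between the orbit of $i$ and the orbit of $j$---but the counts and the conclusion are correct.
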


\begin{proof}
By repeated applications of $\langle x_i, x_j \rangle = \langle
\sigma(x_i), \sigma(x_j) \rangle$, it easily follows that
$\{x_1,\dots,x_{3m}\}$ is a simplex.

Having shown that, now consider the matrix $M = \sum_{i=1}^{3m}
w_i x_i x_i^\dagger$.  Rewriting $M$ as $\sum_{i=1}^m w_i (x_i
x_i^\dagger + \sigma(x_i) \sigma(x_i)^\dagger + \sigma^2(x_i)
\sigma^2(x_i)^\dagger)$, we see that $M$ is cyclic-symmetric;
in other words, it is invariant under conjugation by the permutation
$\sigma$.  Of course $M$ is also Hermitian.  Combining these
two properties, it must be of the form
\[
M = \begin{pmatrix}
  r & s & \bar s \\
  \bar s & r & s \\
  s & \bar s & r
\end{pmatrix}
\]
for some $r \in \R$ and $s \in \HH$.  The last condition in the proposition
statement forces $r = 1$ and $s = 0$, so in fact $M = I_3$.

Therefore, $\{x_1,\dots,x_{3m}\}$ is a simplex with
$\sum_{i=1}^{3m} w_i x_i x_i^\dagger = I_3$, and we complete
the proof by applying Proposition~\ref{prp:general}.
\end{proof}

Applying the constraints in the above proposition with $m=4$,
we get a surjective Jacobian in Theorem~\ref{thm:implicit},
which proves the following result.

\begin{theorem} \label{thm:12}
There is a tight simplex of $12$ points in $\HH\Proj^2$.  In fact, there is
such a tight simplex with cyclic symmetry.
\end{theorem}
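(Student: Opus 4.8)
The plan is to apply Proposition~\ref{prp:12} with $m=4$ (so $3m=12$), which reduces the existence of a cyclic-symmetric tight $12$-point simplex in $\HH\Proj^2$ to the solvability of a single system of polynomial equations, and then to certify a numerically obtained approximate solution using Theorem~\ref{thm:implicit}. Concretely, the unknowns are the four ``seed'' vectors $x_1,\dots,x_4 \in \HH^3$ together with four real weights $w_1,\dots,w_4$, a total of $4\cdot 12 + 4 = 52$ real variables; conditions (1) and (2) of Proposition~\ref{prp:12} are absorbed into this parametrization (the remaining eight points are just the cyclic shifts), and conditions (3)--(5) become a polynomial map $f$ to a Euclidean space $W$ (the normalizations (3) contribute four equations, the equalities among the listed squared inner products in (4) contribute one fewer equation than the number of listed pairs, and (5) contributes one equation pinning the $1,1$ entry of $\sum_i w_i x_i x_i^\dagger$ to $1$ and four more forcing the quaternionic $1,2$ entry to vanish). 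A key point to highlight is that imposing cyclic symmetry is exactly what makes condition (5) so short: the symmetry argument in the proof of Proposition~\ref{prp:12} shows that pinning down only the $1,1$ and $1,2$ entries forces $\sum_i w_i x_i x_i^\dagger = I_3$, whereas without symmetry one would need all $2d^2-d = 15$ entries. Together with the reduction in the number of variables, this is what one hopes will cure the rank deficiency of the naive system.

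With the system set up, the rest is computational. First I would run Newton's method (or gradient descent on a suitable energy; see \textsection\ref{subsec:finding} and \textsection\ref{algs}) to produce an explicit approximate solution $x_0$, i.e., four seed vectors and four weights with residual $|f(x_0)|$ on the order of $10^{-15}$, to be recorded in the auxiliary files. Then I would compute $Df(x_0)$, check numerically that it is surjective, and take $T$ to be an approximate right inverse. Choosing a radius $\varepsilon$, I would finally verify
\[
||Df(x)\circ T - \id_W|| < 1 - \frac{||T||\cdot |f(x_0)|}{\varepsilon}
\]
for all $x \in B(x_0,\varepsilon)$, either via interval arithmetic or, since $f$ is polynomial, via Corollary~\ref{cor:polybounds} using the explicit bound on $||Df(x)-Df(x_0)||$ from Lemma~\ref{lma:polybounds}. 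Theorem~\ref{thm:implicit} then produces an exact $x_* \in B(x_0,\varepsilon)$ with $f(x_*)=0$, and Proposition~\ref{prp:12} promotes this to a genuine tight $12$-point simplex in $\HH\Proj^2$; it is cyclic-symmetric by construction, which is the ``in fact'' in the theorem. (Theorem~\ref{thm:implicit} also records that the local solution set is a manifold, giving the dimension statement quoted after the theorem in the surrounding text.)

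The main obstacle is the surjectivity of $Df(x_0)$. As the discussion before Proposition~\ref{prp:general} emphasizes, the most natural formulations of the tight-simplex conditions have Jacobians that are singular at \emph{every} solution, because the defining quantities sit at the boundary of an inequality, so Theorem~\ref{thm:implicit} cannot possibly apply to them; for $N=12$ (and $N=13$) even Proposition~\ref{prp:general} fails in this way, which is why a new formulation is needed. The real content of the argument is therefore the verification that the reformulated, cyclic-symmetric system of Proposition~\ref{prp:12} genuinely has a surjective Jacobian at the numerical solution---this is not something one can predict a priori and must simply be checked. A secondary, purely technical hurdle is performing the interval-arithmetic or Lemma~\ref{lma:polybounds}-based estimate rigorously over the whole ball $B(x_0,\varepsilon)$, so that the displayed inequality holds with a provable margin rather than merely a numerical one.
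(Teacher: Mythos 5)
Your proposal is correct and follows exactly the route the paper takes: apply Proposition~\ref{prp:12} with $m=4$ to get a polynomial system in the four seed vectors and four weights, find a numerical approximate zero, and certify it with Theorem~\ref{thm:implicit} via Corollary~\ref{cor:polybounds}. Your variable and constraint counts (52 unknowns; $4+21+5=30$ equations), and in particular your observation that cyclic symmetry is what collapses the $2d^2-d=15$ conditions of Proposition~\ref{prp:general}(c) down to the five in Proposition~\ref{prp:12}(e), are both accurate and correctly identify why this reformulation cures the rank deficiency of the generic system.
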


Experimentally it appears that tight simplices with cyclic
symmetry exist in other cases (e.g., $6$- and $9$-point
simplices in $\HH\Proj^2$).  In those cases we do not need to
use the symmetry to establish the existence of tight simplices,
though.

For $13$-point simplices, we wish to follow a similar approach
to bypass the rank-deficiency issue, but we must allow fixed
points of the cyclic shift.  In fact, there are
cyclic-symmetric $13$-point tight simplices consisting of $12$
points with cyclic symmetry as above (i.e., four equivalence
classes under the cyclic-shift operator) plus one extra point
which is invariant under the cyclic-shift operator.

\begin{proposition} \label{prp:13} Let $\sigma$ be the cyclic-shift
  automorphism $\sigma(a,b,c) = (b,c,a)$.  Suppose $x_1,\dots,x_{3m}
  \in \HH^3$ satisfy the following conditions:
\begin{enumerate}
\item $x_{m+i} = \sigma(x_i)$ for $i=1,\dots,2m$,
\item $|x_i|^2 = 1$ for $i=1,\dots,m$,
\item the squared inner products $|\langle x_i , x_j
    \rangle |^2$ for $i=1,\dots,m$ and the following values
    of $j$ are all equal: \caseup{(i)} $j=i+m$,
    \caseup{(ii)} $i < j \le m$, \caseup{(iii)} $i+m < j
    \le 2m$, \caseup{(iv)} $i+2m < j \le 3m$, and
\item the $1,2$ entry of the matrix $\sum_{i=1}^{3m} x_i x_i^\dagger$
  has real part $1/6$ and magnitude $1/3$.
\end{enumerate}
Then there is a unique point $x_{3m+1} \in \HH\Proj^2$ such that
$\{x_1,\dots,x_{3m},x_{3m+1}\}$ is a tight simplex, and that point satisfies
$\sigma(x_{3m+1}) = x_{3m+1}$.
\end{proposition}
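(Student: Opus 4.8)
The plan is to follow the template of the proof of Proposition~\ref{prp:12}: conditions (1)--(3) will force $\{x_1,\dots,x_{3m}\}$ to be an equidistant set whose ``frame matrix'' $M := \sum_{i=1}^{3m} x_i x_i^\dagger$ is circulant, and condition (4) will be precisely what is needed to turn $\frac{3m+1}{3}I_3 - M$ into a genuine rank-one projection matrix, which will be the adjoined point $x_{3m+1}$.

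First I would unwind conditions (1)--(3). Exactly as in the proof of Proposition~\ref{prp:12}, condition (1) makes $\{x_1,\dots,x_{3m}\}$ invariant (as a set, in three orbits of size~$3$) under the permutation matrix $P$ realizing $\sigma$, and chasing $\langle x_i,x_j\rangle = \langle\sigma x_i,\sigma x_j\rangle$ together with (2) and (3) shows that all $|x_i|^2 = 1$ and that all $|\langle x_a,x_b\rangle|^2$ with $a\neq b$ equal a common value $\alpha$ (so, once we see $\alpha<1$, the $3m$ points are distinct). Since the set is $P$-invariant we have $PMP^{-1} = M$, so the Hermitian matrix $M$ is circulant,
\[
M = \begin{pmatrix} m & s & \bar s \\ \bar s & m & s \\ s & \bar s & m \end{pmatrix},
\]
with $s$ the $1,2$ entry from condition (4) (so $\RealPart s = 1/6$, $|s|=1/3$) and diagonal $m$ because $\Tr M = \sum_i |x_i|^2 = 3m$. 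Comparing the identity $\Tr(M^2) = \sum_{i,j}|\langle x_i,x_j\rangle|^2 = 3m + 3m(3m-1)\alpha$ with the circulant computation $\Tr(M^2) = 3(m^2 + 2|s|^2) = 3m^2 + 2/3$ and solving gives $\alpha = (3m-2)/(9m)$, which is exactly the tight value $(N-d)/(d(N-1))$ for $N = 3m+1$ and $d = 3$. (This is where the hypothesis $|s| = 1/3$ enters.)

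The heart of the matter is to show that $\Pi_* := \frac{3m+1}{3}I_3 - M$ is an orthogonal projection of rank one. It is Hermitian with $\Tr\Pi_* = (3m+1) - 3m = 1$, so it suffices to verify $\Pi_*^2 = \Pi_*$: a Hermitian idempotent matrix on $\HH^3$ has eigenvalues in $\{0,1\}$, and trace~$1$ then forces exactly one eigenvalue to be~$1$, so $\Pi_*$ is the orthogonal projection onto a quaternionic line and hence equals $x_{3m+1} x_{3m+1}^\dagger$ for a unit vector $x_{3m+1}$, unique up to right multiplication by a unit quaternion --- a well-defined point of $\HH\Proj^2$. Writing $\Pi_* = \tfrac13 I_3 - (M - mI_3)$ and squaring, the identity $\Pi_*^2 = \Pi_*$ reduces to $|s|=1/3$ (a hypothesis) together with $3\bar s^2 = -s$. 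The latter also follows from the hypotheses: putting $s = u/3$ with $|u| = 1$, the condition $\RealPart s = 1/6$ gives $\RealPart u = 1/2$, so $u = e^{\pi\hat n/3}$ for a unit pure-imaginary quaternion $\hat n$, whence $u^3 = e^{\pi\hat n} = -1$, which is equivalent to $3\bar s^2 = -s$. I expect this to be the only real obstacle: one must track noncommutativity when squaring the circulant (the relevant off-diagonal entry of $(M-mI_3)^2$ is $\bar s^2$, not twice anything) and verify $u^3 = -1$ by hand. The spurious solution $u=-1$, which also makes $\Pi_*$ idempotent but produces a different point, is precisely the branch excluded by the hypothesis $\RealPart s = 1/6$.

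Finally I would check that adjoining $x_{3m+1}$ yields a tight simplex and that it is forced. By construction $\sum_{i=1}^{3m+1} x_i x_i^\dagger = M + \Pi_* = \tfrac{3m+1}{3}I_3$, which is exactly the condition for $\{x_1,\dots,x_{3m+1}\}$ to be a tight simplex \emph{provided} the $3m+1$ points are equidistant. For $i \le 3m$, using $|x_i|^2 = 1$ and $x_i^\dagger M x_i = \sum_{j=1}^{3m}|\langle x_i,x_j\rangle|^2 = 1 + (3m-1)\alpha$, one computes
\[
|\langle x_{3m+1}, x_i\rangle|^2 = x_i^\dagger \Pi_* x_i = \tfrac{3m+1}{3} - \big(1 + (3m-1)\alpha\big) = \alpha,
\]
the last step by the value of $\alpha$ found above; so $x_{3m+1}$ lies at the common distance from every $x_i$, and in particular is distinct from them since $\alpha < 1$. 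Thus $\{x_1,\dots,x_{3m+1}\}$ is a tight $(3m+1)$-point simplex. For uniqueness, any tight $(3m+1)$-simplex extending the given configuration must satisfy $\sum_{i=1}^{3m+1} x_i x_i^\dagger = \tfrac{3m+1}{3}I_3$, which forces its final projection matrix to be $\Pi_*$; and since $\Pi_*$ commutes with $P$ it is cyclically invariant, so $\sigma(x_{3m+1}) = x_{3m+1}$ as a point of $\HH\Proj^2$. Applied with $m=4$, once Theorem~\ref{thm:implicit} has supplied twelve points satisfying (1)--(4), this gives the $13$-point tight simplex in $\HH\Proj^2$.
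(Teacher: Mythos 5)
Your proof is correct and follows essentially the same strategy as the paper's: cyclic symmetry makes $M$ circulant, the hypotheses on the $1,2$ entry are exactly what makes $\Pi_* = \tfrac{3m+1}{3}I_3 - M$ a rank-one projection, and the design identity $M + \Pi_* = \tfrac{3m+1}{3}I_3$ then yields tightness and uniqueness. The only real difference is a small one in verifying equidistance of the new point: you solve for $\alpha = (3m-2)/(9m)$ explicitly from $\Tr(M^2)$ and then compute $x_i^\dagger \Pi_* x_i = \alpha$ directly, whereas the paper avoids ever evaluating $\alpha$ by pairing the design identity against $\Pi_*$ and $\Pi_i$ and subtracting; both are valid.
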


\begin{proof}
  A tight $(3m+1)$-point simplex $\{x_1,\dots,x_{3m+1}\}$ must satisfy
\[
\sum_{i=1}^{3m+1} x_i x_i^\dagger = \frac{3m+1}{3} I_3.
\]
Thus the
  matrix $x_{3m+1} x_{3m+1}^\dagger$ is determined by the other data;
  since a point in projective space is determined by its projection
  matrix, this proves uniqueness.  It also proves that, if such a
  point $x_{3m+1}$ exists, then it must satisfy $\sigma(x_{3m+1}) =
  x_{3m+1}$ up to scalar multiplication (by a cube root of unity in $\HH$);
  this is because otherwise
\[
\sigma(\{x_1,\dots,x_{3m},x_{3m+1}\}) =
\{x_1,\dots,x_{3m},\sigma(x_{3m+1})\}
\]
would be a distinct tight simplex.

Define $M = \sum_{i=1}^{3m} x_i x_i^\dagger$.  This matrix is
Hermitian and cyclic-symmetric, so as in the proof of Proposition
\ref{prp:12} it is of the form
\[
M = \begin{pmatrix}
  r & s & \bar s \\
  \bar s & r & s \\
  s & \bar s & r
\end{pmatrix}
\]
for some $r \in \R$ and $s \in \HH$.  Each projection $x_i x_i^\dagger$ has
trace $1$, so $\Tr M = 3m$ and thus $r = m$.  Let
\begin{align*}
\Pi &:= \frac{3m+1}{3} I_3 - M\\
& = \begin{pmatrix}
1/3 & -s & -\bar s \\
-\bar s & 1/3 & -s \\
-s & -\bar s & 1/3
\end{pmatrix}.
\end{align*}
Being Hermitian and of trace $1$, $\Pi$ is a projection matrix of rank $1$
iff $3 s^2 = -\bar s$, as one can see by solving $\Pi^2 = \Pi$. The last
hypothesis in the proposition statement implies that $-3s$ is a cube root of
unity in $\HH$, from which we see that this condition is satisfied.

Let $x_{3m+1} \in \HH\Proj^2$ be the point satisfying $\Pi = x_{3m+1}
x_{3m+1}^\dagger$. We know that $\{x_1,\dots,x_{3m}\}$ is a regular simplex,
as in Proposition~\ref{prp:12}. For $i=1,\dots,3m$ define $\Pi_i = x_i
x_i^\dagger$ and let $\alpha$ be the common inner product $\langle \Pi_i ,
\Pi_j \rangle$ (for $i,j \le 3m$ with $i \ne j$). By the definition of $\Pi$,
\begin{equation} \label{eq:prop13}
\Pi + \sum_{i=1}^{3m} \Pi_i = \frac{3m+1}{3} I_3.
\end{equation}
Since $\langle \Pi_i , \Pi_j \rangle = \alpha$ for $i \ne j$ and $\langle
\Pi_i,\Pi_i\rangle = 1$, the symmetry of \eqref{eq:prop13} implies that the
inner products $\langle \Pi, \Pi_i\rangle$ are all equal; call their common
value $\beta$.  Taking the inner product of \eqref{eq:prop13} with $\Pi$ and
$\Pi_i$ yields
\begin{align*}
1 + 3m \beta & = (3m+1)/3\quad\textup{and}\\
\beta + (3m-1)\alpha + 1 &= (3m+1)/3,
\end{align*}
respectively.  Subtracting shows that $\alpha=\beta$, so
$\{x_1,\dots,x_{3m+1}\}$ is a simplex, and it is tight by \eqref{eq:prop13}.
\end{proof}

We get a surjective Jacobian when we apply the conditions of the above
proposition in Theorem \ref{thm:implicit} with $m=4$, proving the
following result.

\begin{theorem} \label{thm:13}
There is a tight simplex of $13$ points in $\HH\Proj^2$.  In fact, there is
such a tight simplex with cyclic symmetry.
\end{theorem}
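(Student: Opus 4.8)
The plan is to prove Theorem~\ref{thm:13} by applying Theorem~\ref{thm:implicit} to the polynomial system extracted from Proposition~\ref{prp:13} with $m = 4$, so that $3m+1 = 13$; once Proposition~\ref{prp:13} is in hand this is a short argument whose content is computational. First I would assemble the polynomial map $f$ whose zero set is cut out by conditions (2)--(4) of Proposition~\ref{prp:13}. The unknowns are $x_1,x_2,x_3,x_4 \in \HH^3$, which I regard as one point of $\R^{48}$; the points $x_5,\dots,x_{12}$ are the formal images $\sigma^j(x_i)$ and contribute no further variables. Condition (2) gives the four equations $|x_i|^2 = 1$ for $i = 1,\dots,4$; condition (3) says that the $22$ squared inner products $|\langle x_i,x_j\rangle|^2$ indexed there are all equal, which I would impose by subtracting one of them as a reference value (equivalently, by adjoining that common value as an extra real unknown); and condition (4) gives the two equations fixing the real part ($= 1/6$) and the squared magnitude ($= 1/9$) of the $1,2$ entry of $\sum_i x_i x_i^\dagger$. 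All of these are polynomials of degree at most $4$ in the $48$ real coordinates, so $f$ fits the framework of \textsection\ref{effective}, and it has strictly more variables than constraints.

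Next I would compute a high-precision numerical solution $x_0$ of $f(x_0) = 0$ by the methods of \textsection\ref{subsec:finding}, form an approximate right inverse $T$ of $Df(x_0)$, fix a radius $\varepsilon$, and rigorously verify the hypothesis \eqref{eq:implicit1} of Theorem~\ref{thm:implicit} on $B(x_0,\varepsilon)$ using interval arithmetic, or equivalently the explicit bound of Corollary~\ref{cor:polybounds}. The one step that cannot be taken for granted is that $Df(x_0)$ is surjective: this is exactly what fails for the naive systems of Proposition~\ref{prp:general} when $N = 12$ or $13$ (see part~(b) of Table~\ref{tbl:general3}), and it is the reason Proposition~\ref{prp:13} introduces a $\sigma$-fixed extra point together with the cube-root-of-unity constraint on the $1,2$ entry. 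Granting surjectivity at $x_0$ --- which the reformulation is engineered to produce, and which one checks numerically --- Theorem~\ref{thm:implicit} yields an exact $x_* \in B(x_0,\varepsilon)$ with $f(x_*) = 0$, and Proposition~\ref{prp:13} then supplies a unique point $x_{13} \in \HH\Proj^2$, necessarily fixed by $\sigma$, such that $\{x_1,\dots,x_{12},x_{13}\}$ is a tight simplex, where $x_1,\dots,x_4$ are read off from $x_*$ and $x_5,\dots,x_{12}$ are their $\sigma$- and $\sigma^2$-images. Because $\{x_1,\dots,x_{12}\}$ is a union of four complete $\sigma$-orbits, the resulting $13$-point tight simplex is $\sigma$-invariant, which establishes both assertions of the theorem.

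The main obstacle is not in the logical skeleton of this argument --- with Proposition~\ref{prp:13} available, the proof is a single application of Theorem~\ref{thm:implicit} --- but in the computer-assisted verification it relies on: one must locate $x_0$ to enough precision that \eqref{eq:implicit1} holds for a workable $\varepsilon$, confirm surjectivity of $Df(x_0)$ (the whole point of the reformulation, and not automatic), and bound $||Df(x)\circ T - \id||$ rigorously over the entire ball $B(x_0,\varepsilon)$. With $m = 4$ the system is small, so none of these should be serious; the genuine difficulty lies upstream, in finding the reformulation of Proposition~\ref{prp:13}, and it mirrors the general phenomenon emphasized in \textsection\ref{effective} that the most straightforward systems defining a tight simplex tend to have singular Jacobians.
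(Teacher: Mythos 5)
Your proposal is correct and matches the paper's approach exactly: the paper's proof of Theorem~\ref{thm:13} is the single sentence that applying the conditions of Proposition~\ref{prp:13} with $m=4$ in Theorem~\ref{thm:implicit} yields a surjective Jacobian, and you have simply fleshed out the same argument (including the correct constraint count and the deduction that $\sigma$-invariance of the $13$th point gives a cyclic-symmetric simplex).
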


Theorems \ref{thm:12} and \ref{thm:13} establish the
existence of tight simplices, and their proof could
also provide the dimension of the space of
tight simplices with cyclic symmetry.  They cannot,
though, tell us the dimension of the full space of tight
simplices.

If Proposition \ref{prp:general} had applied then
we would have concluded that, in some neighborhood,
the space of tight simplices of $12$ (resp., $13$)
points in $\HH\Proj^2$ has dimension $7$ (resp., $4$).
The observed rank deficiency of two has several
possible explanations, including the following:
it might mean that two of the constraints are redundant, so
that the space of tight simplices is two dimensions larger than
predicted; it might mean that the constraints become degenerate at the
solutions, but the space of tight simplices is still a manifold;
or it might mean that the space of tight
simplices is not even locally a manifold.  Based
on numerical evidence (see \textsection\ref{subsec:dimensions}),
we conjecture that the first possibility holds.

\begin{conjecture} \label{conj:dim12and13}
There exists a $12$-point (resp., $13$-point)
tight simplex in $\HH\Proj^2$ such that, in a
neighborhood thereof, the space of
tight simplices has dimension $9$
(resp., $6$).
\end{conjecture}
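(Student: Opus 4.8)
The plan is to prove Conjecture~\ref{conj:dim12and13} by replacing the singular system of Proposition~\ref{prp:general} with a reformulation whose Jacobian is surjective at a tight simplex, so that Theorem~\ref{thm:implicit} applies and certifies both existence and the exact local dimension of the solution manifold. The observed rank deficiency of~$2$ is consistent with exactly two of the constraint equations being redundant in a neighborhood of a tight simplex, and the conjecture is precisely that this is what happens; the content of the proof is to make the redundancy explicit.

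First I would take a high-precision numerical $12$-point (resp., $13$-point) tight simplex $x_*$ together with its associated weights, of the sort produced in the proofs of Theorems~\ref{thm:12} and~\ref{thm:13}, and compute the two-dimensional cokernel of $Df(x_*)$, i.e., a basis for the left null space of the Jacobian of the map $f$ from Proposition~\ref{prp:general}. The hoped-for outcome is that these two functionals reflect genuine polynomial identities: two components of $f$ can be written as polynomial combinations of the others on the common zero locus. The natural source of such a syzygy is the ``just barely'' phenomenon noted after Proposition~\ref{prp:general}, namely that the remaining constraints already force a quadratic quantity such as
\[
\Big\langle \sum_{i} w_i \Pi_i - I_3,\; \sum_{i} w_i \Pi_i - I_3 \Big\rangle
\]
(or a related expression built from $\RealPart\Tr$ of products of the projections $\Pi_i = x_i x_i^\dagger$) to be a sum of squares vanishing on the solution set; the gradient of such an expression then vanishes there, and each tautological inequality of this kind contributes one linear dependence among the gradients of the constraints. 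The crux is to single out two \emph{independent} constraints that can be dropped without changing the zero locus locally, rather than an arbitrary pair---removing the wrong two either introduces spurious solutions or leaves the Jacobian singular. This identification is the step I expect to be the main obstacle, consistent with the paper's recurring theme that the difficulty lies in choosing the right constraints.

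Granting a reduced polynomial map $\tilde f\colon (\HH^3)^N \times \R^N \to \R^{90}$ (resp., $\R^{103}$) whose zero locus agrees with that of $f$ near $x_*$, I would then proceed exactly as in the generic cases: compute an approximate right inverse $T$ of $D\tilde f(x_*)$ and verify rigorously, using interval arithmetic or Corollary~\ref{cor:polybounds} at a sufficiently precise numerical approximation, that the hypotheses of Theorem~\ref{thm:implicit} hold on a small ball $B(x_*,\varepsilon)$. This produces an exact tight simplex nearby and shows that there the solution locus is a $C^\infty$ submanifold whose dimension is the number of variables minus the number of remaining constraints, namely $156 - 90 = 66$ when $N = 12$ and $169 - 103 = 66$ when $N = 13$.

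Finally I would pass to the moduli space, as in \textsection\ref{quat}. The $66$-dimensional solution space records separately the $3N$-dimensional ambiguity in the unit-norm lifts, and it carries an action of the symmetry group $\operatorname{Sp}(3)$ of $\HH\Proj^2$, which has real dimension $21$. Checking---as in Remark~\ref{rmk:whenrapplies} and \textsection\ref{subsec:stabilizers}---that the stabilizer of the simplex we find is finite, the quotient is locally an orbifold of dimension $66 - 3N - 21$, which equals $9$ for $N = 12$ and $6$ for $N = 13$, as claimed. If no clean algebraic reduction presents itself in the second step, a fallback would be to seek an entirely different parametrization---for instance one adapted to the partial cyclic symmetry used in Propositions~\ref{prp:12} and~\ref{prp:13}, or one exploiting Gale duality---but arranging the variable and constraint counts together with nonsingularity is itself delicate, exactly as the introduction cautions.
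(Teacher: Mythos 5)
The statement you are addressing is, in the paper, explicitly left as a conjecture: the authors state after Theorems~\ref{thm:12} and~\ref{thm:13} that they cannot determine the dimension of the full moduli space from their cyclic-symmetric existence proofs, and that the dimension count of $9$ (resp.\ $6$) rests only on the numerical tangent-space sampling heuristic of \textsection\ref{subsec:dimensions} (singular values of perturbed nearby solutions). There is no proof in the paper to compare against.

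Your proposal correctly diagnoses the structure of the problem: the $2$-dimensional cokernel of $Df$ at a tight $12$- or $13$-point simplex is conjecturally caused by two redundant constraints, and if one could produce a reduced polynomial map $\tilde f$ with the same local zero locus but $90$ (resp.\ $103$) equations, then Theorem~\ref{thm:implicit} would certify a $66$-dimensional solution manifold; subtracting $3N$ lift degrees of freedom and $\dim\operatorname{Sp}(3)=21$ then gives $9$ (resp.\ $6$), in agreement with the conjecture. The arithmetic is right, and the quotient-dimension step matches what the paper does in the generic case.

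However, as you yourself flag, the proposal has a genuine gap at its crux: you do not produce the reduced system $\tilde f$. It is not enough to observe that $Df(x_*)$ has a $2$-dimensional left null space and ``drop two constraints'' — dropping arbitrary equations changes the zero locus, and the null space of the Jacobian at one point does not by itself certify a polynomial syzygy holding identically on the variety (let alone that removing two well-chosen equations keeps the local zero set unchanged). Your suggested source of the syzygy — a gradient of a tautologically nonnegative quadratic such as $\langle \sum_i w_i\Pi_i - I_3, \sum_i w_i\Pi_i - I_3\rangle$ vanishing on the solution set — is exactly the kind of ``just barely'' singularity the paper already engineered Proposition~\ref{prp:general} to avoid; the residual rank deficiency at $N=12,13$ must therefore come from a subtler dependence, and no candidate identity is exhibited. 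This identification step is precisely the open problem; until it is carried out, your argument establishes nothing beyond what the paper's numerical evidence already suggests. A comparison with the paper's handling of the $N=15$ case (Proposition~\ref{prp:15}), where a rank deficiency of $14$ is eliminated by an entirely different reformulation using traceless projections, a singular Gram determinant, and a second-zonal-harmonic inequality, shows that finding the right replacement system is not a matter of deleting equations but of discovering a new positivity/degeneracy structure, and there is no indication that the $N=12,13$ case admits an analogue.
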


\subsection{$15$-point simplices}
\label{subsec:15}

The case of $15$ points in $\HH \Proj^2$ is special for a few
reasons. First, it may be the only case in quaternionic
projective spaces where the cardinality upper bound in
Proposition~\ref{prp:bounds} is achieved (beyond $\HH\Proj^1$,
which is $S^4$ and clearly contains a $6$-point simplex). Also,
in comparison with the other cases in Tables~\ref{tbl:general3},
this case has especially large rank
deficiency.  This suggests that the moduli space of simplices is of a
larger dimension than $\sdim{15}{\HH\Proj^2}$. That turns out
to be correct, as we now show.

\begin{proposition} \label{prp:15}
Suppose $x_1,\dots,x_{15} \in \HH^3$ satisfy
\[\langle \Gamma_i , \Gamma_j \rangle = -\frac{1}{21} \quad \text{for }i \ne j,\]
where
\[\Gamma_i := x_i x_i^\dagger - \frac{1}{3} |x_i|^2 I_3.\]
Suppose additionally that $|x_i|^4 \in [1-10^{-6},1+10^{-6}]$ for each $i$.
Then $|x_i| = 1$ and $\{x_1,\dots,x_{15}\}$ is a tight simplex in
$\HH\Proj^2$.
\end{proposition}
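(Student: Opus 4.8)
The plan is to reduce Proposition~\ref{prp:15} to Proposition~\ref{prp:general} by showing that the hypothesis on the Gram matrix of the traceless parts $\Gamma_i$ forces both the normalization $|x_i|=1$ and the $1$-design condition $\sum_i x_i x_i^\dagger = 5 I_3$. The key elementary observations are that $\langle \Gamma_i, I_3\rangle = \Tr \Gamma_i = 0$ by construction, and that $\langle \Gamma_i,\Gamma_i\rangle = |x_i|^4 \langle \Pi_i - \tfrac13 I_3, \Pi_i - \tfrac13 I_3\rangle = |x_i|^4 (1 - \tfrac13) = \tfrac23 |x_i|^4$ where $\Pi_i = x_i x_i^\dagger / |x_i|^2$ is the genuine projection (using $\langle \Pi_i,\Pi_i\rangle = 1$ and $\langle \Pi_i, I_3\rangle = 1$). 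So the Gram matrix $H$ of the $\Gamma_i$ has diagonal entries $\tfrac23|x_i|^4$ and off-diagonal entries $-\tfrac1{21}$.

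First I would exploit the fact that the $\Gamma_i$ live in the $8$-dimensional space of traceless Hermitian $3\times 3$ quaternionic matrices, so the $15\times 15$ matrix $H$ has rank at most $8$, hence is singular; its kernel must contain a nonzero vector. Writing $H = D - \tfrac1{21}(vv^t - I_{15})$ where $D = \operatorname{diag}(\tfrac23|x_i|^4 + \tfrac1{21})$ and $v$ is the all-ones vector, I would analyze when such a matrix can be singular. If all $|x_i|^4$ were exactly equal to some common value $\mu$, then $H = (\tfrac23\mu + \tfrac1{21})I_{15} - \tfrac1{21}vv^t$ has eigenvalues $\tfrac23\mu + \tfrac1{21} - \tfrac{15}{21} = \tfrac23\mu - \tfrac{14}{21} = \tfrac23(\mu - 1)$ (simple) and $\tfrac23\mu + \tfrac1{21}$ (multiplicity $14$); singularity plus the a priori bound $|x_i|^4 \in [1-10^{-6}, 1+10^{-6}]$ then forces $\mu = 1$, i.e.\ $|x_i|=1$. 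The work is to handle the case where the $|x_i|^4$ are not all equal: here I would use the interval hypothesis to show $D$ is within $10^{-6}\cdot\tfrac23$ of a scalar matrix $cI_{15}$ with $c$ near $\tfrac23 + \tfrac1{21}$, note that $-\tfrac1{21}(vv^t - I_{15})$ has eigenvalues $-\tfrac{14}{21}$ (simple) and $\tfrac1{21}$ (multiplicity $14$), and invoke a perturbation/eigenvalue interlacing argument (Weyl's inequality) to conclude that $H$ can have a zero eigenvalue only if the ``$-\tfrac{14}{21}$-direction'' eigenvalue of $D + (-\tfrac1{21})(vv^t - I_{15})$ vanishes, which upon quantifying pins $\tfrac1{15}\sum_i \tfrac23|x_i|^4$ (roughly the average) to equal $\tfrac23$, and then the narrow interval forces each $|x_i|^4 = 1$ individually. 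The main obstacle is making this last step rigorous: ruling out the possibility that unequal norms conspire to keep $H$ singular without all norms being $1$, which is exactly where the explicit bound $10^{-6}$ is used.

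Once $|x_i| = 1$ is established, we have $\Gamma_i = \Pi_i - \tfrac13 I_3$ with $\Pi_i$ genuine rank-one projections, and the Gram relation becomes $\langle \Pi_i,\Pi_j\rangle - \tfrac13 - \tfrac13 + \tfrac13 = \langle\Pi_i,\Pi_j\rangle - \tfrac13 = -\tfrac1{21}$ for $i\neq j$, so all pairwise inner products equal $\alpha := \tfrac13 - \tfrac1{21} = \tfrac{6}{21} = \tfrac27$; thus $\{x_1,\dots,x_{15}\}$ is automatically a regular simplex satisfying condition (2) of Proposition~\ref{prp:general}. To get the $1$-design condition I would set $S := \sum_{i=1}^{15} \Gamma_i = \sum_i \Pi_i - 5 I_3$ (a traceless Hermitian matrix) and compute $\langle S, S\rangle = \sum_{i,j}\langle\Gamma_i,\Gamma_j\rangle = 15\cdot\tfrac23 + 15\cdot 14\cdot(-\tfrac1{21}) = 10 - 10 = 0$, whence $S = 0$, i.e.\ $\sum_i \Pi_i = 5 I_3 = \tfrac{15}{3}I_3$. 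This is exactly the tightness condition from the remark following Proposition~\ref{prp:tightoptimal} (equivalently, apply Proposition~\ref{prp:general} with all $w_i = 1/5$), so $\{x_1,\dots,x_{15}\}$ is a tight simplex in $\HH\Proj^2$, completing the proof. The elegant point is that the clever choice of constraints — the Gram matrix of \emph{traceless} parts rather than of the projections themselves — makes the $1$-design condition fall out of a single norm computation rather than being imposed separately, which is presumably why this formulation yields a surjective Jacobian in Theorem~\ref{thm:implicit} where the naive one of Proposition~\ref{prp:general} is rank-deficient by $14$.
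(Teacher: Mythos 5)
There is a genuine gap in the first half of your argument. You correctly observe that the Gram matrix $H$ of the $\Gamma_i$ must be singular (although the ambient space of traceless Hermitian $3\times 3$ quaternionic matrices has dimension $14$, not $8$ --- $8$ is the complex count; the quaternionic count is $3 + 3\cdot 4 - 1 = 14$, which is why the bound is $15 \le 14 + 1$). But singularity of $H$ is a \emph{single} scalar equation $\det H = 0$ in the $15$ unknowns $|x_1|^4, \dots, |x_{15}|^4$. Near $(1,\dots,1)$ its solution set is a $14$-dimensional surface, and no amount of eigenvalue interlacing can collapse it to a point. Concretely, writing $|x_i|^4 = 1 + \delta_i$, Lemma~\ref{lemma:D} shows the (normalized) determinant satisfies $D \approx 15\, m_1 + 14(m_1^2 - m_2)$ where $m_1 = \sum_i \delta_i$ and $m_2 = \sum_i \delta_i^2$, so $D = 0$ only constrains one combination of the moments. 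It does not force $m_2 = 0$; a perturbation such as $\delta_1 = \varepsilon$, $\delta_2 = -\varepsilon + O(\varepsilon^2)$, $\delta_i = 0$ otherwise keeps $H$ singular to second order while violating $|x_i| = 1$. The sentence ``and then the narrow interval forces each $|x_i|^4 = 1$ individually'' has no support: the interval bound cannot turn a codimension-one condition into a codimension-fifteen one.

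The paper's proof needs a second, genuinely independent constraint, namely the nonnegativity $\Sigma_2 \ge 0$ of the sum of the degree-two zonal spherical function over the configuration, which is a positive-definiteness condition on $\HH\Proj^2$ and not a consequence of the Gram matrix being singular. Lemma~\ref{lemma:S} shows $\Sigma_2 \approx -\tfrac{10}{3} m_1 + \tfrac{23}{252} m_1^2 + \tfrac{719}{252} m_2$; the crucial device is the combination $\Sigma_2' := 4200D - 269D^2 + 18900\,\Sigma_2$, which must be $\ge 0$ (since $D = 0$ and $\Sigma_2 \ge 0$) but whose quadratic Taylor expansion is exactly $-4875\,m_2$, a negative-definite form in the $\delta_i$. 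The $10^{-6}$ hypothesis is then used to control the cubic error term so that $\Sigma_2' \le 0$ with equality iff $m_2 = 0$; combined with $\Sigma_2' \ge 0$ this forces all $\delta_i = 0$. Your proposal omits this second ingredient entirely, so the argument cannot close.

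The second half of your argument --- showing that once $|x_i| = 1$ holds the hypotheses of Proposition~\ref{prp:general} are satisfied, via the clean computation $\langle \sum_i \Gamma_i, \sum_i \Gamma_i \rangle = 15 \cdot \tfrac{2}{3} + 15 \cdot 14 \cdot (-\tfrac{1}{21}) = 0$ --- is correct and is a pleasant way to see the $1$-design condition. (The paper handles this step more tersely by noting that all pairwise inner products equal $2/7$, so Definition~\ref{def:tightsimplex} applies directly.) You would also want to fix the decomposition of $H$: with $D' = \operatorname{diag}\bigl(\tfrac{2}{3}|x_i|^4 + \tfrac{1}{21}\bigr)$ the correct identity is $H = D' - \tfrac{1}{21} vv^t$, not $H = D' - \tfrac{1}{21}(vv^t - I_{15})$; your stated eigenvalues happen to be right, but the algebra as written does not reproduce the off-diagonal entries.
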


We do not think the assumption $|x_i|^4 \in [1-10^{-6},1+10^{-6}]$ is
necessary for the proposition to hold, but it is easy to verify in our
applications and lets us prove the result with local calculations.  This
proof and that of Proposition~\ref{prp:27} will be based on two technical
lemmas (Lemmas~\ref{lemma:S} and~\ref{lemma:D}), which we defer until the end
of the section.  It would be straightforward to replace them in our
applications with bounds computed using interval arithmetic (see
\textsection\ref{subsec:finding}), but they are simple enough to prove by
hand, so we do so below.

\begin{proof}
For each $i$ write $|x_i|^4 = 1 + \delta_i$, and let $\delta = \max_i
|\delta_i|$.  It suffices to show $\delta=0$, because $\{x_1,\dots,x_{15}\}$
is then a tight simplex.  Specifically, define $\eta_i = (1+\delta_i)^{-1/2}$
and let $\Pi_i = x_i x_i^\dagger/|x_i|^2 = \eta_i x_i x_i^\dagger$ denote the
projection matrix associated to $x_i$. Then
\[
\langle \Pi_i , \Pi_j \rangle = \eta_i \eta_j \langle \Gamma_i , \Gamma_j \rangle + \frac{1}{3} =
\begin{cases} 1 & {\textup{if $i=j$, and}} \\ - \eta_i \eta_j/21 + 1/3 & {\textup{if $i\ne j$.}}
\end{cases}
\]
If $\eta_i=1$ for all $i$, then these inner products agree with the desired
value $2/7$ in a tight simplex of $15$ points.

Our strategy is to show that nonnegativity of the second zonal harmonic sum
forces $\delta=0$, given a rank condition coming from the fact that $15$
equals the dimension of the space of Hermitian matrices.

Recall that the zonal harmonics on $\HH\Proj^{d-1}$ are given by Jacobi
polynomials $P^{(2d-3,1)}_k(2t-1)$. Specifically, the functions
\[K_k(x,y) = P^{(2d-3,1)}_k(2|\langle x,y \rangle|^2-1)\]
are positive-definite kernels on $\HH\Proj^{d-1}$. Let $\Sigma_k$ be the sum
of the kernel $K_k(x,y)$ over the projective code determined by $\{x_1,
\dots, x_{15}\}$.  Then positive definiteness implies $\Sigma_k \ge 0$.

We will require only $\Sigma_2$.  As $P_2{(3,1)}(2t-1) = 28t^2 - 21t + 3$, we
can write $\Sigma_2$ in terms of the moments $\sum_{i,j=1}^{15} \langle \Pi_i
, \Pi_j \rangle^k$ with $k \le 2$.  If $\delta=0$, then $\Sigma_2=0$, and we
wish to compute it to second order in $\delta_1,\dots,\delta_{15}$ in terms
of the moments $m_1 := \sum_i \delta_i$ and $m_2 := \sum_i \delta_i^2$.
Applying Lemma~\ref{lemma:S} with $P_{i,j} = \langle \Pi_i , \Pi_j \rangle$,
we find that
\begin{equation} \label{eq:sigma2approx}
\left| \Sigma_2 - \left ( -\frac{10}{3} m_1 + \frac{23}{252} m_1^2 + \frac{719}{252} m_2 \right ) \right |
\le 8295 \cdot \delta^3.
\end{equation}

If we could approximate $\Sigma_2$ sufficiently well by a negative-definite
quadratic form in $\delta_1,\dots,\delta_{15}$, then $\Sigma_2 \ge 0$ would
imply $\delta=0$. However, the approximation in \eqref{eq:sigma2approx} is
not negative definite.  To make it so, we must add correction terms based on
additional constraints satisfied by the perturbations $\delta_i$.

These additional constraints come from a singular Gram matrix. We have
$\langle \Gamma_i , \Gamma_i \rangle = \tfrac{2}{3} (1+\delta_i)$, and the
Gram matrix of the elements $\sqrt{\tfrac{2}{3}} \Gamma_i$ is
\[
G = \begin{pmatrix}
1+\delta_1 & {} &  -\tfrac{1}{14} \\
{} & \ddots & {} \\
{-\tfrac{1}{14}} & {} & 1+\delta_N
\end{pmatrix}.
\]
Each of $\Gamma_1,\dots,\Gamma_{15}$ is a traceless Hermitian matrix, so they
must be linearly dependent, because the space of such matrices has dimension
$14$.  Thus, the Gram matrix $G$ must be singular.  Let $D := 14^{14}
\det(G)/15^{12}$ be its determinant, normalized as in Lemma~\ref{lemma:D}.
Of course $D=0$, but we know from Lemma~\ref{lemma:D} that
\[
| D - 15 m_1 - 14 (m_1^2 - m_2) | \leq 50625 \cdot \delta^3
\]
and
\[
| D^2 - 225 m_1^2 | \leq 4556250 \cdot \delta^3.
\]
Because $D$ (and so $D^2$) must vanish and $\Sigma_2$ must be nonnegative,
\[
\Sigma_2' := 4200D - 269 D^2 + 18900 \Sigma_2
\]
must be nonnegative as well.  However, from the above inequalities, we have
\[
|\Sigma_2'  + 4875m_2| \leq  16 \cdot 10^8 \cdot \delta^3.
\]
We have $-4875 m_2 \leq -4875 \cdot \delta^2$, and the assumption $\delta \le
10^{-6}$ implies that
\[
16 \cdot 10^8 \cdot \delta^3 \le 4875 \cdot \delta^2.
\]
It follows that $\Sigma_2' \le 0$, with equality iff $\delta=0$. Because
$\Sigma_2'$ is nonnegative, we conclude that indeed equality must hold, as
desired.
\end{proof}

Using this system of constraints, we do get a nonsingular Jacobian matrix and
hence we can apply Theorem \ref{thm:implicit}.  This yields a
$75$-dimensional solution space; after subtracting overcounting and
symmetries, we arrive at the following.

\begin{theorem} \label{thm:15}
There is a tight simplex of $15$ points in $\HH\Proj^2$.  In fact, locally
there is a $9$-dimensional space of such simplices.\footnote{As opposed to
the absurd $-5$ predicted by $\sdim{15}{\HH\Proj^2}$.}
\end{theorem}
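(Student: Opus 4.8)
The plan is to deduce the theorem from Proposition~\ref{prp:15} together with the effective existence theorem, in the polynomial form of Corollary~\ref{cor:polybounds}. Identifying $\HH^3$ with $\R^{12}$, I would work with the polynomial map $f \colon \R^{180} \to \R^{105}$ given by
\[
f(x_1,\dots,x_{15}) = \Big( \langle \Gamma_i,\Gamma_j \rangle + \tfrac{1}{21} \Big)_{1 \le i < j \le 15},
\qquad \Gamma_i = x_i x_i^\dagger - \tfrac{1}{3}|x_i|^2 I_3.
\]
Each entry of $\Gamma_i$ is quadratic in the real coordinates of $x_i$, so $\langle \Gamma_i,\Gamma_j \rangle = \RealPart\Tr(\Gamma_i\Gamma_j)$ is a polynomial of total degree $4$, and $f$ is a degree-$4$ map from a space of real dimension $15\cdot 12 = 180$ to one of dimension $\binom{15}{2} = 105$. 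By Proposition~\ref{prp:15}, any zero of $f$ at which additionally $|x_i|^4 \in [1-10^{-6},1+10^{-6}]$ for every $i$ gives a tight $15$-point simplex in $\HH\Proj^2$, so it suffices to produce such a zero and to understand the local geometry of $f^{-1}(0)$ near it.

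Next I would invoke the numerical machinery of \textsection\ref{algs} (Newton's method, seeded if necessary by energy minimization) to produce an explicit approximate zero $x_0 \in \R^{180}$ with $|f(x_0)|$ of order $10^{-15}$, together with an explicit approximate right inverse $T \colon \R^{105} \to \R^{180}$ of $Df(x_0)$ obtained by linear least squares. The essential feature---and the whole reason Proposition~\ref{prp:15} is phrased in terms of the traceless matrices $\Gamma_i$ rather than the naive constraints $|x_i|^2 = 1$ and $|\langle x_i,x_j \rangle|^2 = \tfrac{2}{7}$, which give a singular Jacobian at every tight simplex---is that at such an $x_0$ the Jacobian $Df(x_0)$ has full rank $105$, so $T$ can be taken with $\|Df(x_0)\circ T - \id_{\R^{105}}\|$ small. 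Fixing a suitable $\varepsilon > 0$ and $\eta = \max(1,|x_0|+\varepsilon)$, I would then verify rigorously, using interval arithmetic on the concrete data $x_0$ and $T$ recorded in the auxiliary files, the single numerical inequality
\[
\|Df(x_0)\circ T - \id_{\R^{105}}\| + \varepsilon\,|f|\cdot 4\cdot 3\cdot\eta^{2}\,\|T\| < 1 - \frac{\|T\|\cdot|f(x_0)|}{\varepsilon}
\]
demanded by Corollary~\ref{cor:polybounds}.

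Granting this inequality, Corollary~\ref{cor:polybounds} produces an exact point $x_* \in B(x_0,\varepsilon)$ with $f(x_*) = 0$ and shows that, in $B(x_0,\varepsilon)$, the zero locus $f^{-1}(0)$ is a smooth submanifold of dimension $180 - 105 = 75$. Choosing $\varepsilon$ small enough that $B(x_0,\varepsilon)$ lies inside the region where $|x_i|^4 \in [1-10^{-6},1+10^{-6}]$ (a mild requirement, since the tolerance is $10^{-6}$), Proposition~\ref{prp:15} applies at $x_*$---yielding the desired tight simplex---and in fact at every point of the $75$-dimensional solution manifold through $x_*$, so that manifold parametrizes a $75$-dimensional family of unit-norm lifts of tight $15$-point simplices. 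To pass from $75$ to the claimed $9$, I would quotient by symmetry exactly as in the discussion following Proposition~\ref{prp:general}: the group $G = \operatorname{Sp}(1)^{15} \times \operatorname{Sp}(3)$---right multiplication of each lift by a unit quaternion, composed with the isometry group of $\HH\Proj^2$---acts on the solution manifold and has real dimension $3\cdot 15 + 3\cdot 7 = 66$; since $G$ is compact, the quotient onto the moduli space of simplices is closed with Hausdorff image, and topological dimension theory \cite[Theorem~VI~7]{HW41} gives dimension at least $75 - 66 = 9$, with equality because the stabilizer of this simplex is finite, as one checks with the methods of \textsection\ref{subsec:stabilizers}. I expect the only real obstacle to be the verified computation: obtaining $x_0$ and $T$ accurate enough, with an $\varepsilon$ large enough to cover them, that the Corollary~\ref{cor:polybounds} inequality holds---in particular confirming that $Df(x_0)$ is genuinely surjective---after which everything is bookkeeping given Proposition~\ref{prp:15}.
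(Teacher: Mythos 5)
Your proposal matches the paper's own proof: reduce to Proposition~\ref{prp:15}, apply Corollary~\ref{cor:polybounds} to the $\Gamma_i$-based map $f\colon\R^{180}\to\R^{105}$ near a numerically computed approximate zero with full-rank Jacobian to obtain a $75$-dimensional local solution manifold, and quotient by the $66$-dimensional group $\operatorname{Sp}(1)^{15}\times\operatorname{Sp}(3)$ with finite stabilizer to reach dimension $9$, exactly as in the discussion after Proposition~\ref{prp:general}. The only place you diverge is an implementation detail: instead of checking directly that $B(x_0,\varepsilon)$ lies in the region $|x_i|^4\in[1-10^{-6},1+10^{-6}]$, the paper augments the system with auxiliary variables $v_i=|x_i|^4$ initialized to $1$, so that the tolerance follows for free from the $\varepsilon=10^{-9}$ localization---a simplification the paper itself notes is equivalent to your direct verification.
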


Theorem~\ref{thm:15} establishes the existence of a tight $2$-design in
$\HH\Proj^2$.  The common inner product in this simplex is $2/7$, contrary to
a theorem of Bannai and Hoggar asserting that the inner products in tight
designs are always reciprocals of integers \cite[Corollary 1.7(b)]{BH}. The
case of $2$-designs is not addressed in their proof, and Bannai has informed
us that this was an oversight in the theorem statement.  See also \cite{L09} for another correction
(the icosahedron is a tight $5$-design in $\C\Proj^1$ with irrational inner
products).

It would be interesting to determine whether using the points of a $15$-point
simplex as vertices could lead to a minimal triangulation of $\HH\Proj^2$
(see \cite{BK}), as well as whether the same is true for a $27$-point simplex
in $\OO \Proj^2$.

Tight $2$-designs in $\HH\Proj^{d-1}$ are quaternionic analogues of SIC-POVMs
\cite{SICPOVM}.  Because SIC-POVMs seem to exist in $\C\Proj^{d-1}$ for every
$d$, it is natural to speculate that tight quaternionic $2$-designs should be
even more abundant, but we have not found any examples with $d>3$.

So far, we have shown that there are tight simplices in $\HH\Proj^2$ of every
size up to $15$ except for $14$.

\begin{conjecture} \label{conj:no14}
There does not exist a tight simplex of $14$ points in $\HH\Proj^2$.
\end{conjecture}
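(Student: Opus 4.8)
The plan is to begin with linear programming bounds and, since these empirically fall short, escalate to stronger tools. A tight $14$-point simplex in $\HH\Proj^2$ would have common squared inner product $\alpha = 11/39$, so every pair of its projection matrices would satisfy $\cos\vartheta(x_i,x_j) = 2|\langle x_i,x_j\rangle|^2 - 1 = -17/39$. The zonal spherical functions $C_k$ of $\HH\Proj^2$ are the Jacobi polynomials $P_k^{(3,1)}$, so positive definiteness forces $\Sigma_k := 14\,C_k(1) + 182\,C_k(-17/39) \ge 0$ for all $k \ge 1$; if this failed for some $k$, then, exactly as in the $5$-point example in $\R\Proj^2$ from \textsection\ref{subsec:tight}, adding a small positive multiple of $C_k(z) - C_k(-17/39)$ to the linear function $f(z)$ of Lemma~\ref{lma:tightmeanstight} would produce a valid polynomial with $f(1)/f_0 < 14$, ruling out the simplex. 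So the first step is to compute the rational numbers $\Sigma_k$ and look for a negative one. Unfortunately $\Sigma_1 = 0$ (the $1$-design condition) and $\Sigma_2 > 0$, and in fact the full degree-$n$ linear programming bound appears to equal $14$ for every $n$; two-point bounds do not suffice, which is exactly why the assertion is only a conjecture.

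To get past two-point data, the plan is to bring in the three-point Bargmann invariants $\RealPart\Tr(\Pi_i\Pi_j\Pi_k)$. For a tight $14$-point simplex the two distances of each triangle $\{\Pi_i,\Pi_j,\Pi_k\}$ are pinned down, so the Bargmann invariant is the only free parameter of a triple, subject to the positive-semidefiniteness conditions of the three-point (Bachoc--Vallentin-type \cite{BV}) semidefinite-programming hierarchy for $\HH\Proj^{d-1}$. The second step is to set up and solve this semidefinite program for $(N,d,\alpha) = (14,3,11/39)$; since all the data are rational, a dual optimal solution could then be rounded to an exact infeasibility certificate. One might also isolate the substructure formed by the triples with a fixed Bargmann invariant and argue that no association-scheme-like configuration with these parameters can be realized by rank-one projections over $\HH$.

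A complementary, purely algebraic route is to prove directly that the defining polynomial system has no real solution. The tight simplex condition amounts to the existence of unit vectors $x_1,\dots,x_{14} \in \HH^3$ with $|\langle x_i,x_j\rangle|^2 = 11/39$ for all $i \ne j$; equivalently, to $14$ traceless Hermitian matrices $\Gamma_i = x_ix_i^\dagger - \tfrac13 I_3$, spanning a $13$-dimensional subspace of the $14$-dimensional space of traceless Hermitian matrices over $\HH^3$, with $\langle\Gamma_i,\Gamma_j\rangle = -2/39$ for $i \ne j$ (from which $\sum_i\Gamma_i = 0$ follows automatically). In coordinates this is a polynomial system over $\Q$ in $168$ real variables, invariant under $S_{14}$ and $\operatorname{Sp}(3)$, and one would attempt to certify its infeasibility via a Positivstellensatz representation of $-1$ as a combination of the defining polynomials plus a sum of squares, found numerically by semidefinite programming and then verified exactly in rational arithmetic, using the symmetry to shrink the program. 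A more tractable first target is to apply the same method to simplices with extra symmetry — for instance, the cyclic symmetry exploited for $N = 12, 13$ in \textsection\ref{subsec:1213} — which yields a much smaller system, potentially within reach of an exact Gr\"obner-basis computation over $\Q$.

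The hard part is that none of these routes is known to succeed: the two-point bounds provably do not, the three-point bounds may or may not, and even if an algebraic infeasibility certificate exists it is likely to exceed what is currently computationally feasible at this dimension. Settling the conjecture therefore seems to require either a sharper bound or a genuine structural reason why no system of $14$ equiangular lines in $\HH\Proj^2$ — a near-miss to the maximal $2$-design of $15$ lines — can exist.
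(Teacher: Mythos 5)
The statement you were asked to prove is labeled a \emph{conjecture} in the paper, and the paper offers no proof of it; the authors explicitly say (after Conjecture~\ref{conj:no14}, in the discussion of the ``second-largest size always missing'' pattern) that they have no proof beyond $\R\Proj^2$. Your submission is accordingly not a proof but a research plan, and you say so yourself in the final paragraph. Your preliminary calculations are correct: a tight $14$-point simplex in $\HH\Proj^2$ would have $\alpha = (14-3)/(3\cdot 13) = 11/39$, hence $\cos\vartheta = 2\alpha - 1 = -17/39$, and the degree-one LP function of Lemma~\ref{lma:tightmeanstight} gives exactly $14$, so two-point bounds cannot rule it out. Your proposed escalations --- the three-point (Bachoc--Vallentin) semidefinite hierarchy with rational rounding, a Positivstellensatz infeasibility certificate for the polynomial system on the $\Gamma_i$, and a symmetry-restricted Gr\"obner computation --- are all sensible directions consistent with the paper's ``case (c)'' evidence (the authors could not even find an approximate numerical solution), but none is carried out and none is known to close the problem. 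So there is no error to flag, and no paper proof to compare against; the honest summary is that you have correctly identified the problem as open and sketched plausible but unverified attacks.
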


Similarly, we will see in \textsection\ref{oct} that there are tight
simplices in $\OO\Proj^2$ of every size up to $27$ except for $26$.
In $\R\Proj^2$ every size up to $6$ except for $5$ occurs, while in
$\C\Proj^2$ we see every size up to $9$ except for $5$ and $8$.  It
seems unlikely to be a coincidence that the second largest possible
size is always missing in projective planes, but we do not have a
proof beyond $\R\Proj^2$.  (As explained after
Lemma~\ref{lma:tightmeanstight}, linear programming bounds suffice to
disprove the existence of tight $5$-point simplices in $\R\Proj^2$.  However, they do not
rule out the analogous cases in $\C\Proj^2$, $\HH\Proj^2$, or
$\OO\Proj^2$.)

In the remainder of this section, we state and prove the deferred lemmas from
the proof of Proposition~\ref{prp:15}.

\begin{lemma} \label{lemma:S}
Given $d \ge 2$, $N>1$, and $\delta_1,\dots,\delta_N$ with $\delta := \max_i
|\delta_i| \le 1/4$, set $\eta_i = (1+\delta_i)^{-1/2}$, $\lambda =
-\tfrac{d-1}{d(N-1)}$, $m_1 = \sum_i \delta_i$, $m_2 = \sum_i \delta_i^2$,
and
\[
P_{i,j} =
\begin{cases} 1 & {\textup{if $i=j$, and}} \\ \eta_i \eta_j \lambda + 1/d & {\textup{if $i\ne j$.}} \end{cases}
\]
Then the moments $S_k := \sum_{i,j=1}^N P_{i,j}^k$ satisfy the following
bounds. Let
\begin{align*}
T_0 &= N^2, \\
T_1 &= \frac{N^2}{d} + \frac{d-1}{d} m_1 + \lambda\left(\frac{3N}{4} - 1\right)m_2 + \frac{\lambda}{4} m_1^2,\quad\text{and}\\
T_2 &= \frac{N^2(N+d^2-2d)}{d^2(N-1)} - \frac{2(N-d)\lambda }{d} m_1\\
& \quad {} + \lambda \left ( \lambda + \frac{1}{2d} \right) m_1^2 - \frac{\lambda}{d} \left( (2+ \lambda) d - \frac{3N}{2} \right) m_2.
\end{align*}
Then
\begin{align*}
S_0 &= T_0, \\
|S_1 - T_1| & \leq 5N \delta^3,\quad\text{and}\\
|S_2 - T_2| & \leq 16N \delta^3.
\end{align*}
\end{lemma}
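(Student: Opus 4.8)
The plan is to reduce the moments to three power sums of the $\eta_i$ and then Taylor-expand in the $\delta_i$, keeping careful track of the error. The identity $S_0 = \sum_{i,j} 1 = N^2 = T_0$ needs no argument. For $S_1$ and $S_2$ I would split off the diagonal — which contributes exactly $N$, since $P_{i,i} = 1$ — and expand the off-diagonal terms binomially in the small quantity $\lambda \eta_i \eta_j$:
\[
\sum_{i \ne j} P_{i,j} = \frac{N(N-1)}{d} + \lambda \sum_{i \ne j} \eta_i \eta_j, \qquad
\sum_{i \ne j} P_{i,j}^2 = \frac{N(N-1)}{d^2} + \frac{2\lambda}{d} \sum_{i \ne j} \eta_i \eta_j + \lambda^2 \sum_{i \ne j} \eta_i^2 \eta_j^2.
\]
Writing $\mu_a := \sum_i \eta_i^a$ and using $\sum_{i \ne j} \eta_i^a \eta_j^a = \mu_a^2 - \mu_{2a}$, this expresses $S_1$ through $\mu_1, \mu_2$ and $S_2$ through $\mu_1, \mu_2, \mu_4$.

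I would then expand $\eta_i^a = (1+\delta_i)^{-a/2} = 1 - \tfrac{a}{2}\delta_i + \tfrac{a(a+2)}{8}\delta_i^2 + r_i^{(a)}$ with an explicit cubic remainder bound $|r_i^{(a)}| \le c_a |\delta_i|^3$ valid for $|\delta_i| \le 1/4$: for $a = 1, 2$ the Taylor coefficients are monotone in absolute value, so the tail is dominated by a geometric series, and for $a = 4$ the tail $(1+x)^{-2} - (1 - 2x + 3x^2) = -x^3(4+3x)/(1+x)^2$ is available in closed form. Summing over $i$ gives $\mu_a = N - \tfrac{a}{2} m_1 + \tfrac{a(a+2)}{8} m_2 + R_a$ with $|R_a| \le c_a \sum_i |\delta_i|^3 \le c_a \delta m_2 \le c_a N \delta^3$. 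Squaring, $\mu_a^2$ separates into a part that is at most quadratic in the $\delta_i$ — a combination of $N^2$, $Nm_1$, $Nm_2$, $m_1^2$ — and a cubic-or-higher remainder built from $NR_a$, $m_1 m_2$, $m_1 R_a$, $R_a^2$, and $m_2^2$.

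Next comes the bookkeeping that is supposed to produce $T_1$ and $T_2$ on the nose. Setting all $\delta_i = 0$ turns $P$ into the Gram matrix $(1-\alpha)I_N + \alpha vv^{t}$ of an honest tight $N$-simplex with $\alpha = (N-d)/(d(N-1))$, so the $\delta$-free part of $S_1$ is $N + N(N-1)\alpha = N^2/d$ and that of $S_2$ is $N + N(N-1)\alpha^2 = N^2(N+d^2-2d)/(d^2(N-1))$ — precisely the constant terms of $T_1$ and $T_2$. Collecting the quadratic terms and simplifying via the relations $\lambda(N-1) = -(d-1)/d$, $N^2 - N = N(N-1)$, and $\tfrac{1}{d} + \lambda = \alpha$ reproduces the remaining $m_1$, $m_1^2$, and $m_2$ terms of $T_1$ and $T_2$ (one checks, for instance, that the $m_1$-coefficient of $S_2$ equals $\tfrac{2(d-1)}{d}\alpha = -\tfrac{2(N-d)\lambda}{d}$). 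What is left over is the cubic error: for $S_1$ it is $\lambda$ times the cubic remainder of $\mu_1^2 - \mu_2$, and for $S_2$ it is $\tfrac{2\lambda}{d}$ times that remainder plus $\lambda^2$ times the cubic remainder of $\mu_2^2 - \mu_4$.

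The main obstacle is to push these error bounds down to the stated constants $5N\delta^3$ and $16N\delta^3$. The subtlety is that individual cubic terms such as $NR_a$ or $m_1 m_2$ are only $O(N^2 \delta^3)$, not $O(N\delta^3)$; what rescues the estimate is that every such term carries a factor of $\lambda$ or $\lambda^2$, and $|\lambda| = \tfrac{d-1}{d(N-1)} \le \tfrac{1}{N-1}$ turns an $O(N^2\delta^3)$ quantity into an $O(N\delta^3)$ one. To land on the specific constants one must take the $c_a$ reasonably sharp (one gets $c_1 \le 5/12$ and $c_2 \le 4/3$), use $\sum_i |\delta_i|^3 \le \delta m_2$ and $m_2 \le N\delta^2$ in place of cruder inequalities, and note that $\tfrac{N^2}{N-1} \le N+2$ only for $N \ge 2$, so the smallest values of $N$ carry the worst relative error; a short computation then shows the totals stay below $5N\delta^3$ and $16N\delta^3$ for every $N \ge 2$. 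Everything else is elementary algebra.
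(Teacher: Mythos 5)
Your proposal is correct and follows essentially the same route as the paper: split off the diagonal, express $S_1$ and $S_2$ through the power sums $\mu_a = \sum_i \eta_i^a$ for $a=1,2,4$, Taylor-expand $\eta_i^a$ with an explicit cubic remainder, and absorb the remaining $O(N^2\delta^3)$ terms using the smallness of $\lambda$. One small point worth flagging in your favor: you use the unconditional bound $|\lambda| \le 1/(N-1)$, whereas the paper's final step invokes $|\lambda| \le 1/N$, which actually requires $d \le N$ (not a hypothesis of the lemma, though it holds in both applications); your sharper remainder constants (e.g.\ $c_1 \le 5/12$ from the monotone tail, the closed form for $a=4$) are what let you still land under $5N\delta^3$ and $16N\delta^3$ with the weaker $\lambda$-bound, so it would be worth actually carrying out that last ``short computation'' for $S_2$ at $N=2$, where the estimate is tightest.
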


\begin{proof}
It is clear that $S_0 = N^2$.  For $S_1$ and $S_2$, we begin by explicitly
computing that $S_1$ equals
\[
\lambda \left ( \sum_{i=1}^N \eta_i \right )^2 - \lambda \left ( \sum_{i=1}^N \eta_i^2 \right ) + \frac{N^2-N+Nd}{d}
\]
and $S_2$ equals
\[
\lambda^2 \left ( \sum_{i=1}^N \eta_i^2 \right )^2 -
\lambda^2 \left ( \sum_{i=1}^N \eta_i^4 \right ) + \frac{2\lambda}{d} \left (
\sum_{i=1}^N \eta_i \right )^2 - \frac{2\lambda}{d} \left ( \sum_{i=1}^N
\eta_i^2 \right ) + \frac{N^2 + Nd^2 - N}{d^2}.
\]

Now, using $\eta_i = (1+\delta_i)^{-1/2}$and $\delta \le 1/4$, Taylor's
theorem with the Lagrange form of the remainder yields the estimates
\[ \left| \eta_i^a - \left ( 1 - \frac{a}{2} \delta_i + \frac{a(a+2)}{4} \cdot \frac{\delta_i^2}{2} \right) \right|
\le \frac{4}{81} \left(\frac{4}{3}\right)^{a/2} a(a+2)(a+4) \cdot \delta^3 \]
for all $a > 0$.  Taking $a=1,2,4$, we get the bounds
\begin{align*}
\left| \sum_{i=1}^N \eta_i - \bigg( N - \frac{m_1}{2} + \frac{3}{8} m_2 \bigg) \right| & \le  N\delta^3, \\
\left| \sum_{i=1}^N \eta_i^2 - \bigg( N - m_1 + m_2 \bigg) \right| & \le 4N \delta^3, \\
\left| \sum_{i=1}^N \eta_i^4 - \bigg( N - 2m_1 + 3m_2 \bigg) \right| & \le 17 N \delta^3,
\end{align*}
respectively.

We also have the simple bounds $|m_i| \le N\delta^i$. Using these, we find
\begin{align*} 
&\left| \left( \sum_{i=1}^N \eta_i \right)^2 - \left ( N^2 - N m_1 + \frac{3}{4} N m_2 + \frac{m_1^2}{4} \right) \right| \\
& \quad \le \left| \left( \sum_{i=1}^N \eta_i \right)^2 - \left ( N - \frac{m_1}{2} + \frac{3}{8} m_2 \right)^2 \right| +
\left| \frac{3}{8} m_1 m_2 - \frac{9}{64} m_2^2 \right| \\
&\quad \le N \delta^3 \cdot \left|\sum_{i=1}^N \eta_i + N - \frac{m_1}{2} + \frac{3}{8} m_2 \right|  + N^2 \left ( \frac{3}{8} \cdot \delta^3 + \frac{9}{64} \cdot \delta^4  \right) \\
&\quad \le N \delta^3 \cdot \left( 2\left|N - \frac{m_1}{2} + \frac{3}{8} m_2\right| + N \delta^3 \right) + N^2 \left ( \frac{3}{8} \cdot \delta^3 + \frac{9}{64} \cdot \delta^4  \right) \\
&\quad \le N \delta^3 \cdot \left( 2N \left(1 + \frac{1}{2}\delta + \frac{3}{8} \delta^2 \right) + N \delta^3 \right) + N^2 \left ( \frac{3}{8} \cdot \delta^3 + \frac{9}{64} \cdot \delta^4  \right) \\
&\quad  \le 3N^2 \delta^3.
\end{align*}
We similarly compute
\[ \left| \left( \sum_{i=1}^N \eta_i^2 \right)^2 - \left ( N^2 - 2 N m_1 + 2 N m_2 + m_1^2 \right) \right| \le 13 N^2 \delta^3 . \]
Combining all of these estimates with $d \ge 2$, $N \ge 2$, and $|\lambda|
\le 1/N$ leads to bounds of $(3N+4)\delta^3$ and $(3N+17+17/N)\delta^3$ for
$|S_1-T_1|$ and $|S_2-T_2|$, respectively.  We have rounded them up to
pleasant multiples of $N$ in the lemma statement.
\end{proof}

\begin{lemma} \label{lemma:D}
Suppose $N>3$, and let
\[
D = \frac{(N-1)^{N-1}}{N^{N-3}} \det \begin{pmatrix}
1+\delta_1 & {} & -\tfrac{1}{N-1}\\
{} & \ddots & {} \\
-\tfrac{1}{N-1} & {} & 1+\delta_N
\end{pmatrix},
\]
where every off-diagonal entry in the above matrix equals $-1/(N-1)$. Set
$\delta = \max_i |\delta_i|$, $m_1 = \sum_i \delta_i$, and $m_2 = \sum_i
\delta_i^2$. If $\delta \le 1/(2N)$, then \[ |D - Nm_1 - (N-1)(m_1^2 - m_2)|
\le N^4 \delta^3
\]
and
\[
\left | D^2 - N^2 m_1^2 \right | \le
6N^5 \delta^3.
\]
\end{lemma}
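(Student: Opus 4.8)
The plan is to expand the determinant defining $D$ as an explicit polynomial in $\delta_1,\dots,\delta_N$, identify its linear and quadratic parts, and control the higher-order terms using the smallness hypothesis $\delta\le 1/(2N)$. The observation that makes everything explicit is that the matrix in the statement is $A+\operatorname{diag}(\delta_1,\dots,\delta_N)$, where $A=\tfrac{N}{N-1}\bigl(I_N-\tfrac1N J_N\bigr)$ and $J_N$ is the all-ones matrix. Every principal submatrix of $A$ has the same scalar-plus-rank-one shape, so all the minors that arise can be written down in closed form.

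Concretely, I would first expand by multilinearity in the columns,
\[
\det\bigl(A+\operatorname{diag}(\delta)\bigr)=\sum_{S}\Bigl(\prod_{i\in S}\delta_i\Bigr)\det\bigl(A[\overline S]\bigr),
\]
where $S$ ranges over subsets of $\{1,\dots,N\}$ and $A[\overline S]$ is the principal submatrix of $A$ on the complementary indices; the empty set contributes $\det A=0$. A $k\times k$ principal submatrix of $A$ equals $\tfrac{N}{N-1}\bigl(I_k-\tfrac1N J_k\bigr)$, and since $J_k$ has rank one with nonzero eigenvalue $k$ its determinant is $\bigl(\tfrac{N}{N-1}\bigr)^k\tfrac{N-k}{N}$. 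Substituting $k=N-|S|$ and multiplying through by $(N-1)^{N-1}/N^{N-3}$, the sum collapses to
\[
D=\sum_{\ell=1}^{N}\ell\,(N-1)^{\ell-1}N^{2-\ell}\,e_\ell(\delta_1,\dots,\delta_N),
\]
where $e_\ell$ is the $\ell$th elementary symmetric polynomial. The $\ell=1$ term is exactly $Nm_1$, and since $e_2=(m_1^2-m_2)/2$ the $\ell=2$ term is exactly $(N-1)(m_1^2-m_2)$, so $D-Nm_1-(N-1)(m_1^2-m_2)$ equals the tail $\sum_{\ell\ge3}\ell\,(N-1)^{\ell-1}N^{2-\ell}e_\ell(\delta)$.

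To bound this tail I would use the crude estimates $(N-1)^{\ell-1}N^{2-\ell}\le N$ (valid for $\ell\ge1$), $\ell\binom N\ell=N\binom{N-1}{\ell-1}\le N^{\ell}/(\ell-1)!$, and $|e_\ell(\delta)|\le\binom N\ell\delta^\ell$. The hypothesis $\delta\le1/(2N)$ forces $N\delta\le\tfrac12$, so for $\ell\ge3$ one has $\delta^{\ell-3}\le(2N)^{-(\ell-3)}$ and the $\ell$th term of the tail is at most $N^4\delta^3\cdot(1/2)^{\ell-3}/(\ell-1)!$; summing this convergent series (its value is well under $1$) gives the first inequality $|D-Nm_1-(N-1)(m_1^2-m_2)|\le N^4\delta^3$, with room to spare.

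For the squared estimate I would write $D=Nm_1+R$ and expand $D^2-N^2m_1^2=2Nm_1R+R^2$. Combining the previous step with $|e_2|\le\binom N2\delta^2$ and again using $\delta\le1/(2N)$ to absorb the $N^4\delta^3$ tail into an $N^3\delta^2$ term yields $|R|\le\tfrac32N^3\delta^2$; together with $|m_1|\le N\delta$ this gives $|2Nm_1R|\le3N^5\delta^3$ and $|R^2|\le\tfrac94N^6\delta^4\le\tfrac98N^5\delta^3$, for a total comfortably below $6N^5\delta^3$. The proof is conceptually immediate once the scalar-plus-rank-one structure is exploited; the only real work is bookkeeping the explicit constants so that the geometric tail in the first step and the quartic remainder $R^2$ in the second step fit inside the stated $N^4$ and $6N^5$. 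The hypothesis $\delta\le1/(2N)$ is exactly what is needed to make $N\delta\le\tfrac12$ and thereby keep both the series and the quartic term small.
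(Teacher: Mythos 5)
Your proposal is correct and follows essentially the same route as the paper's own proof: both expand $\det(A+\operatorname{diag}(\delta))$ by multilinearity into a sum $\sum_\ell e_\ell(\delta)\det(A[\bar S])$ of principal minors, exploit the scalar-plus-rank-one structure of $A$ to evaluate each minor in closed form, identify the $\ell=1,2$ terms as $Nm_1$ and $(N-1)(m_1^2-m_2)$, and bound the tail $\ell\ge 3$ using $|e_\ell|\le\binom N\ell\delta^\ell$ together with $N\delta\le\tfrac12$. The only cosmetic differences are in the tail estimate (you sum a factorial series where the paper uses a ratio-test geometric bound) and in the handling of $D^2$ (you write $D=Nm_1+R$ and expand $(Nm_1+R)^2$, whereas the paper compares $D^2$ and $N^2m_1^2$ to the intermediate quantity $(Nm_1+(N-1)(m_1^2-m_2))^2$); both yield the stated constants with room to spare.
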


\begin{proof}
Let $G_r$ be the $r \times r$ matrix with diagonal entries $1$ and
off-diagonal entries $\beta$. It is easy to show\footnote{See the footnote in the proof of Proposition~\ref{prp:bounds}.} that
\[
D_r := \det(G_r) = \big(1 + (r-1)\beta \big) (1-\beta)^{r-1}.
\]
Setting $\beta = -1/(N-1)$, we have
\[
D_r = \frac{(N-r)N^{r-1}}{(N-1)^r}.
\]
Using this, for
\[
G = \begin{pmatrix}
1+\delta_1 & {} & -\tfrac{1}{N-1} \\
{} & \ddots & {} \\
-\tfrac{1}{N-1} & {} & 1+\delta_N
\end{pmatrix}
\]
we find that
\begin{align*}
\det(G) &= D_N + \Big(\sum_i \delta_i\Big) D_{N-1} +
\Big(\sum_{i < j} \delta_i \delta_j\Big) D_{N-2} + \dots + \prod_i \delta_i\\
&= 0 + \Big(\sum_i \delta_i\Big) \frac{N^{N-2}}{(N-1)^{N-1}} +
\Big(\sum_{i < j} \delta_i \delta_j\Big) \frac{2 N^{N-3}}{(N-1)^{N-2}} + \dots + \prod_i \delta_i.
\end{align*}
In terms of the moments $m_1 = \sum_i \delta_i$ and $m_2 = \sum_i
\delta_i^2$, the rescaled determinant $D = (N-1)^{N-1}\det(G)/N^{N-3}$
satisfies
\[
|D - Nm_1 - (N-1)(m_1^2 - m_2)| \leq \sum_{k \geq 3} \binom{N}{k}\delta^k  N^{2-k} (N-1)^{k-1} k.
\]
The $k=3$ term on the right is $(N-1)^3(N-2)\delta^3/2 \le N^4\delta^3/2$.
Because $\delta \le 1/(2N)$, each subsequent term diminishes by a factor of
at least $1/2$.  Thus, summing the geometric series, we have
\[
|D - Nm_1 - (N-1)(m_1^2 - m_2)| \le N^4 \delta^3.
\]
Note that the trivial bounds $|m_1| \le N \delta$ and $m_2 \le N \delta^2$
imply
\begin{align*}
|Nm_1 + (N-1)(m_1^2 - m_2)| & \leq N^2\delta + (N-1)( N^2 \delta^2 + N \delta^2) \\
&= N^2 \delta + N(N^2 - 1)\delta^2 \\
& \leq N^2 \delta + N^3 \delta^2 \leq 2N^2 \delta
\end{align*}
and therefore $|D| \leq 2 N^2 \delta + N^4 \delta^3 \leq 3N^2 \delta$. Now,
to control $D^2$, we write
\begin{align*}
\big|D^2 - (Nm_1 + (N-1)(m_1^2-m_2))^2\big| &\le N^4 \delta^3 |D + Nm_1 + (N-1)(m_1^2 - m_2)|\\
& \le N^4\delta^3 (|D| + |Nm_1 + (N-1)(m_1^2 - m_2)|) \\
& \le N^4 \delta^3 (5N^2 \delta) = 5 N^6 \delta^4.
\end{align*}
Furthermore,
\begin{align*} 
& |N^2m_1^2 - (Nm_1 + (N-1)(m_1^2-m_2))^2|  \\
& \qquad \le 2N(N-1)|m_1|(m_1^2+m_2) + (N-1)^2(m_1^2+m_2)^2 \\
& \qquad \le 2N(N-1) N \delta (N^2 \delta^2 + N\delta^2) + (N-1)^2 (N^2 \delta^2 + N\delta^2)^2 \\
& \qquad = 2N^3(N^2-1) \delta^3 + N^2(N^2-1)^2 \delta^4 \leq 3N^5 \delta^3.
\end{align*}
Combining these two bounds with the triangle inequality and using $N\delta
\leq 1/2$, we obtain the asserted bound for $|D^2 - N^2 m_1^2|$.
\end{proof}

\section{Simplices in $\OO\Proj^2$} \label{oct}

The study of simplices in $\OO\Proj^2$ unfolds much like that in
$\HH\Proj^2$; we get essentially the same results as long as we take care to
work in an affine chart. In particular, we can handle the generic case, $24$-
and $25$-point simplices, and $27$-point simplices using adaptations of
Propositions \ref{prp:general}, \ref{prp:12} and \ref{prp:13}, and
\ref{prp:15}, respectively.

\subsection{Generic case}

\begin{proposition} \label{prp:generalpanda}
For $i=1,\dots,N$, suppose $x_i = (a_i,b_i,c_i) \in \R_{+} \times \OO^2$ and
$w_i \in \R$ satisfy
\begin{enumerate}
\item $|a_i|^2 + |b_i|^2 + |c_i|^2 = 1$ for $i=1,\dots,N$,
\item $\rho(x_i,x_j)^2 = \rho(x_{i'},x_{j'})^2$ for $1 \le
    i < j \le N$ and $1 \le i' < j' \le N$, and
\item $\displaystyle \sum_{i=1}^N w_i \begin{pmatrix} a_i \\ b_i \\ c_i \end{pmatrix} \begin{pmatrix} \bar a_i & \bar b_i & \bar c_i \end{pmatrix} = \begin{pmatrix} 1 & 0 & 0 \\ 0 & 1 & 0 \\ 0 & 0 & 1 \end{pmatrix}$.
\end{enumerate}
Then $w_1 = \cdots = w_N = 3/N$ and $\{x_1,\dots,x_N\}$ is a tight simplex.
\end{proposition}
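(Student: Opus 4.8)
The plan is to re-run the proof of Proposition~\ref{prp:general} in the octonionic setting --- Proposition~\ref{prp:generalpanda} is its $d=3$ analogue over $\OO$ --- the only genuinely new issue being to check that the bookkeeping with octonionic $3\times 3$ matrices remains valid. The key device is that we have deliberately placed each point in the affine chart $\R_{+}\times\OO^{2}$, so that the first coordinate $a_i$ is a \emph{real} scalar. First I would set $v_i = (a_i,b_i,c_i)^{T}$ and $\Pi_i = v_iv_i^{\dagger}$, the formal outer product. Because $a_i\in\R$, the relation $(a_ib_i)c_i = a_i(b_ic_i)$ holds automatically, so $\Pi_i$ is a legitimate point of $\OO\Proj^{2}$ in the sense recalled in \textsection\ref{known}; only alternativity of $\OO$ is then needed to verify $\Pi_i^{2} = \Pi_i$, whence $\langle\Pi_i,\Pi_i\rangle = \RealPart\Tr\Pi_i^{2} = \Tr\Pi_i = |a_i|^{2}+|b_i|^{2}+|c_i|^{2} = 1$ by hypothesis~(1). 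I would also record the trivial identities $\langle\Pi_i,I_3\rangle = \Tr\Pi_i = 1$ and $\langle I_3,I_3\rangle = 3$, and note that hypothesis~(2), via $\rho(\Pi_i,\Pi_j)^{2} = 1-\langle\Pi_i,\Pi_j\rangle$, says precisely that $\langle\Pi_i,\Pi_j\rangle$ takes a common value $\alpha$ for all $i\neq j$. Since $\langle\cdot,\cdot\rangle$ is positive definite on Hermitian $3\times 3$ octonionic matrices, $\alpha=1$ would force $\Pi_1=\dots=\Pi_N$ and hence $\sum_i w_i\Pi_i$ to be a scalar multiple of $\Pi_1$, which cannot equal $I_3$; so $\alpha\neq 1$.

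Next I would run the same linear-algebra computation as in Proposition~\ref{prp:general}. Rewriting~(3) as $\sum_{i=1}^{N} w_i\Pi_i = I_3$ and pairing both sides against $I_3$ under $\langle\cdot,\cdot\rangle$ gives $\sum_i w_i = 3$. Pairing instead against each $\Pi_j$ gives $(1-\alpha)w_j + \alpha\sum_i w_i = 1$, i.e.\ $(1-\alpha)w_j + 3\alpha = 1$, so $w_j = (1-3\alpha)/(1-\alpha)$ for every $j$; in particular all the $w_j$ coincide, and combined with $\sum_i w_i = 3$ this forces $w_j = 3/N$. Feeding $3/N = (1-3\alpha)/(1-\alpha)$ back in and solving yields $\alpha = (N-3)/\bigl(3(N-1)\bigr)$.

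To finish, I would observe that $\alpha<1$ makes $\Pi_1,\dots,\Pi_N$ pairwise distinct while hypothesis~(2) makes them equidistant, so $\{x_1,\dots,x_N\}$ is a regular simplex; its common squared inner product $(N-3)/\bigl(3(N-1)\bigr)$ is exactly the tight value of Definition~\ref{def:tightsimplex} with $d=3$, so it is a tight simplex. I do not expect a serious obstacle: the one thing that could go wrong is that a matrix identity used freely in the quaternionic argument fails over $\OO$, but the choice of affine chart removes exactly those failures --- with a real first coordinate the outer products $v_iv_i^{\dagger}$ are genuine rank-one idempotents and $\RealPart\Tr(\,\cdot\,)$ is an honest positive-definite bilinear form, after which the argument never multiplies three octonionic matrices and is purely formal. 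The only point deserving a line of care is confirming $\Pi_i^{2}=\Pi_i$ (equivalently $\langle\Pi_i,\Pi_i\rangle=1$) by a short direct computation using alternativity, rather than via the associativity-based reassociation $v_i(v_i^{\dagger}v_i)v_i^{\dagger}$ that one would use over $\HH$.
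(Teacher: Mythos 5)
Your proposal is correct and follows exactly the route the paper intends: the paper omits this proof, stating only that it is ``nearly identical to that of Proposition~\ref{prp:general},'' and your argument is indeed that proof of Proposition~\ref{prp:general} transplanted to $d=3$, $K=\OO$. The two points where you add detail beyond the paper's implicit treatment --- the remark that $a_i\in\R_{+}$ makes $(a_ib_i)c_i=a_i(b_ic_i)$ automatic so that $\Pi_i=v_iv_i^{\dagger}$ is a legitimate idempotent in $J_3(\OO)$, and the short positive-definiteness argument ruling out $\alpha=1$ before dividing by $1-\alpha$ --- are both correct and worth recording, though the first is really just invoking the description of $\OO\Proj^2$ already given in \textsection\ref{known} and the second is glossed over in the paper's proof of Proposition~\ref{prp:general} as well.
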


We omit the proof of Proposition \ref{prp:generalpanda}
as it is nearly identical to that of Proposition \ref{prp:general}.

We can attempt to apply Proposition \ref{prp:generalpanda} with
Theorem \ref{thm:implicit} just as we did for simplices in
quaternionic projective spaces.  There are
\[N + \left(\frac{N(N-1)}{2}-1\right) + 27\text{ real constraints}\quad\text{in}\quad 18N\text{ real variables},\]
so, when the Jacobian is nonsingular, we get a solution space of dimension
$(N-1)(34-N)/2 - 9$.  As before, we should deduct the dimension of the
symmetry group.  The symmetry group of $\OO\Proj^2$ is the exceptional Lie
group $F_4$, which has dimension $52$.  Thus, our final expression for the
expected local dimension of the moduli space of simplices is
\[\sdim{N}{\OO\Proj^2} := \frac{(N-1)(34-N)}{2} - 61.\]
Again, as with $\sdim{N}{\HH\Proj^{d-1}}$, this formula only applies when, at
our numerical solution, Theorem \ref{thm:implicit} applies to the conditions
of Proposition \ref{prp:generalpanda} and the simplex has zero-dimensional
stabilizer.

\begin{theorem} \label{thm:generalpanda}
For the values of $N$ listed in part (a) of Table
\ref{tbl:generalop2}, there exist tight $N$-point simplices in
$\OO\Proj^2$.
\end{theorem}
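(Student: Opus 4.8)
The plan is to mirror, almost verbatim, the computer-assisted argument used for $\HH\Proj^{d-1}$: reinterpret Proposition~\ref{prp:generalpanda} as saying that a tight simplex is a zero of an explicit polynomial map, find an approximate zero numerically for each relevant $N$, and upgrade it to an exact zero by Theorem~\ref{thm:implicit}.

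First I would work in the first affine chart $\R_+\times\OO^2$, so that each point $x_i=(a_i,b_i,c_i)$ carries $17$ real coordinates and, together with the weight $w_i\in\R$, we have $18N$ real variables in all. Because $a_i$ is a positive real, no associativity ambiguity arises in forming the associated projection matrix $\Pi_i$, so the three conditions of Proposition~\ref{prp:generalpanda} become genuine polynomial equations in these variables: the $N$ normalizations $|a_i|^2+|b_i|^2+|c_i|^2=1$, the $N(N-1)/2-1$ equidistance conditions $\rho(x_i,x_j)^2=\rho(x_1,x_2)^2$, and the $27$ real components of the matrix equation $\sum_i w_i\Pi_i=I_3$. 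This assembles into a polynomial map
\[
f\colon\R^{18N}\longrightarrow\R^{N+N(N-1)/2-1+27},
\]
whose zeros are, by the proposition, precisely the tight $N$-point simplices (automatically with all $w_i=3/N$).

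Next, for each $N$ listed in part~(a) of Table~\ref{tbl:generalop2}, I would run the numerical search of \textsection\ref{subsec:finding} --- Newton's method, falling back on gradient descent for $\sum_{i,j}|\langle x_i,x_j\rangle|^2$ --- to produce a high-precision approximate zero $x_0$, then compute a numerical approximate right inverse $T$ of $Df(x_0)$ and check that $||Df(x_0)\circ T-\id_W||$ is tiny; this also certifies that $Df(x_0)$ is surjective. Finally I would verify the hypothesis of Theorem~\ref{thm:implicit} on a suitable ball $B(x_0,\varepsilon)$, bounding $||Df(x)\circ T-\id_W||$ over $B(x_0,\varepsilon)$ either by interval arithmetic (see \textsection\ref{algs}) or by Corollary~\ref{cor:polybounds} together with Lemma~\ref{lma:polybounds}. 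Once $||Df(x)\circ T-\id_W||<1-||T||\,|f(x_0)|/\varepsilon$ holds, Theorem~\ref{thm:implicit} produces an exact $x_*\in B(x_0,\varepsilon)$ with $f(x_*)=0$ --- a genuine tight simplex --- and shows that near $x_*$ the zero locus is a manifold of dimension $18N-(N+N(N-1)/2-1+27)=(N-1)(34-N)/2-9$; quotienting by the $52$-dimensional symmetry group $F_4$ gives the local moduli dimension $(N-1)(34-N)/2-61=\sdim{N}{\OO\Proj^2}$, with the single exception $N=5$, where the $3$-dimensional stabilizer must be added back, as recorded in Remark~\ref{rmk:whenrapplies}.

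I expect the \emph{main obstacle} to be the rigorous (as opposed to merely numerical) verification of the operator-norm inequality over the entire ball $B(x_0,\varepsilon)$: one must certify an enclosure of $Df(x)\circ T-\id_W$ sharp enough to beat $1-||T||\,|f(x_0)|/\varepsilon$, which demands both a genuinely high-precision $x_0$ and a careful interval-arithmetic implementation of the octonionic products entering $f$ and $Df$. Relatedly, as $N$ increases the system has $18N$ variables and the search must still reach the precision needed for the certification to close; this is exactly what fails in the conjectured-nonexistence rows and in the ambiguous type~(d) cases, whereas every entry of part~(a) is within reach. Beyond this, the argument introduces essentially no new ideas over the quaternionic case, since Proposition~\ref{prp:generalpanda} already absorbs the octonionic geometry.
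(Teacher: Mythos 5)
Your proposal is correct and follows essentially the same route as the paper: the paper proves Theorem~\ref{thm:generalpanda} by exactly the scheme you describe, namely reformulating tight simplices via Proposition~\ref{prp:generalpanda} as zeros of a polynomial map in $18N$ variables and $N+N(N-1)/2-1+27$ constraints, finding a numerical approximate zero, and certifying an exact nearby zero with Theorem~\ref{thm:implicit} via Corollary~\ref{cor:polybounds} (implemented in \texttt{op2\_general.gp}). Your dimension bookkeeping, including the caveat for the $3$-dimensional stabilizer at $N=5$, also agrees with the paper's account.
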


\begin{table}
\caption{Cases in $\OO\Proj^2$: (a) proven existence of tight simplices; (b)
singular Jacobian; (c) conjectured nonexistence.} \label{tbl:generalop2}

\centering
\begin{tabular}{c}
\begin{tabular}[t]{cccccccc}
\toprule
$N$ & $\sdim{N}{\OO\Proj^2}$ & {\hspace{8pt}} & $N$ & $\sdim{N}{\OO\Proj^2}$ & {\hspace{8pt}} & $N$ & $\sdim{N}{\OO\Proj^2}$ \\
\midrule
 5 & 0${}^{\dagger}$      & {}     &  12 & 60      & {}     &  19 & 74   \\
 6 &  9      & {}     &  13 & 65      & {}     &  20 & 72   \\
 7 & 20      & {}     &  14 & 69      & {}     &  21 & 69   \\
 8 & 30      & {}     &  15 & 72      & {}     &  22 & 65   \\
 9 & 39      & {}     &  16 & 74      & {}     &  23 & 60   \\
10 & 47      & {}     &  17 & 75      & {}                  \\
11 & 54      & {}     &  18 & 75      & {} \\
\bottomrule
\end{tabular} \vspace{4pt} \\
{(a)}
\end{tabular}

\bigskip

\begin{tabular}{>{\centering\arraybackslash\hspace{0pt}}p{0.33\textwidth}>{\centering\arraybackslash\hspace{0pt}}p{0.2\textwidth}}
{\begin{tabular}[t]{cc}
\toprule
$N$ & rank deficiency \\
\midrule
24 & 2 \\
25 & 2 \\
27 & 26 \\
\bottomrule
\end{tabular}} &
{\begin{tabular}[t]{c}
\toprule
$N$ \\
\midrule
26 \\
\bottomrule
\end{tabular}} \\
{\vspace{-8pt}

(b)}
&
{\vspace{-8pt}

(c)} \\
\end{tabular}

\bigskip

\begin{minipage}{0.7\textwidth}
\footnotesize $\dagger$ Actually $\sdim{5}{\OO\Proj^2}$ is not $0$; rather,
it equals $-3$. This is the only case in which the simplex we found has a
positive-dimensional stabilizer. The stabilizer is $3$-dimensional, so the
actual dimension of the moduli space, which is what
$\sdim{5}{\OO\Proj^2}$ is really intended to capture, is $0$.
\end{minipage}
\end{table}

\subsection{$24$- and $25$-point simplices}

The following proposition is proven similarly to Proposition \ref{prp:12}.

\begin{proposition} \label{prp:24} Let $\sigma$ be the cyclic-shift
  automorphism $\sigma(a,b,c) = (b,c,a)$. Suppose $x_1,\dots,x_{3m}
  \in \OO^3$ and $w_1,\dots,w_{3m} \in \R$ satisfy the
  following conditions:
\begin{enumerate}
\item $x_{m+i} = \sigma(x_i)$ for $i=1,\dots,2m$,
\item $w_{m+i} = w_i$ for $i=1,\dots,2m$,
\item $x_i \in \R_{+} \times \OO^2$ and $|x_i|^2 = 1$ for $i=1,\dots,m$,
\item the squared distances $\rho(x_i,x_j)^2$ for
    $i=1,\dots,m$ and the following values of $j$ are all
    equal: \caseup{(i)} $j=i+m$, \caseup{(ii)} $i < j \le
    m$, \caseup{(iii)} $i+m < j \le 2m$, \caseup{(iv)}
    $i+2m < j \le 3m$, and
\item the matrix $\sum_{i=1}^{3m} w_i x_i x_i^\dagger$ has $1,1$
entry equal to $1$ and vanishing $1,2$ entry.
\end{enumerate}
Then $w_1 = \cdots = w_{3m} = 1/m$ and $\{x_1,\dots,x_{3m}\}$ is a tight
simplex.
\end{proposition}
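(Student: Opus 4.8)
The plan is to mirror the proof of Proposition~\ref{prp:12}, making the one adjustment forced by non-associativity: every manipulation will be carried out with $3\times 3$ octonionic Hermitian matrices and the Jordan product, rather than with ``lines in $\OO^3$'' and their inner products. First I would record that the cyclic shift $\sigma$ induces an isometry of $\OO\Proj^2$. Conjugation by the real orthogonal cyclic permutation matrix $P$ is an automorphism of the Jordan algebra $\mathcal{H}(\OO^3)$ that carries rank-one projections to rank-one projections, and $\sigma(x_i)\sigma(x_i)^\dagger = P\,(x_i x_i^\dagger)\,P^{-1}$ because $P$ has central (real) entries, so associativity is not an issue in this product. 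Here hypothesis~(3) does real work: once the first coordinate $a_i$ is real, the alternativity constraint $(a_ib_i)c_i = a_i(b_ic_i)$ holds automatically, so $x_i x_i^\dagger$ is a genuine rank-one projection for $i\le m$, and hence so is each $\sigma^k(x_i)\sigma^k(x_i)^\dagger$, even though $\sigma^k(x_i)$ need not lie in the standard chart $\R_+\times\OO^2$. It follows that $\rho(\sigma(x),\sigma(y)) = \rho(x,y)$ for all $x,y$.

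Next I would verify that $\{x_1,\dots,x_{3m}\}$ is a regular simplex. Writing an index $i$ as $i = i_0 + am$ with $i_0\in\{1,\dots,m\}$ and $a\in\{0,1,2\}$, repeated application of hypothesis~(1) gives $x_i = \sigma^a(x_{i_0})$; doing the same for $j = j_0 + bm$ and using that $\sigma$ is an isometry yields $\rho(x_i,x_j)^2 = \rho(x_{i_0},\sigma^{b-a}(x_{j_0}))^2$, and the right-hand side is one of the squared distances listed in cases~(i)--(iv) of hypothesis~(4) (after possibly interchanging $i_0$ and $j_0$). Thus all pairwise distances coincide. Distinctness of the points will follow a posteriori from tightness, since the common squared distance equals $1 - (N-d)/(d(N-1))$ with $N = 3m$ and $d = 3$, which is strictly positive.

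The core of the proof is then the same structural observation as in Proposition~\ref{prp:12}. Put $M = \sum_{i=1}^{3m} w_i\, x_i x_i^\dagger$. Hypotheses~(1) and~(2) let me rewrite $M$ as $\sum_{i=1}^m w_i\big(\Pi_i + P\Pi_i P^{-1} + P^2\Pi_i P^{-2}\big)$ with $\Pi_i = x_i x_i^\dagger$, which is manifestly invariant under conjugation by $P$; being also Hermitian over $\OO$, the matrix $M$ is forced into the circulant shape
\[
M = \begin{pmatrix} r & s & \bar s \\ \bar s & r & s \\ s & \bar s & r \end{pmatrix}, \qquad r \in \R,\ s \in \OO,
\]
which is a purely formal consequence of the two symmetries and uses no associativity. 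Hypothesis~(5) pins down $r = 1$ and $s = 0$, so $M = I_3$. I would then conclude by invoking Proposition~\ref{prp:generalpanda} (whose proof mirrors that of Proposition~\ref{prp:general}): its hypothesis~(2) is the equidistance just established, its hypothesis~(3) is $M = I_3$, and its hypothesis~(1) is our hypothesis~(3) together with the $\sigma$-translates; the conclusion is $w_1 = \dots = w_{3m} = 3/(3m) = 1/m$ and that the configuration is a tight simplex.

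The only genuine obstacle, and the one subtlety not present in the quaternionic case, is the chart bookkeeping flagged at the start of \textsection\ref{oct}: Proposition~\ref{prp:generalpanda} is stated for configurations lying entirely in the standard chart, so before applying it I would first replace each $\sigma^k(x_i)$ by a unit representative of the same point of $\OO\Proj^2$ with positive real first coordinate. This changes neither the projection matrices $\Pi_i$, nor the pairwise distances, nor the matrix $M$, so all the hypotheses verified above persist and the argument goes through. Apart from this, the proof is routine and essentially identical to the proof of Proposition~\ref{prp:12}.
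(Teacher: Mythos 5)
Your proof is correct and follows exactly the route the paper intends: the paper states that Proposition~\ref{prp:24} ``is proven similarly to Proposition~\ref{prp:12}'' and omits the argument, and your write-up supplies that argument with appropriate attention to the octonion-specific subtleties (notably that $\sigma$ acts as a Jordan-algebra automorphism, hence an element of $F_4$, and that hypothesis~(3) guarantees the associativity constraint $(a_i b_i)c_i = a_i(b_i c_i)$ needed for $x_i x_i^\dagger$ and its cyclic shifts to be honest rank-one idempotents). The one point to sharpen is the chart bookkeeping in your final paragraph: replacing $\sigma^k(x_i)$ by a unit representative with positive real first coordinate is impossible if the relevant coordinate of $x_i$ vanishes, so the substitution is not always available. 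Fortunately it is also unnecessary: the proof of Proposition~\ref{prp:generalpanda}, mirroring that of Proposition~\ref{prp:general}, works entirely at the level of the projection matrices $\Pi_i$, their pairwise inner products $\langle\Pi_i,\Pi_j\rangle$, and the identity $\sum_i w_i\Pi_i = I_3$; the chart hypothesis in its statement serves only to guarantee that each $\Pi_i$ is a rank-one Hermitian idempotent of trace one, which you have already verified directly for each $\sigma^k(x_i)$. So you may simply run the argument of Proposition~\ref{prp:generalpanda} on the $\Pi_i$ as they stand, without re-choosing representatives.
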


Using the conditions of Proposition \ref{prp:24} with $m=8$ in Theorem
\ref{thm:implicit} yields a surjective Jacobian, allowing us to prove
the following theorem.

\begin{theorem}
There is a tight simplex of $24$ points in $\OO\Proj^2$.  In fact, there is
such a tight simplex with cyclic symmetry.
\end{theorem}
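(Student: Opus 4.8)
The plan is to reproduce the proof of Theorem~\ref{thm:12} essentially verbatim, with $\HH$ replaced by $\OO$, Proposition~\ref{prp:12} replaced by Proposition~\ref{prp:24}, and the specialization $m=8$ so that $3m=24$.

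First I would write down the polynomial map $f$ whose real zeros parametrize cyclic-symmetric tight $24$-point simplices, as dictated by Proposition~\ref{prp:24} with $m=8$. Such a simplex is completely determined by the eight representatives $x_1,\dots,x_8$, each taken in the first affine chart as $(a_i,b_i,c_i)\in\R_+\times\OO^2$ ($1+8+8=17$ real parameters apiece), together with the eight weights $w_1,\dots,w_8\in\R$; the remaining points and weights are then fixed by $x_{8+i}=\sigma(x_i)$ and $w_{8+i}=w_i$. This is a system in $144$ real variables. The coordinates of $f$ impose the normalizations $|x_i|^2=1$ of condition~(iii), the equalities among the squared distances $\rho(x_i,x_j)^2$ enumerated in condition~(iv) (written as vanishing differences of successive entries of that list), and condition~(v): that the $1,1$ entry of $\sum_{i=1}^{24} w_i x_i x_i^\dagger$ equal $1$ and that its $1,2$ entry (an octonion, hence eight real components) vanish. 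One checks that there are fewer constraints than variables, and that every real zero of $f$ gives, via Proposition~\ref{prp:24}, a genuine tight simplex of $24$ distinct points of $\OO\Proj^2$ invariant under the cyclic shift $\sigma$ (with all weights equal to $1/8$).

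Next I would run the numerical search of \textsection\ref{subsec:finding}---Newton's method seeded near an approximate tight simplex, or gradient descent on the associated energy---to obtain a point $x_0$ at which the constraints hold to within roughly $10^{-15}$. At $x_0$ I would compute an approximate right inverse $T$ of $Df(x_0)$. The decisive point, exactly as for Theorem~\ref{thm:12}, is that passing to the cyclic-symmetric system of Proposition~\ref{prp:24} makes $Df(x_0)$ surjective, whereas the generic system of Proposition~\ref{prp:generalpanda} has rank deficiency $2$ at a $24$-point simplex (see Table~\ref{tbl:generalop2}). I would then bound $||Df(x)\circ T-\id_W||$ for all $x$ in a small ball $B(x_0,\varepsilon)$---either by interval arithmetic or, since $f$ is polynomial, by Corollary~\ref{cor:polybounds}---and verify that it is less than $1-||T||\cdot|f(x_0)|/\varepsilon$. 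Theorem~\ref{thm:implicit} then produces an exact zero $x_*\in B(x_0,\varepsilon)$, which by Proposition~\ref{prp:24} is the desired cyclic-symmetric tight $24$-point simplex in $\OO\Proj^2$; it is an optimal code by Proposition~\ref{prp:tightoptimal}, and in fact universally optimal as recalled in \textsection\ref{known}. (As with Theorems~\ref{thm:12} and~\ref{thm:13}, the argument also computes the local dimension of the space of cyclic-symmetric tight simplices, but says nothing about the full moduli space.)

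The main obstacle is precisely this surjectivity of the Jacobian, which is why Proposition~\ref{prp:24} rather than Proposition~\ref{prp:generalpanda} must be used: one has to check by explicit computation at $x_0$ that the cyclic-symmetry reduction removes the rank deficiency, and this verification is genuinely part of the computer-assisted proof. A related care point, special to $\OO\Proj^2$, is that all of the underlying algebra must respect the nonassociativity of $\OO$: this is why condition~(iii) of Proposition~\ref{prp:24} works in the affine chart $\R_+\times\OO^2$, where the condition $(a_ib_i)c_i=a_i(b_ic_i)$ needed for $x_ix_i^\dagger$ to be a genuine projection matrix is automatic, so the formulas for $\rho(x_i,x_j)^2$ and for the entries of $\sum_i w_ix_ix_i^\dagger$ must be written in terms of the projection matrices $x_ix_i^\dagger$ rather than treating points as lines in $\OO^3$. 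Finally, nondegeneracy is automatic: once the common squared inner product is forced to its tight value $(24-3)/(3\cdot 23)=7/23<1$, the $24$ points are distinct in $\OO\Proj^2$.
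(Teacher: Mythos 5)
Your proposal is correct and matches the paper's own approach: the paper proves the theorem by applying Proposition~\ref{prp:24} with $m=8$, numerically locating an approximate cyclic-symmetric solution, verifying that the resulting Jacobian is surjective, and invoking Theorem~\ref{thm:implicit} (via Corollary~\ref{cor:polybounds} or interval arithmetic) to certify a nearby exact solution. The additional remarks you make about variable counts, the nonassociativity of $\OO$ necessitating the affine-chart setup, and automatic distinctness of the points are all consistent with the paper.
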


Similarly, to prove the existence of tight simplices with $25$ points,
we use the following adaptation of Proposition \ref{prp:13}.

\begin{proposition} \label{prp:25} Let $\sigma$ be the cyclic-shift
  automorphism $\sigma(a,b,c) = (b,c,a)$.  Suppose $x_1,\dots,x_{3m}
  \in \OO^3$ satisfy the following conditions:
  \begin{enumerate}
  \item $x_{m+i} = \sigma(x_i)$ for $i=1,\dots,2m$,
  \item $x_i \in \R_{+} \times \OO^2$ and $|x_i|^2 = 1$ for
      $i=1,\dots,m$,
  \item the squared distances $\rho(x_i,x_j)^2$ for
      $i=1,\dots,m$ and the following values of $j$ are all
      equal: \caseup{(i)} $j=i+m$, \caseup{(ii)} $i < j \le
      m$, \caseup{(iii)} $i+m < j \le 2m$, \caseup{(iv)}
      $i+2m < j \le 3m$, and
  \item the $1,2$ entry of the matrix $\sum_{i=1}^{3m} x_i
    x_i^\dagger$ has real part $1/6$ and magnitude $1/3$.
  \end{enumerate}
  Then there is a unique point $x_{3m+1} \in \OO\Proj^2$ such that
  $\{x_1,\dots,x_{3m},x_{3m+1}\}$ is a tight simplex, and that point
  satisfies $\sigma(x_{3m+1}) = x_{3m+1}$.
\end{proposition}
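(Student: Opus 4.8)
The plan is to follow the proof of Proposition~\ref{prp:13} essentially verbatim, replacing $\HH$ by $\OO$ and keeping in mind that a point of $\OO\Proj^2$ simply \emph{is} a rank-one Hermitian idempotent, so there is no scalar ambiguity to track.

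First I would observe that any tight $(3m+1)$-point simplex $\{x_1,\dots,x_{3m+1}\}$ satisfies the $1$-design equation $\sum_{i=1}^{3m+1} x_i x_i^\dagger = \tfrac{3m+1}{3} I_3$ of Proposition~\ref{prp:generalpanda}, so the projection matrix $x_{3m+1} x_{3m+1}^\dagger$ is forced to be $\Pi := \tfrac{3m+1}{3} I_3 - M$ with $M := \sum_{i=1}^{3m} x_i x_i^\dagger$; this proves uniqueness. Moreover $M$ is invariant under conjugation by the coordinate permutation $\sigma$ (it is a sum over full $\sigma$-orbits), hence so is $\Pi$, which is exactly the statement $\sigma(x_{3m+1}) = x_{3m+1}$. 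So everything reduces to producing such a point, i.e.\ to showing $\Pi$ genuinely is a point of $\OO\Proj^2$ and that adjoining it to $\{x_1,\dots,x_{3m}\}$ gives a tight simplex.

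As in Proposition~\ref{prp:24}, conditions~(1)--(3) make $\{x_1,\dots,x_{3m}\}$ a regular simplex and make $M$ Hermitian and $\sigma$-invariant, hence circulant, $M = \left(\begin{smallmatrix} r & s & \bar s \\ \bar s & r & s \\ s & \bar s & r \end{smallmatrix}\right)$ with $r \in \R$ and $s \in \OO$; taking traces gives $r = m$, and $s$ is the $1,2$ entry of $M$. Thus $\Pi$ is Hermitian with $\Tr \Pi = 1$, $\Pi_{11} = 1/3$, and $\Pi_{12} = -s$. The substantive point is to check $\Pi^2 = \Pi$: expanding entrywise, the diagonal equations all reduce to $|s|^2 = 1/9$ and the off-diagonal ones all reduce to $3s^2 = -\bar s$. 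Non-associativity is harmless here because every entry of $\Pi$ lies in the commutative associative subalgebra $\R + \R s \subset \OO$, so this is the same arithmetic as in the quaternionic case. Hypothesis~(4) supplies both facts at once: writing $s = \tfrac16 + u$ with $u$ purely imaginary, the magnitude condition forces $|u|^2 = 1/12$, whence $s^2 = -\tfrac1{18} + \tfrac13 u$ and therefore $3s^2 = -\tfrac16 + u = -\bar s$ (equivalently, $-3s$ is a cube root of unity in $\OO$). So $\Pi$ is a rank-one Hermitian idempotent, which by definition is a point $x_{3m+1} \in \OO\Proj^2$.

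The rest is formal and identical to Proposition~\ref{prp:13}: with $\Pi_i := x_i x_i^\dagger$, pairing the identity $\Pi + \sum_{i=1}^{3m} \Pi_i = \tfrac{3m+1}{3} I_3$ against $\Pi_j$ shows the inner products $\langle \Pi, \Pi_i \rangle$ all equal some value $\beta$, and pairing against $\Pi$ and then against $\Pi_j$ and subtracting gives $\beta = \alpha$, the common inner product of the regular simplex $\{x_1,\dots,x_{3m}\}$; hence $\{x_1,\dots,x_{3m+1}\}$ is a regular simplex satisfying $\sum x_i x_i^\dagger = \tfrac{3m+1}{3} I_3$, so it is tight by Proposition~\ref{prp:generalpanda}. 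The one step that needs real care is the verification that $\Pi^2 = \Pi$ together with $\Tr\Pi = 1$ really does certify $\Pi$ as a point of $\OO\Proj^2$ in the Jordan-algebra sense used to define that space, and that all the manipulations of $s$, $\bar s$, and $u$ take place inside $\R + \R s$ so that the alternative and associative laws genuinely apply; every other step transfers mechanically from the quaternionic argument.
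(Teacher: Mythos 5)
Your proof is correct and is exactly the argument the paper intends: the paper omits the proof of Proposition~\ref{prp:25}, describing it only as an ``adaptation of Proposition~\ref{prp:13},'' and your write-up transcribes that quaternionic argument faithfully while correctly isolating the one genuinely new point, namely that all the $\Pi^2 = \Pi$ computations involve only products of two entries lying in the commutative associative subalgebra $\R + \R s$, so non-associativity never intervenes.
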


Using the conditions above with $m = 8$ in Theorem \ref{thm:implicit}
yields a surjective Jacobian.

\begin{theorem}
  There is a tight simplex of $25$ points in $\OO\Proj^2$.  In
  fact, there is such a tight simplex with cyclic symmetry.
\end{theorem}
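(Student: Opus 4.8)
The plan is to follow the proof of Theorem~\ref{thm:13} essentially line for line, substituting $\OO\Proj^2$ for $\HH\Proj^2$ and taking $m=8$ in Proposition~\ref{prp:25}, so that the simplex has $3m+1 = 25$ points. The first step is to reduce the problem to a polynomial system with surjective Jacobian, which is exactly the role of Proposition~\ref{prp:25}. Building condition~(1) of that proposition into the parametrization, I would view a $\sigma$-symmetric configuration as determined by its representatives $x_1,\dots,x_8$ in the first affine chart, i.e.\ with each $x_i = (a_i,b_i,c_i) \in \R_+ \times \OO^2$; the remaining sixteen points are the shifts $\sigma(x_i) = x_{8+i}$ and $\sigma^2(x_i) = x_{16+i}$. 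Conditions~(2)--(4) of Proposition~\ref{prp:25} then define a polynomial map $f$ from the space of chart coordinates of $(x_1,\dots,x_8)$ to $\R^c$, where $c$ counts the eight norm conditions, the equalities among the four families of squared distances $\rho(x_i,x_j)^2$ appearing in case~(3)(i)--(iv), and the two scalar conditions on the $(1,2)$ entry of $\sum_{i=1}^{24} x_i x_i^\dagger$. By Proposition~\ref{prp:25}, every zero of $f$ produces a tight $25$-point simplex in $\OO\Proj^2$, the twenty-fifth point being uniquely determined and automatically $\sigma$-invariant; so it suffices to exhibit one zero of $f$.

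The second step is to produce a zero of $f$ from a numerical approximation using the effective implicit function theorem. Following \textsection\ref{subsec:finding}, I would compute an approximate solution $x_0$ to high precision (with $|f(x_0)|$ of order $10^{-15}$) by Newton's method or energy minimization, together with a numerically computed approximate right inverse $T$ of $Df(x_0)$. Since $f$ is a polynomial map, Corollary~\ref{cor:polybounds} turns the hypothesis of Theorem~\ref{thm:implicit} into a single explicit inequality that can be checked by evaluating $\|Df(x_0)\circ T - \id\|$, $\|T\|$, $|f(x_0)|$, and the coefficient sum $|f|$, for a suitably small radius $\varepsilon$; alternatively one can bound $\|Df(x)\circ T - \id\|$ over all of $B(x_0,\varepsilon)$ directly by interval arithmetic (\textsection\ref{algs}). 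Verifying this inequality yields an exact zero $x_* \in B(x_0,\varepsilon)$ and, as a bonus, the local dimension of the zero set, hence the dimension of the family of $\sigma$-symmetric tight $25$-point simplices.

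A point needing extra care, and the reason this result belongs in \textsection\ref{oct} rather than being a verbatim transcription of the quaternionic argument, is that $\OO\Proj^2$ is not the set of lines in $\OO^3$: because octonion multiplication is non-associative, a unit triple $(a,b,c)$ defines a rank-one projection matrix only when $(ab)c = a(bc)$. Restricting $(x_1,\dots,x_8)$ to the first chart, where each $a_i$ is a positive real, makes this automatic, since a real scalar associates with everything (indeed the subalgebra generated by $b_i$, $c_i$, and $\R$ is associative by Artin's theorem); so one must simply check that $x_0$, and a neighborhood of it, stays in the interior of this chart, which is clear for a generic configuration. With this bookkeeping in hand the proof of Proposition~\ref{prp:25} goes through over $\OO$ exactly as the proof of Proposition~\ref{prp:13} does over $\HH$ --- the relevant facts about cube roots of unity and the rank-one condition $3s^2 = -\bar s$ hold identically in $\OO$.

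I expect the genuine obstacle to be the same one that Proposition~\ref{prp:25} is designed to overcome: obtaining a \emph{surjective} Jacobian, without which Theorem~\ref{thm:implicit} says nothing. The naive system coming from Proposition~\ref{prp:generalpanda} has rank deficiency $2$ at $N=25$ (Table~\ref{tbl:generalop2}), so the substance of the theorem is the passage to the cyclic-symmetric formulation plus a single forced extra point. Granting Proposition~\ref{prp:25} and the numerically verified surjectivity of $Df(x_0)$, everything else is the routine, if computer-assisted, verification described above. As with Theorems~\ref{thm:12} and~\ref{thm:13}, this argument determines the dimension of the cyclic-symmetric family but not that of the full moduli space of tight $25$-point simplices in $\OO\Proj^2$.
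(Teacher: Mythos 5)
Your proposal is correct and follows essentially the same approach as the paper: reduce to the cyclic-symmetric system of Proposition~\ref{prp:25} with $m=8$, find a numerical approximation, and invoke Theorem~\ref{thm:implicit} (via Corollary~\ref{cor:polybounds} or interval arithmetic) at a point where the Jacobian is surjective. Your remarks about the octonionic chart handling non-associativity and about the rank deficiency of the unsymmetrized system (Table~\ref{tbl:generalop2}) correctly identify why the cyclic-symmetric formulation is needed.
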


Continuing the correspondence with $12$- and $13$-point
simplices in $\HH\Proj^2$, based on numerical evidence
we conjecture the following.

\begin{conjecture} \label{conj:dim24and25}
There exists a $24$-point (resp., $25$-point)
tight simplex in $\OO\Proj^2$ such that, in a
neighborhood thereof, the space of
tight simplices has dimension $56$
(resp., $49$).
\end{conjecture}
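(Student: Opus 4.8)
\section*{Proof proposal}

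The plan is to follow the template of the $15$-point case in $\HH\Proj^2$ (Proposition~\ref{prp:15}) and the $27$-point case in $\OO\Proj^2$ (Proposition~\ref{prp:27}): replace the constraint map of Proposition~\ref{prp:generalpanda}, whose Jacobian $Df$ has rank deficiency $2$ at every tight simplex of size $24$ or $25$, by a modified polynomial map $\tilde f$ with \emph{surjective} Jacobian at a numerical solution, in such a way that the zero locus of $\tilde f$ still coincides, in a neighborhood of that solution, with the set of tight simplices of the given size. Once such an $\tilde f$ is in hand, Theorem~\ref{thm:implicit} --- applied via Corollary~\ref{cor:polybounds} and interval arithmetic, exactly as for the cases in part~(a) of Table~\ref{tbl:generalop2} (see also \textsection\ref{algs}) --- shows that the zero locus is a smooth manifold of dimension (number of variables) minus (number of equations). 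With $18N$ variables and $N + \binom{N}{2} + 26 - 2$ equations this dimension is $108$ for $N=24$ and $101$ for $N=25$; subtracting the $52$ dimensions of $F_4$, and using finiteness of the stabilizer as checked in Remark~\ref{rmk:whenrapplies}, leaves the moduli dimensions $56$ and $49$ asserted in the conjecture.

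Concretely, I would first take the $24$- and $25$-point tight simplices already produced by the theorems following Propositions~\ref{prp:24} and~\ref{prp:25} --- which may be assumed cyclic-symmetric, a convenient low-complexity starting point, though the symmetry plays no role in the argument. Next I would compute, numerically and then rigorously, the two-dimensional cokernel of $Df$ at such a solution, identifying two of the imposed equations, say $g_1$ and $g_2$, that span it. The goal is then to produce an explicit \emph{syzygy}: polynomial functions $h_{jk}$ with $g_j = \sum_k h_{jk} f'_k$, where $f'_1,\dots,f'_{N+\binom{N}{2}+24}$ are the remaining equations of Proposition~\ref{prp:generalpanda}. Such an identity forces $\{g_1 = g_2 = 0\}$ to hold automatically on $\{f' = 0\}$ near the solution, so that $\tilde f := f'$ cuts out exactly the tight-simplex locus while (one hopes) having surjective Jacobian there. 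The derivation of the $h_{jk}$ should exploit the $1$-design identity $\sum_i \Pi_i = (N/3)I_3$ together with the algebra of the traceless parts $\Gamma_i = \Pi_i - \tfrac13 I_3$, in the spirit of the proof of Proposition~\ref{prp:15}; a Gr\"obner-basis computation could help locate it. After that, the interval-arithmetic verification and the dimension bookkeeping proceed as in \textsection\ref{algs}.

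The main obstacle is precisely the second step: exhibiting, and then \emph{proving}, the structural identity responsible for the rank-$2$ deficiency. In the $15$- and $27$-point cases the deficiency is large and is dictated by a transparent count --- the $\Gamma_i$ lie in a space of dimension one less than their number, so their Gram matrix is identically singular --- which is what the proof of Proposition~\ref{prp:15} leverages. For $N = 24$ and $N = 25$ the deficiency is small and has no such obvious explanation, so locating the right reformulation is genuinely delicate. A useful partial test is that the conjecture implies that the second-order term of $\pi \circ f$ vanishes on $\ker Df$, where $\pi$ is the projection onto the cokernel: this is a finite polynomial computation, and if it fails the moduli space is in fact singular and the conjecture is false, whereas if it holds it is strong evidence but not a proof, since higher-order obstructions must still be ruled out --- most cleanly by dividing $g_1$ and $g_2$ by suitable functions to produce $\tilde f$ and then invoking Theorem~\ref{thm:implicit}. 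The identical obstacle blocks Conjecture~\ref{conj:dim12and13} for $12$- and $13$-point simplices in $\HH\Proj^2$; settling that quaternionic case first, and carrying the tangent-space computation across the embedding $\HH\Proj^2 \hookrightarrow \OO\Proj^2$ --- which, however, does not by itself determine the octonionic moduli dimension --- would be a natural warm-up.
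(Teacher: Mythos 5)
This statement is a \emph{conjecture} in the paper, not a theorem: the authors do not prove it and do not claim to. Their only support is the numerical procedure of \textsection\ref{subsec:dimensions}: converge to many nearby solutions of the rank-deficient system from Proposition~\ref{prp:generalpanda} starting from random small perturbations, form the matrix of resulting tangent directions, and count the singular values that are of order $1$ rather than of order $\varepsilon$. That count came out $108$ (resp.\ $101$) before the $F_4$ quotient, matching the hypothesis that the observed rank-$2$ deficiency reflects two redundant constraints rather than a genuine singularity; the paper explicitly lists the other possible explanations for the deficiency (in \textsection\ref{subsec:1213}) and leaves them open.

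Your proposal therefore attempts more than the paper achieves, and the plan is structurally sound. The bookkeeping is right: dropping $2$ of the $N + \binom{N}{2} + 26$ constraints of Proposition~\ref{prp:generalpanda} in $18N$ variables gives codimension leading to solution-space dimension $108$ and $101$, hence $56$ and $49$ after subtracting $\dim F_4 = 52$ (the finiteness of the stabilizer for these simplices is indeed already certified in \textsection\ref{subsec:stabilizers}). And the logic is correct: if you exhibit two equations $g_1,g_2$ of the original system that vanish identically on the zero locus of the remaining ones, with the pruned Jacobian surjective, Theorem~\ref{thm:implicit} closes the argument. But, as you acknowledge, you have not produced the syzygy, and that is exactly the missing content. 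In the $15$- and $27$-point cases the redundancy has a transparent source --- the $\Gamma_i$ live in a space of dimension $N-1$, so their Gram matrix is identically singular --- which is what lets Propositions~\ref{prp:15} and~\ref{prp:27} reformulate the constraints explicitly. For $N=24,25$ (as for $N=12,13$ in $\HH\Proj^2$) no such cause has been identified, which is precisely why the paper stops at a conjecture. In short, your proposal is a correct diagnosis of what a proof would require, but it supplies no proof, and the paper supplies none either; there is nothing in the paper against which it can be said to succeed or fail beyond the honest gap you have already flagged.
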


\subsection{$27$-point simplices}

\begin{proposition} \label{prp:27}
Suppose $x_i = (a_i,b_i,c_i) \in \R_{+} \times \OO^2$ satisfy
\[\langle \Gamma_i , \Gamma_j \rangle = -\frac{1}{39} \quad \text{for }i \ne j,\]
where
\[\Gamma_i := \begin{pmatrix} a_i \\ b_i \\ c_i \end{pmatrix} \begin{pmatrix} \bar a_i & \bar b_i & \bar c_i \end{pmatrix} - \frac{1}{3} (a_i^2+|b_i|^2+|c_i|^2) I_3.\]
Suppose additionally that $|x_i|^4 \in [1-10^{-7},1+10^{-7}]$ for each $i$.
Then $|x_i| = 1$ and $\{x_1,\dots,x_{27}\}$ determines a tight simplex in
$\OO\Proj^2$.
\end{proposition}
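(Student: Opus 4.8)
The plan is to follow the proof of Proposition~\ref{prp:15} essentially verbatim, with $\HH\Proj^2$ replaced throughout by $\OO\Proj^2$; the only genuinely new ingredients are a dimension count for the ambient space of Hermitian matrices and the degree-two zonal polynomial. Write $|x_i|^4 = 1+\delta_i$ and $\delta = \max_i |\delta_i| \le 10^{-7}$, and note it suffices to prove $\delta = 0$. Indeed, once every $|x_i| = 1$, each $x_i$ has a real first coordinate, so (as in Section~\ref{known}) $\Pi_i := x_i x_i^\dagger$ is a genuine rank-one projection, i.e.\ a point of $\OO\Proj^2$; moreover $\langle\Pi_i,\Pi_j\rangle = -1/39 + 1/3 = 4/13 = (27-3)/(3\cdot(27-1))$ for $i\ne j$, which is exactly the common inner product of a tight $27$-point simplex (Definition~\ref{def:tightsimplex}), the points being distinct since $4/13 \ne 1$.

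To set up, I would translate the hypothesis into a statement about the projection matrices $\Pi_i = \eta_i x_i x_i^\dagger$ with $\eta_i = (1+\delta_i)^{-1/2}$. Just as in the quaternionic computation, cyclic invariance of $\RealPart\Tr$ (cf.\ \eqref{eq:piproduct}) gives $\langle\Gamma_i,\Gamma_i\rangle = \tfrac23 |x_i|^4$ and $\langle\Gamma_i,\Gamma_j\rangle = |\langle x_i,x_j\rangle|^2 - \tfrac13 |x_i|^2|x_j|^2$ for $i\ne j$, so the constraint $\langle\Gamma_i,\Gamma_j\rangle = -1/39$ becomes $\langle\Pi_i,\Pi_j\rangle = -\eta_i\eta_j/39 + 1/3$. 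Hence the numbers $P_{i,j} := \langle\Pi_i,\Pi_j\rangle$ are precisely those of Lemma~\ref{lemma:S} with $d = 3$, $N = 27$, and $\lambda = -1/39$. Two facts then enter. First, positive definiteness of the second zonal kernel on $\OO\Proj^2$: here the Jacobi parameters are $(\alpha,\beta) = (7,3)$, one computes $P^{(7,3)}_2(2t-1) = 91t^2 - 65t + 10$, and therefore $\Sigma_2 := 91 S_2 - 65 S_1 + 10 S_0 \ge 0$ in the notation of Lemma~\ref{lemma:S}. Second, a singular Gram matrix: each $\Gamma_i$ is traceless and Hermitian, so $\Gamma_1,\dots,\Gamma_{27}$ lie in the space of traceless Hermitian $3\times3$ octonionic matrices, which has real dimension $26$ (the full space of Hermitian $3\times3$ octonionic matrices has dimension $3 + 3\cdot 8 = 27$); being $27$ vectors in a $26$-dimensional space, they are linearly dependent, and since the trace form is positive definite the Gram matrix of the elements $\sqrt{3/2}\,\Gamma_i$ --- which has diagonal entries $1+\delta_i$ and all off-diagonal entries $(3/2)\cdot(-1/39) = -1/26 = -1/(N-1)$ --- is singular. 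Thus the normalized determinant $D$ of Lemma~\ref{lemma:D} (with $N = 27$) vanishes.

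With these in hand I would conclude exactly as in Proposition~\ref{prp:15}. Lemmas~\ref{lemma:S} and~\ref{lemma:D} express $\Sigma_2$, $D$, and $D^2$ as explicit quadratics in $m_1 = \sum_i\delta_i$ and $m_2 = \sum_i\delta_i^2$ with $O(\delta^3)$ errors having explicit constants; the quadratic for $\Sigma_2$ has vanishing constant term because a $27$-point tight simplex in $\OO\Proj^2$ is a $2$-design. The approximation of $\Sigma_2$ by itself is not negative definite, but one can choose integers $c_1,c_2,c_3$ with $c_3 > 0$ so that the $m_1$ and $m_1^2$ coefficients of $c_1 D + c_2 D^2 + c_3\Sigma_2$ cancel, leaving a strictly negative $m_2$ coefficient; for instance $c_1 = 8424$, $c_2 = -305$, $c_3 = 37908$ gives $\Sigma_2' := 8424 D - 305 D^2 + 37908\,\Sigma_2$ with $\Sigma_2' = -22275\,m_2 + (\text{error})$ and the error bounded by an explicit constant times $\delta^3$. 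Since $D = D^2 = 0$ and $\Sigma_2 \ge 0$, we have $\Sigma_2' \ge 0$; but $m_2 \ge \delta^2$ together with $\delta \le 10^{-7}$ forces $\Sigma_2' < 0$ unless $\delta = 0$. Hence $\delta = 0$, which finishes the proof.

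The main obstacle is bookkeeping rather than anything conceptual: one must pin down the $26$-dimensional count, get the Jacobi polynomial right, and then verify that after killing the $m_1$ and $m_1^2$ terms the $m_2$ coefficient is still negative and that the resulting $O(\delta^3)$ error constant is small enough for the tolerance $|x_i|^4 \in [1-10^{-7}, 1+10^{-7}]$ to close the argument. That error constant is considerably larger than in the quaternionic case --- the $N^4$ and $N^5$ factors in Lemma~\ref{lemma:D} and the larger Jacobi coefficients all grow with $N$ --- which is presumably why the tolerance here is tightened from $10^{-6}$ (as in Proposition~\ref{prp:15}) to $10^{-7}$. As elsewhere in the paper, I would verify the arithmetic with interval arithmetic, or simply invoke Lemmas~\ref{lemma:S} and~\ref{lemma:D} as black boxes with the stated constants.
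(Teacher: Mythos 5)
Your proposal is correct and follows the paper's proof essentially verbatim: reduce to $\delta=0$, apply Lemma~\ref{lemma:S} with $d=3$, $N=27$, $\lambda=-1/39$ and the Jacobi polynomial $P_2^{(7,3)}(2t-1)=91t^2-65t+10$, exploit the singular Gram matrix of the $27$ traceless Hermitian matrices in a $26$-dimensional space via Lemma~\ref{lemma:D}, and take a linear combination $c_1 D + c_2 D^2 + c_3\Sigma_2$ killing $m_1$ and $m_1^2$. Your coefficients $(8424,-305,37908)$ are exactly $\tfrac{1}{9}$ of the paper's $(75816,-2745,341172)$, so the resulting $m_2$ coefficient ($-22275$ vs.\ $-200475$) and error constant both scale by the same factor and the final inequality closes identically for $\delta\le 10^{-7}$. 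Two minor remarks: you correctly scale the $\Gamma_i$ by $\sqrt{3/2}$ to normalize the Gram diagonal to $1+\delta_i$ (the paper's displayed $\sqrt{2/3}$ appears to be a typo, since $\langle\Gamma_i,\Gamma_i\rangle=\tfrac23(1+\delta_i)$), and you do not write out the explicit $O(\delta^3)$ error constant, but you correctly identify where it comes from and why the tightened tolerance $10^{-7}$ suffices.
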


\begin{proof}
We use the same proof technique as Proposition \ref{prp:15}, with the only
difference being the constants appearing in the proof.

As before, we write $|x_i|^4 = 1 + \delta_i$, and let $\delta = \max_i
|\delta_i|$.  Let $G$ be the Gram matrix of $\sqrt{\tfrac{2}{3}} \Gamma_1,
\dots \sqrt{\tfrac{2}{3}} \Gamma_{27}$.  Then $\det(G)=0$, and by
Lemma~\ref{lemma:D} the normalized determinant $D := 26^{26} \det(G)/27^{24}$
satisfies
\[
|D - 27m_1 - 26(m_1^2-m_2)| \le 531441 \cdot \delta^3
\]
and
\[
|D^2 - 729 m_1^2| \le 86093442 \cdot \delta^3.
\]

The second zonal harmonic on $\OO\Proj^2$ is given by the Jacobi polynomial
\[P_2^{(7,3)}(2t-1) = 91t^2 - 65t + 10.\]
Let $\Sigma_2$ be the sum of the kernel $K_2(x,y) := P^{(7,3)}_2(2|\langle
x,y \rangle|^2-1)$ over the projective code determined by $\{x_1, \dots,
x_{27}\}$, so $\Sigma_2 \ge 0$. By Lemma~\ref{lemma:S},
\[
\left|\Sigma_2-\left( -6 m_1 + \frac{41}{468} m_1^2 + \frac{2429}{468} m_2 \right) \right|
\leq  48087 \cdot \delta^3.
\]
Because $D=0$,
\[\Sigma_2' := 75816 D - 2745 D^2 + 341172 \Sigma_2\]
must be nonnegative, but $|\Sigma_2' + 200475 m_2| \leq 293024167110 \cdot
\delta^3$. Because $m_2 \ge \delta^2$, when $\delta \le 10^{-7}$ we have
$\Sigma_2' \le 0$ with equality only when $\delta=0$.  Thus, $\delta=0$ and
$\{x_1,\dots,x_{27}\}$ determines a tight simplex in $\OO\Proj^2$.
\end{proof}

Applying Theorem \ref{thm:implicit} with the conditions of the above
proposition, we find a suitable point for which the Jacobian is
surjective.

\begin{theorem} \label{thm:27}
There is a tight simplex of $27$ points in $\OO\Proj^2$.  In fact, locally there is a
$56$-dimensional space of such simplices.
\end{theorem}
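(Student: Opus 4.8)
The plan is to repeat the argument behind Theorem~\ref{thm:15}, with Proposition~\ref{prp:27} in place of Proposition~\ref{prp:15}. First I would set up the polynomial system: in the first affine chart a point of $\OO\Proj^2$ is a triple $x_i = (a_i,b_i,c_i) \in \R \times \OO^2$, i.e., $17$ real coordinates, so a $27$-point configuration is a point of $\R^{459}$. Proposition~\ref{prp:27} says that on the open set $U \subset \R^{459}$ where $a_i > 0$ and $|x_i|^4 \in (1-10^{-7},1+10^{-7})$ for every $i$, the zeros of the polynomial map $f \colon \R^{459} \to \R^{351}$ given by
\[
f(x) = \Big( \langle \Gamma_i,\Gamma_j\rangle + \tfrac{1}{39} \Big)_{1 \le i < j \le 27}
\]
are exactly the (chart-representable) $27$-point tight simplices in $\OO\Proj^2$; here $\Gamma_i$ is quadratic in $x_i$, so $f$ has total degree $4$ and $\binom{27}{2} = 351$.

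The existence step is computer-assisted. Using energy minimization followed by Newton's method (see \textsection\ref{subsec:finding}) I would produce a high-precision approximate solution $x_0 \in U$, together with a numerical right inverse $T \colon \R^{351} \to \R^{459}$ of $Df(x_0)$; the fact that $Df(x_0)$ has full rank $351$, so such a $T$ exists, is exactly what the reformulation via Lemmas~\ref{lemma:S} and~\ref{lemma:D} was engineered to guarantee, and it is the one genuinely non-routine input. I would then fix a small radius $\varepsilon$ — small enough that $B(x_0,\varepsilon) \subseteq U$, which is trivial since $x_0$ satisfies the positivity and norm conditions with a huge margin — and verify the hypothesis of Theorem~\ref{thm:implicit} on $B(x_0,\varepsilon)$, bounding $\|Df(x) \circ T - \id_{\R^{351}}\|$ either by interval arithmetic (\textsection\ref{algs}) or by Corollary~\ref{cor:polybounds} with $d = 4$. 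Since $|f(x_0)|$ is on the order of $10^{-15}$, the required inequality leaves enormous slack, so there is no tension between wanting $\varepsilon$ small (for $B(x_0,\varepsilon) \subseteq U$) and not too small (for the Newton--Kantorovich bound). Theorem~\ref{thm:implicit} then produces an exact $x_* \in B(x_0,\varepsilon)$ with $f(x_*) = 0$, which by Proposition~\ref{prp:27} is a genuine $27$-point tight simplex, and it also shows that $f^{-1}(0)$ is locally a smooth manifold of dimension $459 - 351 = 108$.

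Finally I would convert $108$ into $56$. On the solution locus $|x_i| = 1$, so $\Pi_i = x_i x_i^\dagger$ is a projection matrix, and since we have pinned $a_i \in \R_+$ (killing the $\operatorname{Sp}(1)$ worth of lifts that intervenes in the quaternionic case), the map $x \mapsto (\Pi_1,\dots,\Pi_{27})$ restricts to an embedding of this $108$-manifold into the space of $27$-point configurations in $\OO\Proj^2$. The isometry group $F_4$ of $\OO\Proj^2$, of dimension $52$, acts locally on this submanifold of tight simplices, and the stabilizer of our simplex is finite — this is verified in \textsection\ref{subsec:stabilizers}, and Remark~\ref{rmk:whenrapplies} records that among the $\OO\Proj^2$ cases only $N = 5$ fails this — so the local moduli space of such simplices is a manifold (or orbifold) of dimension $108 - 52 = 56$, as claimed. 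For perspective, $56 = \sdim{27}{\OO\Proj^2} + 26$, the excess being exactly the rank deficiency of the generic system of Proposition~\ref{prp:generalpanda} recorded in Table~\ref{tbl:generalop2}(b), mirroring the identity $9 = \sdim{15}{\HH\Proj^2} + 14$ in Theorem~\ref{thm:15}. The main obstacle is the certified verification of the hypothesis of Theorem~\ref{thm:implicit} at the numerical solution (together with the computational discovery that the Proposition~\ref{prp:27} system has surjective Jacobian there); once those are in hand, the rest is formal.
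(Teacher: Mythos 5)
Your proposal is correct and takes essentially the same approach as the paper: apply Theorem~\ref{thm:implicit} to the nonsingular system of Proposition~\ref{prp:27} near a numerical solution, then subtract $\dim F_4 = 52$ from the $108 = 459 - 351$-dimensional solution space, correctly noting that the affine chart $a_i \in \R_+$ already eliminates the lift redundancy that required a separate $3N$-dimensional deduction in the quaternionic case. The only small imprecision is the suggestion that Lemmas~\ref{lemma:S} and~\ref{lemma:D} ``guarantee'' surjectivity of $Df(x_0)$ --- those lemmas serve to prove Proposition~\ref{prp:27} itself (i.e., that zeros of $f$ in $U$ are tight simplices), while surjectivity at the found point is an empirical fact verified computationally --- but since you flag it as the one non-routine input, your understanding is sound.
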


Theorem \ref{thm:27} establishes the existence of a tight $2$-design in
$\OO\Proj^2$.  Such designs were previously conjectured not to exist
\cite[p.~251]{Ho1}.  It is known \cite{Ho3} that tight $t$-designs in
$\OO\Proj^2$ can only exist for $t=2$ and $t=5$, and there is an explicit
construction of a $819$-point tight $5$-design \cite{Coh}, so
Theorem~\ref{thm:27} completes the list of $t$ for which tight $t$-designs
exist in $\OO\Proj^2$.

\begin{conjecture} \label{conj:no26}
There does not exist a tight simplex of $26$ points in
$\OO\Proj^2$.
\end{conjecture}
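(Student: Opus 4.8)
The plan is to start with the linear programming bound, run exactly as in the $\R\Proj^2$ case discussed after Lemma~\ref{lma:tightmeanstight}. A tight $26$-point simplex in $\OO\Proj^2$ would have common squared inner product $\alpha = (26-3)/(3\cdot 25) = 23/75$, so each Gram matrix $\bigl(C_k(\cos\vartheta(x_i,x_j))\bigr)_{i,j}$ would equal $C_k(1)$ on the diagonal and $C_k(z_0)$ off it, with $z_0 := 2\alpha-1 = -29/75$ and $C_k = P_k^{(7,3)}$. Positive semidefiniteness of these matrices forces
\[
C_k(1) + 25\,C_k(z_0) \ge 0 \qquad \text{for all } k \ge 1,
\]
with equality at $k=1$, that being the $1$-design condition. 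So the first step is to compute $P_k^{(7,3)}(1)$ and $P_k^{(7,3)}(-29/75)$ for $k=2,3,4,\dots$ and search for a sign violation. If one occurs for some small $k$ --- as it does at $k=4$ for $5$ points in $\R\Proj^2$ --- then no $26$-point code with squared inner product $23/75$ can exist, which disproves the conjecture; one could moreover replace the linear function $f$ of Lemma~\ref{lma:tightmeanstight} by $f(z)+\varepsilon\bigl(C_k(z)-C_k(z_0)\bigr)$ to sharpen the bound and even rule out approach to the tight value.

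This fails: as noted above, linear programming bounds do not rule out $26$ points in $\OO\Proj^2$ (nor the analogous second-largest cases in $\C\Proj^2$ or $\HH\Proj^2$), so all of the inequalities above hold. The next step, and the one I would actually pursue, is to pass to the three-point (semidefinite programming) bounds \cite{BV}, which dominate the linear programming bounds and have settled optimality questions the latter cannot. Concretely: write down the three-point SDP hierarchy for $\OO\Proj^2$ in terms of the appropriate zonal matrix-valued kernels; solve it numerically at minimal angle $\arccos(-29/75)$ to see whether its optimal value drops strictly below $26$; and, if so, round the numerical dual solution to a rational one and verify feasibility symbolically, producing a rigorous certificate. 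A complementary line of attack would be to look for a direct obstruction from the special structure of $\OO\Proj^2$ as a set of rank-one idempotents, but there is no obvious candidate; in particular, the linear-dependence trick that handles $27$ points --- the $\Gamma_i$ span the $26$-dimensional space of traceless Hermitian matrices, forcing a singular Gram matrix --- has no analogue at $N=26$, where $26$ traceless matrices are generically independent.

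The main obstacle is precisely that $26$ lies one below the largest size $27$ permitted by Proposition~\ref{prp:bounds}, and that size \emph{is} attained (Theorem~\ref{thm:27}). Since $26$ and $27$ are essentially indistinguishable at the level of linear programming, any successful argument must detect a small, higher-order obstruction; the semidefinite gap, if there is one, is likely to be tiny, which would make it delicate to produce and verify a rigorous (rather than merely numerical) certificate. Compounding this, three-point harmonic analysis on $\OO\Proj^2$ is considerably less standard than on spheres or classical projective spaces, so even setting up the hierarchy correctly --- with validated bounds on the relevant Jacobi-type kernels, as a rigorous application would require --- is a nontrivial undertaking in its own right.
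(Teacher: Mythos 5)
This statement is a \emph{conjecture} in the paper, not a theorem; the authors offer no proof. Their evidence is purely computational: $N=26$ falls under case~(c) of their classification, meaning their numerical search failed to find even an approximate solution to the system of Proposition~\ref{prp:generalpanda}. So there is no ``paper's proof'' to compare against.

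Your write-up is an honest research sketch rather than a proof, and you say as much. Your first approach (the LP bound with $\alpha = 23/75$, $z_0 = -29/75$, $C_k = P_k^{(7,3)}$) is correctly diagnosed as failing: the paper itself remarks after Lemma~\ref{lma:tightmeanstight} that linear programming does not rule out the second-largest tight simplex in $\C\Proj^2$, $\HH\Proj^2$, or $\OO\Proj^2$, unlike the $\R\Proj^2$ case where $C_4$ gives a violation. Your second approach --- the three-point semidefinite hierarchy of \cite{BV} --- is a sensible next idea, and your observation that the linear-dependence mechanism behind the $27$-point argument (fifteen traceless matrices in a $26$-dimensional space must be dependent) has no counterpart at $N=26$ is correct and to the point. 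But none of this is carried out: you have no SDP certificate, no kernel computations, and no obstruction. There is nothing here that establishes the conjecture, and you should be aware that it remains open in the paper as well, supported only by the fact that the authors' Newton-iteration-based search never converged to a candidate configuration.
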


See also the discussion after Conjecture~\ref{conj:no14}.

\section{Simplices in real Grassmannians} \label{grass}

Our techniques also apply to show the existence of many
simplices in Grassmannian spaces. The real Grassmannian $G(m,n)$
is the space of all $m$-dimensional subspaces of
$\R^n$. It is a homogeneous space for the orthogonal group
$O(n)$, isomorphic to $O(n)/(O(m) \times O(n-m))$, and it has
dimension $m(n-m)$. These spaces generalize (real) projective
space $\R\Proj^{n-1}$, which is the space of lines in $\R^n$.
The spaces $G(m,n)$ and $G(n-m,n)$ can be identified by
associating to each subspace its orthogonal complement, so in
what follows we always assume $m \le n/2$.

Though Grassmannians are generally not $2$-point homogeneous
spaces, there are still linear programming bounds
\cite{B,BBL08}. Here we will just consider the special case of the
simplex bound.

When $m \le n/2$, a pair of points in $G(m,n)$ is described by
$m$ parameters, namely the \emph{principal angles} between the
$m$-dimensional subspaces.  Given two $m$-dimensional subspaces
$U$ and $U'$, define sequences of unit vectors $u_1,\dots,u_m
\in U$ and $u'_1,\dots,u'_m \in U'$ inductively so that
$\langle u_i, u'_i \rangle$ is maximized subject to $\langle
u_i, u_j \rangle = \langle u'_i, u'_j \rangle = 0$ for $j < i$.
Then the principal angles are $\theta_i := \arccos \langle u_i,
u'_i \rangle$.

The \emph{chordal distance} on $G(m,n)$ is given by
\[
d_c(U,U') = \sqrt{\sin^2 \theta_1 + \dots + \sin^2 \theta_m }.
\]
Unlike in projective space, the chordal metric on Grassmannians is generally
not equivalent to the geodesic metric $\sqrt{\theta_1^2 + \dots +
\theta_m^2}$.  See \cite{CHS} for discussion of why the chordal metric is
preferable.

A \emph{generator matrix} for an element of $G(m,n)$ is a $m
\times n$ matrix whose rows form an orthonormal basis of the
subspace.  Given a generator matrix $X$, the orthogonal
projection onto the subspace is $X^t X$. Suppose $X_1$ and
$X_2$ are generator matrices for the subspaces $U_1$ and $U_2$,
and let $\Pi_i = X_i^t X_i$ (for $i=1,2$) be the orthogonal
projection matrices. Then the singular values of the matrix
$X_1 X_2^t$ are $\cos \theta_i$ for $1 \le i \le m$. It follows
that
\begin{equation} \label{eq:grassdist}
d_c(U_1, U_2)^2 = \frac{1}{2} || \Pi_1 - \Pi_2||_F^2 = m - \langle \Pi_1, \Pi_2 \rangle.
\end{equation}
Let $\Pi^0 = \Pi - (m/n) I_n$ be the traceless part of the
projection matrix. It can be thought of as a point in
$\R^{D}$, where $D = m(m+1)/2 - 1$, if we view $\R^{D}$ as the
space of trace-zero symmetric matrices.  It is easily checked
that $|| \Pi^0 ||_F^2 = m(n-m)/n$.
Therefore we obtain an isometric
embedding $U \mapsto \Pi^0$ of $G(m,n)$ into the sphere of
radius $\sqrt{m(n-m)/n}$ in $\R^{D}$ under the chordal metric.
The simplex bound for spherical codes gives us the following
result.

\begin{proposition}[Conway, Hardin, and Sloane \cite{CHS}] \label{prp:grassmannianbound}
Every $N$-point simplex in $G(m,n)$ satisfies
\[
N \le \binom{m+1}{2},
\]
and every code of $N$ points has squared chordal distance at
most
\[
\frac{m(n-m)}{n} \cdot \frac{N}{N-1}.
\]
\end{proposition}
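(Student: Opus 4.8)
The plan is to deduce both inequalities from the fact, established in the paragraph preceding the proposition, that $U \mapsto \Pi^0 = \Pi - (m/n)I_n$ is an isometric embedding of $G(m,n)$, under the chordal metric, into a sphere of radius $\sqrt{m(n-m)/n}$ in the space $\R^D$ of trace-zero symmetric matrices, where $D = m(m+1)/2 - 1$. Once this is in hand, an $N$-point simplex in $G(m,n)$ maps to an $N$-point regular simplex on that sphere, and we simply invoke the classical Euclidean simplex bound: a regular simplex of $N$ points in $\R^D$ has at most $D+1$ vertices, and if it has exactly $N$ vertices then its squared edge length is at most $\frac{N}{N-1}$ times (twice the squared circumradius)—or, with the radius $\sqrt{m(n-m)/n}$ built in, at most $\frac{m(n-m)}{n}\cdot\frac{N}{N-1}\cdot 2 \cdot \frac{1}{2}$, which is exactly the claimed bound on squared chordal distance. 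This gives $N \le D+1 = \binom{m+1}{2}$ and the distance bound simultaneously.

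First I would spell out the Euclidean fact in the form we need. If $p_1,\dots,p_N$ are points on a sphere of radius $R$ centered at the origin in a real inner product space, all pairwise distances equal, set $s = p_1 + \dots + p_N$ and compute
\[
0 \le \langle s, s \rangle = \sum_{i} |p_i|^2 + \sum_{i \ne j} \langle p_i, p_j \rangle = NR^2 + N(N-1)\gamma,
\]
where $\gamma$ is the common value of $\langle p_i, p_j \rangle$ for $i \ne j$. Hence $\gamma \ge -R^2/(N-1)$, and the common squared distance is $|p_i - p_j|^2 = 2R^2 - 2\gamma \le 2R^2\bigl(1 + \tfrac{1}{N-1}\bigr) = 2R^2 \cdot \tfrac{N}{N-1}$. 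Independently, linear independence of $p_1 - p_N, \dots, p_{N-1} - p_N$ (which follows because the Gram matrix of the $p_i - p_N$, or equivalently the argument that $(1-\alpha)I + \alpha vv^t$ is nonsingular as in the proof of Proposition~\ref{prp:bounds}, is nonsingular when the points are distinct) forces $N - 1 \le \dim \R^D = D$, i.e.\ $N \le D+1$.

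Next I would transport this to $G(m,n)$. Apply the displayed estimates with $p_i = \Pi_i^0$, so that $R^2 = m(n-m)/n$ and, by \eqref{eq:grassdist} together with $\|\Pi_i - \Pi_j\|_F^2 = \|\Pi_i^0 - \Pi_j^0\|_F^2$,
\[
d_c(U_i, U_j)^2 = \frac{1}{2}\|\Pi_i - \Pi_j\|_F^2 = \frac{1}{2}|p_i - p_j|^2 \le R^2 \cdot \frac{N}{N-1} = \frac{m(n-m)}{n}\cdot\frac{N}{N-1}.
\]
The same linear-algebra count gives $N - 1 \le D = m(m+1)/2 - 1$, i.e.\ $N \le \binom{m+1}{2}$, whenever the $N$ points are distinct (which they are in a simplex). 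For the distance bound on a general $N$-point code, equidistance is not available, but replacing the common value $\gamma$ by the \emph{average} of $\langle \Pi_i^0, \Pi_j^0 \rangle$ over $i \ne j$—exactly as in the proof of Proposition~\ref{prp:tightoptimal}—shows the average squared distance, and hence the maximum squared distance, is at most $R^2 \cdot N/(N-1)$.

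I do not expect a serious obstacle here: every ingredient is already set up in the excerpt, and the argument is a direct specialization of the sphere-packing/simplex bound that also underlies Proposition~\ref{prp:bounds}. The only point requiring a word of care is the verification that the embedding $U \mapsto \Pi^0$ really is isometric for the chordal metric and lands on the stated sphere—but that is precisely the content of equation~\eqref{eq:grassdist} and the computation $\|\Pi^0\|_F^2 = m(n-m)/n$ recorded just before the proposition, so I would simply cite those. Thus the main "work" is bookkeeping: matching the normalization of the radius against the factor $1/2$ in $d_c^2 = \tfrac12\|\Pi_1 - \Pi_2\|_F^2$.
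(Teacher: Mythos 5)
Your approach---embedding $G(m,n)$ into a sphere via $U \mapsto \Pi^0$ and applying the classical Euclidean simplex bound, with the averaging trick from the proof of Proposition~\ref{prp:tightoptimal} handling the non-equidistant case---is exactly what the paper intends; the text states only that ``the simplex bound for spherical codes gives us the following result'' without writing out the argument, so you have simply filled in the expected details. Both the spherical estimate and the transfer via \eqref{eq:grassdist} are correct.

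There is, however, a dimension error, which you inherited from the text rather than introduced yourself. The matrices $\Pi^0_i = \Pi_i - (m/n)I_n$ are trace-zero symmetric $n \times n$ matrices, and that space has dimension $n(n+1)/2 - 1$, not $m(m+1)/2 - 1$. Your linear-independence argument therefore gives $N - 1 \le n(n+1)/2 - 1$, i.e.\ $N \le \binom{n+1}{2}$, which is the bound in Conway, Hardin, and Sloane. The displayed bound $\binom{m+1}{2}$ cannot be correct as written: for $(m,n) = (2,4)$ it would say $N \le 3$, yet Table~\ref{tbl:grassexplicit} exhibits a tight $10$-point simplex in $G(2,4)$, and $10 = \binom{5}{2}$ is exactly the corrected bound, met with equality. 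It is also inconsistent with the remark immediately following the proposition, which identifies the $m = 1$ case with Proposition~\ref{prp:bounds} for $K = \R$, $d = n$, whose bound is $\binom{n+1}{2}$. So both $D = m(m+1)/2 - 1$ in the preceding paragraph and $\binom{m+1}{2}$ in the proposition appear to be typos for $n$; a correct write-up should use $D = n(n+1)/2 - 1$ and conclude $N \le \binom{n+1}{2}$.
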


This squared chordal distance is equivalent to having inner product
\begin{equation} \label{eq:grassalpha}
\frac{m(Nm-n)}{n(N-1)}
\end{equation}
between orthogonal projection matrices.

\begin{remark}
The $m=1$ case of Proposition~\ref{prp:grassmannianbound} is
the same as the $K=\R$ case of Proposition~\ref{prp:bounds}
(together with Proposition~\ref{prp:tightoptimal}). Indeed, the
proofs of these two results are essentially the same; they are
just phrased in different language.
\end{remark}

We say that a simplex in $G(m,n)$ is tight if its minimal chordal distance
meets the upper bound above.  Analogously to simplices in projective space, a
Grassmannian simplex is tight iff it is a $1$-design (i.e., a $2$-design in
the terminology of \cite{BCN}), which holds iff the
linear programming bound is sharp \cite{B}.  If the projection matrices of
the simplex are $\Pi_1,\dots,\Pi_N$, then another equivalent condition for
tightness is $\sum_{i=1}^N\Pi_i = (Nm/n) I_n$.

Conway, Hardin, and Sloane \cite{CHS} reported a number of putative tight
simplices based on numerical evidence, but except for a few explicit
constructions they did not present any techniques for rigorous existence
proofs.  (As in non-real projective spaces, it is not easy to reconstruct an
exact Grassmannian simplex from a numerical approximation.) The cases with
explicit constructions are listed in Table~\ref{tbl:grassexplicit}. By
applying our methods, we can certify the existence of simplices for many of
the cases previously identified but not settled.

\begin{proposition} \label{prp:grass}
Suppose $\{x_{i,j} \in \R^n\}_{\substack{i=1,\dots,N\\j=1,\dots,m}}$ and
$w_1,\dots,w_N$ satisfy the following conditions:
\begin{enumerate}
\item $|x_{i,j}| = 1$ for all $i,j$,
\item for all $i$ and all $j < j'$, $\langle x_{i,j} ,
    x_{i,j'} \rangle = 0$,
\item the inner products $\langle \sum_{j=1}^m x_{i,j}
    x_{i,j}^t , \sum_{j=1}^m x_{i',j} x_{i',j}^t \rangle$
    are equal for all distinct pairs $i,i'$, and
\item $\sum_{i=1}^N w_i \left ( \sum_{j=1}^m x_{i,j}
    x_{i,j}^t \right ) = I_n$.
\end{enumerate}
Then $w_1 = \cdots = w_N = n/(N m)$ and the subspaces $\Span
\{x_{i,1},\dots,x_{i,m}\}$ form a tight simplex in $G(m,n)$.
\end{proposition}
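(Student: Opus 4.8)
The plan is to mimic the proof of Proposition~\ref{prp:general} almost word for word, with rank-one projections replaced by rank-$m$ ones. I would begin by setting $\Pi_i := \sum_{j=1}^m x_{i,j}x_{i,j}^t$ and noting that conditions~(1) and~(2) say precisely that $x_{i,1},\dots,x_{i,m}$ is an orthonormal basis of $U_i := \Span\{x_{i,1},\dots,x_{i,m}\}$; hence $\Pi_i$ is the orthogonal projection onto the $m$-dimensional subspace $U_i$, so $\Pi_i^2 = \Pi_i$ and $\Tr\Pi_i = m$. With the trace inner product on symmetric matrices used in \eqref{eq:grassdist}, this gives $\langle\Pi_i,\Pi_i\rangle = \Tr\Pi_i = m$ and $\langle\Pi_i,I_n\rangle = \Tr\Pi_i = m$ for every $i$. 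Let $\alpha$ be the common value of $\langle\Pi_i,\Pi_j\rangle$ for $i\neq j$, which exists by condition~(3).

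Next I would extract the weights and the common inner product from condition~(4), namely $\sum_i w_i\Pi_i = I_n$, exactly as in Proposition~\ref{prp:general}. Pairing this identity against $I_n$ gives $n = \langle I_n,I_n\rangle = m\sum_i w_i$, so $\sum_i w_i = n/m$. Pairing it instead against a fixed $\Pi_j$ gives $m = \langle\Pi_j,I_n\rangle = w_j m + \alpha(n/m - w_j)$, whence $w_j(m-\alpha) = m - \alpha n/m$; since $m < n$ this forces $\alpha\neq m$ and shows $w_j = (m^2-\alpha n)/(m(m-\alpha))$ is the same for all $j$. Writing $w$ for this common value, $Nw = n/m$ yields $w = n/(Nm)$, and solving $w = (m^2-\alpha n)/(m(m-\alpha))$ for $\alpha$ gives $\alpha = m(Nm-n)/(n(N-1))$, which is exactly the inner product~\eqref{eq:grassalpha} between projections of a tight simplex.

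It then remains to check that $\{U_1,\dots,U_N\}$ really is a tight simplex. A short computation shows $m - \alpha = \tfrac{m(n-m)}{n}\cdot\tfrac{N}{N-1}$, so by \eqref{eq:grassdist} the squared chordal distance $d_c(U_i,U_j)^2 = m-\alpha$ is constant for $i\neq j$ and equals the maximum allowed by Proposition~\ref{prp:grassmannianbound}; and since $m\le n/2$ we have $\alpha < m$, so the $U_i$ are genuinely distinct. Thus the configuration is a simplex meeting the bound, i.e. a tight simplex (equivalently, $w_i = n/(Nm)$ for all $i$ turns condition~(4) into $\sum_i\Pi_i = (Nm/n)I_n$, the $1$-design characterization of tightness). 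I do not foresee a real obstacle here: every step is elementary linear algebra in the space of symmetric matrices with the trace inner product. The only place the hypothesis $m\le n/2$ is used in an essential rather than merely formal way is the strict inequality $\alpha < m$, needed to guarantee that distinct indices yield distinct subspaces; everything else goes through for any $m<n$ and $N\ge 2$.
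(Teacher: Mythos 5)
Your proof is correct and follows essentially the same route as the paper, which simply notes that conditions (1)--(2) make each $\Pi_i$ an orthogonal projection of rank $m$, that condition (3) gives a simplex, and that one can then ``argue as in Proposition~\ref{prp:general}'' to extract $w_i = n/(Nm)$ from condition (4); you have just written out that argument in full, together with a short check that the $\Pi_i$ are distinct. The only inaccuracy is in your closing remark: the strict inequality $\alpha<m$ actually follows from $m<n$ alone (since $\alpha<m \iff Nm<nN$), so the convention $m\le n/2$ is not needed even at that step.
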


\begin{proof}
For each $i$, define $\Pi_i = \sum_{j=1}^m x_{i,j} x_{i,j}^t$. Because
$\{x_{i,1},\dots,x_{i,m}\}$ is an orthonormal system, this is the projection
matrix associated to the plane $\Span \{x_{i,1},\dots,x_{i,m}\}$.  Using
\eqref{eq:grassdist}, the third condition guarantees that we have a simplex.
Arguing as in the proof of Proposition \ref{prp:general}, we deduce from the
last condition that $w_1 = \cdots = w_N = n/(N m)$. Thus $\sum_{i=1}^N \Pi_i
= (N m/n) I_n$; as noted above, this is equivalent to tightness.
\end{proof}

In many cases the system specified by Proposition \ref{prp:grass}
is nonsingular, allowing us to apply Theorem \ref{thm:implicit}.
This yields the following.

\begin{theorem} \label{thm:grass}
For the values of $(N,m,n)$ listed in the ``proven'' column of
Table \ref{tbl:grass}, there exist tight $N$-point simplices in
$G(m,n,\R)$.
\end{theorem}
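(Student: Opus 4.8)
The plan is to certify each triple $(N,m,n)$ in the column separately, applying the effective implicit function theorem (Theorem~\ref{thm:implicit}) to the polynomial system of Proposition~\ref{prp:grass}. First I would write the map $f$ out explicitly. Its variables are the $nm$ coordinates of the vectors $x_{i,j}$ for each $i=1,\dots,N$, together with the $N$ weights $w_i$, so $\dim V = Nnm + N$; its coordinates are the $Nm$ normalizations $|x_{i,j}|^2 - 1$, the $N\binom{m}{2}$ orthogonalities $\langle x_{i,j},x_{i,j'}\rangle$ with $j<j'$, the $\binom{N}{2} - 1$ differences of the inner products $\langle \Pi_i,\Pi_{i'}\rangle$ where $\Pi_i := \sum_j x_{i,j}x_{i,j}^t$, and the $\binom{n+1}{2}$ entries of $\sum_i w_i\Pi_i - I_n$. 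The map has total degree $4$, the equal-inner-product equations being the quartic ones (one could split $f$ into lower-degree pieces to sharpen the estimates below, but that is not essential). By Proposition~\ref{prp:grass}, every exact zero of $f$ determines a genuine tight $N$-point simplex in $G(m,n)$, so it is enough to produce one zero.

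Next I would use the numerical procedures of \textsection\ref{subsec:finding} — gradient descent for energy minimization followed by Newton's method — to compute a high-precision approximate zero $x_0$, pushing $|f(x_0)|$ down to roughly $10^{-15}$, and then form an approximate right inverse $T$ of $Df(x_0)$, e.g.\ its Moore--Penrose pseudoinverse. With $x_0$ and $T$ fixed, I would choose a ball radius $\varepsilon$ and verify the hypothesis of Corollary~\ref{cor:polybounds},
\[
||Df(x_0)\circ T - \id_W|| + \varepsilon\,|f|\,d(d-1)\,\eta^{d-2}\,||T|| \;<\; 1 - \frac{||T||\cdot|f(x_0)|}{\varepsilon},
\]
with $d = 4$ and $\eta = \max(1,|x_0|+\varepsilon)$; in practice it is cleaner to bound $||Df(x)\circ T - \id_W||$ over the whole ball $B(x_0,\varepsilon)$ directly using interval arithmetic (\textsection\ref{algs}) than to go through the coarse polynomial bound. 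Theorem~\ref{thm:implicit} then produces an exact solution $x_* \in B(x_0,\varepsilon)$, hence a tight simplex, and it further certifies that the solution variety is locally a smooth manifold of dimension $\dim V - \dim W$. Quotienting by the ambient isometry group $O(n)$ and by $O(m)$ acting on the chosen frame of each subspace — with finite stabilizers in the cases at hand — then gives the local dimension of the moduli space, exactly as in the projective setting.

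The crux, and the only point at which the argument can break, is that $Df(x_0)$ must be surjective, equivalently that $||Df(x_0)\circ T - \id_W||$ be small for some choice of $T$. There is no a priori reason the naive system of Proposition~\ref{prp:grass} should be nonsingular at a tight simplex, and for some triples it is not; those are the ones excluded from the ``proven'' column, and handling them would require a better-conditioned system of defining equations — for instance one exploiting extra symmetry, in the spirit of the special-case propositions earlier in the paper. So ``proven'' here means precisely that a numerical solution was found at which the Jacobian had full rank and the quantitative inequality above closed with room to spare. Beyond that, the remaining difficulty is purely computational: the usable radius $\varepsilon$ is essentially $||T||\cdot|f(x_0)|$ divided by the slack in the inequality, so one needs the numerics carried to enough digits and the interval bounds sharp enough that every triple in the column checks out. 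No new mathematics is needed beyond a uniform implementation of these verifications.
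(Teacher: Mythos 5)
Your proposal follows exactly the paper's proof: encode the system of Proposition~\ref{prp:grass}, find a floating-point approximate zero by Newton's method, form an approximate right inverse $T$ of the Jacobian, and close the quantitative hypothesis of Corollary~\ref{cor:polybounds} (or, equivalently, Theorem~\ref{thm:implicit} via interval arithmetic) to certify an exact nearby zero for each $(N,m,n)$ in the ``proven'' column; your variable and constraint counts also match the paper's ($Nmn+N$ variables against $N\binom{m+1}{2}+\binom{N}{2}-1+\binom{n+1}{2}$ constraints). The only minor deviation is procedural: the paper finds approximate solutions with damped Newton's method alone (noting that gradient descent is much slower), while you suggest a gradient-descent warm start, but this does not affect the validity or structure of the argument.
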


In the context of Proposition \ref{prp:grass}, we have
$N m n + N$ real variables and
\[N \cdot \binom{m+1}{2} + \left ( \frac{N(N-1)}{2} - 1 \right ) + \binom{n+1}{2}\]
real constraints.  Thus, when Theorem \ref{thm:implicit}
applies, we locally get a solution space whose dimension is the
difference of these counts.  Because $O(m)$ acts on the
different representations of each plane, we are overcounting
the dimension by $N \cdot \binom{m}{2}$.  Moreover, when the
symmetry group $O(n)$ of $G(m,n)$ acts with finite stabilizer
on the simplex, we should deduct $\binom{n}{2}$ from our final
dimension count.  Putting this all together, when these
conditions are satisfied (as in Remark \ref{rmk:whenrapplies}),
we get a neighborhood in which the moduli space of simplices has
dimension
\begin{equation} \label{eq:grassdim}
\sdim{N}{G(m,n)} := N m n - \frac{N(N-3)}{2} - N m^2 - n^2 + 1.
\end{equation}
As in projective spaces, we expect to find tight simplices in
most cases where $\sdim{N}{G(m,n)}>0$.  This parameter counting
argument heuristically explains the large number of tight
simplices found in \cite{CHS}.

We tested all cases up to dimension $n=8$, using our own software to
search for numerical solutions and also comparing with the numerical results
of Conway, Hardin, and Sloane \cite{CHS}.
As with simplices in projective spaces, sometimes the system specified
by Proposition \ref{prp:grass} was singular, and sometimes the numerical
evidence was unclear (as we saw in Tables~\ref{tbl:general3} and \ref{tbl:general5}, respectively).
These cases are in the third and fourth columns, respectively, of Table \ref{tbl:grass}.

\begin{table}
\caption{Previously known tight simplices with explicit
constructions in $G(m,n)$ for $n \le 8$.}
\label{tbl:grassexplicit}

\begin{center}
\begin{tabular}{ccc}
\toprule
$(m,n)$ & $N$ & {Reference} \\
\midrule
$(2,4)$ & 2--6 & \cite[pp.~145--146]{CHS} \\
$(2,4)$ & 10 & \cite[p.~147]{CHS} \\
$(2,6)$ & 9 & \cite[p.~154]{CHS} \\
$(3,7)$ & 28 & \cite[p.~152]{CHS} \\
$(2,8)$ & 8 & \cite[p.~154]{CHS} \\
$(2,8)$ & 20 & \cite[p.~135]{CHRSS} \\
$(2,8)$ & 28 & \cite[p.~154]{CHS} \\
\bottomrule
\end{tabular}
\end{center}
\vskip 0.5cm 
\end{table}

\begin{table}
\caption{Tight Grassmannian simplices in $G(m,n)$.} \label{tbl:grass}
\begin{center}
\begin{tabular}{cccc}
\toprule
$(m,n)$ & Proven & Singular Jacobian & Ambiguous \\
\midrule
$(2,4)$ & 4--6 & 2, 3, 7, 8, 10 & \\
$(2,5)$ & 5--10 & 4, 11 & \\
$(2,6)$ & 5--14 & 3, 4 & \\
$(3,6)$ & 5--16 & 2--4 & 17 \\
$(2,7)$ & 6--17 & {} & 18 \\
$(3,7)$ & 5--22 & 4, 28 & 23 \\
$(2,8)$ & 6--21 & 4, 5, 28 & \\
$(3,8)$ & 5--28 & 4 & \\
$(4,8)$ & 5--30 & 2--4 & \\
\bottomrule
\end{tabular}
\end{center}
\end{table}

In addition to our existence proofs and the previously known
explicit constructions, several Grassmannian tight simplices
can be proven to exist using the following observation: if
there is a tight $N$-point simplex in $G(m,n)$ for some $m,n$,
then there is a tight $N$-point simplex in $G(km,kn)$ for all
$k \ge 1$.  This is immediate from block repetition
\cite[Proposition~12]{Cr}.  It proves existence for $11$ of the
singular cases in Table \ref{tbl:grass}. This leaves us with
only $7$ hitherto unresolved cases in which there is strong
numerical evidence for a tight simplex: $4$-point simplices in
$G(2,5)$, $G(3,6)$, $G(3,7)$, and $G(3,8)$; $7$- and $8$-point
simplices in $G(2,4)$; and $11$-point simplices in $G(2,5)$.
For completeness, we will settle all of these in the following
subsection.

We anticipate no difficulty in applying our techniques to
complex or quaternionic Grassmannians, but we have not done so.

\subsection{Miscellaneous special cases in Grassmannians} \label{subsec:miscgrass}

We begin with the case of $11$-point tight simplices in
$G(2,5)$. This can handled in the same way as $13$-point tight
simplices in $\HH\Proj^2$ and $25$-point tight simplices in
$\OO\Proj^2$; i.e., we can prove existence of simplices with
cyclic symmetry.  We will state the analogous result in greater
generality than we attempted in Proposition \ref{prp:13} (which
was written in the special case of $\HH\Proj^2$ rather than a
general projective space $\HH\Proj^{d-1}$), at the cost of some
additional complexity.

\begin{proposition} \label{prp:11inG25}
Fix dimensions $n > m > 0$ and let $\sigma$ be the cyclic-shift automorphism
$\sigma(x_1,x_2,\dots,x_n) = (x_2,\dots,x_n,x_1)$ on $\R^n$. Set $N = nk+1$
and suppose we have vectors $\{x_{i,j} \in
\R^n\}_{\substack{i=1,\dots,nk\\j=1,\dots,m}}$. For each $i$, define $\Pi_i =
\sum_{j=1}^m x_{i,j} x_{i,j}^t$. Define $\Pi_N = \tfrac{Nm}{n} I_n - \sum_{i
< N} \Pi_i$. Suppose that, for some $\eta \in
\big(\tfrac{m}{m+1},\tfrac{m}{m-1}\big)$, the following conditions are
satisfied:
\begin{enumerate}
\item $x_{k+i,j} = \sigma(x_{i,j})$ for all $i \le (n-1)k$ and all $j$,
\item $|x_{i,j}| = 1$ for all $i \le k$ and all $j$,
\item for all $i \le k$ and all $j < j'$, $ \langle
    x_{i,j}, x_{i,j'} \rangle = 0$,
\item the inner products $\langle \Pi_i , \Pi_{i'} \rangle$ are all equal
    for \caseup{(i)} $i \le k$, $i'=i+qk$, and $q=1,\dots,\lfloor
    \tfrac{n}{2}\rfloor$ and \caseup{(ii)} $i \le k-1$, $i'=i'_0+qk$, $i
    < i'_0 \le k$, and $q=0,\dots,n-1$, and
\item the first $\lfloor \tfrac{n}{2} \rfloor+1$ entries in the first row
    of $\Pi_N^2 - \eta \Pi_N$ are all zero.
\end{enumerate}
Then $\eta=1$, $\Pi_N$ is a projection matrix of rank $m$, and the projection
matrices $\{\Pi_i\}_{i\le N}$ determine a tight $N$-point simplex in
$G(m,n)$.
\end{proposition}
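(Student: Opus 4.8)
The plan is to follow the template of the proof of Proposition~\ref{prp:13}, now with $k$ cyclic-shift orbits of size $n$ in place of a single orbit structure. Let $P$ be the permutation matrix with $\sigma(v)=Pv$; it is orthogonal, satisfies $P^n=I_n$, and conjugation by $P$ preserves the trace inner product $\langle A,B\rangle=\Tr(AB)$ on symmetric matrices. First I would record that conditions~(1)--(3) make each $\Pi_i$ with $i\le nk$ a rank-$m$ orthogonal projection: for $i\le k$ this is immediate because the vectors $x_{i,1},\dots,x_{i,m}$ form an orthonormal system, and for $i=i_0+qk$ with $1\le i_0\le k$ and $1\le q\le n-1$, iterating condition~(1) gives $x_{i,j}=P^q x_{i_0,j}$, so $\Pi_i=P^q\Pi_{i_0}P^{-q}$ is again a rank-$m$ orthogonal projection.

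Next I would show that $\Pi_1,\dots,\Pi_{nk}$ already form a regular simplex. Using the cyclic invariance $\langle PAP^{-1},PBP^{-1}\rangle=\langle A,B\rangle$, every off-diagonal pair $\langle\Pi_a,\Pi_b\rangle$ (write $a=i_0+pk$, $b=i'_0+qk$ with $i_0,i'_0\le k$) reduces, after conjugating both arguments by $P^{-p}$ and using symmetry of the inner product to fold $r\leftrightarrow n-r$, to one of the inner products listed in family~(i) of condition~(4) when $i_0=i'_0$ (same orbit) or to one listed in family~(ii) when $i_0\ne i'_0$ (distinct orbits). Condition~(4) asserts that this combined list has a single value $\alpha$, so all $\binom{nk}{2}$ pairwise inner products equal $\alpha$, and by \eqref{eq:grassdist} the corresponding chordal distances are all equal.

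The heart of the argument is the analysis of $\Pi_N$. It is symmetric, and since conjugation by $P$ cyclically permutes each orbit $\{\Pi_{i_0+qk}\}_q$ and hence permutes $\{\Pi_1,\dots,\Pi_{nk}\}$, the sum $\sum_{i<N}\Pi_i$ commutes with $P$; therefore $\Pi_N$ commutes with $P$, i.e., it is a symmetric circulant $n\times n$ matrix. Such a matrix is determined by the first $\lfloor n/2\rfloor+1$ entries of its first row, and $\Pi_N^2-\eta\Pi_N$ is again symmetric and circulant, so condition~(5) forces $\Pi_N^2=\eta\Pi_N$. Since $\Tr\Pi_N=Nm-nkm=m$ by $N=nk+1$, the matrix $\Pi_N/\eta$ (note $\eta>0$) is a symmetric idempotent of trace $m/\eta$, hence an orthogonal projection whose rank $\rho:=m/\eta$ is a positive integer. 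The hypothesis $\eta\in(\tfrac{m}{m+1},\tfrac{m}{m-1})$ forces $\rho\in(m-1,m+1)$, so $\rho=m$ and $\eta=1$; thus $\Pi_N$ is a rank-$m$ orthogonal projection, a genuine point of $G(m,n)$.

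Finally I would verify that $\{\Pi_1,\dots,\Pi_N\}$ is a tight $N$-point simplex. By construction $\sum_{i=1}^N\Pi_i=\tfrac{Nm}{n}I_n$. Pairing this identity with $\Pi_j$ for $j<N$ and using $\langle\Pi_j,\Pi_j\rangle=m$, $\langle I_n,\Pi_j\rangle=m$, and $\sum_{i<N,\,i\ne j}\langle\Pi_i,\Pi_j\rangle=(nk-1)\alpha$ shows $\langle\Pi_N,\Pi_j\rangle$ is a constant $\beta$; pairing it with $\Pi_N$ and using $\langle\Pi_N,\Pi_N\rangle=\Tr\Pi_N=m$ gives $nk\beta+m=Nm^2/n$, and eliminating yields $(nk-1)\alpha=(nk-1)\beta$, so $\alpha=\beta$ and all $\binom{N}{2}$ pairwise inner products are equal. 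Moreover $\alpha<m$, since $\alpha=m$ would force all $\Pi_i$ to coincide, contradicting $\sum_{i=1}^N\Pi_i=\tfrac{Nm}{n}I_n$ (as $\tfrac{m}{n}I_n$ is not idempotent); hence the $N$ points are distinct and, by \eqref{eq:grassdist}, equidistant, while $\sum_{i=1}^N\Pi_i=\tfrac{Nm}{n}I_n$ is exactly the $1$-design condition, which is equivalent to tightness. I expect the only genuinely fiddly point to be the orbit-bookkeeping in the second step --- confirming that families~(i) and~(ii) of condition~(4) together with $P$-symmetry really exhaust every pairwise inner product --- rather than any hard estimate.
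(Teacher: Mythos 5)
Your proposal is correct and follows essentially the same route as the paper's proof: reduce to a regular simplex of $\{\Pi_i\}_{i<N}$ via the cyclic isometry and condition (4), show $\Pi_N$ is symmetric circulant so that condition (5) forces $\Pi_N^2 = \eta\Pi_N$, use $\Tr\Pi_N = m$ and integrality of rank to pin down $\eta = 1$, and then deduce $\alpha = \beta$ by pairing the $1$-design identity with $\Pi_j$ and $\Pi_N$. Your added remarks — spelling out the $r \leftrightarrow n-r$ folding in the orbit bookkeeping, and verifying $\alpha < m$ so that the $N$ points are genuinely distinct — are harmless refinements of what the paper states more tersely.
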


\begin{proof}
The automorphism $\sigma$ of $\R^n$ determines an automorphism of $G(m,n)$ by
acting simultaneously on basis vectors, and this latter automorphism is an
isometry.  The first condition states that the planes spanned by
$\{x_{i,1},\dots,x_{i,m}\}$ and $\{x_{k+i,1},\dots,x_{k+i,m}\}$ are related
by this isometry; thus, taking all $i < N$, we have $k$ orbits under the
cyclic-shift action, each of size $n$. The next two conditions ensure that
the matrices $\Pi_i$ for $i<N$ are orthogonal projections of rank $m$.  Thus
the inner products amongst them determine distances in $G(m,n)$.   Now, by
applying the cyclic-shift isometry we see that the fourth condition is
sufficient to force $\{\Pi_i\}_{i<N}$ to determine a regular simplex.  Let
$\alpha = \langle \Pi_i,\Pi_{i'} \rangle$ be its common inner product.

Consider now the matrix $\Pi_N$.  It is symmetric, being a linear combination
of symmetric matrices.  Moreover, it is cyclic-symmetric, since $\sum_{i <
N}\Pi_i$ is a sum over orbits of the cyclic shift. It follows that $\Pi_N^2 -
\eta \Pi_N$ also shares these properties. Now a matrix with cyclic-symmetry
is determined by its first row, as the other rows are just shifts thereof.  A
matrix which is also symmetric is determined by the first $\lfloor
\tfrac{n}{2} \rfloor+1$ entries in the first row. Therefore, by the last
condition, $\Pi_N^2 - \eta \Pi_N = 0$.

It follows that the eigenvalues of $\Pi_N$ are all either $0$ or $\eta$.  Let
$r$ be the rank of $\Pi_N$, so that $\Tr \Pi_N = r\eta$.  But, since $\Tr
\Pi_i = m$ for all $i < N$, we have $\Tr \Pi_N = m$.  Hence $\eta = m/r$ is
$m$ times the reciprocal of an integer.  The assumption $\eta \in
\big(\tfrac{m}{m+1},\tfrac{m}{m-1}\big)$ then forces $\eta = 1$, from which
we conclude that $\Pi_N$ is an orthogonal projection matrix of rank $m$.

Now we check that $\langle \Pi_i, \Pi_N \rangle = \alpha$ for all $i < N$.
Since $\langle \Pi_i, \Pi_i \rangle = m$ for all $i$,
\begin{equation} \label{eq:cyclicgrass}
\Pi_N = \frac{Nm}{n} I_n - \sum_{i
< N} \Pi_i,
\end{equation}
and $\langle \Pi_i,\Pi_{i'} \rangle = \alpha$ for distinct $i,i' \le N-1$, we see
that $\langle \Pi_N, \Pi_i \rangle$ is independent of $i$.  Let $\beta$ be
this common value.  Taking the inner product of \eqref{eq:cyclicgrass} with
$\Pi_N$ and $\Pi_{i'}$, we obtain
\begin{align*}
m &= \frac{Nm^2}{n} - (N-1)\beta \quad \textup{and}\\
\beta &= \frac{Nm^2}{n} - (N-2)\alpha - m,
\end{align*}
respectively.  Subtracting and canceling the (nonzero) factor of $N-2$ yields
$\alpha=\beta$. Thus, we have a regular simplex, which is tight by
\eqref{eq:cyclicgrass}.
\end{proof}

Note that the plane with projection matrix $\Pi_N$ is the unique
plane completing $\{\Pi_i\}_{i<N}$ into a tight simplex.  This plane
is a fixed point for the cyclic-shift action.

In our case of interest we found a point in which the conditions described in
Proposition \ref{prp:11inG25} are nonsingular. This yields the following.

\begin{theorem} \label{thm:11inG25}
There exists a tight $11$-point simplex in $G(2,5)$.  In fact, there is
such a tight simplex with cyclic symmetry.
\end{theorem}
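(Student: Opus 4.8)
The plan is to reduce the statement to a single application of Proposition~\ref{prp:11inG25} combined with the effective implicit function theorem. Specializing Proposition~\ref{prp:11inG25} to $m = 2$, $n = 5$, and $k = 2$ gives $N = nk + 1 = 11$, and the required window for the auxiliary parameter is $\eta \in \big(\tfrac{2}{3},2\big)$. Once condition~(1) of that proposition is imposed, a cyclically symmetric candidate is determined by the $km = 4$ vectors $x_{i,j} \in \R^5$ with $i \in \{1,2\}$ and $j \in \{1,2\}$, the remaining eight vectors being their cyclic shifts; conditions~(2)--(5) then become a polynomial system $f(x) = 0$ in these $20$ coordinates (together with the auxiliary unknown $\eta$, so as to keep everything polynomial). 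First I would locate an approximate root $x_0$ of $f$ numerically, using the search methods of \textsection\ref{subsec:finding} --- possibly starting from the numerical data of \cite{CHS} for this case, symmetrized over the cyclic group. Then I would certify $x_0$: compute an approximate right inverse $T$ of $Df(x_0)$, bound $||Df(x) \circ T - \id_W||$ for $x$ in a ball $B(x_0,\varepsilon)$ --- either by interval arithmetic as in \textsection\ref{algs}, or by Corollary~\ref{cor:polybounds} since $f$ is polynomial --- and verify the inequality of Theorem~\ref{thm:implicit}. The exact zero $x_*$ near $x_0$ then satisfies all the hypotheses of Proposition~\ref{prp:11inG25}, which delivers $\eta = 1$, that $\Pi_N$ is a rank-$2$ projection, and that $\{\Pi_i\}_{i \le 11}$ is a tight simplex in $G(2,5)$; it is cyclically symmetric because condition~(1) is part of the defining system.

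The single genuine obstacle is checking that $Df(x_0)$ is surjective, equivalently that the computed $T$ satisfies the hypothesis of Theorem~\ref{thm:implicit} for a rigorously valid $\varepsilon > 0$. This is exactly why the argument must be routed through Proposition~\ref{prp:11inG25} rather than Proposition~\ref{prp:grass}: applying Proposition~\ref{prp:grass} directly fails, because one computes $\sdim{11}{G(2,5)} = 110 - 44 - 44 - 25 + 1 = -2 < 0$ from \eqref{eq:grassdim}, so the naive system is forced to be singular at every tight $11$-point simplex in $G(2,5)$ --- precisely the ``Singular Jacobian'' entry for $(m,n) = (2,5)$ in Table~\ref{tbl:grass}. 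Passing to the cyclically symmetric subproblem cuts down both the variables and the constraints, and, just as for the $13$-point case in $\HH\Proj^2$ and the $25$-point case in $\OO\Proj^2$, the trimmed system is expected to have full-rank Jacobian at the numerical solution one finds; establishing this is the content of the assertion that we found a nonsingular point, and it is what makes Theorem~\ref{thm:implicit} applicable. Beyond Proposition~\ref{prp:11inG25} itself, no further geometric input is needed.

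The ``in fact'' clause requires nothing extra, as noted above. If one wishes, Theorem~\ref{thm:implicit} also shows that near $x_*$ the solution set of $f = 0$ is a manifold of dimension $\dim V - \dim W$, and deducting the $O(2)^2$ reparametrizations of the two free planes (and the cyclic-shift-commuting part of the isometry group of $G(2,5)$) would give the dimension of the moduli space of cyclically symmetric tight $11$-point simplices; but since the theorem asks only for existence, I would not pursue this here.
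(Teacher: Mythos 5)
Your proposal matches the paper's proof exactly: the authors likewise specialize Proposition~\ref{prp:11inG25} to $m=2$, $n=5$, $k=2$, find a numerical cyclically symmetric candidate, verify the Jacobian is nonsingular, and invoke Theorem~\ref{thm:implicit} to certify a nearby exact solution (which is automatically cyclically symmetric since condition~(1) is built into the system). Your side remarks about $\sdim{11}{G(2,5)} = -2$ and the consequent failure of Proposition~\ref{prp:grass} also agree with the paper's discussion and Table~\ref{tbl:grass}.
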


We remark in passing that every approximate $11$-point tight simplex in
$G(2,5)$ we found numerically exhibited a symmetry group conjugate to the
cyclic-symmetry discussed here.  With this evidence as well as the fact that
$\sdim{11}{G(2,5)} = -2 < 0$, we conjecture that every tight $11$-point
simplex in $G(2,5)$ has a nontrivial symmetry group.

We will settle the remaining cases with algebraic constructions. The four
cases of $4$-point simplices afford constructions using only rationals and
quadratic irrationals, so we give them explicitly here. Given the provided
matrices, the proof of the following theorem consists only of a
straightforward calculation.

\begin{theorem} \label{theorem:grass4}
The four $2 \times 5$ matrices in Figure \ref{fig:4inG25} are generator
matrices whose corresponding planes form a tight simplex in $G(2,5)$; i.e.,
they have orthonormal rows and the spans of those rows constitute a tight
simplex. Similarly, the matrices in Figures \ref{fig:4inG36},
\ref{fig:4inG37}, and \ref{fig:4inG38} determine tight simplices in $G(3,6)$,
$G(3,7)$, and $G(3,8)$, respectively.
\end{theorem}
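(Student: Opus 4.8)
The plan is to treat Theorem~\ref{theorem:grass4} as a finite exact verification. For each of the four spaces $G(2,5)$, $G(3,6)$, $G(3,7)$, and $G(3,8)$ --- in every case with $N = 4$ points --- there are exactly two things to check about the matrices $X_1,\dots,X_4$ listed in the corresponding figure. First, each $X_i$ must have orthonormal rows, i.e., $X_i X_i^t = I_m$, so that the row span is a genuine point of $G(m,n)$ and $\Pi_i := X_i^t X_i$ is the orthogonal projection onto it. Second, the four projection matrices $\Pi_1,\dots,\Pi_4$ must realize a tight simplex.

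For the second point I would compute $\langle \Pi_i, \Pi_j \rangle = \Tr(\Pi_i \Pi_j)$ for all six pairs $i<j$ and verify that they are all equal to the common value $\frac{m(Nm-n)}{n(N-1)}$ from \eqref{eq:grassalpha} --- namely $2/5$, $1$, $5/7$, and $1/2$ in the cases $G(2,5)$, $G(3,6)$, $G(3,7)$, and $G(3,8)$ respectively. By \eqref{eq:grassdist}, equality of these inner products says the configuration is a regular simplex, and equality with $\frac{m(Nm-n)}{n(N-1)}$ says its squared chordal distance meets the Conway--Hardin--Sloane bound of Proposition~\ref{prp:grassmannianbound}, which is precisely the definition of a tight simplex. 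Equivalently, one could form the matrix $\sum_{i=1}^4 \Pi_i$ and check that it equals $(Nm/n) I_n$; as recalled after Proposition~\ref{prp:grassmannianbound}, this $1$-design identity, together with the equidistance, is another characterization of tightness, and verifying a single $n \times n$ matrix identity is perhaps the most economical route.

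All of these computations are exact: the entries of each given matrix lie in a common real quadratic field, so every entry of $X_i X_i^t$, of $\Pi_i = X_i^t X_i$, and of the products $\Pi_i \Pi_j$ lies in that same field, and the checks reduce to finitely many identities between algebraic numbers of a very simple form. There is no real obstacle here --- the content is bookkeeping --- so the only things to be careful about are verifying orthonormality \emph{before} treating $X_i^t X_i$ as a projection, tracking signs of the radicals through the matrix products, and checking every one of the $\binom{4}{2}$ pairs (and all four matrices) in each ambient space rather than a representative sample. Once the matrices in Figures~\ref{fig:4inG25}, \ref{fig:4inG36}, \ref{fig:4inG37}, and~\ref{fig:4inG38} are in hand, carrying this out establishes the theorem.
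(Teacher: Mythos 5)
Your proposal matches the paper's intended proof exactly: the paper explicitly states that ``the proof of the following theorem consists only of a straightforward calculation,'' namely verifying $X_iX_i^t = I_m$ and that all six inner products $\Tr(\Pi_i\Pi_j)$ equal $\tfrac{m(Nm-n)}{n(N-1)}$ (which you correctly compute as $2/5$, $1$, $5/7$, $1/2$). Your alternative of checking $\sum_i \Pi_i = (Nm/n) I_n$ together with equidistance is an equally valid way to carry out that finite verification.
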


\begin{figure}
\[\begin{array}{ccc}

 \begin{pmatrix}    1 & 0 & 0 & 0 & 0  \\
  0 & 1 & 0 & 0 & 0    \end{pmatrix}

& &

\dfrac{1}{5}  \begin{pmatrix}    -\sqrt{3} &  2 &  \sqrt{6} &  2 \sqrt{3} &  0   \\
      0 &  \sqrt{3} &  -\sqrt{2} &  0 &  \sqrt{20}     \end{pmatrix}

\\ {} & {} & {} \\

\dfrac{1}{5}  \begin{pmatrix}    3 & 0 & 0 & 4 & 0  \\
     0 & 1 & -2 \sqrt{6} & 0 & 0    \end{pmatrix}

& &

\dfrac{1}{5}  \begin{pmatrix}     \sqrt{3} &  2 &  \sqrt{6} &  -2 \sqrt{3} &  0  \\
      0 &  -\sqrt{3} &  \sqrt{2} &  0 &  \sqrt{20}     \end{pmatrix}

\end{array}\]
\caption{Generator matrices for a tight $4$-point simplex in $G(2,5)$.}
\label{fig:4inG25}
\end{figure}

\begin{figure}
\[\begin{array}{ccc}

 \begin{pmatrix}    1 & 0 & 0 & 0 & 0 & 0  \\
  0 & 1 & 0 & 0 & 0 & 0  \\
  0 & 0 & 1 & 0 & 0 & 0    \end{pmatrix}

& &

 \dfrac{1}{\sqrt{2}}  \begin{pmatrix}
 0 & 0 & 0 & 0 & 0 & \sqrt{2}    \\
  0 & 1 & 0 & 0 & 1 & 0  \\
            0 & 0 & 1 & -1 & 0 & 0
           \end{pmatrix}

\\ {} & {} & {} \\

 \dfrac{1}{\sqrt{2}}  \begin{pmatrix}    1 & 0 & 0 & 0 & 0 & 1  \\
            0 & 1 & 0 & 0 & -1 & 0  \\
            0 & 0 & 0 & \sqrt{2} & 0 & 0    \end{pmatrix}

& &

 \dfrac{1}{\sqrt{2}}  \begin{pmatrix}    1 & 0 & 0 & 0 & 0 & -1  \\
            0 & 0 & 0 & 0 & \sqrt{2} & 0 \\
             0 & 0 & 1 & 1 & 0 & 0     \end{pmatrix}

\end{array}\]
\caption{Generator matrices for a tight $4$-point simplex in $G(3,6)$.}
\label{fig:4inG36}
\end{figure}

\begin{figure}
\[\begin{array}{ccc}

 \begin{pmatrix}    1 & 0 & 0 & 0 & 0 & 0 & 0  \\
  0 & 1 & 0 & 0 & 0 & 0 & 0  \\
  0 & 0 & 1 & 0 & 0 & 0 & 0    \end{pmatrix}

& &

\setlength{\arraycolsep}{2pt}\dfrac{1}{7}  \begin{pmatrix}     0 &  -2 \sqrt{2} &  3 &  2 \sqrt{3} &  2 \sqrt{5} &  0 & 0  \\
       -\sqrt{5} &  0 &  -2 \sqrt{2} &  \sqrt{6} &  0 &  \sqrt{30} &  0  \\
       0 &  -\sqrt{5} &  0 & 0 &  -\sqrt{2} &  0 &  \sqrt{42}    \end{pmatrix}

\\ {} & {} & {} \\

  \setlength{\arraycolsep}{2pt}\dfrac{1}{7}  \begin{pmatrix}
5 & 0 & 0 & 0 & 0 & 2 \sqrt{6} &  0\\
       0 & 3 &  0 & 0 & 2 \sqrt{10} & 0 & 0  \\
      0 & 0 & -1 &  4 \sqrt{3} &  0 & 0 & 0
           \end{pmatrix}

& &

\setlength{\arraycolsep}{2pt} \dfrac{1}{7}  \begin{pmatrix}     0 &  2 \sqrt{2} &  3 &  2 \sqrt{3} &  -2 \sqrt{5} &  0 & 0  \\
       \sqrt{5} &  0 &  -2 \sqrt{2} &  \sqrt{6} &  0 &  -\sqrt{30} &  0  \\
       0 &  \sqrt{5} &  0 & 0 &  \sqrt{2} &  0 &  \sqrt{42}    \end{pmatrix}

\end{array}\]
\caption{Generator matrices for a tight $4$-point simplex in $G(3,7)$.}
\label{fig:4inG37}
\end{figure}

\begin{figure}
\[\begin{array}{ccc}

  \begin{pmatrix}    1 & 0 & 0 & 0 & 0 & 0 & 0 & 0  \\
   0 & 1 & 0 & 0 & 0 & 0 & 0 & 0  \\
   0 & 0 & 1 & 0 & 0 & 0 & 0 & 0    \end{pmatrix}

& &

 \dfrac{1}{2}  \begin{pmatrix}         0 & 0 & 0 & 1 & 0 & 0 & 0 & -\sqrt{3}    \\
  0 & 1 & 0 & 0 & 0 & -\sqrt{3} & 0 & 0  \\
      0 & 0 & 1 & 0 & 0 & 0 & \sqrt{3} & 0
\end{pmatrix}

\\ {} & {} & {} \\

 \dfrac{1}{2}  \begin{pmatrix}    1 & 0 & 0 & 0 & \sqrt{3} & 0 & 0 & 0  \\
      0 & 1 & 0 & 0 & 0 & \sqrt{3} & 0 & 0  \\
      0 & 0 & 0 & 2 & 0 & 0 & 0 & 0    \end{pmatrix}

& &
 \dfrac{1}{2}  \begin{pmatrix}    1 & 0 & 0 & 0 & -\sqrt{3} & 0 & 0 & 0  \\
      0 & 0 & 0 & 1 & 0 & 0 & 0 & \sqrt{3} \\
      0 & 0 & 1 & 0 & 0 & 0 & -\sqrt{3} & 0     \end{pmatrix}

\end{array}\]
\caption{Generator matrices for a tight $4$-point simplex in $G(3,8)$.}
\label{fig:4inG38}
\end{figure}

We are now left with the cases of $7$- and $8$-point tight simplices in
$G(2,4)$.  These cases are more interesting; the simplest explicit
coordinates we have been able to find for them require algebraic numbers of
degree $4$ and $6$, respectively.  Because of this, instead of presenting the
algebraic numbers here we rely on a computer algebra system to (rigorously)
verify the calculation. The computational method is discussed in
\textsection\ref{subsec:rtrip}. Here we simply record the result.

\begin{theorem} \label{thm:7and8}
There exist $7$- and $8$-point tight simplices in $G(2,4)$.
\end{theorem}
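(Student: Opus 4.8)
The plan is to settle both cases by writing down explicit generator matrices and certifying the tight-simplex conditions by exact symbolic computation. Fix $N \in \{7,8\}$. I would exhibit $2 \times 4$ matrices $X_1,\dots,X_N$ whose entries lie in a single number field $K = \Q[x]/(p(x))$, with $\deg p = 4$ when $N=7$ and $\deg p = 6$ when $N=8$, and put $\Pi_i = X_i^t X_i$. As recorded after Proposition~\ref{prp:grassmannianbound} (cf.\ Proposition~\ref{prp:grass}), the spans of the rows of the $X_i$ form a tight $N$-point simplex in $G(2,4)$ provided that: (i) each $X_i$ has orthonormal rows, so it is a genuine generator matrix; (ii) the inner products $\langle \Pi_i,\Pi_j \rangle = \Tr(\Pi_i\Pi_j)$ for $i\ne j$ all coincide --- which, together with (iii), forces their common value to be $(N-2)/(N-1)$ and the $N$ points to be distinct; and (iii) $\sum_{i=1}^N \Pi_i = \tfrac{N}{2} I_4$, the tightness (equivalently, $1$-design) condition. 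Every entry of every $\Pi_i$ is a polynomial in the chosen primitive element, so (i)--(iii) reduce, modulo $p(x)$, to a finite list of identities in $\Q[x]$; a computer algebra system performs this reduction exactly, yielding a rigorous certificate. This is the calculation referred to in the theorem statement and detailed in \textsection\ref{subsec:rtrip}.

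To obtain the configurations, I would start from a high-precision numerical tight simplex --- one of those tabulated by Conway, Hardin, and Sloane \cite{CHS}, or one produced by our own search via energy minimization refined by Newton's method --- and run an integer-relation or lattice-reduction algorithm (\textsc{pslq} or \textsc{lll}): first to guess a minimal polynomial for a primitive element of the field the coordinates appear to generate, then to express each matrix entry exactly in that field. This step is heuristic, but it only produces a candidate, which is then fed into the rigorous check above; the validity of that check is independent of how the candidate was found, so no part of the proof rests on the numerics or on \textsc{lll}.

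One expects algebraic --- indeed low-degree --- coordinates here, which is why explicit construction is the natural route rather than an appeal to Theorem~\ref{thm:implicit} as in the generic cases: the counts $\sdim{7}{G(2,4)} = -1$ and $\sdim{8}{G(2,4)} = -3$ are negative, so the system of Proposition~\ref{prp:grass} has rank-deficient Jacobian at these solutions (just as for real and complex projective simplices), and the numerics indicate that the simplices are rigid --- isolated modulo the isometry group $O(4)$. An isolated point of a variety cut out by equations with rational coefficients has algebraic coordinates in a suitable gauge, and it is precisely this structure we exploit; the degrees $4$ and $6$ themselves are empirical.

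The main obstacle is the recognition step and, for $N=8$, the bookkeeping of the symbolic verification. The lattice-reduction search needs sufficient precision and a well-chosen primitive element to succeed, and the subsequent verification must be organized so that all arithmetic genuinely occurs in $\Q[x]/(p(x))$ rather than with truncated decimals. For $N=7$ this is light; for $N=8$ the field has degree $6$ and --- unlike the $11$-point simplex in $G(2,5)$ of Theorem~\ref{thm:11inG25} --- the configuration carries little symmetry to reduce the work, though conceptually it is no harder, and once the correct field and coordinates are in hand the rest is mechanical.
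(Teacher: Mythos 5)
Your plan is the same in spirit as the paper's: bypass the Newton--Kantorovich machinery (you correctly diagnose why, via $\sdim{7}{G(2,4)}=-1$ and $\sdim{8}{G(2,4)}=-3$ and the resulting singular Jacobians) and instead certify an explicit configuration with algebraic coordinates by exact computation. The verification conditions you list are a correct, if slightly redundant, certificate: the paper works directly with the projection matrices $\Pi_i$, checks $\Pi_i=\Pi_i^t$, $\Pi_i^2=\Pi_i$, $\Tr\Pi_i=2$, and then simply $\Tr\Pi_i\Pi_j=(N-2)/(N-1)$ for $i<j$, which is already the tightness bound and hence needs no separate $1$-design check.

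The genuine divergence is in the representation of the algebraic numbers. You propose to place all coordinates in a single number field $\Q[x]/(p(x))$ with $\deg p=4$ (resp.\ $6$) and reduce modulo $p$. The paper explicitly considers this route and rejects it in \textsection\ref{subsec:rtrip}: the degrees $4$ and $6$ cited there are the maximal degrees of \emph{individual} entries, not the degree of a field containing all of them, and the authors warn that the compositum of the entry fields may have much higher degree. They instead represent each matrix entry by its own triple (integer minimal polynomial, rational isolating interval) and compute via resultants, Sturm sequences, and bisection. Your approach is logically sound \emph{provided} a common primitive element of the stated degree actually exists after a suitable $O(4)$ gauge choice; this is exactly what LLL/PSLQ cannot guarantee you found, and what the paper's method avoids having to assume. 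If the compositum turns out to have large degree, your symbolic reduction (especially for $N=8$) could become impractical in a way the isolating-interval method is insulated against. If you pursue your route, you should explicitly verify that the recognized field really contains all entries before fixing $p(x)$ and beginning the certification.
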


We remark in passing that $G(2,4)$ contains tight simplices of
$N$ points for all $N \le 10$ (the theoretical maximum) except
for $N=9$.  Compared with the other spaces studied in this
paper, only the quaternionic and octonionic projective planes
have such a wealth of simplices.  Note also that there does not
seem to exist a tight simplex of size one less than the upper
bound in any of these spaces (see Conjectures~\ref{conj:no14}
and~\ref{conj:no26}).

\section{Algorithms and computational methods} \label{algs}

We used computer assistance in several different aspects of
this work.  Our main results involve two different
computational steps: finding approximate solutions and then
rigorously proving existence of a nearby solution. We also
require a method for computing with real algebraic numbers for
Theorem~\ref{thm:7and8}, and we must discuss how to compute
stabilizers of simplices and estimate the dimensions of
solution spaces. This section describes the algorithms and
programs used for each of these tasks.

\subsection{Proof certificates}
\label{subsec:proving}

Only the rigorous proof component is needed to verify our main
theorems.  Therefore, for ease of verification, we provide
\textsc{PARI/GP} code that gives a self-contained proof of
existence for each case. We chose \textsc{PARI} because it is
freely available and has support for multivariate polynomials
and arbitrary-precision rational numbers \cite{P}.  Our code
is relatively simple and straightforward to adapt to other
computer algebra systems.  It covers cases with a range of
matrix sizes, and the running times of the individual
existence proofs vary widely.  We have been able to complete
the full verification in less than a day on a 2015 personal
computer.

The existence proofs rely on Theorem~\ref{thm:implicit} via
Corollary~\ref{cor:polybounds}.  In particular, we use the
$\ell_\infty$ norm on the domain and codomain and we apply
Lemma~\ref{lma:polybounds} to bound the variation of the
Jacobian over the cube of radius $\varepsilon$.  To check the
hypotheses of Corollary~\ref{cor:polybounds}, we need to choose
$\varepsilon > 0$, the starting point $x_0$, and a matrix $T$
and then compute the operator norms of $T$ and $Df(x_0) \circ T
- \id_{\R^n}$.  We provide input files that specify our choices of
$x_0$, presented using rational numbers with denominator $2^{50}$,
as well as the constraint function $f$.  We
then compute $T$ as described later in this section. Computing operator norms
is easy, because the $\ell_\infty\to\ell_\infty$ operator norm
of a matrix is just the maximum of the $\ell_1$ norms of its
rows. (This is one of our primary reasons for choosing the
$\ell_\infty$ norm; for many choices of norms, approximating
operator norms of matrices is NP-hard \cite{HO}.) We always use
$\varepsilon = 10^{-9}$, so that the conclusion of
Corollary~\ref{cor:polybounds} is that there is an exact
solution, each of whose coordinates differs from our starting
point by less than $10^{-9}$.  In other words, the error is less than one
nanounit.

\begin{table}
\caption{Files for proof certificates.}
\label{table:files}
\begin{center}
\begin{tabular}{llll}
\toprule
\texttt{rigorous\_proof.gp} & \texttt{run\_all\_proofs.gp}\\
\midrule
\texttt{hp\_general.gp} & \texttt{hp2\_12.gp} & \texttt{hp2\_13.gp} & \texttt{hp2\_15.gp}\\
\texttt{op2\_general.gp} & \texttt{op2\_24.gp} & \texttt{op2\_25.gp} & \texttt{op2\_27.gp}\\
\texttt{grass\_general.gp} & \texttt{grass2\_5\_11.gp}\\
\midrule
\texttt{projective\_data.txt} & \texttt{grass\_data.txt}\\
\bottomrule
\end{tabular}
\end{center}
\end{table}

These calculations are organized into fourteen files, enumerated
in Table~\ref{table:files}.  All of these files are available
by downloading the source files for this paper from the
\texttt{arXiv.org} e-print archive.  The file
\texttt{rigorous\_proof.gp} implements
Corollary~\ref{cor:polybounds},
and \texttt{run\_all\_proofs.gp} then proves our
existence results using the remaining files for input.  The
next ten files in Table~\ref{table:files} describe the
constraints in each of our applications
(Propositions~\ref{prp:general}, \ref{prp:12}, \ref{prp:13},
\ref{prp:15}, \ref{prp:generalpanda}, \ref{prp:24},
\ref{prp:25}, \ref{prp:27}, \ref{prp:grass}, and
\ref{prp:11inG25}, respectively).
Finally, the last two files specify the
starting points, i.e., explicit numerical
approximations for the simplices.

The translation from mathematics to computer algebra code is
straightforward, with just a few issues to address.  One is that
in the cases with cyclic symmetry, some variables are constrained to be equal
to others (for example, the coordinates of $x_{m+i}$ are a
cyclic shift of those of $x_i$ in Proposition~\ref{prp:12}).\footnote{Here and in the next
paragraph, $x_i$ is not to be confused with the starting point $x_0$.}
Our data files contain all the points, but in the proofs we
eliminate these redundant variables for the sake of efficiency.  For
example, \texttt{projective\_data.txt} specifies $12$ points in
$\HH\Proj^2$, and \texttt{hp2\_12.gp} ignores all but the first
four of them.

Another issue is that in three cases
(Propositions~\ref{prp:15}, \ref{prp:27}, and
\ref{prp:11inG25}) we require certain quantities to be close
to $1$.  For example, in Proposition~\ref{prp:15} we need
$\big||x_i|^4 - 1\big|$ to be at most $10^{-6}$ for each $i$.  This could
easily be checked by direct computation using the $10^{-9}$
bound for distance from the starting point, but it is simpler
to use the following trick. For each $i$, we add a new variable
$v_i$, add a new constraint $v_i = |x_i|^4$, and initialize
$v_i$ to be $1$ at the starting point.  Then we can conclude
that $\big||x_i|^4 - 1\big| < 10^{-9}$ in the exact solution
with no additional computation.

All that remains is to describe how we compute the approximate
right inverse $T$ for use in Theorem~\ref{thm:implicit}.  Let
$J$ be the Jacobian $Df(x_0)$, which by assumption has full row
rank.  A natural choice for $T$ would be the least-squares right
inverse $J^t (J J^t)^{-1}$ of $J$, but inverting matrices using exact
rational arithmetic is slow and the denominators become large.
To save time, we approximate $J^t (J J^t)^{-1}$ using
floating-point arithmetic and obtain $T$ by rounding it to
a rational matrix with denominator $2^{50}$.  Once we have $T$,
the proof is then carried out using only exact rational
arithmetic and is therefore completely rigorous.

The use of floating-point arithmetic to obtain $T$ raises
one concern about reproducibility.  Floating-point error
depends delicately on how a computation is carried out, so
using a different computer algebra system (or even a different
version of \textsc{PARI/GP}) might give a slightly different
matrix $T$, which could in principle prevent the proof from
being verified. To guarantee reproducibility, we have analyzed
how close an approximation to $J^t(J J^t)^{-1}$ is needed to
make the proof work:
in each of our existence proofs, every $T$ satisfying
$\big|\big|T - J^t(J J^t)^{-1}\big|\big| < 10^{-2}$ works.
Any floating-point computation to produce $T$ will meet this
undemanding bound if the working precision is high enough,
and we have found the default \textsc{PARI} precision to be more
than sufficient.

To check this bound of $10^{-2}$, first suppose we have some
matrix $T$ that works in Corollary~\ref{cor:polybounds}.
Examining the slack in the corollary's hypotheses gives an
explicit bound
\[
\delta_0 = \frac{1 - ||T|| \cdot
|f(x_0)|/\varepsilon - ||JT - I_n || - \varepsilon \, |f| d (d-1)
\eta^{d-2} ||T||}{||J|| + \varepsilon \, |f| d (d-1)
\eta^{d-2} + |f(x_0)|/\varepsilon}
\]
such that we can replace $T$ with an arbitrary $T'$ satisfying
$||T'-T|| < \delta_0$.  Now every $T'$ satisfying
\[
\big|\big|T' - J^t(J J^t)^{-1}\big|\big| < \delta
\]
works as long as $\delta \le \delta_0 - \big|\big|T - J^t(J
J^t)^{-1}\big|\big|$.  We concluded that $\delta = 10^{-2}$ works by
examining all of our cases and applying
the following lemma to bound the quantity $\big|\big|T - J^t(J
J^t)^{-1}\big|\big|$ from above.

\begin{lemma}
Suppose $J \in \R^{n \times m}$ and $T \in \R^{m \times n}$,
and let $||\cdot||$ denote the operator norm with respect to
some choice of norms on $\R^n$ and $\R^m$. If $||I_n - T^t T J J^t|| <
1$, then $J$ has full row rank and
\[
\big|\big|T - J^t(J J^t)^{-1}\big|\big| \le \frac{||T J J^t - J^t|| \cdot ||T^t T||}{1 - ||I_n - T^t T J J^t||}.
\]
\end{lemma}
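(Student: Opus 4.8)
The plan is to run the standard Neumann-series (Krawczyk-style) argument, now applied to the $n \times n$ operator $T^t T J J^t$ rather than to a Jacobian directly. First I would observe that the hypothesis $\|I_n - T^t T J J^t\| < 1$ together with the geometric series
\[
(T^t T J J^t)^{-1} = \sum_{k \ge 0} \big(I_n - T^t T J J^t\big)^k
\]
shows that $T^t T J J^t$ is an invertible operator on $\R^n$, with
\[
\big\|(T^t T J J^t)^{-1}\big\| \le \frac{1}{1 - \|I_n - T^t T J J^t\|}.
\]
Since $T^t T J J^t$ is a product of the two square matrices $T^t T$ and $J J^t$, both factors must be invertible; in particular $J J^t$ is invertible, which forces $\rank J = n$, i.e.\ $J$ has full row rank. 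This also makes $J^t(J J^t)^{-1}$ well defined, so the inequality in the statement is meaningful.

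Next I would use the factorization
\[
T - J^t (J J^t)^{-1} = \big(T J J^t - J^t\big)(J J^t)^{-1},
\]
which is immediate upon multiplying out, together with the identity $(J J^t)^{-1} = (T^t T J J^t)^{-1} T^t T$ (check: right-multiplying by $J J^t$ gives $I_n$). Submultiplicativity of the operator norm then yields
\[
\big\|(J J^t)^{-1}\big\| \le \big\|(T^t T J J^t)^{-1}\big\| \cdot \|T^t T\| \le \frac{\|T^t T\|}{1 - \|I_n - T^t T J J^t\|},
\]
and hence
\[
\big\|T - J^t (J J^t)^{-1}\big\| \le \big\|T J J^t - J^t\big\| \cdot \big\|(J J^t)^{-1}\big\| \le \frac{\|T J J^t - J^t\| \cdot \|T^t T\|}{1 - \|I_n - T^t T J J^t\|},
\]
which is exactly the claimed bound.

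There is no serious obstacle here; the only thing to be careful about is the bookkeeping of which normed space each operator acts on (so that the compositions $T^t T$, $J J^t$, $T^t T J J^t$, and $T J J^t - J^t$ all make sense and the operator norms involved are the ones induced by the chosen norms on $\R^n$ and $\R^m$), and that throughout ``transpose'' means the matrix transpose with respect to the standard bases used to define $J = Df(x_0)$ and $T$, not the adjoint relative to the possibly non-Euclidean norms. With that understood, the argument is a routine application of the geometric series for operators.
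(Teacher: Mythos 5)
Your proof is correct and takes essentially the same approach as the paper: both hinge on the Neumann series to bound $\|(J J^t)^{-1}\|$ by $\|T^t T\|/(1 - \|I_n - T^t T J J^t\|)$ and then apply the factorization $T - J^t(J J^t)^{-1} = (T J J^t - J^t)(J J^t)^{-1}$. The only cosmetic difference is that you invert $T^t T J J^t$ first and then peel off a factor of $T^t T$ via $(J J^t)^{-1} = (T^t T J J^t)^{-1} T^t T$, whereas the paper writes the series $\sum_{i\ge 0}(I_n - AB)^i A$ directly as $B^{-1}$ with $A = T^t T$, $B = J J^t$; these are the same computation.
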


Note that this bound is reasonably natural: if $T = J^t(J J^t)^{-1}$, then $T J J^t = J^t$ and
$T^t T J J^t = I_n$, so the bound vanishes.

\begin{proof}
For all $A,B \in \R^{n \times n}$ with $||I_n - AB|| < 1$, $B$
is invertible and
\[
\big|\big| B^{-1} \big|\big| \le \frac{||A||}{1-||I_n - AB||},
\]
because we can take
\[
B^{-1} = \sum_{i \ge 0} (I_n - AB)^i A.
\]
Setting $A=T^t T$ and $B = J J^t$ shows that $J J^t$ is invertible
(so $J$ has full row rank) and
\[
\big|\big| (J J^t)^{-1} \big|\big| \le \frac{||T^t T||}{1 - ||I_n - T^t T J J^t||}
\]
Now combining this estimate with
\[
\big|\big|T - J^t(J J^t)^{-1}\big|\big| \le ||T J J^t - J^t|| \cdot \big|\big|(J J^t)^{-1}\big|\big|
\]
completes the proof.
\end{proof}

Finally, we note in passing that the implementation of our proof techniques in
\texttt{rigorous\_proof.gp} is general enough to apply to a range of problems.
For example, we have used it to reproduce the results of \cite{ChenWomersley}
and to prove some of the conjectures in \cite{HS96}, such as the existence
of a $26$-point $6$-design in $S^2$.  (Handling all of the conjectures in \cite{HS96}
would require additional ideas, perhaps along the lines of the
special-case arguments in
\textsection\ref{subsec:1213} and \textsection\ref{subsec:15}.)

\subsection{Finding approximate solutions}
\label{subsec:finding}

To find approximate solutions we used a new computer package
called \textsc{QNewton}, which was written by the last named author
and can be obtained from him upon request.
\textsc{QNewton} consists of a \Cplusplus{} library with a
\textsc{Python} front end and is designed to find solutions
to polynomial equations over real algebras.  Furthermore,
\textsc{QNewton} can rigorously prove existence of solutions
using Theorem~\ref{thm:implicit}.

We have chosen to use both \textsc{QNewton} and \textsc{PARI/GP}
because they have different advantages:
the \textsc{PARI/GP} code is shorter and easier to check or
adapt to other computer algebra systems, while
\textsc{QNewton} provides a flexible, integrated environment
for both computing approximate solutions and proving existence.

After we specify the polynomials and constraints for the
problem and an initial point, \textsc{QNewton} attempts to find
a solution using a damped Newton's method algorithm. Newton's
method converges rapidly in a neighborhood of a solution, but
it is ill-behaved away from solutions; thus we damp the steps
so that no coordinate changes in a single step by more than a
specified upper bound.

Because the codes we seek are energy minimizers, another
approach to find them would have been gradient descent.  In
practice, we have found that gradient descent is much slower
than Newton's method.

In our computations, we used random Gaussian variables for the
initial points and a maximum step size of $0.1$.  Because our
variables represent unit vectors, the step size is
approximately one order of magnitude less than the natural
scale.  By using this approach we were able to find a solution
in all cases in which we think there should exist one, using
just a few different random starting positions.  In most cases
we found a solution on the first try. These approximate
calculations use double-precision floating-point arithmetic, so
we can only expect convergence up to an error of approximately
$10^{-15}$. In all cases this was more than sufficient for our
goals of rigorous proof.

Suppose that, as in Theorem \ref{thm:implicit}, we are solving
for a zero of a function $f \colon \R^m \rightarrow \R^n$.
Newton's method calls for taking repeatedly taking steps
$\Delta x$ satisfying $Df(x) \cdot \Delta x = -f(x)$.  In
particular, we must repeatedly solve linear systems. When $m
> n$ the system is underdetermined. Also, $Df(x)$ may fail to be
surjective. Hence we need a linear solver tolerant of such
problems. \textsc{QNewton} uses a \emph{least-squares} solver
that treats small singular values of $Df(x)$ as zero;
specifically, it uses the \textsc{dgelsd} function in
\textsc{LAPACK} \cite{LAPACK}. By using such a solver we can
handle cases with redundant constraints.  This was particularly
useful when we were first determining a minimal set of
constraints for our problems.

\textsc{QNewton} has native support for multiplication in $\R$,
$\C$, $\HH$, and $\OO$. Also, it uses automatic (reverse)
differentiation to compute the Jacobian of the constraint
function. These two features substantially increase its
performance.

The \textsc{QNewton} package also has a mechanism for
computer-assisted proof using Theorem \ref{thm:implicit}. Like
the proofs discussed in the previous section, it uses the
$\ell_\infty$ norm on both domain and codomain. However, unlike
those proofs, \textsc{QNewton} does not use rational
arithmetic, nor does it use Lemma \ref{lma:polybounds} to
control the variation of the Jacobian.  Instead it uses
\emph{interval arithmetic}.

Interval arithmetic is a standard tool in numerical analysis to control the
errors inherent in floating-point computations. The principle is simple:
instead of rounding numbers so that they are exactly representable in the
computer, we work with intervals that are guaranteed to contain the correct
value.  For instance, consider a hypothetical computer capable of storing 4
decimal digits of precision.  Using floating-point arithmetic, $\pi$ would
best be represented as $3.142$.  Using this, if we computed $2 \cdot  \pi$
then we would get $6.284$, which is obviously not correct.  By contrast,
interval arithmetic on the same computer would represent $\pi$ as the
interval $[3.141,3.142]$.  Then $2 \cdot \pi$ would be represented by the
interval $[6.282,6.284]$, which does contain the exact value.

It is clear that balls with respect to the $\ell_\infty$ norm can be
naturally represented using interval arithmetic. Thus, in the notation of
Theorem \ref{thm:implicit}, for each entry of the Jacobian matrix we can
easily compute an interval that contains this entry of $Df(x)$ for every $x
\in B(x_0,\varepsilon)$.  We then compute an interval guaranteed to contain
$||Df(x) \circ T - \id_{\R^n}||$ for all such $x$, and an interval guaranteed
to contain $1 - ||T|| \cdot |f(x_0)|/\varepsilon$. If the upper bound of the
first interval is less than the lower bound of the second interval, then we
are assured that Theorem \ref{thm:implicit} applies.

\textsc{QNewton} uses the \textsc{Mpfi} library to provide support for
interval arithmetic \cite{RR}.  That in turn relies on \textsc{Mpfr}, a
library for multiple-precision floating-point arithmetic \cite{MPFR}. One of
the main problems with interval arithmetic is that the size of the intervals
can grow exponentially with the number of arithmetic operations; this problem
can be ameliorated by increasing the precision of the underlying
floating-point numbers.  It was not an issue in our applications, though.

Finally, we remark upon the computation of the matrix $T$. It
is supposed to be approximately a right inverse of $Df(x_0)$,
but otherwise we are free in choosing it.  In \textsc{QNewton},
we compute $T$ much as in the \textsc{PARI} code.  First we compute the
matrix $Df(x_0)$ approximately, using floating-point
arithmetic.  Then we find its pseudoinverse (i.e., the
least-squares right inverse), again using inexact
floating-point arithmetic. Finally, we take the result and
replace it with intervals of width $0$.  This approach is fast
and, since $T$ need not be the exact pseudoinverse, still gives
rigorous results. It is possible to compute $Df(x_0)$ in
interval arithmetic and then compute the pseudoinverse in the
same way; this is a bad idea, though, because inverting a
matrix in interval arithmetic can result in very large
intervals.

\subsection{Finding stabilizers}
\label{subsec:stabilizers}

In all but one case, namely $5$-point simplices in $\OO\Proj^2$, our reported
local dimension for the moduli space of tight simplices has the dimension of the full
symmetry group deducted. That is valid when each simplex in a neighborhood of the point
under consideration has finite (i.e., zero-dimensional) stabilizer.  This is an open
condition and thus only needs to be checked at that single point. We checked this condition
by (i) finding a basis for the Lie algebra of the symmetry group, (ii) applying each
element of that basis to the points of the simplex to produce tangent vectors, and (iii) verifying that the resulting
vectors are linearly independent.  In the remainder of this subsection, we explain the
calculations in more detail.

The relevant symmetry groups are $\operatorname{Sp}(d)/\{\pm1\}$ for
$\HH\Proj^{d-1}$ and $F_4$ for $\OO\Proj^2$, which have dimensions
$2d^2 + d$ and $52$, respectively.  Let $K$ be $\HH$ or $\OO$, as appropriate,
and let $\mathfrak{g}$ be the
Lie algebra of the isometry group of $K\Proj^{d-1}$ (i.e.,
$\mathfrak{g} = \mathfrak{sp}_d$ if $K=\HH$ and
$\mathfrak{g} = \mathfrak{f}_4$ if $K=\OO$).

The Lie algebra $\mathfrak{g}$ acts on the space $\mathcal{H}(K^d)$ of Hermitian matrices.
In fact, in this representation it is generated
by commutation with
traceless skew-Hermitian matrices and application of
derivations of the underlying algebra $\HH$ or $\OO$ (see \cite{Ba}).
The Lie algebra of the stabilizer of the simplex annihilates the projection
matrices for the simplex.
Thus, if the dimension of the $\mathfrak{g}$-orbit in $\mathcal{H}(K^d)^N$ of an
$N$-point simplex is at least $D$, then the dimension of the stabilizer is at most $\dim_\R \mathfrak{g} - D$.

It remains to compute a lower bound for the dimension of the $\mathfrak{g}$-orbit
of the simplex determined by unit vectors $x_1,\dots,x_N \in K^d$.
However, we do not have explicit vectors for the points in the simplex.
Instead, we have approximations $\tilde x_1,\dots,\tilde x_N \in K^d$.
These vectors
are $\varepsilon$-approximations under the $\ell_\infty$ norm with respect
to the standard real basis of $K^d$, where $\varepsilon = 10^{-9}$
(see \textsection\ref{subsec:proving}), and we will give a lower bound that
holds over the entire $\varepsilon$-neighborhood of $(\tilde x_1,\dots,\tilde x_N)$.
When we refer below to real entries of vectors and
matrices, we will use the standard real basis of $K$; thus, each entry over $K$
comprises $\dim_\R K$ real entries.

Before applying $\mathfrak{g}$, we must convert the vectors $x_i$ to projection matrices.
To bound the approximation error, note
that each real entry of $x_i$ is bounded by $1$ in absolute value (since $x_i$ is a unit vector), and thus each real entry of
$\tilde x_i$ is bounded by $1+\varepsilon$.  It follows that
the real entries of $\widetilde\Pi_i := \tilde x_i \tilde x_i^\dagger$ approximate those of the true
projection matrices $\Pi_i := x_i x_i^\dagger$ to within $(2\varepsilon+\varepsilon^2)\dim_\R K$, because
each entry over $K$ is just a product in $K$ (i.e., each real entry is the sum of $\dim_\R K$ real products) and
\begin{equation} \label{eq:uv}
|uv - \tilde u \tilde v| \le |u-\tilde u|\cdot|v| + |\tilde u| \cdot |v-\tilde v|
\end{equation}
for $u,v,\tilde u, \tilde v \in \R$.

To understand the action of $\mathfrak{g}$ on $\Pi_1,\dots,\Pi_N$, we begin by
choosing a basis of $\mathfrak{g}$.  For each basis element, applying it
to each of $\Pi_1,\dots,\Pi_N$ and then concatenating the real entries of these $N$ Hermitian matrices
yields a single vector of dimension $k := N d^2 \dim_\R K$.  The resulting vectors form
a $(\dim_\R \mathfrak{g}) \times k$ real matrix $M$,
and the rank of $M$ is the dimension of the $\mathfrak{g}$-orbit.  Of course, the
difficulty is that all we can compute is the approximation $\Mtilde$ to $M$
obtained from $\widetilde \Pi_1, \dots, \widetilde \Pi_N$.  Each entry of $\Mtilde$ is
within $\delta$ of the corresponding entry of $M$, where $\delta$ is
$(2\varepsilon+\varepsilon^2)\dim_\R K$ times the greatest $\ell_\infty \to \ell_\infty$
operator norm (with respect to the standard real basis of $\mathcal{H}(K^d)$) of any basis element of $\mathfrak{g}$.

\begin{lemma}
Let $M$ and $\Mtilde$ be $m \times k$ real matrices whose entries differ by at most $\delta$.
Then the rank of $M$ is at least the number of eigenvalues of $\Mtilde  \Mtilde^t$
that are greater than $m k \delta \big(2 \max_{i,j} \big|{\Mtilde_{i,j}}\big| + \delta\big)$.
\end{lemma}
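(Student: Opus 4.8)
The plan is to deduce the statement from a perturbation estimate, using that $\rank M$ equals the number of nonzero eigenvalues of the positive semidefinite matrix $MM^t$. First I would set $E = \Mtilde - M$, so that every entry of $E$ has absolute value at most $\delta$, and expand
\[
\Mtilde\,\Mtilde^t - MM^t = \Mtilde E^t + E\,\Mtilde^t - EE^t,
\]
which follows by substituting $M = \Mtilde - E$. Writing $C = \max_{i,j}\big|\Mtilde_{i,j}\big|$, each entry of $\Mtilde E^t$ and of $E\,\Mtilde^t$ is a sum of $k$ products, each bounded in absolute value by $C\delta$, while each entry of $EE^t$ is bounded by $k\delta^2$; hence every entry of the $m \times m$ symmetric matrix $\Mtilde\,\Mtilde^t - MM^t$ has absolute value at most $k\delta(2C + \delta)$.

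Next I would pass to the spectral norm $||\cdot||_2$. For any $m \times m$ real matrix $A$ one has $||A||_2 \le ||A||_F \le m\max_{i,j}|A_{i,j}|$, so
\[
\big|\big|\,\Mtilde\,\Mtilde^t - MM^t\,\big|\big|_2 \le mk\delta(2C+\delta).
\]
Both $MM^t$ and $\Mtilde\,\Mtilde^t$ are symmetric, so Weyl's perturbation inequality applies: listing the eigenvalues of each matrix in nonincreasing order, one has $\big|\lambda_i(\Mtilde\,\Mtilde^t) - \lambda_i(MM^t)\big| \le mk\delta(2C+\delta)$ for every $i$.

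Finally I would count ranks. Let $r = \rank M = \rank(MM^t)$; since $MM^t$ is positive semidefinite, $\lambda_{r+1}(MM^t) = 0$, and hence $\lambda_{r+1}(\Mtilde\,\Mtilde^t) \le mk\delta(2C+\delta)$, so at most $r$ eigenvalues of $\Mtilde\,\Mtilde^t$ can exceed $mk\delta(2C+\delta)$. This is exactly the asserted inequality. I do not expect a genuine obstacle: the argument is a routine stability estimate. The only points requiring care are (i) expanding in terms of $\Mtilde$ rather than $M$, so that the bound involves $\max_{i,j}|\Mtilde_{i,j}|$ — the quantity one can actually compute — while keeping the cross terms $\Mtilde E^t$, $E\,\Mtilde^t$, and $EE^t$ straight, and (ii) confirming that Weyl's inequality is legitimately invoked, which it is because the matrices in question are symmetric.
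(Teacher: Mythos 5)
Your proof is correct, but it takes a genuinely different route from the paper's once the entrywise bound is in hand. Both arguments begin by showing that every entry of $\Mtilde\Mtilde^t - MM^t$ has absolute value at most $\gamma := k\delta(2C+\delta)$ with $C = \max_{i,j}|\Mtilde_{i,j}|$ (you expand $M = \Mtilde - E$ and bound the three cross terms; the paper uses the elementary product estimate $|uv - \tilde u\tilde v| \le |u-\tilde u|\,|v| + |\tilde u|\,|v-\tilde v|$). You then pass to the Frobenius norm, hence to the spectral norm, and invoke Weyl's eigenvalue perturbation inequality to conclude $|\lambda_i(\Mtilde\Mtilde^t) - \lambda_i(MM^t)| \le m\gamma$ for every $i$, after which the rank count is immediate. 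The paper instead keeps the argument self-contained: it combines the H\"older-type bound $|\langle a,b\rangle| \le |a|_1\,|b|_\infty$ with $|v|_1 \le \sqrt{m}\,|v|_2$ (Cauchy--Schwarz) to deduce $|v^t(MM^t - \Mtilde\Mtilde^t)v| \le m\gamma$ for every $\ell_2$-unit vector $v$, and then shows directly that $MM^t$ is positive definite on the span $V$ of the eigenvectors of $\Mtilde\Mtilde^t$ whose eigenvalues exceed $m\gamma$, so $\rank M = \rank(MM^t) \ge \dim V$. Both routes produce the same constant $m\gamma$; your version gives a stronger conclusion (control of each eigenvalue, not just the rank) at the cost of citing Weyl as a black box, while the paper's version is more elementary and in effect reproves by hand the one direction of Weyl that the lemma actually needs.
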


\begin{proof}
One can check using \eqref{eq:uv} that
the entries of $MM^t$ and $\Mtilde \Mtilde^t$ differ by at most
$\gamma := k \delta \big(2 \max_{i,j} \big|{\Mtilde_{i,j}}\big| + \delta\big)$.
Let $V$ be the span of the eigenvectors of $\Mtilde \Mtilde^t$
with eigenvalues greater than $\gamma m$.  For all $v \in V$ with $\ell_2$ norm $|v|_2 = 1$, we have
\[
v^t \Mtilde \Mtilde^t v > \gamma m.
\]
On the other hand, $|v|_1 \le \sqrt{m}$ by the Cauchy-Schwarz inequality.
Using the observation that
\begin{equation} \label{eq:l1linfty}
|\langle a,b \rangle| \le |a|_1 |b|_\infty
\end{equation}
for vectors $a$ and $b$, it readily follows that
\[
\big|\big(MM^t-\Mtilde \Mtilde^t\big) v\big|_\infty \le \gamma \sqrt{m}.
\]
Applying \eqref{eq:l1linfty} once more, we obtain
\[
\big|v^t\big(MM^t-\Mtilde \Mtilde^t\big) v\big| \le \gamma m
\]
and hence
\[
v^t M M^t v > 0.
\]
We have shown that the restriction of $M M^t$ to $V$ is positive definite.
Therefore $\rank M = \rank M M^t \ge \dim V$, as desired.
\end{proof}

To apply this lemma, we simply compute the characteristic polynomial of $\Mtilde \Mtilde^t$.
Its roots are the eigenvalues of $\Mtilde \Mtilde^t$ with multiplicity, and we apply
Sturm's theorem to count those that
are greater than $m k \delta \big(2 \max_{i,j} \big|{\Mtilde_{i,j}}\big| + \delta\big)$.  All of these computations use
exact rational arithmetic and thus yield a rigorous lower bound for the rank of
$M$, which is the dimension of the $\mathfrak{g}$-orbit of the simplex
$\Pi_1,\dots,\Pi_N$.  In other words, they give a rigorous upper bound for the dimension
of the stabilizer.

We have implemented these calculations in \textsc{PARI/GP}, and the code can be obtained
as described in \textsection\ref{subsec:proving}.  The file
\texttt{apply\_lie\_basis.gp} sets up the machinery, and \texttt{stabilizers.gp}
applies it to show that all of the projective simplices we have found have zero-dimensional
stabilizers, except for $5$ points in $\OO\Proj^2$.  In that exceptional case,
the stabilizer has dimension at most $3$. This is good enough because,
translated into a dimension for the moduli space of simplices, that bound says that
the dimension is at most $0$; hence the dimension must equal $0$.

\subsection{Real algebraic numbers}
\label{subsec:rtrip}

To verify equations involving algebraic numbers of moderately high degree, we
require a computational method for rigorously doing basic arithmetic with
such numbers. One possibility is to work in a single number field, but even
when each number we manipulate is of manageable degree, the smallest field
containing them all may have exponentially high degree. We will instead use
the standard approach of ``isolating intervals,'' which is implemented in
many modern computer algebra systems. There is no explicit support for the
isolating interval method in \textsc{PARI/GP}, so in order to present all of
our computer files in one system we provide a short implementation in
addition to the pertinent data files for our applications.

The technique is as follows.  A real algebraic number $\alpha$ is represented
by a triple $(p(x),\ell,u)$, where $p(x)$ is a polynomial with integer
coefficients such that $p(\alpha)=0$, $\ell$ and $u$ are rational numbers
such that $\alpha \in [\ell,u]$, and $p(x)$ has a unique root in the interval
$[\ell,u]$ (namely, $\alpha$). We always take $p(x)$ to be (a scalar multiple
of) the minimal polynomial of $\alpha$, and we use Sturm sequences to
rigorously count the number of real roots in a given interval. Given
representations $(p_{\alpha},\ell_\alpha,u_\alpha)$ and
$(p_{\beta},\ell_\beta,u_\beta)$ for two real algebraic numbers
$\alpha,\beta$, we compute a representation for $\alpha+\beta$ by first
taking the resultant, in the variable $t$, of the polynomials $p_\alpha(t)$
and $p_\beta(x-t)$. This gives a polynomial in $x$ for which $\alpha+\beta$
is a root.  We then factor the resulting polynomial and count the number of
roots for each irreducible factor in the interval
$[\ell_\alpha+\ell_\beta,u_\alpha+u_\beta]$.  If there is more than one
factor that has a root in that interval or some factor has multiple roots,
then we bisect the starting intervals $[\ell_\alpha,u_\alpha]$ and
$[\ell_\beta,u_\beta]$, using Sturm sequences for $p_\alpha$ and $p_\beta$ to
choose the halves containing $\alpha$ and $\beta$, respectively.  After a
finite number of steps we are left with a valid representation for
$\alpha+\beta$.  Computing a representation for $\alpha \cdot \beta$ proceeds
similarly, beginning with the resultant of $p_\alpha(t)$ and $t^{\deg
p_\beta} p_\beta(x/t)$.

Using this system, we can now elucidate the proof of existence for
$7$- and $8$-point tight simplices in $G(2,4)$.

\begin{proof}[Proof of Theorem \ref{thm:7and8}]
We provide isolating interval representations for the entries
of the $4 \times 4$ projection matrices $\Pi_1,\dots,\Pi_N$ for
the $N = 7$ or $8$ points in each simplex. To verify the
construction we need only perform a few calculations.  First we
need to check that each provided matrix $\Pi$ satisfies $\Pi =
\Pi^t$, $\Pi^2 = \Pi$, and $\Tr \Pi = 2$, as together these
conditions imply that $\Pi$ is an orthogonal projection onto a
plane.  Then we just need to verify that $\Tr \Pi_i\Pi_j =
(N-2)/(N-1)$ for $i < j \le N$. These calculations are
straightforward given our implementation of the isolating
interval method.
\end{proof}

The computer files can be obtained as described in
\textsection\ref{subsec:proving}.  The file \texttt{rtrip.gp}
implements isolating intervals (``rtrip'' refers to the representation of real
algebraic numbers using triples). Using this implementation,
\texttt{G2\_4\_verify.gp} carries out the computations with
projection matrices taken from \texttt{G2\_4\_data.txt}.

\subsection{Estimating dimensions}
\label{subsec:dimensions}

In Conjectures \ref{conj:dim12and13},
\ref{conj:dim24and25}, and
\ref{conj:dim39}, we conjecture the dimension
of certain solution spaces; here we describe the basis
for those conjectures.

Suppose, as is the case in our examples, that we are studying the zero set
$Z$ of some function $f$. Suppose moreover that we have a procedure for
converging to zeros of $f$, using, for example, Newton's method with
least-squares solving to handle degeneracy. Thus we have the ability to
generate points on $Z$, and we wish to use that ability to calculate its
dimension.  This is a simple instance of \emph{manifold learning}, the
problem of describing a manifold given sample points embedded in some
higher-dimensional space.

For our purposes we use following heuristic.  Fix $\varepsilon > 0$. Starting
with a solution $x_0$, we compute  $N$ nearby solutions $x_1,\dots,x_N$ as follows.
We first set $x'_i = x_0 + \varepsilon g_i$, where $g_i$ is a vector
of standard normal random variables, and then use our iterative solver to find a
zero $x_i$ of $f$ near $x'_i$.  To first order in $\varepsilon$, the
vectors $(x_i - x_0)/|x_i - x_0|$ are random (normalized) samples from the
tangent space of $Z$ at $x_0$. We then form the matrix whose rows are those
$N$ vectors and compute its singular values. There should be $d$ singular
values of order approximately $1$, where $d$ is the dimension of $Z$.  The
remaining singular values should be smaller by a factor of $\varepsilon$.

This procedure is certainly not rigorous, but in suitably nice cases, and
with proper choice of parameters, one can have a fair amount of confidence in
the result.  In particular, $N$ should be at least as large as the dimension
$d$ and $\varepsilon$ should be chosen small enough that, in a ball of radius
$\varepsilon$, $Z$ is well-approximated by its tangent space.  One pitfall to
avoid is that, while $\varepsilon$ needs to be small for the tangent space
approximation, it should also be large enough that the precision of the
solver is better than (approximately) $\varepsilon^2$.  If this is violated
then we may erroneously identify extra null vectors of $Df(x_0)$ as elements
of the tangent space.

In our applications we used $N=1000$ and $\varepsilon = 10^{-3}$ and we
required that Newton's method converge to within $10^{-12}$.  It was usually
easy to identify the jump in singular values after the $d$ corresponding to
the tangent space.  For instance, Conjecture \ref{conj:dim12and13} says that,
before accounting for overcounting and symmetries, we conjecture a
$66$-dimensional space of $12$-point tight simplices in $\HH\Proj^2$. This is
based on the following observation: when we ran the procedure just discussed,
the first $66$ singular values were all in the interval $[2,6]$, but the
$67$th was $0.04139564$.

\begin{remark} \label{rmk:sic-povm-dim}
Based on similar computations, we conjecture that
the moduli space of SIC-POVMs, simplices of $d^2$ points in $\C\Proj^{d-1}$, has
dimension $1$ when $d=3$ and $0$ when $d \ge 4$.  In particular, we
conjecture that, except in $\C\Proj^2$, SIC-POVMs are isolated.  This is in
accordance with the numerical results in \cite{SG}, although they searched
only for SIC-POVMs that are invariant under the Weyl-Heisenberg group.
\end{remark}

\section{Explicit constructions} \label{mub}

With the exception of Theorems~\ref{theorem:grass4} and~\ref{thm:7and8}, all
of the new results we have presented so far involve computer-assisted proofs
using Theorem~\ref{thm:implicit}. This allowed us to sidestep explicit
constructions, and it also gave local dimensions as a collateral benefit.
When an explicit construction is available, though, it can sometimes give
insight not proffered by a general existence theorem. We conclude the paper
with a few examples of this.

\subsection{Two universal optima in $\SO(4)$}

Most results in the literature concerning universal optima in continuous
spaces are set in two-point homogeneous spaces, i.e., spheres and projective
spaces. We have already seen another family of spaces (namely, real
Grassmannians) but there are many others.

Consider the special orthogonal group $\SO(n)$, endowed with the chordal
distance $d_c(U_1,U_2) = ||U_1 - U_2||_F$ coming from the embedding $\SO(n)
\hookrightarrow \R^{n^2}$ as $n \times n$ matrices equipped with the Frobenius
norm. This is not the Killing metric, but it has the advantage that its square
is a smooth function on $\SO(n) \times \SO(n)$.  Note that every element of $\SO(n)$ has
norm $n$, so up to this scaling factor we have an embedding into $S^{n^2-1}$.

By a \emph{universally optimal} code in $\SO(n)$, we mean a code that
minimizes energy for every completely monotonic function of squared chordal
distance (see \cite{CK}).  In this section we present two particularly
attractive universal optima in $\SO(4)$.

\begin{theorem} \label{thm:17}
There is a $17$-point code in $\SO(4)$ with the following properties:
it is a regular simplex, it is universally optimal, and it has a transitive symmetry group.
Moreover, there is no larger regular simplex in $\SO(4)$.
\end{theorem}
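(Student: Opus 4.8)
The plan is to exhibit $17$ explicit elements of $\SO(4)$, verify by direct computation that they are equidistant in $\R^{16}$ under the Frobenius norm, and then read off the remaining assertions from the embedding $\SO(4)\hookrightarrow\R^{16}$.

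First I would identify $\R^4$ with the quaternions $\HH$, so that $\SO(4)$ consists of the rotations $L_pR_q\colon x\mapsto pxq$ for unit quaternions $p,q$, with $(p,q)$ and $(-p,-q)$ giving the same rotation. The key computational fact is the identity
\[
\langle L_{p_1}R_{q_1},\,L_{p_2}R_{q_2}\rangle = \Tr\!\big((L_{p_1}R_{q_1})^{t}L_{p_2}R_{q_2}\big) = 4\,\RealPart(\bar p_1 p_2)\,\RealPart(\bar q_1 q_2),
\]
which follows from $(L_pR_q)^t = L_{\bar p}R_{\bar q}$, the relations $L_aL_b=L_{ab}$, $R_aR_b=R_{ba}$, and $\Tr(L_uR_v)=4\,\RealPart(u)\,\RealPart(v)$ (the last because $\sum_{b\in\{1,i,j,k\}}\bar b\,u\,b = 4\,\RealPart(u)$). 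Hence a set $\{L_{p_i}R_{q_i}\}$ is a regular simplex exactly when $\RealPart(\bar p_ip_j)\,\RealPart(\bar q_iq_j)$ is constant for $i\ne j$. Since every element of $\SO(4)$ has $\|\,\cdot\,\|_F^2 = 4$, i.e.\ lies on the sphere of radius $2$ in $\R^{16}$, and $17$ equidistant points on a radius-$r$ sphere in $\R^{16}$ have pairwise inner product $-r^2/16$, this constant must be $-1/16$, so the pairwise Frobenius inner product must be $-1/4$. I would then present an explicit list of $17$ quaternion pairs $(p_i,q_i)$, obtained as the orbit of a well-chosen base point under a transitive group $G$ of isometries, and check the constant-inner-product condition directly; since the entries involve algebraic irrationalities, this would be carried out exactly along the lines of the proof of Theorem~\ref{theorem:grass4} and \textsection\ref{subsec:rtrip}. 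That the configuration is a regular simplex is then immediate, as is the existence of a transitive symmetry group, because left and right translations $U\mapsto AUB$ and the transpose $U\mapsto U^{t}$ are isometries of $(\SO(4),d_c)$ and $G$ acts by such maps, transitively on the $17$ points by construction.

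Maximality is the easy half: the Frobenius embedding places $\SO(4)$ inside $\R^{16}$, which contains at most $16+1=17$ equidistant points, so $\SO(4)$ contains no regular simplex with more than $17$ points. Universal optimality follows from the same embedding together with inheritance from a larger space: $\SO(4)$ lies on the sphere $S^{15}$ (scaled by $2$) isometrically for the chordal metric, and squared chordal distance on $\SO(4)$ equals squared Euclidean distance there; the vertices of a regular simplex on a sphere form a universal optimum for that sphere (Cohn--Kumar \cite{CK}); and universal optimality is inherited by every subset of the ambient space containing the configuration, since a competing $17$-point configuration in $\SO(4)$ is in particular a $17$-point configuration in $S^{15}$ with identical potential energy for every completely monotonic function of squared distance. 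Hence our simplex is universally optimal in $\SO(4)$.

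The real work — and the main obstacle — is the construction of the $17$ matrices together with their transitive symmetry group. Parameter counting, in the spirit of \textsection\ref{quat}--\textsection\ref{grass}, makes the existence of a $17$-point regular simplex in $\SO(4)$ plausible, but the transitivity requirement is rigid: a transitive permutation group of prime degree $17$ necessarily contains a $17$-cycle, so the symmetry group must contain a cyclic group of order $17$ acting by isometries of $\SO(4)$ and cyclically permuting the vertices. Arranging the orbit of such a group to be an honest regular simplex, rather than merely an equidistant-looking configuration whose pairwise inner products split into Gauss periods, is the delicate point, and it is what forces the particular choice of $G$ and base point. Once these are in hand, every remaining step is a finite, mechanical verification.
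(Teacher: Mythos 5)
Your framework is correct and matches the paper's strategy in spirit: the Frobenius embedding $\SO(4)\hookrightarrow S^{15}$ (radius $2$) immediately gives both the cap $N\le 17$ and, once a $17$-point regular simplex exists, its universal optimality as a spherical simplex inherited to the subset $\SO(4)$; the quaternionic double-cover formula $\langle L_{p_1}R_{q_1},L_{p_2}R_{q_2}\rangle = 4\,\RealPart(\bar p_1p_2)\RealPart(\bar q_1q_2)$ is correct (and equivalent to the paper's direct trace computation with block-diagonal rotation matrices), as is your observation that the common inner product must equal $-1/4$; and your observation that a transitive group of prime degree $17$ must contain a $17$-cycle, so the construction should be an orbit of a cyclic-$17$ isometry, is precisely the structure the paper uses.

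However, the plan stops at the step that actually constitutes the proof. You write ``I would then present an explicit list of $17$ quaternion pairs\dots'' and later, candidly, ``the real work --- and the main obstacle --- is the construction of the $17$ matrices together with their transitive symmetry group.'' That construction is not supplied, and without it the theorem is not proved; the surrounding arguments are reductions to it, not replacements for it. The paper closes this gap by an explicit choice: take $\theta=2\pi/17$, let $R_{a,b}$ be the block-diagonal rotation by angles $a\theta$ and $b\theta$, let $\sigma_{1,3,4,5}\colon X\mapsto R_{1,3}XR_{4,5}$ (an order-$17$ isometry of the chordal metric), and take the orbit of a fixed base point $X_0$ under $\langle\sigma_{1,3,4,5}\rangle$; the parameters $(1,3,4,5)$ and the base point are chosen so that the seventeen pairwise inner products $\Tr(X_i^tX_j)$ all collapse to the single value $-1/4$ rather than splitting into distinct Gauss-period sums --- exactly the failure mode you flag. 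In your quaternionic coordinates this amounts to finding $(p_0,q_0)$ and primitive $17$th roots $\zeta_1,\zeta_2$ (acting by left/right translation on the two $S^3$ factors) so that $\RealPart(\bar p_0\zeta_1^m p_0)\RealPart(\bar q_0\zeta_2^m q_0)$ is independent of $m\not\equiv 0$. Until some such choice is exhibited and verified, the proposal is a correct reduction but not a proof.
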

\begin{proof}
Given $a,b \in \Z/17\Z$, define the rotation matrix
\[R_{a,b} = \begin{pmatrix}
 \cos(a \theta) & -\sin(a \theta) & 0 & 0 \\
\sin(a \theta) & \cos(a \theta) & 0 & 0 \\
0 & 0 & \cos(b \theta) & -\sin(b \theta) \\
0 & 0 & \sin(b \theta) & \cos(b \theta) \end{pmatrix},\]
where $\theta = 2\pi/17$.  For any $a,b,c,d$, not all zero,
the map $\sigma_{a,b,c,d} \colon \SO(4) \rightarrow \SO(4)$
defined by $X \mapsto R_{a,b} X R_{c,d}$ is an isometry of $\SO(4)$
of order $17$.  Set
\[X_0 = \begin{pmatrix}
0 & 0 & 0 & 1 \\
0 & 1 & 0 & 0 \\
0 & 0 & 1 & 0 \\
1 & 0 & 0 & 0 \end{pmatrix} \in \SO(4)\] and let $\{X_i = R_{1,3}^i X_0
R_{4,5}^i\} \in \SO(4)$ be the orbit of $X_0$ under $\sigma_{1,3,4,5}$.  This
is a $17$-point code which, by construction, has a transitive symmetry group.
Moreover, direct calculation shows that it forms a regular simplex.

By virtue of the Euclidean embedding $\SO(4) \hookrightarrow S^{15}$, there
can be no regular simplices of more than $17$ points, and a $17$-point
regular simplex must be universally optimal (indeed, it is even universally
optimal as a code on the sphere). That proves the remaining claims of the
theorem.
\end{proof}

\begin{theorem} \label{thm:32}
There is a $32$-point code in $\SO(4)$ with the following properties: it is a
subgroup, it is universally optimal, and it forms the vertices of a
cross polytope in $S^{15}$.
\end{theorem}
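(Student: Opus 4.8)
The plan is to write the code down explicitly as a finite subgroup of $\SO(4)$ via the quaternionic double cover, and then to recognize its image under the embedding $\SO(4)\hookrightarrow S^{15}$ as a regular cross polytope, whose universal optimality is already known from \cite{CK}. Identify $\R^4$ with the quaternion algebra $\HH$. For unit quaternions $p,q$ the map $L_pR_q\colon x\mapsto pxq$ is orthogonal (since $|pxq|=|x|$), and because $(p,q)\mapsto L_pR_q$ is a continuous map from the connected group $S^3\times S^3$ carrying $(1,1)$ to $\id$, it lands in the identity component $\SO(4)$. Let $Q_8=\{\pm1,\pm i,\pm j,\pm k\}$ and let $\Gamma$ be the image of $Q_8\times Q_8$ under $(p,q)\mapsto L_pR_q$. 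If $L_pR_q=\id$, then $pxq=x$ for all $x$; taking $x=1$ forces $q=p^{-1}$, and then $p$ is central, so $p=\pm1$. Hence the kernel of this homomorphism is $\{(1,1),(-1,-1)\}$, so $\Gamma$ is a subgroup of $\SO(4)$ of order $32$, and it contains $-I$ as the image of $(1,-1)$. This establishes that the code is a subgroup.

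Next I would compute traces. Taking $1,i,j,k$ as an orthonormal basis of $\HH$ with $\langle a,b\rangle=\RealPart(\bar a b)$, one has $\Tr(L_pR_q)=\sum_e\RealPart(\bar e\,peq)=\RealPart\big(p\sum_e eq\bar e\big)=4\,\RealPart(p)\,\RealPart(q)$, where the last equality uses the identity $\sum_{e\in\{1,i,j,k\}}eq\bar e=4\RealPart(q)$ (conjugation by $1,i,j,k$ averages a quaternion onto its real part). For $p,q\in Q_8$ one has $\RealPart(p)\in\{0,\pm1\}$, nonzero exactly when $p\in\{\pm1\}$; therefore $\Tr(L_pR_q)$ equals $4$ for precisely the two pairs with image $I$, equals $-4$ for precisely the two pairs with image $-I$, and vanishes otherwise. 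Since a matrix in $\SO(4)$ has trace $\pm4$ only if it is $\pm I$, it follows that every element of $\Gamma\setminus\{\pm I\}$ is traceless.

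Then I would pass to the sphere. Every element of $\SO(4)$ has Frobenius norm $2$, so dividing by $2$ maps $\SO(4)$ onto a subset of the unit sphere $S^{15}\subset\R^{16}$. For $X,Y\in\Gamma$ the inner product of the images is $\tfrac14\Tr(X^tY)=\tfrac14\Tr(X^{-1}Y)$ with $X^{-1}Y\in\Gamma$; by the trace computation this equals $1$ if $Y=X$, equals $-1$ if $Y=-X$, and equals $0$ otherwise. That is exactly the Gram matrix of the $32$ vertices of a regular cross polytope: choosing one representative from each of the $16$ antipodal pairs $\{X,-X\}$ gives, after normalization, $16$ mutually orthogonal unit vectors forming a basis of $\R^{16}$, and $\Gamma$ consists of their $\pm$ multiples. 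This proves the cross-polytope claim.

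For universal optimality, note that the cross polytope in $S^{15}$ is a diametrical $2$-distance set (the only inner products between distinct points are $0$ and $-1$) and a spherical $2$-design; by the results of Levenshtein \cite{Le1} and Cohn--Kumar \cite{CK} recalled in \textsection\ref{known} it is therefore universally optimal among all $32$-point configurations on $S^{15}$. Squared chordal distance on $\SO(4)$ is $4$ times squared Euclidean distance on the rescaled sphere, and complete monotonicity is preserved under scaling the argument, so every completely monotonic function of squared chordal distance on $\SO(4)$ is a completely monotonic function of squared Euclidean distance on $S^{15}$; hence minimality over all $32$-point spherical codes forces minimality over the sub-collection of codes contained in $\SO(4)$, which is exactly universal optimality of $\Gamma$ in $\SO(4)$. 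There is essentially no hard step here once the group $\Gamma$ is identified: the only computation with genuine content is the trace identity $\Tr(L_pR_q)=4\RealPart(p)\RealPart(q)$ together with the bookkeeping of the two-to-one map $Q_8\times Q_8\to\Gamma$, and the one conceptual point to get right is that universal optimality on the ambient sphere descends to $\SO(4)$ with its chordal metric, which is immediate from the rescaling.
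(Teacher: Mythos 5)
Your proof is correct, and it takes a genuinely different route from the paper's. The paper simply writes down all $32$ matrices explicitly as signed permutation matrices (four block types, permutation part in the Klein four-group, even number of $-1$'s), then notes that matrices of different types have disjoint supports (hence are orthogonal in $\R^{16}$) while matrices of the same type differ by a sign vector $(a,b,c,d)$ with even weight, giving inner products $0$ or $\pm 4$; the subgroup property is asserted by inspection. You instead realize the code conceptually as the image of $Q_8\times Q_8$ under the two-to-one map $\mathrm{Spin}(4)=S^3\times S^3\to\SO(4)$, so the subgroup property and the order $32$ come for free from the kernel computation, and the Gram matrix is handled in one stroke by the clean trace identity $\Tr(L_pR_q)=4\RealPart(p)\RealPart(q)$ combined with $\tfrac14\Tr(X^tY)=\tfrac14\Tr(X^{-1}Y)$ and the rigidity fact that trace $\pm4$ in $\SO(4)$ forces $\pm I$. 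The two constructions produce the same group: $L_pR_q$ in the basis $\{1,i,j,k\}$ is precisely a signed permutation matrix of the paper's form. What your approach buys is conceptual transparency (group structure and the $\pm$-pair structure are manifest) at the cost of the quaternionic machinery; what the paper's buys is total self-containment with nothing but matrix inspection. The reduction of universal optimality in $\SO(4)$ to universal optimality of the cross polytope in $S^{15}$ is handled the same way in both, though you spell out the rescaling/monotonicity point that the paper leaves implicit.
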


\begin{proof}
The code consists of all matrices of the form
\[
\begin{pmatrix} a & 0 & 0 & 0\\
0 & b & 0 & 0\\
0 & 0 & c & 0\\
0 & 0 & 0 & d\end{pmatrix},\
\begin{pmatrix} 0 & a & 0 & 0\\
b & 0 & 0 & 0\\
0 & 0 & 0 & c\\
0 & 0 & d & 0\end{pmatrix},\
\begin{pmatrix} 0 & 0 & a & 0\\
0 & 0 & 0 & b\\
c & 0 & 0 & 0\\
0 & d & 0 & 0\end{pmatrix},
\textup{ or }
\begin{pmatrix} 0 & 0 & 0 & a\\
0 & 0 & b & 0\\
0 & c & 0 & 0\\
d & 0 & 0 & 0\end{pmatrix},
\]
where $a,b,c,d = \pm 1$ with an even number of $-1$'s.  In other words, we
use signed permutation matrices in which the underlying permutation is either
trivial or a product of disjoint $2$-cycles and the number of minus signs is
even.  It is not difficult to check that this defines a subgroup of $\SO(4)$.

The supports of these four types of matrices are disjoint, so the
corresponding points in $\R^{16}$ are orthogonal.  The inner product between
two matrices of the same type is simply the inner product of the vectors
$(a,b,c,d)$, which is $0$ or $\pm4$ because of the even number of $-1$'s.
Thus, the code forms a cross polytope in $S^{15}$.

As in Theorem \ref{thm:17}, universal optimality of $\mathcal{C}$ in $\SO(4)$
follows from universal optimality as a subset of $S^{15}$ (see \cite{CK}).
\end{proof}

\subsection{$39$ points in $\OO \Proj^2$}

\begin{theorem} \label{thm:39}
There exists a tight code $\sC$ of $39$ points in $\OO\Proj^2$.
It consists of $13$ orthogonal triples such that, for any two
points $x_i,x_j$ in distinct triples, $\rho(x_i,x_j) =
\sqrt{2/3}$. In other words, if $\Pi,\Pi'$ are the projection
matrices corresponding to two distinct points in $\sC$, then
$\langle \Pi,\Pi' \rangle$ equals $0$ if the two points are in
the same triple and otherwise equals $1/3$.
\end{theorem}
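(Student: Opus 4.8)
The plan is to exhibit the $39$ points as an explicit union of thirteen orthonormal triples in $\OO\Proj^2$ — i.e.\ thirteen mutually unbiased bases — and then deduce tightness formally from the theory in \textsection\ref{known}. First note that thirteen is the largest possible number of mutually unbiased bases here: under the traceless embedding $\Pi \mapsto \Pi^0 = \Pi - \tfrac13 I_3$ of \textsection\ref{grass}, the three points of an orthonormal triple become a regular simplex spanning a $2$-dimensional subspace of the $26$-dimensional space $\{A \in \mathcal{H}(\OO^3) : \Tr A = 0\}$, and the subspaces attached to two unbiased triples are orthogonal (since $\langle \Pi,\Pi'\rangle = 1/3$ forces $\langle \Pi^0, (\Pi')^0\rangle = 0$); comparing dimensions gives at most $26/2 = 13$ triples, and meeting this count is precisely the orthoplex bound of \cite{CHS}. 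So the code we want is extremal, which is what makes its existence delicate.

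Concretely I would present the configuration either as thirteen lists of three $3\times3$ octonionic projection matrices or as unit vectors in suitable affine charts of $\OO\Proj^2$. Such a configuration cannot arise from the usual finite-field Heisenberg construction, which in dimension $3$ yields only the four \emph{complex} mutually unbiased bases; the extra nine bases must come from the additional real degrees of freedom of $\OO$ (equivalently, from the large symmetry group $F_4$). I would find a candidate numerically using the methods of \textsection\ref{algs}, recognize it as an algebraic configuration with entries in a small number field together with octonionic units, and record the resulting coordinates explicitly.

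Given the explicit data, the proof is a finite computation. For each triple one checks that the three matrices $\Pi_i$ are genuine points of $\OO\Proj^2$ — Hermitian, idempotent, of trace $1$ — which is exactly where the nonassociativity of $\OO$ must be handled carefully, since idempotency of $vv^\dagger$ is a real constraint and not an automatic consequence of $|v|=1$. One then checks $\langle\Pi_i,\Pi_j\rangle = 0$ for $i\ne j$ within a triple with $\sum_i \Pi_i = I_3$ (so the triple is an orthonormal basis), that $\langle\Pi_i,\Pi_j\rangle = 1/3$ for points in distinct triples, and that all $39$ points are distinct. These are exact computations over $\OO$, to be carried out rigorously by hand or, following \textsection\ref{subsec:rtrip}, with exact real-algebraic arithmetic.

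Tightness is then automatic. Summing over the thirteen bases gives $\sum_{i=1}^{39}\Pi_i = 13\,I_3 = \tfrac{39}{3}I_3$, so $\sC$ is a projective $1$-design; and among the $39^2$ inner products $|\langle x_i,x_j\rangle|^2$ the value $1$ occurs $39$ times, the value $0$ occurs $13\cdot 3\cdot 2 = 78$ times, and the value $1/3$ occurs $39\cdot 36 = 1404$ times. Since the second zonal spherical function on $\OO\Proj^2$ is $P_2^{(7,3)}(2t-1) = 91t^2 - 65t + 10$, taking the values $36,\,10,\,-14/9$ at $t = 1,\,0,\,1/3$, the second harmonic sum equals $39\cdot 36 + 78\cdot 10 + 1404\cdot(-14/9) = 1404 + 780 - 2184 = 0$, so $\sC$ is a $2$-design. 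Being a diametrical $2$-distance set (the value $\langle\Pi,\Pi'\rangle = 0$ realizes the maximal chordal distance in $\OO\Proj^2$) that is a $2$-design, $\sC$ is a tight code by Levenshtein's theorem \cite{Le1}, and universally optimal by \cite{CK}. I expect the genuine obstacle to lie entirely in the first two paragraphs: producing the thirteen mutually unbiased bases, since no construction via an associative coordinate algebra is available and the nonassociative verifications are delicate — once the configuration is in hand, the rest is routine.
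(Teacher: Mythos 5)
Your outline is logically sound and your arithmetic checks out: the census of inner products ($39$ diagonal, $78$ zeros, $1404$ values of $1/3$), the evaluations $P_2^{(7,3)}(2t-1)\big|_{t=1,0,1/3} = 36,\,10,\,-14/9$, the resulting $\Sigma_2 = 1404 + 780 - 2184 = 0$, and the appeal to Levenshtein and Cohn--Kumar are all correct, and the dimension-counting argument for the bound of $13$ bases is a nice piece of context. But you never exhibit the $39$ points. The entire content of the theorem is the construction, and your second paragraph explicitly defers it (``I would find a candidate numerically\ldots and record the resulting coordinates explicitly''). You correctly identify this as the genuine obstacle, and then leave it unaddressed, so what you have is a proof template, not a proof.

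The paper's actual construction is short and worth knowing. It starts from the familiar $12$-point universal optimum in $\C\Proj^2$: the standard basis $(1,0,0),(0,1,0),(0,0,1)$ together with the nine vectors $(1,\omega^a,\omega^b)/\sqrt{3}$ with $\omega = e^{2\pi i/3}$ and $a,b\in\{0,1,2\}$. One then takes the standard basis plus \emph{four} twisted copies of the nine nontrivial vectors, namely
\[
(1,\omega^a,\omega^b)/\sqrt{3},\quad
(1,\omega^a j,\omega^b \ell)/\sqrt{3},\quad
(1,\omega^a \ell,\omega^b n)/\sqrt{3},\quad
(1,\omega^a n,\omega^b j)/\sqrt{3},
\]
where $j$ is a quaternion unit, $\ell$ a further octonion unit, and $n = j\ell$; this gives $3 + 4\cdot 9 = 39$ points, and one verifies the distances by direct octonionic computation (the nine points of a fixed twist split into three triples according to $a+b \bmod 3$, yielding the required $13$ triples). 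From there the argument is exactly the one you gave. So your approach is the same in spirit; the difference is that the paper supplies the explicit twist by octonion units that you were hoping to find by a numerical search, and without those coordinates your proposal is incomplete.

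One small additional remark: for the $2$-design claim you should also record that the first harmonic sum vanishes, which it does since $P_1^{(7,3)}(2t-1) = 12t - 4$ gives $39\cdot 8 + 78\cdot(-4) + 1404\cdot 0 = 0$; this is equivalent to your observation that $\sum_i \Pi_i = 13 I_3$, but it is cleaner to present the $1$- and $2$-design checks in parallel.
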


\begin{proof}
First we recall from \cite[p.~127]{Co} the standard construction of a
$12$-point universal optimum in $\C\Proj^2$: in terms of unit-length
representatives, it consists of the standard basis
\[(1,0,0),\ (0,1,0),\ (0,0,1)\]
together with the $9$ points
\begin{equation} \label{eq:omegapoints}
\frac{1}{\sqrt{3}} (1,\omega^a,\omega^b),
\end{equation}
where $\omega = e^{2\pi i/3}$ and $a,b = 0,1,2$.

To construct the desired code, we will use the standard basis together with
four rotated copies of \eqref{eq:omegapoints}. More precisely, let
$\{1,i,j,k\}$ be the standard basis of $\HH$ and let $\ell$ be any one of the
remaining four standard basis elements of $\OO$. We identify $\omega \in \C$
as an element of $\Span \{1,i\} \subset \OO$. Set $n = j\ell$.  Then we
define $\sC \subset \OO\Proj^2$ to be the code obtained from the standard
basis and the points
\begin{equation} \label{eq:39points}
\begin{matrix}
(1,\omega^a,\omega^b)/\sqrt{3}, &
(1,\omega^a j,\omega^b \ell)/\sqrt{3},\\
(1,\omega^a \ell,\omega^b n)/\sqrt{3}, &
(1,\omega^a n,\omega^b j)/\sqrt{3}
\end{matrix}
\end{equation}
for $a,b = 0,1,2$. Direct computation shows that this code has the desired
distances.  In particular, the code splits into $13$ distinguished
triples of points: the standard basis yields one such triple, and each of
the four types of points in \eqref{eq:39points} yields three triples
according to the value of $a+b$ modulo $3$.

The sums over $\sC$ of the first and second harmonics
\begin{align*}
P_1^{(7,3)}(2t-1) &= 12t - 4,\\
P_2^{(7,3)}(2t-1) &= 91t^2 - 65t + 10
\end{align*}
of $\OO\Proj^2$ both vanish; thus $\sC$ is a $2$-design.
As it has only two inner products between distinct
points, and one of those is $0$, it is tight \cite{Le1}
and in fact universally optimal \cite{CK}.
\end{proof}

The code $\sC$ in Theorem~\ref{thm:39} is a system of $13$
mutually unbiased bases.  It follows easily from linear
programming bounds that it is the largest such system possible
in $\OO\Proj^2$.

This code is not unique: we can deform it to a four-dimensional family of
tight codes by replacing $\ell$, $n$, $n$, and $j$ in the second line of
\eqref{eq:39points} with $\xi_1 \ell$, $\xi_2 n$, $\xi_3 n$, and $\xi_4 j$,
where $\xi_1,\dots,\xi_4$ are complex numbers of absolute value $1$.  The
group of isometries of $\OO\Proj^2$ fixing the remaining 21 unchanged points
is zero-dimensional (see \textsection\ref{subsec:stabilizers}, for instance),
so we have a four-dimensional family even modulo the
action of the isometry group $F_4$ of $\OO\Proj^2$.  We think the actual
space of tight codes is much larger, though. On the basis of numerical
evidence (see \textsection\ref{subsec:dimensions}), we conjecture the
following.

\begin{conjecture} \label{conj:dim39}
In a neighborhood of the code constructed in
\eqref{eq:39points}, the space of tight $39$-point codes,
modulo the action of $F_4$, is a manifold of dimension $24$.
\end{conjecture}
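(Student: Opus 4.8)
The plan is to realize the local moduli space as the zero locus of an explicit polynomial map $f$ and then apply Theorem~\ref{thm:implicit}, exactly as for the simplices of \textsection\ref{quat}--\textsection\ref{oct}. First I would pin down which codes are relevant. By Lemma~\ref{lemma:tightchar}, any tight code near the code of \eqref{eq:39points} shares its linear programming polynomial, so its pairwise inner products all lie in that polynomial's zero set $\{0,\tfrac13\}$; since $0\ne\tfrac13$ and $39$ exceeds the maximal simplex size $27$ in $\OO\Proj^2$, any such code is again a two-distance set whose orthogonal pairs partition it into $13$ mutually orthogonal triples. Conversely, any configuration of $13$ orthonormal frames in $\OO\Proj^2$ with all cross inner products equal automatically has that common value $\tfrac13$ — because the frame structure forces $\sum_i\Pi_i=13I_3$ and hence fixes $\sum_{i,j}\langle\Pi_i,\Pi_j\rangle$ — and is automatically a $2$-design, hence a tight code by Levenshtein's theorem \cite{Le1}. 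So the moduli problem is genuinely cut out by polynomial equations in the $39\cdot 17$ real coordinates of the points $x_i=(a_i,b_i,c_i)\in\R_+\times\OO^2$: unit norm for each $x_i$; vanishing of $\langle x_i,x_j\rangle\in\OO$ for within-triple pairs; and equality of $|\langle x_i,x_j\rangle|^2$ over pairs in distinct triples (plus, if convenient, the near-unit-norm device from Proposition~\ref{prp:15}).

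The crux is to distill from these a \emph{non-redundant} system whose Jacobian is surjective at a high-precision approximation to (or the exact algebraic form of) the code \eqref{eq:39points}; this is the same difficulty that recurs throughout the paper, since the obvious systems are singular. Here the $13$ octonionic orthonormality conditions and the $\binom{39}{2}-39$ equal-inner-product conditions interact, most of the latter being forced once enough hold, so one must prune to a minimal generating set and aim for the solution variety $\X$ to have dimension $24+\dim F_4=76$. I expect a reformulation in the spirit of Proposition~\ref{prp:15} to be the cleanest route: replace $\Pi_i$ by its traceless part $\Gamma_i=\Pi_i-\tfrac13 I_3$ (so within-triple pairs satisfy $\langle\Gamma_i,\Gamma_j\rangle=-\tfrac13$ and between-triple pairs $\langle\Gamma_i,\Gamma_j\rangle=0$), exploit the singular Gram matrix of the $\Gamma_i$ — they span at most the $26$-dimensional space of traceless Hermitian octonionic $3\times3$ matrices — together with nonnegativity of the second zonal-harmonic sum $\Sigma_2$ to pin down the norms by a local calculation, just as in Lemmas~\ref{lemma:S} and~\ref{lemma:D}.

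With surjectivity established, the rest follows the now-standard route. One certifies, via Corollary~\ref{cor:polybounds} or interval arithmetic (\textsection\ref{algs}), that $\|Df(x)\circ T-\id\|<1-\|T\|\,|f(x_0)|/\varepsilon$ on a small ball, so Theorem~\ref{thm:implicit} produces an exact tight code near \eqref{eq:39points} and shows that $\X$ is locally a manifold of dimension $\dim V-\dim W=76$. Then one deducts symmetries: $F_4$ has dimension $52$ and acts on $\X$, and the stabilizer of the $21$ points of \eqref{eq:39points} that are not deformed is already known to be zero-dimensional (\textsection\ref{subsec:stabilizers}), so the stabilizer of the whole code is finite. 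Since $F_4$ is compact, the quotient map $\X\to\X/F_4$ is closed and $\X/F_4$ is Hausdorff, so the topological-dimension argument used in \textsection\ref{quat} (Theorem~VI~7 of \cite{HW41}) gives $\dim(\X/F_4)\ge 76-52=24$, matching the orbit--stabilizer upper bound; checking that the stabilizer is in fact trivial near the code (not merely finite) upgrades ``orbifold'' to the ``manifold'' asserted in the conjecture.

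The main obstacle is the middle step: finding the non-redundant, Jacobian-surjective formulation. Unlike the simplex cases, where Proposition~\ref{prp:generalpanda} supplies a uniform recipe, the two-distance structure means the within-frame and between-frame constraints play different roles, and landing $\dim V-\dim W$ exactly on $76$ while keeping $Df$ onto may need the kind of tailored argument used for the $15$- and $27$-point simplices. Everything downstream — the interval-arithmetic certificate, the stabilizer computation, and the passage to the quotient — is routine given the machinery already in the paper.
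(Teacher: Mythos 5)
This statement is a \emph{conjecture} in the paper, not a theorem: the authors state, immediately after Conjecture~\ref{conj:dim39}, that it ``remains just a conjecture\ldots{} as we have been unable to identify a nonsingular system of equations to which we can apply Theorem~\ref{thm:implicit}.'' So there is no proof in the paper to compare against; the conjecture rests on the numerical dimension-estimation procedure of \textsection\ref{subsec:dimensions}, not on a rigorous argument. Your proposal is therefore being measured against an open problem.

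Read as a strategy, your proposal is on target as far as it goes. The structural reductions you perform are correct: a tight $39$-point code near $\sC$ has its pairwise inner products confined to the zero set of the (essentially unique, degree-$2$) LP certificate, hence lies in $\{0,\tfrac13\}$; the orthogonal pairs then partition the code into $13$ orthonormal triples because $39$ exceeds the maximal simplex size $27$; the frame identities $\sum_{\text{triple}}\Pi_i = I_3$ force $\sum_i\Pi_i = 13 I_3$ and hence the cross-inner-product $\tfrac13$; and a two-distance set with these parameters is automatically a $2$-design and hence tight by Levenshtein. Your plan to deduct $\dim F_4=52$ using the topological-dimension argument and the zero-dimensional stabilizer (which the paper records for the $21$ rigid points of \eqref{eq:39points}) is also the same bookkeeping the paper uses elsewhere. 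But you then name, correctly, the decisive missing step: exhibiting a non-redundant polynomial system, analogous to Propositions~\ref{prp:15} or~\ref{prp:27}, whose Jacobian is surjective at an approximation to $\sC$ with solution variety of dimension $76$. You propose the traceless-Gram-matrix plus $\Sigma_2$ device, which is a sensible guess, but you do not carry it out, and this is precisely the point at which the paper's authors report getting stuck. The two-distance/frame structure here is qualitatively different from the single-distance simplices, and it is not at all clear that the $\Sigma_2$-plus-singular-determinant trick closes the gap. Until a concrete nonsingular formulation is found and certified, this remains a plan with a genuine, acknowledged hole, not a proof.
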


At present this remains just a conjecture, though, as we have been unable to
identify a nonsingular system of equations to which we can apply
Theorem~\ref{thm:implicit}.

The existence of a code of this form was conjectured by Hoggar \cite[Table
2]{Ho2} after classifying the permissible parameters for strongly regular
graphs.  Excepting a hypothetical $26$-point tight simplex, which we
conjecture does not exist, there are no remaining cases in which the
existence of a tight code in $\OO\Proj^2$ is conjectured but not resolved. In
fact, based on computations of optimal quasicodes (two-point correlation
functions subject to linear programming bounds \cite{CZ}), we are confident
there are no other tight codes in $\OO\Proj^2$ with at most $10^4$ points. We
believe there are no more of any size.

\end{document}